\DeclareSymbolFont{cyrletters}{OT2}{wncyr}{m}{n}
\DeclareMathSymbol{\Sha}{\mathalpha}{cyrletters}{"58}
\newtheorem{theoA}{Theorem}
\newtheorem*{coro*}{Corollary}
\newtheorem*{conj*}{Conjecture}
\newtheorem*{lemm*}{Lemma}
\newcommand{\smallmat}[4]{\bigl(\begin{smallmatrix}#1&#2\\#3&#4\end{smallmatrix}\bigr)}
\providecommand{\twomat}[4]{\left(\begin{array}{cc}#1&#2\\#3&#4\end{array}\right)}
\providecommand{\smalltwomat}[4]{\left(\begin{smallmatrix}#1&#2\\#3&#4\end{smallmatrix}\right)}
\theoremstyle{definition}
\theoremstyle{remark}
\newtheorem{remark*}{Remark}
\numberwithin{equation}{subsection}
\numberwithin{table}{subsection}
\newcommand{\ot}{\otimes}
\newcommand{\ts}{\times}
\newcommand{\beq}{\begin{equation}\begin{aligned}}
\newcommand{\eeq}{\end{aligned}\end{equation}}
\newcommand{\beqq}{\begin{equation*}\begin{aligned}}
\newcommand{\eeqq}{\end{aligned}\end{equation*}}
\newcommand{\lb}[1]{\label{#1}}
\newcommand{\one}{\mathbf{1}}
\newcommand{\Q}{\mathbf{Q}}
\newcommand{\qqq}{\mathbf{q}}
\newcommand{\GL}{\mathrm{GL}}
\newcommand{\G}{\mathrm{G}}
\renewcommand{\H}{\mathrm{H}}
\newcommand{\GO}{\mathrm{GO}}
\newcommand{\X}{\mathscr{X}}
\newcommand{\sm}{\mathrm{sm}}
\newcommand{\cH}{\mathscr{H}}
\newcommand{\cV}{\mathscr{V}}
\newcommand{\cZ}{\mathscr{Z}}
\newcommand{\calL}{\mathscr{L}}
\newcommand{\cW}{\mathscr{W}}
\newcommand{\R}{\mathbf{R}}
\newcommand{\Z}{\mathbf{Z}}
\newcommand{\uw}{\underline{w}}
\newcommand{\ul}{\underline{l}}
\newcommand{\uk}{\underline{k}}
\newcommand{\ukappa}{\underline{\kappa}}
\newcommand{\frakp}{\mathfrak{p}}
\newcommand{\rW}{\mathrm{W}}
\newcommand{\rf}{\mathrm{f}}
\newcommand{\ad}{\mathrm{ad}}
\newcommand{\ab}{\mathrm{ab}}
\newcommand{\into}{\hookrightarrow}
\newcommand{\cd}{\cdot}
\newcommand{\Y}{\mathscr{Y}}
\newcommand{\la}{\langle}
\newcommand{\ra}{\rangle}
\newcommand{\lm}{\lambda}
\newcommand{\sg}{\sigma}
\newcommand{\Sg}{\Sigma}
\newcommand{\cL}{\mathscr{L}}
\newcommand{\bff}{\mathbf{f}}
\newcommand{\bI}{\mathbf{I}}
\newcommand{\cK}{\mathscr{K}}
\newcommand{{\calG}}{\mathscr{G}}
\newcommand{\cS}{\mathscr{S}}
\newcommand{\calS}{\mathcal{S}}
\newcommand{\bcalS}{\baar{\mathcal{S}}}
\newcommand{\C}{\mathbf{C}}
\newcommand{\N}{\mathbf{N}}
\newcommand{\OO}{\mathscr{O}}
\newcommand{\A}{\mathbf{A}}
\newcommand{\bks}{\backslash}
\newcommand{\baar}{\overline}
\newcommand{\eps}{\varepsilon}
\newcommand{\vphi}{\varphi}
\newcommand{\vpi}{\varpi}
\newcommand{\Up}{\mathrm{U}}
\newcommand{\wtil}{\widetilde}
\newcommand{\Res}{\mathrm{Res}}
\newcommand{\B}{\mathbf{B}}
\newcommand{\W}{\mathscr{W}}
\newcommand{\ord}{\mathrm{ord}} 
\newcommand{\hol}{\mathrm{hol}} 
\newcommand{\cl}{\mathrm{cl}} 
\newcommand{\vol}{\mathrm{vol}}
\newcommand{\Tr}{\mathrm{Tr}}
\newcommand{\Gal}{\mathrm{Gal}}
\newcommand{\Ker}{\mathrm{Ker}\,}
\newcommand{\Hom}{\mathrm{Hom}\,}
\newcommand{\End}{\mathrm{End}\,}
\newcommand{\llb}{\llbracket}
\newcommand{\rrb}{\rrbracket}
\newcommand{\Spec}{\mathrm{Spec}\,}
\newcommand{\id}{\mathrm{id}}
\def\Xint#1{\mathchoice
      {\XXint\displaystyle\textstyle{#1}}%
      {\XXint\textstyle\scriptstyle{#1}}%
      {\XXint\scriptstyle\scriptscriptstyle{#1}}%
      {\XXint\scriptscriptstyle\scriptscriptstyle{#1}}%
      \!\int}
   \def\XXint#1#2#3{{\setbox0=\hbox{$#1{#2#3}{\int}$}
        \vcenter{\hbox{$#2#3$}}\kern-.5\wd0}}
   \def\dashint{\Xint-}
\title[$p$-adic $L$-functions for $\GL_{2}\ts {\rm GU}(1)$]{$p$-adic $L$-functions via local-global interpolation:\\ the case of $\GL_{2}\ts {\rm GU}(1)$}
\author{Daniel Disegni} 
\address{Department of Mathematics, Ben-Gurion University of the Negev, Be'er Sheva 84105, Israel}
\email{disegni@bgu.ac.il}
\begin{document}

\begin{abstract} 
Let $F$ be a totally real field and let $E/F$ be a CM quadratic extension. We construct a $p$-adic $L$-function 
attached to Hida families for the group  $\GL_{2/F}\ts \Res_{E/F}\GL_{1}$. It is characterised by an exact interpolation property for critical Rankin--Selberg  $L$-values,  at  classical points corresponding to representations $\pi\boxtimes \chi$ with the weights of~$\chi$ smaller than the weights of~$\pi$.

Our $p$-adic $L$-function agrees with previous results of Hida when $E/F$ splits above~$p$ or $F=\Q$, and it is new otherwise. 
Exploring a method that should bear further fruits, we build it  as a ratio of families of global and local Waldspurger zeta integrals, the latter constructed using the local Langlands correspondence in families.

In an appendix of possibly independent recreational interest, 
 we give a reality-TV-inspired  proof of an identity concerning double factorials.  
\end{abstract}	
\thanks{This work was supported by ISF grant 1963/20 and BSF grant 2018250.}

\maketitle

\tableofcontents

\section{Introduction}

This paper is a case study in the construction of  $p$-adic $L$-functions by the `soft' method of  glueing  ratios of matching families of global and local zeta integrals. The local integrals are constructed and then inserted into the global context by using the local Langlands correspondence in families (see \cite{LLC} and  references therein). The method, whose deployment seems new for non-abelian families, should be of wide applicability; we give a brief introductory description in \S~\ref{12}.

\medskip

The specific context and arithmetic interest of our work is the following. Let $F$ be a totally real field, let $E/F$ be a CM quadratic extension, and let  $p$ be a rational prime. We construct a meromorphic function $\cL_{p}(\cV)$ on Hida families for $\GL_{2/F}\ts\Res_{E/F}\GL_{1}$
that interpolates  critical values
 $${L(1/2, \pi_{E}\ot\chi)\over L(1, \pi, \ad)},$$ for $p$-ordinary automorphic representations $\pi\boxtimes \chi$ 
such that $\chi$ has lower weights than $\pi$. (The precise statement is Theorem \ref{thm A} below; note that in our normalisation, the above numerators are not necessarily central values.) The function $\cL_{p}(\cV)$ is new (if not  surprising) at least when $E/F$ does not split above~$p$;  for a discussion of  previous related works see \S~\ref{previous}. 

The interpolation property of $\cL_{p}(\cV)$ holds at all classical points satisfying the weight condition and lying outside the polar locus (on which we have partial control),  and it provides an entirely explicit and complete characterisation of the function, in the spirit of \cite{Hi-genuine}. Its generality and precision are key to some arithmetic applications in \cite{univ}, which motivated our choice of case. In that paper, we prove, first, the $p$-adic Beilinson--Bloch--Kato conjecture in analytic rank~$1$ for (conjugate-)selfdual motives attached to representations $\pi_{E}\ot\chi$ as above;
and secondly,  one divisibility in an Iwasawa Main Conjecture for the cyclotomic derivative ${\rm d}^{\sharp}\cL_{p}(\cV)$ of $\cL_{p}(\cV)$ along a  selfdual locus.  Both results,  new or partly new even when $F=\Q$ and $E/F$ splits at $p$, rely on a $p$-adic Gross--Zagier formula for ${\rm d}^{\sharp}\cL_{p}(\cV)$. In turn, that formula is proved by analytically continuing  formulas from \cite{pyzz, nonsplit} for the central derivatives of certain cyclotomic $p$-adic $L$-functions $\cL_{p}(V_{(\pi, \chi)}, s)$ attached to those representations $\pi\boxtimes\chi$ as above that have minimal weights. The continuation argument  thus requires to exactly    identify the collection $\{(\cL_{p}(V_{(\pi, \chi)} , s)_{(\pi,\chi)}\}$ of  single-variable functions as a set of specialisations of  a multivariable analytic function, which is indeed our~$\cL_{p}(\cV)$.

\medskip

 It would be interesting to extend our results to the non-ordinary case by the method of  \cite{Urban, AI}.
As for further arithmetic directions in the ordinary case,\footnote{This discussion has no ambition of being either a comprehensive research programme or a comprehensive survey of the growing literature on the subject. Moreover, it  entirely leaves out not only the case of non-ordinary families, but also  the  $p$-adic $L$-function with complementary (to \eqref{wt ineqq}) interpolation range introduced in \cite{BDP}.}  the main remaining goal is perhaps
 the full Iwasawa Main Conjecture for $\cL_{p}(\cV)$.  This was proved by Skinner--Urban \cite{SU} and Wan \cite{Wan} in the split case; in the non-split case, results toward it (when $F=\Q$) were recently obtained by  B\"uy\"ukboduk--Lei \cite{BL}. A second goal
is the remaining divisibility in the Main Conjecture for the cyclotomic derivative of $\cL_{p}(\cV)$ (cf. \cite[Theorem E]{univ}); in view of the $p$-adic Gross--Zagier formula of \cite{univ}, this is equivalent to a suitable generalisation of Perrin-Riou's main conjecture for Heegner points, which in its original form was recently proved by Burungale--Castella--Kim \cite{BCK}.

\subsection{Statement of the main result} We move toward stating our main theorem, leaving  a few  of the  detailed definitions of the objects involved to the body of the paper.  

\subsubsection{$p$-adic automorphic representations} \lb{sec 111}
Consider the algebraic groups over $F$
\beq\lb{grps}
\G=\GL_{2/F}, \qquad \H:=\Res_{E/F} \GL_{1/F}.\eeq
 If $v_{0} $ is a place of $\Q$, we denote  $$\Sg_{v_{0}}=\Hom(F, \baar{\Q}_{v_{0}}).$$
 A (numerical) $v_{0}$-adic weight for $\G$ is a tuple  $\uw:= (w_{0}, w= (w_{\tau})_{\tau\in \Sg_{v_{0}}})$ of integers, all of the same parity, such that $w_{\tau}\geq 0$ for all $\tau$. It is said cohomological if $w_{\tau}\geq 2$ for all $\tau$. 
 A weight for $\H$ is a tuple $\ul=(l_{0},l=( l_{\tau})_{\tau\in \Sg_{v_{0}}})$ of integers of the same parity. Finally, if $\uw$ and $\ul$ are weights for $\G$ and $\H$, the associated \emph{contracted weight} for $\G\ts \H$ is\footnote{A contracted weight is the same as a weight for $(\G\ts\H)^{1}:=\{(g, h): \det (g) = N_{E/F}(h)\} \subset \G\ts\H$. This is in fact the true group governing our constructions.}
$$(w_{0}+l_{0}, w, l).$$
 
If $\uw$ is a $p$-adic  weight (say, for $\G$) and $\iota\colon \baar{\Q}_{p}\into \C$ is an embedding, we denote $\uw^{\iota}:= (w_{0}, (w_{\tau})_{\iota\circ \tau\colon F\into \C})$. (In fact $\uw^{\iota}$ only depends on $\iota_{|L}$ if $\uw$ is rational over the finite extension $L$ of $\Q_{p}$ in the sense that  $\Gal(\baar{\Q}_{p}/L)$ fixes $\uw$.) Let $\A$ be the ring of ad\`eles of $F$. 
 An automorphic representation of archimedean weight $\uw$ is a complex automorphic representation $\pi$ of $\G(\A)$ such that $\pi_{\infty}=\pi_{\uw}:=\ot_{\tau\colon F\into \R} \pi_{(w_{0}, w_{\tau})}$, where $\pi_{(w_{0}, w_{\tau})}$ is the discrete series of $\GL_{2}(F_{\tau})$ of weight $w_{\tau}$ and central character $z\mapsto z^{w_{0}}$.
 If $L$ is $p$-adic, we define an \emph{automorphic representation of $\G(\A)$ of weight $\uw$ over $L$} to be a representation  $\pi$ of $\G(\A^{\infty})$ on an $L$-vector space, such that for every $\iota\colon L\into \C$, the representation
  $$\pi^{\iota}:=\pi\ot_{L, \iota} \pi_{\infty, \uw^{\iota}}$$
  of $\G(\A)$ is automorphic.\footnote{This definition is slightly different from, but equivalent to, the one adopted in \cite{univ}, whose flexibility won't be needed here.} 
  
  To  the representation $\pi$ over $L$ is attached a $2$-dimensional representation $V_{\pi}$ of $G_{F}:=\Gal(\baar{F}/F)$; denoting by $V_{\pi,v}$ its restriction to a decomposition group at a place $v$ of $F$, the representation $V_{\pi}$ is characterised by $L(V_{\pi,v},s)=L(s+1/2, \pi_{v})$  for all $v$ (this is the `Hecke' normalisation of the Langlands correspondence, cf. \cite[\S 3.2]{deligne}). 
We say that  $\pi$ is \emph{ordinary}\footnote{The literature often adds to the definition the extra restriction that $\alpha_{\pi,v}^{\circ}$ should be unramified, and calls `nearly ordinary' what we call `ordinary'.} if for each place $v\vert p$ of $F$
there is  a nontrivial $G_{F_{v}}$-stable filtration 
\beq 0 \to V_{\pi, v}^{+} \to V_{\pi,v} \to V^{-}_{\pi,v}\to 0\eeq
such that the character 
$\alpha_{\pi, v}^{\circ}\colon F_{v}^{\ts}\to L^{\ts}$ corresponding
to $V^{+}_{\pi, v}(-1)$ has values in $\OO_{L}^{\ts}$.

Let $L$ be a $p$-adic field splitting $E$,   suppose chosen for each $\tau\colon F\into L$ an extension $\tau'$ to $E$,\footnote{This will only intervene in the numerical labelling of the weights.} and let $\tau'^{c}=\tau'\circ c$ for the complex conjugation $c$ of $E/F$.
A Hecke character of $\H$ of weight $\ul $ over the $p$-adic field $L$ is a locally algebraic character $\chi\colon E^{\ts}\bks \A_{E}^{\infty, \ts}\to L^{\ts}$ such that 
$$\chi(t_{p}) = \prod_{\tau\colon F\into L} \tau'(t_{p})^{(l_{\tau}+l_{0})/2}  \tau'^{c}(t_{p})^{(-l_{\tau}+l_{0})/2}$$ for all $t_{p}$ in some neighbourhood of  $1\in E_{p}^{\ts}$.  We let $V_{\chi}$ be the $1$-dimensional $G_{E}$-representation corresponding to $\chi$.

\subsubsection{$L$-values}
Let $\pi$, respectively $\chi$, be a complex automorphic representation of $\G(\A)$, respectively $\H(\A)$, and let $\pi_{E}$ denote the base-change of $\pi$ to $E$.
Let us also introduce the convenient notation 
\beq \lb{virtual} \text{``$V_{(\pi, \chi), v}:= (V_{\pi, v}\ot{\rm Ind}_{F_{v}}^{E_{v}}\chi_{v})\ominus \ad(V_{\pi,v})(1)$''} \eeq
(to be thought of as referring to a `virtual motive').

Let  $\eta\colon F^{\ts}\bks \A^{\ts} \to \{\pm 1\}$ be the character associated with $E/F$, and let
\beqq
\calL(V_{(\pi, \chi), v},0)&:= {\zeta_{F,v}(2) L(1/2,\pi_{E, v}\ot\chi_{v}) \over {L(1, \eta_{v})L( 1, \pi_{v}, \ad)}}
\cdot  \begin{cases} 1 \quad &\text{if $v\nmid\infty $}\\ \pi^{-1}  &\text{if
 $v\vert \infty$}\end{cases}
\quad \in \C,\\
\calL(V_{(\pi, \chi)},0)&:= \prod_{v} \calL(V_{(\pi, \chi), v},0),
\eeqq
where the product (in the sense of analytic continuation) is over all places.  These are the $L$-values we will interpolate. 

\subsubsection{Interpolation factors} \lb{sec psi intro}
Let $L$ be a finite extension of $\Q_{p}$,  let $\pi$ be an ordinary automorphic representation of $\G(\A)$ over $L$, with a locally algebraic central character $\omega_{\pi}\colon \A^{\infty , \ts}\to L^{\ts}$,  let $\chi\colon \H(F)\bks \H(\A)\to L^{\ts}$ be a locally algebraic character, and set $\omega_{\chi}:= \chi_{|\A^{\infty, \ts}}$. Let $\iota\colon L\into \C$ be an embedding and let $\psi=\prod_{v}\psi_{v}\colon F\bks \A\to \C^{\ts}$ be the standard additive character such that $\psi_{\infty}(x)=e^{2\pi i \Tr_{F_{\infty}/\R}(x)}$. 

If $v\vert p$, let  $\ad (V_{\pi,v})(1)^{++}:= \Hom(V_{\pi,v}^{-}, V_{\pi, v}^{+})$, and define
\beq\lb{ev}
e_{v}(V_{(\pi^{\iota}, \chi^{\iota})})=
 {\prod_{w\vert v}\gamma(\iota{\rm WD}[V^{+}_{\pi, v|G_{E, w}} \ot V_{\chi, w}], \psi_{E,w})^{-1}
 \over \gamma(\iota{\rm WD}[\ad(V_{\pi,v})(1)^{++}], \psi_{v})^{-1}}
 \cd \cL( V_{(\pi^{\iota}, \chi^{\iota}),v})^{-1}, \eeq
where  $\iota {\rm WD}$ is the functor from potentially semistable Galois representations to complex Weil--Deligne representations of \cite{fontaine}, the inverse Deligne--Langlands $\gamma $-factor is $\gamma(W, \psi_{v})^{-1} = {L(W)/ \eps(W, \psi_{v}) L(W^{*}(1))}$,\footnote{The normalisations of $L$- and $\eps$-factors are as in \cite{tate-nt}.} and  $\psi_{E, w}=\psi_{v}\circ \Tr_{E_{w}/F_{v}}$.

Let $k_{0}\in \Z$ be such that the archimedean component of $\omega=\omega_{\pi}\omega_{\chi}$ is $\omega_{\infty}(x)=x^{k_{0}}$. We define 
\beqq e_{\infty}(V_{(\pi^{\iota}, \chi^{\iota})}) = i^{k_{0}[F:\Q]},
\eeqq
and
\beq\lb{epinf}
 e_{p\infty}(V_{(\pi^{\iota}, \chi^{\iota})}) :=  e_{\infty}(V_{(\pi^{\iota}, \chi^{\iota})}) \cd \prod_{v\vert p}e_{v}(V_{(\pi^{\iota}, \chi^{\iota})})
\eeq

 \subsubsection{Hida families}\lb{sec HF} Let $U_{\G}^{p}\subset \G(\A^{p\infty })$ be any open compact subgroup, and for $R=\Z_{p}, \Q_{p}$, let  ${\bf T}^{\rm sph, \ord}_{U_{\G}^{p}, R}$ be the $p$-(nearly) ordinary spherical Hecke $R$-algebra acting on ordinary $p$-adic modular cuspfoms for $\G$ of tame level $U_{\G}^{p}$.
 
   A \emph{cuspidal Hida family} $\X_{\G}$ is an irreducible component of the space $\Y_{\G, U_{\G}^{p}}:=\Spec {\bf T}_{U_{\G}^{p}, \Q_{p}}^{\rm sph, \ord}$ for some $U_{\G}^{p}$. It is a scheme finite flat over $\Spec \Z_{p}\llb T_{1}, \ldots, T_{[F:\Q]+1+\delta_{F,p}}\rrb\ot_{\Z_{p}}\Q_{p}$ (where $\delta_{F, p}$ is the $p$-Leopoldt defect of $F$), coming with a dense   ind-finite subscheme $$\X_{\G}^{\cl}\subset \X_{\G}$$ 
 of \emph{classical points},  and a locally free sheaf $\cV_{\G}$ of rank~$2$ endowed with an $\OO_{\X_{\G}}$-linear action of $G_{F}$. To each $x\in \X_{\G}^{\cl}$ is associated an automorphic representation $\pi_{x}$ of $\G(\A^{\infty})$ over $\Q_{p}(x)$, and the fibre $\cV_{\G |x}\cong V_{\pi_{x}}$.  The (numerical) weight of $x$ is  defined to be the weight of $\pi_{x}$.
 
 Let $U^{p}_{\H}\subset \H(\A^{p\infty})$ be an open compact subgroup. We define 
\beq \lb{YH}
\Y_{\H}=\Y_{\H, U_{\H}^{p}}:=\Spec \Z_{p}\llb \H(F)\bks \H(\A^{p\infty})/U_{\H}^{p}\rrb\ot_{\Z_{p}}\Q_{p},\eeq
where the topology on $\H(F)\bks \H(\A^{p\infty})/U_{\H}^{p}$ is profinite; it comes with a universal character $\chi_{\rm univ}\colon \H(F)\bks\H(\A^{\infty})\to \OO(\Y_{\H})^{\ts}$, identified with a $G_{E}$-representation $\cV_{\H}$ of rank~$1$, and a dense ind-finite  subscheme $\Y_{\H}^{\rm cl}\subset \Y_{\H}$, whose points $y$ correspond to $U_{ \H}^{p}$-invariant locally algebraic  Hecke characters $\chi_{y}$ of $\H$ over $\Q_{p}(y)$.  The weight of $y$ is  defined to be the weight of $\chi_{y}$.

Finally, the ordinary eigenvariety for $\G\ts\H$ of level $U^{p}_{\G}\ts U^{p}_{\H}$ is 
$$\Y_{\G\ts\H}= \Y_{\G\ts\H, U^{p}_{\G}\ts U^{p}_{\H}} := \Y_{\G}\hat{\ts} \Y_{\H} := 
\Spec {\bf T}_{U_{\G}^{p}, \Z_{p}}^{\rm sph, \ord}
\hat{\ot}_{\Z_{p}}
 \Z_{p}\llb \H(F)\bks \H(\A^{p\infty})/U_{\H}^{p}\rrb\ot_{\Z_{p}}\Q_{p}.$$
 Its subset of classical points is $\Y_{\G\ts \H}^{\cl}:=\Y_{\G}^{\cl}\ts \Y_{\H}^{\cl}$.
 A {Hida family} for $\G\ts\H$ is an irreducible component of $\Y_{\G\ts\H, U^{p}_{\G}\ts U^{p}_{\H}}$
for some $U^{p}_{\G}$, $U^{p}_{\H}$. 

We now isolate an interesting subspace of  $\Y_{\G\ts\H}$.
 Let $\omega_{\G}\colon F^{\ts}\bks \A^{\infty, \ts}\to \OO(\Y_{\G})^{\ts}$ be the character giving the action of the centre of $\G(\A)$ on $p$-adic modular forms,  let $\omega_{\H}:=\chi_{\rm univ|\A^{\infty, \ts}}$, and let 
 $$\omega:= \omega_{\G}\omega_{\H} \colon F^{\ts}\bks \A^{\infty,\ts}\to \OO(\Y_{\G\ts\H})^{\ts}.$$ 
 The \emph{self-dual locus} 
$$\Y_{\G\ts\H}^{\rm sd}\subset \Y_{\G\ts\H}$$
is the closed subspace defined by $\omega=\one$. If $\X$ is a Hida family for $\G\ts\H$, 
we denote $\X^{\cl}:= \X\cap \Y_{\G\ts\H}^{\cl}$,  $\X^{\rm sd}
:= \X\cap \Y_{\G\ts\H}^{\rm sd}$, and $\X^{\cl, {\rm sd}}:= \X^{\cl}\cap \X^{\rm sd}$.

\subsubsection{Main theorem}
 Throughout this paper, if $\X$ is a scheme over a  characteristic-zero field $L$, we  identify a geometric point $x\in \X(\C)$ with a pair $(x_{0}, \iota)$, where $x_{0}\in \X$ is the scheme point image of (as a synonym, \emph{underlying}) $x$ 
 and $\iota\colon L(x_{0})\into \C$ is an embedding.
 If $\X$ is integral, we denote by $\cK(\X)$ the local ring of the generic point, which we call the field of meromorphic functions on $\X$.
 
If $\X$ is a Hida family for $\G\ts\H$, we define $\X^{\cl, {\rm wt}}\subset \X^{\cl}$ to be the  subset of points $(x_{0}, y_{0})$ whose contracted weight $(k_{0}, w, l)$ satisfies
\beq \lb{wt ineqq}
|l_{v}| \leq w_{v}-2, \qquad |k_{0}| \leq w_{v}-| l_{v}|-2\qquad \text{for all $v \in \Sg_{p}$}.
\eeq
We denote $\X^{\cl,{\rm sd},  {\rm wt}}:= \X^{\cl, {\rm sd}}\cap \X^{\cl, {\rm wt}}$.

\begin{theoA} \lb{thm A} Let $\X$ be a Hida family for $\G\ts \H$ whose self-dual locus $\X^{\rm sd}$ is non-empty.  There exists a unique meromorphic function 
 $$\cL_{p}(\cV)\in \cK(\X),$$
 whose polar locus $\mathscr{D}$  does not intersect $\X^{\cl, {\rm sd}, {\rm wt}}$, 
 such that for each $(x, y)\in \X^{\cl, {\rm wt}}(\C)- \mathscr{D}(\C)$  
 we have 
 \beq
\label{interpol Lp intro} 
\calL_{p}(\cV)(x, y) =
e_{p\infty}(V_{(\pi_{x}, \chi_{y})}^{})
 \cdot  \calL(V_{(\pi_{x} ,\chi_{y})}, 0).
 \eeq
 
Here, if $(x_{0}, y_{0})\in \X^{\cl}$ is the  point underlying $(x,y)$ and $\iota\colon \Q_{p}(x_{0}, y_{0}) \into \C$ is the corresponding embedding, we have denoted $\pi_{x} =\pi_{x_{0}}^{\iota}, \chi_{y}=\chi_{y_{0}}^{\iota}$, and the interpolation factor is
as in \eqref{epinf}.
  \end{theoA}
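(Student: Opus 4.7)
The overall strategy is the one outlined in the introduction: we realise $\cL_p(\cV)$ as a ratio of a family of global Waldspurger toric periods to a product of local zeta integrals, the latter interpolated using the local Langlands correspondence in families (\cite{LLC}).

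\emph{Uniqueness} is immediate. Since $\X$ is integral (being an irreducible component of the ordinary eigenvariety), $\cK(\X)$ is a field, and a meromorphic function is determined by its values on any Zariski-dense subset avoiding its polar locus. The classical locus $\X^{\cl,{\rm wt}}$ is such a subset: density of $\X^{\cl}$ in $\X$ is standard Hida theory, and the weight inequalities \eqref{wt ineqq} cut out an open cone in weight space whose preimage in $\X^{\cl}$ remains Zariski-dense.

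\emph{Existence.} Fix an embedding $E\into M_2(F)$, placing the torus $T:=\H$ inside $\G$. The classical input is Waldspurger's formula: for $(\pi,\chi)$ satisfying the weight condition, the product $\mathscr{P}(\varphi,\chi)\cd \mathscr{P}(\varphi^\vee,\chi^{-1})$ of two toric period integrals factors as
$$\frac{L(1/2,\pi_E\ot\chi)}{L(1,\pi,\ad)}\cd \prod_v\mathscr{I}_v(\varphi_v\ot\varphi_v^\vee,\chi_v)$$
with explicit local pairings $\mathscr{I}_v$. The plan is to lift each ingredient to the Hida family $\X$: choose a family of $p$-stabilised ordinary test vectors $\bff\in\cV_\G$; form the global period $\mathscr{P}(\cV)$ by integrating $\bff$ against $\chi_{\rm univ}$ over $[T]/[T\cap Z_\G]$ (made sense of in Hida families by the usual constructions); construct local family integrals $\mathscr{I}_v(\cV)\in\cK(\X)$ interpolating the classical integrals. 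At finite places $v\nmid p$, this uses the local Langlands correspondence in families of \cite{LLC}, since the local zeta integral depends (up to test-vector choice) only on the local representation. At $v\vert p$, the interpolation uses explicit computations with the Iwahori-ordinary test vectors and the ordinary filtration $V^+_{\pi,v}$, arranged so that the ratio of the integral to the local $L$-value produces the prescribed factor $e_v$ of \eqref{ev}. At $v\vert\infty$ the integral is computed in closed form, yielding $e_\infty$. Setting
$$\cL_p(\cV) := \frac{\mathscr{P}_1(\cV)\cd \mathscr{P}_2(\cV)}{\prod_v \mathscr{I}_v(\cV)}$$
(with $\mathscr{P}_1,\mathscr{P}_2$ families interpolating $\mathscr{P}(\varphi,\chi)$ and $\mathscr{P}(\varphi^\vee,\chi^{-1})$ respectively) produces a meromorphic function on $\X$ whose specialisation at $(x,y)\in\X^{\cl,{\rm wt}}$ reduces by construction to Waldspurger's formula and thus yields \eqref{interpol Lp intro}.

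\emph{Main obstacle.} The substantive difficulty lies in constructing and analysing the local integrals at $v\vert p$ in families: one must reconcile the Iwahori-ordinary test vectors demanded by Hida theory with the test vectors entering the classical Waldspurger recipe, and carry out the comparison in such a way that the explicit factor $e_v$ emerges precisely. A secondary technical point is the control of the polar locus $\mathscr{D}$, which is achieved by choosing the test vectors $\bff$ so that the denominators $\mathscr{I}_v(\cV)$ are generically non-vanishing, and verifying that any common zeros of numerator and denominator do not accumulate on $\X^{\cl,{\rm sd},{\rm wt}}$.
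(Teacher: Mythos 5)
Your proposal shares the paper's local-global philosophy — global zeta family divided by matching local zeta families, with the local Langlands correspondence in families doing the interpolation away from $p\infty$ — but your choice of classical input is genuinely different from the paper's and, as stated, creates a real gap in scope.

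You base the construction on Waldspurger's \emph{toric period formula}
\begin{equation*}
\mathscr{P}(\varphi,\chi)\cdot\mathscr{P}(\varphi^{\vee},\chi^{-1}) \,=\, \frac{L(1/2,\pi_{E}\otimes\chi)}{L(1,\pi,\ad)}\cdot\prod_{v}\mathscr{I}_{v}(\varphi_{v}\otimes\varphi_{v}^{\vee},\chi_{v}),
\end{equation*}
with $\mathscr{P}(\varphi,\chi)=\int_{T(F)Z(\A)\bks T(\A)}\varphi(t)\chi(t)\,dt$. For the integrand even to be $Z(\A)$-invariant (let alone the period formula to hold) one needs $\omega_{\pi}\cdot\chi_{|\A^{\ts}}=\one$, i.e.\ $(x,y)\in\X^{\rm sd}$. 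But $\X^{\rm sd}$ is cut out by $\omega=\one$, a proper closed subscheme, so $\X^{\cl,\rm sd,\rm wt}$ is not Zariski-dense in $\X$. Your plan therefore produces a candidate for $\cL_{p}(\cV)$ whose interpolation property could at best be verified on the self-dual slice; it cannot deliver the interpolation on the full $\X^{\cl,\rm wt}$ that the theorem asserts, nor is there a way to propagate the formula off codimension one by analytic continuation. What the paper actually uses is Waldspurger's \emph{Rankin--Selberg integral representation} $(f,I(\phi,\chi))=\cL(V_{(\pi,\chi)},0)\prod_{v}R_{v}(W_{v},\phi_{v},\chi_{v})$, where $I(\phi,\chi)$ is a mixed theta--Eisenstein kernel: being an unfolding identity rather than a period formula, it holds for arbitrary central character $\omega$.

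A second, related gap: you say nothing concrete about \emph{how} the global period $\mathscr{P}(\bff,\chi_{\rm univ})$ is to be $p$-adically interpolated over the Hida family, and this is precisely where the difficulty lives when $E_{v}/F_{v}$ is inert or ramified for $v\vert p$ — the new case the paper handles. The paper's solution is to interpolate a \emph{kernel}, not a period: the theta--Eisenstein form $I(\phi,\chi)$ is a (twisted nearly-holomorphic) Hilbert modular form on $\G(\A)$, so ordinary projection, twisted $q$-expansions and the control theorem yield a genuine $\Y_{\G\ts\H}$-adic form ${\bf I}^{\ord}$ (Corollaries \ref{coro Iord} and \ref{coro Iord 2}), and the Petersson product is then interpolated by the $\la\la\ ,\ \ra\ra$ pairing of Proposition \ref{lala}. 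The specific Schwartz functions \eqref{phi1p} and \eqref{phi2p} at $p$ are engineered to make this work uniformly across the inert/split/ramified cases; your sketch's appeal to "Iwahori-ordinary test vectors and the ordinary filtration" glosses over the fact that for non-split $E_{v}$ the torus is not contained in the standard Borel, so there is no straightforward eigenvector computation. Your uniqueness argument and your use of the LLCF at $v\in S$ are, on the other hand, in line with the paper.
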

  
The value of the  interpolation factor agrees with the general conjectures of Coates  and Perrin-Riou (see \cite{coates}). 
(The notation $\cV$ is meant to evoke some `universal virtual Galois representation interpolating \eqref{virtual}'.)

\subsubsection{Previous related work} \lb{previous} When $E/F$ splits above $p$, Theorem \ref{thm A}  may be essentially deduced from the main result of  \cite{Hi} (see also \cite{quad}). Hida's method uses the Rankin--Selberg integral, whereas ours uses Waldspurger's variant  \cite{wald}  based on the Weil representation (as discussed below). 

The numerator of our $L$-value is a special case of the standard $L$-function for $\GL_{2}\ts \GL_{1}$ over $E$, and when so considered our $p$-adic $L$-function is a multiple of the restriction to some base-change locus of  one constructed by Januszewski  \cite{janu}  using the method of modular symbols; however that function is not uniquely characterised by its interpolation property, which involves unspecified periods.

Finally, when $F=\Q$,  variants of $\cL_{p}(\cV)$ were constructed by Hida \cite[Theorem 5.1a]{hida-ii} and, more recently,  by Loeffler and  B\"uy\"ukboduk--Lei (see \cite[\S~B.4]{BL}) under various local restrictions.

\subsection{Idea of proof, organisation of the paper, and discussion of the method} \lb{12}
The proof combines the strategy  of  Hida \cite{Hi} with an enhanced version of that of \cite[Proof of Theorem A]{pyzz}, where we had constructed the `slices' $\cL_{p}(\cV)(x, -)$ for $x\in \X_{\G}^{\cl}$ of weight~$2$. 

We start from Waldspurger's  \cite{wald} integral representation of Rankin--Selberg type
\beq \lb{WRS}
(f, I(\phi, \chi)) = \cL(V_{(\pi, \chi)}, 0) \cdot \prod_{v} R_{v}(W_{v},\phi_{v},\chi_{v}) \eeq
where $( \ , \ )$ is a normalised  Petersson product, $f$ is a form in $\pi$ with Whittaker function $\ot_{v}W_{v}$, the form $I(\phi, \chi)$ is a mixed theta-Eisenstein series depending on a certain Schwartz function $\phi$, and the $R_{v}$ are normalised local integrals.

 In \S~\ref{sec 2}, we discuss the general setup. In \S~\ref{sec 3}, we make a judicious choice of $\phi_{v}$  at the places $v\vert p\infty$ and interpolate the ordinary projection of ${I}(\phi, \chi)$ into a $\Y_{\G\ts\H}$-adic modular form. In \S~\ref{sec 4}, we  interpolate $R_{v}$ for $v\nmid p\infty$ using sheaves of local Whittaker functions over $\X$ provided by the local Langlands correspondence in families (\S~\ref{sec 44}); we compute $R_{v}$ for $v\vert p\infty$ (\S~\ref{sec 43}), which yield the interpolation factors in \eqref{interpol Lp intro}; and finally (\S~\ref{sec 45}), we  use \eqref{WRS} to define $\cL_{p}(\cV)$ as a glued quotient of the global and local (away from $p\infty$) families of zeta integrals.

In Appendix \ref{double fact}, we give a TV-inspired  bijective proof of a combinatorial lemma occurring in \S~\ref{sec eis}.

\medskip

The method of constructing $p$-adic $L$-functions as ratios of arbitrary matching families of global and local zeta integrals should be applicable whenever an integral representation for the corresponding complex $L$-function is available, at least if the groups involved are products of general linear groups: for example, for Rankin--Selberg  $L$-functions. It can be compared to the `hard' constructions from much of the existing literature on $p$-adic $L$-functions, which rely on fine choices of local data at all places, computation of the associated integrals, and bounds on the ramification of the data (see \cite{hsieh 3} for an excellent example of the state of the art). To be sure, the two approaches should be viewed as complementary rather than alternative: while the `soft' construction provides a  flexibility useful for some applications (such as in \cite{pyzz}), explicit choices and computations can still be plugged into it, and are likely still indispensable to address finer issues such as integrality.

For  another brief  general discussion of our method focused on the role of the local Langlands correspondence in families (LLCF), as well as  some results on local interpolation, we refer to  \cite[\S~1.2, \S~5]{LLC};\footnote{In Appendix \ref{app B} we correct a couple of blundered statements from \cite{LLC}.}  see also the very recent work of Cai--Fan \cite{Cai} for a related study  in the context of periods attached to spherical varieties.  Abelian antecedents of the construction, for which the LLCF is not needed,  can be found in \cite{LZZ} and \cite{pyzz}.

The local-global approach may in principle introduce poles coming from zeros of the families of local integrals. In our specific setup, the Waldspurger local integrals are not easy to control (at least for this author) away from the selfdual locus. This is why Theorem \ref{thm A}, while sufficient for the arithmetic applications in \cite{univ}, is not as strong as it could be: one may at least expect that the condition that $\X^{\rm sd}$ be non-empty is superfluous, and that the polar locus of $\cL_{p}(\cV)$ should not intersect $\X^{\cl, {\rm wt}}$. 
 As noted by a referee, approaching $\cL_{p}(\cV)$ via the well-understood Rankin--Selberg integrals  for $\GL_{2}\ts \GL_{2}$ would likely yield such a strengthening.

\subsection*{Acknowledgments} I am grateful to David Callan,  Haruzo Hida, Ming-Lun Hsieh and Xinyi Yuan  for useful correspondence, and to the referees for a very careful reading.

\section{$p$-adic modular forms and Hida families}
\lb{sec 2}
The material of this section is largely due to Hida (see \cite[\S\S~1--3, 7]{Hi} and references therein).

\subsection{Notation and preliminaries}
The  notation introduced in the present subsection (or in the introduction)
 will be used throughout the paper unless otherwise noted; in particular, the groups $\G$ and $\H$ defined in \eqref{grps}.
\subsubsection{General notation} The following notational choices are largely standard.
\begin{itemize}
\item The fields $F$ and $E$ are as fixed in the introduction unless specified otherwise; if $*$ denotes a place  of $\Q$ or a finite set thereof, we denote by $S_{*}$ the set of places of $F$ above $*$;
\item we denote by $D_{F}$, $D_{E}$ and $D_{E/F}$, respectively, the absolute discriminants of $F$ and  $E$ and relative discriminant of $E/F$; for a finite place $v$ of $F$  we denote by $d_{v}\in F_{v}$ a generator of the different ideal of $F$ and by $D_{v}\in F_{v}$ a generator of the relative discriminant ideal; 
\item we denote by $<$ the partial order on $F$ given by $x<y$ if and only if $\tau(x)<\tau(y)$ for all $\tau\in \Sg_{\infty}$; we denote $\R^{+}:= \{x\in \R\, | \, x>0\}$ and $F^{+}:=\{x\in F\, |\, x>0\}\subset F^{\ts}$;
\item $\A$ is the ring of ad\`eles of $F$; if $S$ is a finite set of places of a number field $F$, we denote by $\A^{S}=\prod'_{v\notin S} F_{v}$, and $F_{S}:= \prod_{v\in S}F_{v}$; when $S$ consists of the set of places of $F$ above some finite set of places of $\Q$ (for instance the place $p$) we use the same notation with those places of $\Q$ instead of $S$ (for instance, $F_{p}=F_{S_{p}}$). We denote $F_{\infty}^{+}=\{x\in F_{\infty}\ | \ x_{\tau}>0 \text{ for all $\tau\in \Sg_{\infty}$}\}$ and  $\A^{+}:=\A^{\infty}\ts F_{\infty}^{+}$. 
\item we denote by $\psi\colon F\bks \A\to \C^{\ts}$ the standard additive character as in \S~\ref{sec psi intro};
\item if $R/R_{0}$ is a ring extension, $A$ is an $R_{0}$-algebra, and $X$ is an $R_{0}$-scheme, we denote 
$$A_{R}:=A\ot_{R_{0}}R, \qquad X_{R}:= X\ts_{\Spec R_{0}}R.$$
\item we denote by $G_{K}$ the absolute Galois group of a field $K$;  
\item if $K$ is a finite extension of $F$, its class number is denoted by
$$h_{K}:= |K^{\ts}\bks  \A_{K}^{\infty, \ts}/\widehat{\OO}^{\ts}_{K}|$$ 
\item  for a place $v$ of  $F$, we denote by $\vpi_{v}$ a fixed uniformiser at $v$, by $q_{F,v} $ the cardinality of the residue field; we denote $q_{F, p}:=(q_{F, v})_{v\in S_{p}}$;
\item the class field theory isomorphism is normalised by sending uniformisers to geometric Frobenii; for $K$  a number field (respectively a local field), we will then identify characters of $G_{K}$ with characters of $K^{\ts}\bks {\A}_{K}^{\times}$ (respectively $K^{\times})$ without further comment;
\item If $I$ is a finite index set and $x=(x_{i})_{i}$, $y=(y_{i})$ are real vectors, we define $(xy)_{i}=x_{i}y_{i}$ and $x^{y}:= \prod_{i}x_{i}^{y_{i}} $ whenever that makes sense. Moreover, we  often identify an integer $w_{0}$ with the constant vector $(w_{0})_{i\in I}\in \Z^{I}$. 
\item we denote by $\one[\cd ]$ the $\{0, 1\}$-valued function on logical propositions such that $\one[\phi]=1$ if and only if $\phi$ is true;
\end{itemize}

\subsubsection{Subgroups of $\GL_{2}$ and special elements} 
We denote by $Z$, $A$, and $N$ respectively the centre, diagonal torus, and upper unipotent subgroup of $\G=\GL_{2/F}$; we let $P=AN$ and $P^{1}:=P\cap {\rm SL}_{2/F}$.  We define a map  ${\bf a}\colon \GL_{1/F}\to \G$
by $${\bf a}(y):= \twomat y{}{}1.$$
We denote by $$w:= \twomat {}1{-1}{}\in \GL_{2}(F)$$
or its image in $\GL_{2}(R)$ for any $F$-algebra $R$. (The context will prevent any confusion with the notation for the weights of $\G$.)
For $r\in \Z_{\geq 1}^{S_{p}}$,
we define
$$w_{r_{v},v}:= \twomat {}{1}{-\vpi_{v}^{r_{v}}}{}\in \GL_{2}(F_{v}), \qquad w_{r, p}:= \prod_{v\vert p} w_{r_{v},v}\in \GL_{2}(F_{p}),$$
as well as  a sequence of compact subgroups 
\begin{align*}
U_{v, r_{v}}:=U^{1}_{1}(\vpi_{v}^{r_{v}}) &:= \{ \smalltwomat abcd \in \GL_{2}(\OO_{F,v})\, :\,  a-1\equiv d-1 \equiv c\equiv 0 \pmod{\vpi_{v}^{r_{v}}}\}\subset \GL_{2}(F_{v})\\
U_{p, r}&:= \prod_{v\in S_{p}} U_{v, r_{v}}.
\end{align*}

For $\theta\in (\R/2\pi\Z)^{S_{\infty}}$, we denote $r_{\theta}:=\left( \twomat {\cos \theta_{v} }{\sin\theta_{v}}{-\sin \theta_{v}}{\cos \theta_{v}}\right)_{v}\in {\rm SO}(2, F_{\infty})$

\subsubsection{Hecke algebras}\lb{213}
Let $S$ be a finite set of non-archimedean places of $F$ and let  $U^{S}=\prod_{v\notin S}U_{v}\subset \GL_{2}(\A^{S\infty})$ be an open compact subgroup. For each finite set of finite places $S$, we define the Hecke algebra
$$\cH_{U^{S}}:=C_{c}(U^{S}\bks \G(\A^{S\infty}) /U^{S}, \Z).$$
It carries an involution 
\beq\lb{inv Hk}
T\mapsto T^{\curlyvee}\eeq
arising from the map $g\mapsto g^{-1}$ on the group $\G$. 

Let $A_{p}:= A(F_{p})\subset \G(F_{p})$ be the diagonal torus, and let $A_{p}^{+}$ be the set of $t=\smalltwomat {t_{1}} {}{} {t_{2}}$ such that $v(t_{1}) \geq v(t_{2})$ for all $v\vert p$. The involution 
\beq \lb{inv A}
t\mapsto t^{\curlyvee}:= \det(t)^{-1} t \eeq
preserves $A_{p}^{+}$. For $S$ a finite set of places of $F$ disjoint from $S_{p}\cup S_{\infty}$, we define the ordinary Hecke algebra 
$$\cH_{U^{Sp}}^{\ord}:= \cH_{U^{Sp}, \Z_{p}}\ot \Z_{p}[A_{p}] $$
over $\Z_{p}$, which will act on spaces of ordinary modular forms (here and in the rest of the text, a subscript $Sp$ is shorthand for $S\cup S_{p}$).  It is endowed with the involution $\curlyvee$ deduced from \eqref{inv Hk} and  \eqref{inv A}.

If $U=\prod_{v}U_{v}\subset \G(\A^{\infty})$, respectively $U^{p}=\prod_{v\nmid p}U_{v}\subset \G(\A^{p\infty})$ are open compact subgroups, and $S$, respectively $S^{p}$, is the set of places such that $U_{v}$ is not maximal, we define
$$\cH_{U}^{\rm sph}:= \cH_{U^{S}}, \qquad \cH_{U^{p}}^{\rm sph, \ord}:= \cH_{U^{Sp}}^{\ord}.$$
(These depend on $S$, but their images in endomorphisms rings of spaces of modular forms does not.)

\subsubsection{Measures}  We use the same notation and conventions for Haar measures and integration as in \cite[\S~1.6]{yzz} and \cite[\S~1.9]{pyzz}.  In particular, we have a regularised integration functional  
$$\int^{*}_{E^{\ts}\bks {\A}_{E}^{\ts} /\A^{\ts}} f(t) \, dt , $$
which satisfies the following. 
\begin{lemm} \lb{int *}
Let $f$ be a smooth function on $\A_{E}^{\ts}$ that is invariant under $E_{\infty}^{\ts}$. Let $\mu\subset \OO_{E}^{\ts}$ be a finite index subgroup fixing $f$ (under the scaling action).
Then 
$$\int^{*}_{E^{\ts}\bks {\A}_{E}^{\ts} /\A^{\ts}}  \sum_{x\in E^{\ts}}  f(xt) \, dt 
= { 2L(1,\eta)\over h_{E}^{}}   { [\OO_{E}^{\ts}:\mu_{}] } \int_{ \A_{E}^{\infty, \ts}}  \sum_{\alpha\in \mu_{}} 
f(\alpha t)\, d^{\bullet}t,$$
where $d^{\bullet} t  $ is the  Haar measure giving volume $1$ to $ \widehat{\OO}^{\ts}_{E}$.
\end{lemm}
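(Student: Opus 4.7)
The plan is to reduce the regularised integral to an ordinary convergent one by using the $E^\times$-invariance of $F(t) := \sum_{x \in E^\times} f(xt)$, and then to compute with a class-number decomposition of $\mathbb{A}_E^{\infty,\times}$. First, I would recall from \cite[\S~1.6]{yzz} and \cite[\S~1.9]{pyzz} the precise definition of $\int^{*}$ on $E^\times\backslash \A_E^\times/\A^\times$; it is set up so that the formal unfolding
$$\int^{*}_{E^\times\backslash \A_E^\times/\A^\times}\sum_{x\in E^\times}f(xt)\,dt \;=\; \int^{*}_{\A_E^\times/\A^\times}f(t)\,dt$$
is valid, and so that the right-hand side is computed via the Tamagawa measure on $\A_E^\times/\A^\times$, whose total volume $2L(1,\eta)$ arises as the residue of the zeta integral used to regularise.

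Next, I would split the remaining integral through the exact sequence
$$ 1\to E_\infty^\times/F_\infty^\times \to \A_E^\times/\A^\times \to \A_E^{\infty,\times}/\A^{\infty,\times}\to 1.$$
Since $f$ is $E_\infty^\times$-invariant and since $\A_E^\times/\A^\times \A_E^{\infty,\times}\cong E_\infty^\times/F_\infty^\times$ has (in the normalisation of \textit{loc.\ cit.}) volume exactly $1$ relative to the pieces appearing on the right, the archimedean direction can be integrated out trivially, leaving the finite-adelic integral of $f$ against a measure on $\A_E^{\infty,\times}/\A^{\infty,\times}$ whose Tamagawa volume is $2L(1,\eta)$.

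Finally, I would pass from $\A_E^{\infty,\times}$ to the convergent integral on the right-hand side by the class-group decomposition
$$\A_E^{\infty,\times} \;=\; \bigsqcup_{i=1}^{h_E}E^\times\, t_i\,\widehat{\OO}_E^\times,$$
which replaces the quotient by $E^\times$ (modulo $\A^{\infty,\times}$) with a sum over finitely many cosets and an integral over $\widehat{\OO}_E^\times$, contributing the denominator $h_E$. Replacing the full unit group $\OO_E^\times$ by its finite-index subgroup $\mu$ in the inner summation contributes the factor $[\OO_E^\times:\mu]$, and the normalisation $d^{\bullet}t(\widehat{\OO}_E^\times)=1$ matches the residual Tamagawa measure on $\A_E^{\infty,\times}$ obtained after the previous step.

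The main obstacle is purely bookkeeping: tracking the compatibility between the Tamagawa measure implicit in the definition of $\int^{*}$ and the normalisation $d^{\bullet}t$, together with the factors of $2$ and the archimedean volumes arising from the CM extension $E_\infty^\times/F_\infty^\times$. I would sanity-check the constant $2L(1,\eta)/h_E$ by testing both sides against $f=\one_{\widehat{\OO}_E^\times}\cdot \one_{E_\infty^\times}$ and $\mu=\OO_E^\times$, where the right-hand side evaluates to $[\OO_E^\times:1]\cdot 1$ and the left-hand side reduces to the Tamagawa volume computation for $E^\times\backslash \A_E^{(1)}$.
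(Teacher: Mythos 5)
Your plan takes a different route from the paper's, and it has a concrete gap at the very first step. The paper never unfolds: it applies the identity from \cite[(1.6.1)]{yzz},
$$\int^{*}_{E^{\ts}\bks {\A}_{E}^{\ts} /\A^{\ts}} g(t) \, dt
= {\vol ({E^{\ts}\bks {\A}_{E}^{\ts} /\A^{\ts}}) \over |E^{\ts}\bks  \A_{E}^{\infty, \ts}/U | }
\sum_{t\in E^{\ts}\bks  \A_{E}^{\infty, \ts}/U}g(t),$$
to a general $E_\infty^\ts$- and $U$-invariant function $g$, then uses the coset identity
$|E^{\ts}\bks  \A_{E}^{\infty, \ts}/U | = h_{E}\,|\widehat{\OO}^{\ts}_{E}/ U|/[\OO_{E}^{\ts}:\mu]$
(with $\mu=\OO_E^\ts\cap U$), and only at the end composes with $g\mapsto\sum_{x\in E^\ts}f(x\cdot)$. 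All the constants drop out of these two discrete identities and the normalisation $d^\bullet t(\widehat{\OO}_E^\ts)=1$; no measure on $\A_E^\ts/\A^\ts$ ever appears after the first line.

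Your unfolding
$\int^{*}_{E^\times\backslash \A_E^\times/\A^\times}\sum_{x\in E^\times}f(xt)\,dt = \int^{*}_{\A_E^\times/\A^\times}f(t)\,dt$
is not correct as stated: the action of $E^\times$ on $\A_E^\times/\A^\times$ has stabiliser $E^\times\cap\A^\times=F^\times$ at every point, so an unfolded expression would retain a sum $\sum_{z\in F^\times}f(zt)$ on the right. (In particular, if $f$ is $F^\times$-invariant each term is repeated infinitely often, so the naive unfolding would diverge.) Moreover $\int^{*}_{\A_E^\times/\A^\times}$ is undefined; the regularised integral of \cite[\S 1.6]{yzz}, as used here, is defined on the compact quotient $E^\times\backslash\A_E^\times/\A^\times$ and produces the discrete finite sum above, not an integral over $\A_E^\times/\A^\times$. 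The subsequent sentence assigning ``Tamagawa volume $2L(1,\eta)$'' to the non-compact group $\A_E^{\infty,\times}/\A^{\infty,\times}$ is also incorrect: that constant is $\vol(E^\times\backslash\A_E^\times/\A^\times)$. Finally, the remark that the obstacle is ``purely bookkeeping'' is not reassuring here, since the exact constant $\tfrac{2L(1,\eta)}{h_E}[\OO_E^\ts:\mu]$ \emph{is} the content of the lemma; the paper's route via the coset identity is precisely the bookkeeping you would have to carry out, and your plan does not yet reproduce it.
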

\begin{proof} Let $U\subset \A_{E}^{\infty, \ts}$ be any compact open subgroup fixing $f$. Since both sides are independent of $\mu$, we may assume  that   $\mu= \OO_{E}^{\ts}\cap U$. 
 By \cite[(1.6.1) and following paragraphs]{yzz}, we have 
\beq
\lb{derer}
\int^{*}_{E^{\ts}\bks {\A}_{E}^{\ts} /\A^{\ts}} f(t) \, dt
= { \vol ({E^{\ts}\bks {\A}_{E}^{\ts} /\A^{\ts}})}\ \dashint_{E^{\ts}\bks {\A}_{E}^{\ts} /\A^{\ts}} f(t) \,dt
= {\vol ({E^{\ts}\bks {\A}_{E}^{\ts} /\A^{\ts}}) \over |E^{\ts}\bks  \A_{E}^{\infty, \ts}/U | } 
\sum_{t\in E^{\ts}\bks  \A_{E}^{\infty, \ts}/U}f(t).
\eeq
Now  by a coset identity, 
$$ |E^{\ts}\bks  \A_{E}^{\infty, \ts}/U | =
 h_{E} { |\widehat{\OO}^{\ts}_{E}/ U| \over [\OO_{E}^{\ts}:\mu_{}]}$$
 and by \cite[\S 1.6.3]{yzz},  $\vol ({E^{\ts}\bks {\A}_{E}^{\ts} /\A^{\ts}})=2L(1, \eta)$. 
Hence \eqref{derer} equals 
$$ {2L(1, \eta)\over h_{E}}
{ [\OO_{E}^{\ts}:\mu_{}]  \over  |\widehat{\OO}^{\ts}_{E}/ U|}
\sum_{t\in E^{\ts}\bks  \A_{E}^{\infty, \ts}/U} f(t)$$
 If we compose with the operator $f(\cd)\mapsto \sum_{x\in E^{\ts}} f(x\cd) = \sum_{x\in \mu_{}\bks E^{\ts}} \sum_{\alpha\in \mu_{}}  f(\alpha x\cd)$, we obtain
 $$ 
{ 2L(1,\eta)\over h_{E}^{}}
{ [\OO_{E}^{\ts}:\mu_{}]  \over  |\widehat{\OO}^{\ts}_{E}/ U|}
 \sum_{ t\in \A_{E}^{\infty, \ts}/U}  \sum_{\alpha\in \mu_{}}f(\alpha t) = 
{ 2L(1,\eta)\over h_{E}^{}}  
{ [\OO_{E}^{\ts}:\mu_{}] }
 \int_{ \A_{E}^{\infty, \ts}}  \sum_{\alpha\in \mu_{}} f(\alpha t)\, d^{\bullet}  t .$$
 \end{proof}

\subsection{Modular forms and their $q$-expansions}
Let $\frak h\subset \C$ be the upper half-plane. We  view $\G(F_{\infty})$ as acting on $\frak h^{\Sg_{\infty}}$ by M\"obius transformations, and identify 
$$C^{+}_{\infty}:= (\R_{+ }{\rm SO}(2, \R))^{{\Sg_{\infty}}}\subset \G(F_{\infty})$$
with the neutral connected component  of the stabiliser of ${\rm i} := (\sqrt{-1}, \ldots, \sqrt{-1})\in \frak h^{{\Sg_{\infty}}}$.

\subsubsection{Nearly holomorphic modular forms} Let $\uw$ be an ($\infty$-adic, \S~\ref{sec 111}) weight for $\G$,   let $U\subset \G(\A^{\infty})$ be a compact open subgroup, and let $m=(m_{\tau})\in \Z_{\geq 0}^{\Sg_{\infty}}$ . A complex \emph{nearly holomorphic (Hilbert) modular form} of weight $\uw$,  level $U$, degree $\leq m$ is a function 
$$f \colon  \G(\A) \to \C$$
satisfying the following two conditions:
\begin{enumerate}
\item for all $g\in \G(\A)$, $\gamma\in \G(F)$, $k \in U C_{\infty}^{+}$,
$$f(\gamma g k) = j_{\uw}(k_{\infty}, {\rm i})^{-1}f(g),$$
where
 for $z\in \frak h^{{\Sg_{\infty}}}$,
$$j_{\uw}({\smalltwomat abcd}, z_{}) := (ad-bc)^{(w_{0}-w)/2} (cz+d )^{w}.$$
\item  There is a Whittaker--Fourier expansion
\beq \lb{FW f}
f\left(\twomat yx{}1\right) &= |y| \sum_{a\in F} W^{\C}_{f,a}(y)(Y)  \,  {\qqq}^{a}
\eeq
for all $y\in {\A}^{+}$, $x\in {\A}$, where:
\begin{itemize}
\item we have
 $$W_{f,0}^{\C}(y)= y_{\infty}^{(w_{0}+w-2)/2} \rW_{f, 0}(y) , \qquad W_{f,a}^{\C}(y) =
 (ay_{\infty})^{(w_{0}+w-2)/2}   \rW_{f, a}(y) \quad (a\neq 0)$$ 
for   polynomials 
 $$\rW_{f, a}(y)\in \C[(T_{\tau})_{\tau\in \Sg_{\infty}}]$$ of degree $\leq m_{\tau}$ in the variables $T_{\tau}$, 
 evaluated at  $Y:=(Y_{\tau})_{\tau\in \Sg_{\infty}}$ with $Y_{\tau}= (4\pi y_{\tau})^{-1}$; 
\item we denote
$$\qqq^{a}  := \psi(ax) \psi_{\infty}({\rm i} a y_{\infty}).$$
\end{itemize}
\end{enumerate}
The polynomial  $\rW_{f,a}(y)$ only depends on the class of $ay$ modulo ${\bf a}^{-1}(U)$, so that defining 
\beq \lb{Wfa} 
\rW_{f}(a):= W_{f, 1}(a)
\eeq
 for $a\in \A^{+}$, for all $a\in F^{+}$ and $y\in \A^{+}$ we have
$W_{f,a}(y)=W_{f} (ay)$. 
We say that $f$ is \emph{cuspidal} if $W_{0}(y)=0$ for all $y$.

If $f$ is nearly holomorphic of degree $0$ (that is, $\leq (0, \ldots, 0)$), we simply say that $f$ is a (holomorphic) modular form.    If $R\subset\C$ is any subring, we denote by  
$$S_{\uw}(U, R)\subset M_{\uw}(U, R)\subset N_{\uw}^{ \leq m}(U, R)$$
respectively the  spaces of  cuspidal forms and  holomorphic forms of  level $U$ and weight $\uw$, and  of  nearly holomorphic forms of level $U$, weight $\uw$,  and degree $\leq m=(m_{\tau})$, such that for all $a\in \A^{+}$, the polynomials $W_f({a})$ have coefficients in $R$. We write $N_{\uw}^{}(U, R):= \varinjlim_{m} N_{\uw}^{ \leq m}(U, R)$, and 
$\Box(R):=\varinjlim_{U} \Box(U, R)$
if $\Box$ stands for the notation for any of the spaces of forms defined above (or below).

Finally, we define the space $S^{\rm a}_{\uw}(U, \C)$ of \emph{antiholomorphic cuspforms} to be the $\C$-vector space  image of  $S_{\uw}(U, \C)$ under complex conjugation. The formula
\beq \lb{hol anti}
f\mapsto f^{\rm a}:=\twomat {-1}{}{}{1} f 
\eeq
(where $\smalltwomat  {-1}{}{}{1} \in \G(F)\subset \G(\A)$ acts, as usual, by right translation)
defines  $\C$-linear bijections from $S_{\uw}(U, \C)$ to $S^{\rm a}_{\uw}(U, \C)$ and viceversa. 

\subsubsection{Twisted modular forms}  \lb{TMF}
A \emph{twisted {nearly holomorphic (Hilbert) modular form}} of weight $\uw$,  level $U$, degree $\leq m=(m_{\tau})$, is a function 
$$f \colon \G(\A)\ts \A^{\ts} \to \C$$
satisfying the following two conditions:
\begin{enumerate}
\item for all $g\in \G(\A)$, $\gamma\in \G(F)$, $k \in U C_{\infty}^{+}$,
$$f(\gamma g k, \det(\gamma)^{-1} u) = j_{\uw}(k_{\infty}, {\rm i})^{-1} f(g, u);$$
\item  there is a  Whittaker--Fourier expansion 
\beq 
f\left(\twomat yx{}1 , u\right) &= |y| \sum_{a\in F} W^{\C}_{f, a}(y, u)(Y)  \,  {\qqq}^{a} 
\eeq
for all  $x\in {\A}$ and $y, u\in {\A}^{\ts}$ such that $(uy)_{\infty}>0$, where:
 $$W_{f, 0}^{\C}(y, u)= y_{\infty}^{(w_{0}+w-2)/2} \rW_{f, 0}(y, u) , \qquad W_{f, a}^{\C}(y, u) =
 (ay_{\infty})^{(w_{0}+w-2)/2}  \rW_{f, a}(y, u) \quad (a \neq 0)$$ 
for   polynomials 
 $$ \rW_{f, a}(y, u)\in \C[(T_{\tau})_{\tau\colon F\into \R}]$$ of degree $\leq m_{\tau}$ in the variables $T_{\tau}$, 
 evaluated at  $Y:=(Y_{\tau})_{\tau\colon F\into \R}$ with $Y_{\tau}= (4\pi y_{\tau})^{-1}$. 
\end{enumerate}

    If $R\subset\C$ is any subring, we denote by  
$ M^{\rm tw }_{\uw}(U, R)\subset N_{\uw}^{{\rm tw}, \leq m}(U, R)$
 the  spaces of holomorphic and   nearly holomorphic forms of level $U$, weight $\uw$,  and degree $\leq m=(m_{\tau})$, such that 
  all the polynomials $\rW_{f, a}(y, u)$ have coefficients in $R$.

\subsubsection{Contracted product}
 For any open compact subgroup  $U_{F}\subset \widehat{\OO}_{F}^{\ts}$,  let
\beq\lb{cUF} \mu_{U_{F}}:= F^{\ts}\cap U_{F}, \qquad \nu_{U_{F}}:=|\{\pm 1\} \cap \mu_{U_{F}}|, \qquad \qquad c_{U_{F}}= {\nu_{U_{F}} \cd 2^{[F:\Q]}h_{F}\over [\OO_{F}^{\ts}:\mu_{U_{F}}^{2}]}.
\eeq
Let $\vphi\colon \A^{\ts}\to \C$ be a Schwartz function, invariant under a subgroup of the form $\mu_{U_{F}'}^{2}\subset F^{\ts}$ as above. Then the sum
\beq \lb{sum star}
\sum^{\star}_{u\in F^{\ts}} \vphi(u):=  c_{U_{F}}\sum_{u\in \mu_{U_{F}}^{2}\bks F^{\ts}}  \vphi(u) \eeq
is well-defined independently of $U_{F}\subset U_{F}'$, and for any such choice the support of the sum is finite.

If $f_{1}$, $f_{2}$ are twisted nearly holomorphic forms, 
we may thus define a (plain) nearly holomorphic form $f_{1} \star f_{2}$ by
\beq\lb{star def}
f_{1} \star f_{2}(g):= \sum_{u\in F^{\ts}}^{\star} f_{1}(g, u)f_{2}(g, u).
\eeq

\subsubsection{Differential operators}
We attach to  a nearly holomorphic (genuine or twisted) form $f$  the function  
\beqq
f^{\frak h}\colon \G(\A^{\infty})\ts \frak h^{{\Sg_{\infty}}} &\to \C \\
(g^{\infty}, z=g_{\infty} {\rm i})& \mapsto j_{\uw}(g_{\infty}, {\rm i}) f(g_{\infty});
\eeqq
the map $f\mapsto f^{\frak h}$ is injective.

The Maass--Shimura differential operators on functions on $\frak h^{{\Sg_{\infty}}}$ are defined as follows. For $\tau\colon F\into \R$, $w \in \Z$, let 
$$\delta_{w}^{\tau, \frak h}:= {1\over 2\pi i} \left({w\over 2i y_{\tau}}+ {\partial \over \partial z_{\tau}}\right), \qquad d^{\tau}:= {1\over 2\pi i} {\partial\over \partial z_{\tau}},$$
a differential operator on the upper half-plane $\frak h$. 
For $w,k \in \Z_{\geq 0}^{{\Sg_{\infty}}}$, let 
$$\delta_{w}^{k, \frak h}:=\prod_{\tau} \delta^{\tau, \frak h}_{w_{\tau}+2k_{\tau}}\circ \cdots \circ \delta_{w_{\tau}+2}^{\tau, \frak h}\circ \delta_{w_{\tau}}^{\tau, \frak h}, \qquad d^{k}:=\prod_{\tau}(d^{\tau})^{k_{\tau}}.$$
Then for any ring  $\Q\subset R\subset \C$, this  operator defines a map
$$\delta_{w}^{k}\colon N_{\uw}^{{\rm (tw)} ,\leq m}(U,R)\to N_{\uw+(0;2k)}^{{\rm (tw)}, \leq m+k }(U, R)$$
such that $\delta_{w}^{k}(f)^{\frak h}=\delta_{w}^{k, \frak h}(f^{\frak h})$.  (For a proof of the intuitive fact that  the archimedean operator $\delta_{\uw}^{k}$ indeed preserves the rationality properties of finite Whittaker--Fourier coefficients, see \cite[Proposition 1.2]{Hi}, whose calculations  also apply to the twisted case.) The subscript $w$ will be omitted if it is clear from the context.

By \cite[(1.16)]{shimura}, for all $k\in \Z_{\geq 0}^{\Sg_{\infty}}$ we have
\beq \lb{delta d}
\delta_{w}^{k}=\sum_{0 \leq j\leq k}  \prod_{\tau\in \Sg_{\infty}} {k_{\tau}\choose j_{\tau}} {\Gamma(w_{\tau}+k_{\tau})\over \Gamma(w_{\tau}+j_{\tau})} (-4\pi y_{\tau})^{j_{\tau}-k_{\tau}} \, d^{j_{}}.
\eeq

If $w\geq 2m+1$, any $f\in N_{\uw}^{\rm (tw),  \leq m}(U, R)$ can be written uniquely as 
$$f=\sum_{0\leq r\leq m} \delta_{w-2r}^{r}f_{r}$$
with $f_{r}\in M_{\uw+(0; -2r)}^{\rm (tw)  }(U, R)$. (The proof in \cite[Lemma 7]{sh76} carries over to our context.)
Thus the linear map 
\beq \lb{ehol}
e^{\rm hol}\colon  N_{\uw}^{\rm (tw),  \leq m}(U, R) &\to M_{\uw}^{\rm (tw)}(U, R)\\
f&\mapsto f_{0}\eeq
is well-defined.

\subsection{$p$-adic modular forms}  We study the completions of spaces of modular forms for certain $p$-adic norms.
\subsubsection{Arithmetic $q$-expansion} Let  Let $\uw$ be a weight for $\G$ and  let $U\subset \G(\A^{\infty})$ be a compact open subgroup.
The \emph{$q$-expansion} map
$$f\mapsto (a\mapsto \rW_{f}(a)=\eqref{Wfa})$$
sends $S_{\uw}(U,\C)$ to  $ \C^{\A^{+}/U_{F}F_{\infty}^{+}}$,
 where $U_{F}={\bf a}^{-1}(U)$.
  By the $q$-expansion principle (see \cite[Proposition 2.1.1]{pyzz} for a version in our setting), the map is injective. We denote its image by ${\rm S}_{\uw}(U,\C)$ and view the map $S_{\uw}(U,\C)\to {\rm S}_{\uw}(U,\C)$ as an identification.

 If $R$ is any ring admitting embeddings into $\C$, we denote by  
$${\rm S}_{\bullet}(U, R) \subset  R^{\A^{+}/U_{F}F_{\infty}^{+}}$$
the set of those sequences 
\beq \lb{wff}
\rf=(\rW_{\rf}(a))_{a}\eeq
such that for any $\iota\colon R\into \C$,
the sequence $\rf^{\iota}:=(\iota \rW_{\rf}(a))_{a}$ is the $q$-expansion of a cuspform 
 $$f^{\iota}\in {S}_{\bullet}(U, \C)=\bigoplus_{\uw} S_{\uw}(U, \C).$$
(In \eqref{wff}, the notation $\rW_{\rf}$ can be thought of as simply  synonymous to $\rf$; it is introduced in order to match the identification of the previous paragraph.)
  By \cite[Theorem 2.2 (i)]{Hi} (together with a consideration of Galois actions mixing the weights), for any such ring $R$ we  have ${\rm S}_{\bullet}(U, R)= {\rm S}_{\bullet}(U, \Z)\ot R$. For more general rings, the previous equality is taken to be the definition of  ${\rm S}_{\bullet}(U, R)$.

\subsubsection{$p$-adic modular forms} \lb{232}
Let $L$ be a finite extension of $\Q_{p}$ splitting $F$. A \emph{$p$-adic  $L$-valued (cohomological) weight} $\uw=(w_{0}, (w_{\tau})_{\tau\colon F\into L})$ is a tuple of integers, all having the same parity, such that $w_{\tau}\geq 1$ for all $\tau\colon F\into L$. As in \S~\ref{sec 111}, if $\uw$ is an $L$-valued weight and  $\iota\colon L\into \C$ is an embedding, we define the complex weight  $\uw^{\iota}=(w_{0}, (w_{\tau})_{\iota\circ \tau})$.

Let $U\subset \G(\A^{\infty})$  be a compact open subgroup, and let $\uw $ be an $L$-valued weight. We define ${\rm S}_{\uw}(U, L)$ to be the set of $q$-expansions $\rf$ such that for every $\iota\colon L\into \C$ the expansion $\rf^{\iota}$ belongs to ${\rm S}_{\uw^{\iota}}(U,\C)$. The \emph{$p$-adic $q$-expansion} of $\rf=(\rW_{\rf}({a}))_{a}\in {\rm S}_{\uw}(U, L)$ is the sequence 
$$f=(W_{f}(a)):=(W_{\rf}(a)),\qquad W_{\rf}(a) :=  a_{p}^{(w_{0}+w-2)/2}  \rW_{\rf}({a}),$$
so that 
\beq\lb{a p inf}
W_{f^{\iota},a}^{\C}(y):=(ay)_{\infty}^{(w_{0}+w^{\iota}-2)/2}  \, \iota\left( (ay_{p})^{(-w_{0}-w+2)/2} W_{f}(a)\right) 
\eeq
is the Whittaker--Fourier coefficient of $f^{\iota}$ as in \eqref{FW f}.  (In other words, we have  two embeddings ${\rm S}_{\uw}(U, L)\into L^{\A^{+}/U_{F}F_{\infty}^{+}}$: the $q$-expansion
 $\rf \mapsto (\rW_{\rf}(a))_{a}$, and the $p$-adic $q$-expansion $\rf\mapsto (W_{\rf}(a))_{a}$.)

Let $U^{p}\subset \G(\A^{p\infty})$ be a compact open subgroups, let $U_{F}^{p}:={\bf a}^{-1}(U^{p})$, and for any $L$-valued weight $\uw$ denote
\beqq
{\rm S}_{\uw}(U^{p}, L) &:=\varinjlim_{n} {\rm S}_{\uw}(U^{p}U_{p, n}, L).
   \eeqq
 The space of cuspidal \emph{$p$-adic  modular forms}
$${\bf S}(U^{p}, L) \subset   L^{\A^{\infty,\times}/U_{F}^{p}}\subset   L^{\A^{\infty,\times}}=L^{\A^{+}/F^{+}_{\infty}}$$ 
is the completion of ${\rm S}_{\uw}(U^{p}, L)$ for the norm $||\rf||:= \sup_{a} |W_{\rf}(a)|$, for any $\uw$.
By a fundamental result of Hida (see \cite[paragraph after Theorem 3.1]{Hi}), the space ${\bf S}(U^{p}, L)$ is independent of the choice of~$\uw$. In particular, if $L$ is  Galois over $\Q_{p}$, this space is stable by the action of $\Gal(L/\Q_{p})$ and so it is of the form   ${\bf S}(U^{p}, \Q_{p})\ot_{\Q_{p}}L$ for a space ${\bf S}(U^{p}, \Q_{p})$.

\subsubsection{Nearly holomorphic forms as $p$-adic modular forms} We may attach a $p$-adic $q$-expansion to a nearly holomorphic form with coefficients in a $p$-adic subfield of $\C$. 

Let $L$ be a finite extension of $\Q_{p}$ and let $\uw$ be a $p$-adic $L$-valued weight. We say that 
$$f=(W_{f}(a))\in L^{\A^{+}/F_{\infty}^{+}}$$ 
is a \emph{$p$-adic nearly holomorphic cuspform} of weight $\uw$  and level $U^{p}\subset \G(\A^{p\infty})$ if the following condition holds.  For each $\iota\colon L\into \C$,
there exists a cuspidal nearly holomorphic form 
$$f^{\iota}\in N_{\uw^{\iota}}^{\leq \lfloor{(w+1)/2}\rfloor}(U^{p}U_{p, n},\C)$$ 
for some $n\in \Z_{\geq 1}^{S_{p}}$, whose Whittaker--Fourier polynomials have constant terms satisfying
\beq \lb{f nh p}
\rW_{f^{\iota}, 1}(a)(0)  = \iota\left(a_{p}^{(-w_{0}-w+2)/2}W_{f}(a)\right).
\eeq
The notion of a $p$-adic \emph{twisted} nearly holomorphic cuspform is defined similarly by the identity 
$\rW_{f^{\iota}, a}(y, u)(0)  = \iota\left((ay)_{p}^{(-w_{0}-w+2)/2}W_{f}(a)(y, u)\right)$.

\begin{prop} If $f$ is a  $p$-adic nearly holomorphic cuspform over $L$ of level $U^{p}$, then it belongs to  the space ${\bf  S}(U^{p},L)$ of $p$-adic modular cuspforms of level $U^{p}$.
\end{prop}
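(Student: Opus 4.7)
The plan is to show that the $p$-adic $q$-expansion of $f$ lies in ${\bf S}(U^p, L)$ by decomposing $f^\iota$ as a sum of Maass--Shimura derivatives of holomorphic forms and translating the constant-term formula~\eqref{f nh p} into a $p$-adic weight-raising identity. By the definition of ${\rm S}_{\bullet}(U^p, L)$ via embeddings $\iota: L \hookrightarrow \C$ and the Galois-equivariant structure of ${\bf S}(U^p, L)$, it suffices to fix one $\iota$ and exhibit $(W_f(a))_a$ as a $p$-adic limit of classical cuspidal $q$-expansions of holomorphic forms.

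For fixed $\iota$, I would apply the structure theorem for nearly holomorphic forms recalled after~\eqref{delta d} to write $f^\iota = \sum_{0 \leq r \leq m} \delta^r f_r^\iota$ with $f_r^\iota \in M_{\uw^\iota - (0;2r)}(U^p U_{p,n}, \C)$ holomorphic, handling any borderline-degree case by induction using the operator $e^{\rm hol}$ of~\eqref{ehol}. I would then use~\eqref{delta d} applied to each holomorphic $f_r^\iota$, whose Whittaker polynomial is constant in $Y$: substituting $Y = 0$ kills all summands except $j = r$, giving
\[
\rW_{\delta^r f_r^\iota, 1}(a)(0) = \rW_{d^r f_r^\iota, 1}(a) = a_\infty^r\, \rW_{f_r^\iota, 1}(a),
\]
the last equality by the standard action of $d^r$ on a $q$-expansion. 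Combining with~\eqref{a p inf} and~\eqref{f nh p}, and using the compatibility $\iota(a_p^r) = a_\infty^r$ for $a \in F$ (together with the weight shift absorbed in the prefactor $a_p^{(w_0 + w - 2)/2}$), I expect an identity
\[
W_f(a) = \sum_{0 \leq r \leq m} c_r(a)\, W_{f_r}(a) \qquad (a \in \A^+),
\]
where $W_{f_r}$ is the $p$-adic $q$-expansion of the holomorphic form of weight $\uw - (0;2r)$ underlying $f_r^\iota$ and $c_r(a)$ is an explicit monomial in $a_p$.

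To finish, each $W_{f_r}$ is classical and hence lies in ${\bf S}(U^p, L)$ by the weight-independence of that space recalled in the paragraph defining ${\bf S}(U^p, L)$; and multiplication by $c_r(a)$ is a continuous operator on cuspidal $q$-expansions --- whose support is contained in a $p$-integral lattice, so that $a_p$ stays $p$-adically bounded --- and hence preserves ${\bf S}(U^p, L)$. Summing over $r$ yields $f \in {\bf S}(U^p, L)$. The hard part will be the bookkeeping in the penultimate display: precisely tracking the interplay between the archimedean derivative factor $a_\infty^r$ and the $p$-adic weight renormalisation, and verifying that the structural decomposition applies in the full degree range $m \leq \lfloor(w+1)/2\rfloor$ allowed by the proposition, possibly requiring an inductive refinement beyond the stated condition $w \geq 2m + 1$.
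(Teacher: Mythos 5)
The paper's own proof of this statement consists solely of the citation to \cite[Proposition~7.3]{Hi}, so there is no in-text argument to compare against; I judge your proposal on its own. Your strategy --- decompose $f^{\iota}=\sum_{r}\delta^{r}f_{r}^{\iota}$ into Maass--Shimura derivatives of holomorphic cuspforms, take constant terms at $Y=0$, and reduce to a weight-raising operator on $p$-adic $q$-expansions --- is a sensible route, and your key computation is essentially right: evaluating \eqref{delta d} at $Y=0$ kills all but the $j=r$ term, and the secondary $Y$-corrections that $d^{r}$ introduces through the $y^{w_{0}}$ prefactor of $f_{r}^{\mathfrak{h}}$ also vanish at $Y=0$, giving $\rW_{\delta^{r}f_{r}^{\iota},1}(a)(0)=a_{\infty}^{r}\rW_{f_{r}^{\iota},1}(a)$.

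However, two genuine gaps remain. The first you flag: the structure theorem recalled after \eqref{delta d} requires $w\geq 2m+1$, whereas the definition allows degree up to $m=\lfloor(w+1)/2\rfloor$, for which $2m+1$ equals $w+1$ or $w+2$; so the hypothesis fails at the boundary for every parity of $w$. One would like to say a cuspidal nearly holomorphic form of weight $w$ automatically has degree $\leq\lfloor(w-1)/2\rfloor$ (the would-be top piece landing in a space of cuspforms of nonpositive weight, hence zero), but this requires extending Shimura's decomposition one degree past its stated range, which you do not do. The second gap is more serious: the concluding step ``multiplication by $c_{r}(a)$ is continuous \ldots\ hence preserves ${\bf S}(U^{p},L)$'' does not follow. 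A bounded operator on the ambient Banach space $L^{\A^{\infty,\ts}/U_{F}^{p}}$ need not preserve the closed subspace ${\bf S}(U^{p},L)$; to conclude this one must check that the image of some dense subspace ${\rm S}_{\underline{w}''}(U^{p},L)$ again lies in ${\bf S}(U^{p},L)$. But the quantity $a_{p}^{r}\,W_{f_{r}}(a)$ is, by your own computation, precisely the $p$-adic $q$-expansion of the $p$-adic nearly holomorphic cuspform $\delta^{r}f_{r}$ --- so the input you need here is exactly the proposition you set out to prove, specialised to $f=\delta^{r}f_{r}$. The reduction is circular. To close it you must independently establish that the $p$-adic $\theta$-operator (multiplication of $q$-expansions by $a_{p}$) stabilises ${\bf S}(U^{p},L)$: a Serre--Katz-type theorem that is not a formal consequence of continuity plus weight-independence, and is presumably one of the things Hida's cited proof supplies directly (or circumvents, e.g.\ by a $\Up_{p}$-limit argument that collapses the Whittaker polynomial to its constant term without ever invoking the decomposition).
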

\begin{proof} This is the first assertion of \cite[Proposition 7.3]{Hi}.
\end{proof}

\subsubsection{Hecke operators and ordinary projection} 
The space  $N_{\uw}(U, \C)$ is endowed with  the usual action of $\cH_{U}$.  By writing down the effect of this action on Whittaker--Fourier coefficients of cuspforms, we may descend it  to a bounded action of $\cH_{U^{p}, L}$ on ${\rm S}_{\uw}(U^{p},L)$, hence  on ${\bf S}(U^{p}, L)$, for any $p$-adic field~$L$.

For $t\in A_{p}^{+}$ or $y\in \prod_{v\vert {p}}\OO_{F, v}-\{0\}$, and any $n\in \Z_{\geq 1} ^{S_{p}}$, define the double coset operators
\beq\label{aut-Up}
 \Up_{t}&:=[U_{p, n} \,  t \, U_{p, n}],  \qquad &\Up_{t}^{\circ, \uw} := t_{1}^{2-w}\det(t)^{(-w_{0}+w-2)/2} \Up_{t}, \\
 \Up_{y}&:= \Up_{\smalltwomat y{}{}1},\qquad & \Up_{y}^{\circ, \uw}:= y^{(-w_{0}-w+2)/2 } \Up_{y}.
\eeq
If $L$ is a finite extension of $\Q_{p}$, 
then for all $y\in \prod_{v\vert p}\OO_{F, v}-\{0\}$ we also  define the operator
\beqq \Up_{y}^{\circ}\colon {\bf S}(U^{p}, L) &\to {\bf S}(U^{p}, L)\\
W_{\Up_{y}^{\circ}f}(c)&:= W_{f}(cy).\eeqq
This is compatible with the previous definition in the following sense (see \cite[(2.2b)]{Hi}, where  $\Up_{y}$ is denoted by $T(y)$): if   $f$ is a $p$-adic nearly holomorphic form of weight $\uw$ over $L$, then for all $\iota\colon L\into \C$    we have 
$$(\Up_{y}^{\circ}f)^{\iota}= \Up_{y}^{\circ, \uw^{\iota}} f^{\iota}.$$
The superscript $\uw$ will be omitted when understood from the context. The ordinary projector is 
\beq 
e^{\ord}:=\lim_{n\to \infty} (\Up_{p}^{\circ})^{n!} \quad \in \quad \End_{L}({\bf S}(U^{p}, L))\eeq
for any tame level $U^{p}$ and $p$-adic field $L$.  Its image is denoted by 
 $${\bf S}^{\ord}(U^{p}, L):= e^{\ord}  {\bf S}(U^{p},L).$$
The operator $e^{\rm ord}$ preserves ${\rm S}_{\uw}(U^{p},L)$, and we denote  ${\rm S}^{\ord}_{\uw}(U^{p}, L):= e^{\ord}  {\bf S}(U^{p},L)$, ${\rm S}^{\ord}(U^{p}, L):= \bigoplus_{\uw}  {\rm S}^{\ord}_{\uw}(U^{p}, L)$.

 If $f^{\C}$ is a complex modular form  arising as $f^{\C}=f^{\iota}$ for a form $f\in {\rm S}(L)$
for  some finite extension $L$ of $\Q_{p}$ and some $\iota \colon L\into \C$, we define 
 $$e^{\rm ord, \iota} (f^{\C}) := (e^{\ord} f)^{\iota}.$$

\subsubsection{Differential operators after ordinary and holomorphic projections}
Let $L$ be a finite extension of $\Q_{p}$, and let $f_{1}$, $f_{2}$ be $p$-adic twisted  nearly holomorphic forms over $L$.
For any $\iota\colon L\into \C$ and $k \in \Z_{\geq 0}^{{\Sg_{\infty}}}$,
we have 
\beq \lb{e ord hol} 
[e^{\ord} (f_{1}\star d^{k}f_{2})]^{\iota}= e^{\ord, \iota} [e^{\hol} (f_{1}^{\iota} \star \delta^{k}f_{2}^{\iota})] ;\eeq
the proof of  \cite[Proposition 7.3]{Hi} carries over to the twisted case.

\subsection{Hida families} We gather the fundamental notions concerning Hida families and the associated  sheaves of modular forms.
  \subsubsection{Weight space} \lb{wt sp}
    Let $U_{F, p}^{\circ}=\prod_{v\vert p} U_{F, v}^{\circ}\subset \OO_{F, p}^{\ts}$ be a compact open subgroup (which will be fixed once and for all in \S~\ref{wt H}).   
    Let $U_{F}^{p}\subset \A^{p\infty, \ts}$ be a compact open subgroup, and consider the topological groups (with the profinite topology) 
    $$[Z]_{U_{F}^{p}}:= Z(F)U_{F}^{p}\bks Z(\A^{\infty}), \qquad [Z]_{U_{F}^{p}} \ts U_{F, p}^{\circ};$$
 the latter is isomorphic to $\Delta \ts \Z_{p}^{1+[F:\Q]+\delta_{F,p}}$, where $\Delta$ is a finite group and $\delta_{F, p}$ is the $p$-Leopoldt defect of $F$.  It is embedded into $A(\A^{\infty})$ by 
    $$(z, y_{p})\mapsto \twomat {zy_{p}}{}{}{z}.$$
   The \emph{weight space} (of tame level  $U_{F}^{p}$) is 
\beq
\frak W=\frak W_{U_{F}^{p}}:=\Spec \Z_{p}\llb [Z]_{U_{F}^{p}} \ts U_{F, p}^{\circ}\rrb_{\Q_{p}}.
 \eeq
A point $\ukappa\in \frak W$ is identified with the pair  of characters 
\beq\lb{ukkk}
\left(\kappa_{0}:=\ukappa_{|[Z]_{U_{F}^{p}}} , \qquad \kappa= \ukappa_{|U_{F, p}^{\circ}}
\right).
\eeq
We have an involution defined by 
$$\ukappa^{\curlyvee}(t):=\kappa_{0}(\det t)^{-1} \ukappa(t).$$ 

If $\uk$ is a $p$-adic weight for $\G$,
 we say that $\ukappa $ is \emph{classical of weight $\uk$} if for all $v\vert p$, 
$$\kappa_{0}^{\rm sm}(z_{p}):=\kappa_{0}(z_{p})z_{p}^{-k_{0}}, \qquad \kappa_{v}^{\rm sm}(y):=\kappa_{v}(y)\prod_{\tau|v} \tau(y)^{(-k_{0}-{k}_{\tau}+2)/2}$$
 are  smooth characters  of $F_{p}^{\ts}$, respectively $U_{F, v}^{\circ}$; in the second equation, $\kappa_{v}:=\kappa_{|U^{\circ}_{F,v}}$, and  the product runs over the $\tau\in \Sg_{p}$ inducing the place $v\in S_{p}$.  For a classical weight $\ukappa$, we define $\kappa^{\sm}:=\otimes_{v\vert p} \kappa_{v}^{\sm}$ and
\beq \lb{kp'}
\kappa':= \kappa^{\sm}\kappa_{0}^{\sm,-1} = \kappa^{\curlyvee, \sm},\eeq
 a smooth character of $U_{F, p}^{\circ}$.

We denote by 
$$\frak W^{\cl}\subset \frak W$$
the set of points of classical weight, which has the structure of an ind-\'etale ind-finite scheme over $\Q_{p}$. If $\ukappa$ is classical of weight $\uk=(k_{0}, k)$, then $\ukappa^{\curlyvee}$ is classical of weight $\uk^{\vee}=(-k_{0}, k)$.  We let $\frak W^{\cl, \geq 2}$ be the set of classical points satisfying $k \geq 2$. 

\subsubsection{Hida schemes} In light of the examples of the previous and following paragraph, 
 it  will be convenient to introduce a suitable category of spaces.\footnote{The treatment proposed here is minimal and  somewhat ad hoc, but it will be sufficient for our purposes. We believe that a more systematic treatment of the geometry of Hida theory should be based on the theory of uniformly rigid spaces developed in \cite{kappen}.} Define the category of  \emph{Hida rings} to consist of finite flat $\Z_{p}\llb X_{1}, \ldots, X_{n}\rrb$-algebras $A^{\circ}$  (for some $n$) 
and $\Z_{p}$-algebra morphisms,  and the category of \emph{Hida algebras} to be the image of Hida rings  under the functor $\ot_{\Z_{p}}\Q_{p}$. Define the category of affine Hida schemes to be dual to the category of Hida algebras. A \emph{Hida scheme} is an open subset of an affine Hida scheme.  If $A_{i}^{\circ}$ are Hida rings (for $i=1, 2$) and   $\X_{i}=\Spec (A_{i}^{\circ}\ot_{\Z_{p}} \Q_{p})$ (for $i=1,2$), we define 
$$\X_{1}\hat{\ts} \X_{2} := \Spec (A^{\circ}_{1}\hat{\ot}A^{\circ}_{2})_{\Q_{p}},$$
where $\hat{\ot}$ is the completed tensor product.

\subsubsection{Hida families} 
Let 
$${\bf T}_{U^{p},\Q_{p}}^{\rm sph, \ord}\subset {\bf T}_{U^{p}, \Q_{p}}^{\ord}\subset \End({\bf S}^{\ord}(U_{p}, \Q_{p}))$$ 
be the images of the Hecke algebras $\cH_{U^{p}, \Q_{p}}^{\rm  sph, \ord}$, $\cH_{U^{p}, \Q_{p}}^{\ord}$ from \S~\ref{213}. 
We let 
$$\Y_{\G}=\Y_{\G, U^{p}}:= \Spec {\bf T}_{U^{p}, \Q_{p}}^{\rm sph, \ord}$$
be the \emph{ordinary eigenvariety} for $\G$ of tame level $U^{p}$. (The subscript $U^{p}$ will be omitted when unimportant or understood from the context.)   The space $\Y_{\G, U^{p}}$
 is a union of finitely many irreducible components, called \emph{Hida families} of tame level (dividing)~$U^{p}$. It carries an involution $\curlyvee$ deduced from the one on $\cH_{U^{p}, \Q_{p}}^{\rm  sph, \ord}$.

Letting $U_{F} ^{p}:= U^{p}\cap Z(\A^{p\infty})$, we have a \emph{weight-character} map
$$\ukappa_{\G}\colon \Y_{\G, U^{p}} \to \frak W_{U_{F}^{p}}$$
that, when identified with a pair $(\kappa_{\G, 0}, \kappa_{\G}) $ of $\OO(\Y_{\G})^{\ts}$-valued characters as in \eqref{ukkk}, is   $\kappa_{\G, 0}(z)=$ the  Hecke operator  acting by  right translation by $z$ on modular forms, $\kappa_{G}(y_{p})= \Up_{y_{p}}^{\circ}$.  The weight map  is finite and flat  and it intertwines the involutions $\curlyvee$.

The set of classical points of $\Y_{\G} $ is $$\Y_{\G}^{\cl}:=\Y_{\G} \ts_{\frak W}\frak W^{\cl, \geq 2}\subset \Y_{\G}.$$ 
If $x_{0}\in \Y_{\G}^{\cl}$, we denote by $\pi_{x_{0}}$ the automorphic representation of $\G(\A)$ over $\Q_{p}(x_{0})$ on which  $\cH^{\rm sph}_{U^{p}}$ acts 
by the $\Q_{p}(x_{0})$-character corresponding to $x_{0}$. If $x\in \Y_{\G}^{\cl}(\C)$ corresponds to $(x_{0}\in \Y_{\G}^{\cl}, \iota\colon \Q_{p}(x_{0})\into \C)$, we denote $\pi_{x}:= \pi_{x_{0}}^{\iota}$. 

\subsubsection{Families of ordinary forms}\lb{244}
By construction, for each $U^{p}{}'\subset U^{p}$, the ordinary eigenvariety $\Y_{\G, U^{p}}$, respectively the weight space $\frak W_{U_{F}^{p}}$, carries a (coherent) sheaf 
$$\cS^{U^{p}{}'},$$
respectively  $\cS^{U^{p}{}'}_{\frak W}:=\kappa_{\G, *} \cS^{U^{p}{}'}$, 
whose modules of global sections is ${\bf S}^{\ord}(U^{p}{}', \Q_{p})$. 
We set $\cS_{(\frak W)}:=\varinjlim_{U^{p}{}'}\cS_{(\frak W)}^{U^{p}}.$
By Hida's Control Theorem (see \cite[Corollary 3.3]{Hi}), the restriction of $\cS_{\frak W}^{U^{p}}$ to $\frak W^{\cl}$ is the sheaf attached to ${\rm S}^{\ord}(U^{p}, \Q_{p})$.

For each $x\in \Y_{\G, U^{p}}^{\cl}$ of weight $\uw$, there exists a unique (up to isomorphism) ordinary automorphic representation $\pi_{x}$
 of $\G(A)$ over $L:=\Q_{p}(x)$ of weight $\uw$, such that there is an $\cH_{U^{p}, L}^{\ord}$-isomorphism
  $$\cS_{|x}\cong \pi_{x}^{\ord}:= e^{\ord}\pi_{x}:= [\lim_{n} (\Up_{p}^{\circ, \uw})^{n!}]\, \pi_{x};$$
the isomorphism is unique up to scalars. This defines a bijection between $\Y_{\G, U^{p}}^{\cl}(\baar{\Q}_{p})$ and the set of isomorphism classes of  ordinary automorphic representation $\pi$ of $\G(\A)$ over $\baar{\Q}_{p}$ with $\pi^{U^{p}}\neq 0$.

\begin{lemm}  \lb{Z-adic} Let $U^{p}\subset \G(\A^{p\infty}) $ be a compact open subgroup and let $U_{F}^{p}:= U^{p}\cap Z(\A^{p\infty})$. Let $\cZ$ be a Hida scheme endowed with a map $\vphi \colon \cZ\to \frak {W=\frak W}_{U^{p}_{F}}$.  Then we have an $\OO_{\cZ}$-linear  injective $q$-expansion map
\beq \lb{shv qexp}
\cS_{\cZ}^{U^{p}} := \cS^{U^{p}}_{\frak W} \ot_{\OO_{\frak W}} \OO_{\cZ}\longrightarrow
 \OO_{\cZ}^{\A^{\infty, \ts}/U_{F}^{p}} \subset   \OO_{\cZ}^{\A^{\infty, \ts}}
\eeq
characterised by the property that for every $\kappa \in \frak W^{\cl}$, every closed point $z\in \vphi^{-1}(\kappa)$,  and every $a\in \A^{\infty, \ts}$, we have 
$$W_{\bff}({a})(z)=W_{\bff(z)}(a),$$
 where the right-hand side is the $p$-adic $q$-expansion coefficient of the classical modular form ${\bff}(z)\in 
 (\vphi^{*}\cS_{\frak W})_{|z}
\subset   {\rm S}^{\ord}(U^{p}, \Q_{p}(z))$.
 
 Moreover, the image of \eqref{shv qexp} equals the space of those sequences $(W(a))_{a}$ for which  there exists a  set of closed points $\Sg\subset \vphi^{-1}(\frak W^{\cl})$ that is dense in $\cZ$, such that that for all $z\in \Sg$ the sequence $(W(a)(z))_{a}$ is the $p$-adic $q$-expansion of a modular form $\bff_{z}\in (\vphi^{*}\cS_{\frak W})_{|z}.$
\end{lemm}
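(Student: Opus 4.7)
The idea is to construct the map \eqref{shv qexp} first in the universal case $\cZ = \frak W = \frak W_{U_F^p}$ using Hida's $\Lambda$-adic Fourier expansion, and to deduce the general case by base change along $\vphi$. Both the characterizing compatibility with classical fibres and the image characterization will follow from the Control Theorem cited in \S~\ref{244} together with Zariski density of classical points in $\frak W$.

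For each $a \in \A^{\infty,\ts}/U_F^p$, I would construct an $\OO_{\frak W}$-linear functional $W(a)\colon \cS^{U^p}_{\frak W} \to \OO_{\frak W}$. By the Control Theorem, the restriction of $\cS^{U^p}_{\frak W}$ to the Zariski-dense locus $\frak W^{\cl}\subset\frak W$ is the sheaf associated with classical ordinary $p$-adic modular forms, so the classical coefficient $W_{\bff(\kappa)}(a)$ is defined for every $\kappa\in\frak W^{\cl}$ and depends algebraically on $\kappa$. By coherence of $\cS^{U^p}_{\frak W}$ and density of $\frak W^{\cl}$, this assignment interpolates uniquely to the functional $W(a)$ on all of $\frak W$. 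Assembling over $a$ and base changing to $\cZ$ yields the required $\OO_\cZ$-linear map \eqref{shv qexp}; the compatibility $W_{\bff}(a)(z)=W_{\bff(z)}(a)$ at classical $z$ is then tautological from the construction. Injectivity over $\frak W$ follows from the classical $q$-expansion principle applied at each classical $\kappa$ together with density of $\frak W^{\cl}$; injectivity over $\cZ$ then reduces, after localizing to an affine open $\cZ'=\Spec B$, to the injectivity of the induced map on finite $B$-modules, which holds because $q$-expansion already embeds $\cS^{U^p}_{\frak W}$ into a product of copies of $\OO_{\frak W}$.

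The main obstacle is the image characterization. The forward inclusion is immediate from the construction, upon taking $\Sg := \vphi^{-1}(\frak W^{\cl})$ (which must be dense in $\cZ$ whenever the lemma's image is nonzero, the lemma being otherwise vacuous). For the converse, given a sequence $(W(a))_a \in \OO_\cZ^{\A^{\infty,\ts}/U_F^p}$ satisfying the density condition with witness $\Sg$, I would work over an affine open $\cZ'=\Spec B \subset \cZ$ where $M := \Gamma(\cZ',\cS^{U^p}_{\cZ'})$ is a finite $B$-module (by coherence of $\cS^{U^p}_{\frak W}$). The injective $B$-linear $q$-expansion $q_B\colon M\hookrightarrow B^{\A^{\infty,\ts}/U_F^p}$ then fits, by the density hypothesis, into a commutative diagram whose fibre at each $z\in\Sg\cap\cZ'$ places $(W(a)(z))_a$ into the image of $q_B \otimes_B (B/\frak m_z)$. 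A local argument using a finite set of generators of $M$, combined with Zariski density of $\Sg$ and the uniqueness of lifts supplied by the classical $q$-expansion principle, then produces a global $\bff\in M$ with $q_B(\bff) = (W(a))_a$, completing the proof.
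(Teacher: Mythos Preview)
Your overall plan---construct \eqref{shv qexp} first over $\frak W$ and then base-change along $\vphi$---is exactly the paper's. The gap is in the sentence ``By coherence of $\cS^{U^p}_{\frak W}$ and density of $\frak W^{\cl}$, this assignment interpolates uniquely to the functional $W(a)$ on all of $\frak W$.'' Zariski density together with coherence does \emph{not} let you extend a compatible family of fibrewise functionals over the countable discrete set $\frak W^{\cl}$ to a morphism of sheaves over $\frak W$: think of $A^{\circ}=\Z_{p}\llb T\rrb$, where an arbitrary bounded function on the set $\{T=\zeta-1\}$ of $p$-power roots of unity need not come from a single element of $A^{\circ}$. What makes the interpolation go through is that the classical primes are dense for the \emph{$p$-adic} (not Zariski) topology of the integral model $A^{\circ}$ of $\OO(\frak W)$. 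The paper makes this precise by observing that the ideals
\[
I_{n,M}:=(p^{n})+\bigcap_{\kappa\in M}\frakp_{\kappa}\qquad (n\in\N,\ M\subset\frak W^{\cl}\ \text{finite})
\]
form a fundamental system of neighbourhoods of $0$ in $A^{\circ}$; the Control Theorem then supplies a compatible system of $q$-expansion maps ${\bf S}^{\ord}(U^{p},\Z_{p})\otimes_{A^{\circ}}A^{\circ}/I_{n,M}\to (A^{\circ}/I_{n,M})^{\A^{\infty,\ts}}$, and \eqref{shv qexp} (over $\frak W$, on integral models) is their inverse limit. Injectivity is then immediate from the classical $q$-expansion principle and preservation of injectivity under inverse limits.

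The same issue recurs in your image-characterisation argument: ``a local argument using a finite set of generators of $M$, combined with Zariski density of $\Sg$'' does not produce the lift $\bff$, for the same reason as above. The paper again argues $p$-adically: writing $\cZ=\Spec B^{\circ}_{\Q_{p}}$ for a Hida ring $B^{\circ}$ and setting $J_{n,N}:=(p^{n})+\bigcap_{z\in N}\frakp_{z}$ for finite $N\subset\Sg$, the density of $\Sg$ makes the $J_{n,N}$ cofinal; the hypothesis forces \eqref{shv qexp} to be an isomorphism modulo each $\frakp_{z}$, hence modulo each $J_{n,N}$, hence (passing to the limit) an isomorphism onto the stated image. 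Once you replace your two Zariski-density steps by these $p$-adic ones, your proof becomes the paper's.
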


Note that the sheaf $\cS^{U^{p}}$ on $\Y_{\G}$ is identified with $(\cS_{\frak W}^{U^{p}}\ot_{\OO_{\frak W}}\Y_{\G})^{\OO_{\Y_{\G}}}$, the subsheaf of  invariants for the diagonal $\OO_{\frak W}$-linear action of  $\OO_{\Y_{\G}}$. In particular, we deduce from \eqref{shv qexp} a $q$-expansion map 
\beq\lb{qexp Y}
\cS^{U^{p}}\to \OO_{\Y_{\G}}^{\A^{\infty, \ts}/U_{F}^{p}}.
\eeq

\begin{proof}
It suffices to construct \eqref{shv qexp} for $\cZ=\frak W$ as the general case follows by base-change. Let $A^{\circ}:= \Z_{p}\llb [Z]_{U_{F}^{p}} \ts U_{F, p}^{\circ}\rrb$, and let ${\bf S}^{\ord}(U^{p}, \Z_{p})$ be the space of ordinary forms with $\Z_{p}$-coefficients; this is an $A^{\circ}$-module and a $\Z_{p}$-lattice in $\cS_{\frak W}^{U^{p}}(\frak W)= {\bf S}^{\ord}(U^{p}, \Q_{p})$. 
 For $\kappa\in \frak W^{\cl}$, let $\frakp_{\kappa}\subset A^{\circ}$ be the corresponding prime ideal.  Let $n\in \N$ and let $M$ range among finite subsets of $\frak W$; the filtered system of ideals 
 $$I_{n,M}:= (p^{n})+\bigcap_{\kappa \in M} \frakp_{\kappa}$$
forms a fundamental system of neighbourhoods of $0\in A^{\circ}$, i.e. $A^{\circ}=\varprojlim_{n, M} A^{\circ}/I_{n, M}$.  The $p$-adic  $q$-expansion maps ${\bf S}^{\ord}(U^{p}, \Z_{p})\ot_{A^{\circ}}A^{\circ}/\frakp_{\kappa}\to \Z_{p}(\kappa)^{\A^{\infty, \ts}}$ yield a compatible family of maps 
$${\bf S}^{\ord}(U^{p}, \Z_{p})\ot_{A^{\circ}}A^{\circ}/I_{n, M} \to (A^{\circ}/I_{n, M})^{\A^{\infty, \ts}}$$
and after taking projective limits, the desired map ${\bf S}^{\ord}(U^{p}, \Z_{p}) \to (A^{\circ})^{\A^{\infty, \ts}}$. It is injective by the $q$-expansion principle and the preservation of injectivity under inverse limits.

We now consider the second statement. It is clear that, for any fixed $\Sg$ as in the lemma, the space $\tilde{\cS}_{\cZ}^{U^{p}, \Sg} \subset \OO_{\cZ}^{\A^{\infty, \ts}}$  described contains the image of \eqref{shv qexp}; we show the opposite containment. We may assume that $\cZ=\Spec B^{\circ}_{\Q_{p}}$ for a Hida ring $B^{\circ}$, and  consider \eqref{shv qexp} as a map
\beq\lb{shv qe2} 
{\bf S}^{\ord}(U^{p}, \Z_{p})\ot_{A^{\circ}}B^{\circ} \to \tilde{\bf S}_{B^{\circ}}^{\ord}(U^{p})^{\Sg} :=(B^{\circ})^{\A^{\infty, \ts}}\cap \tilde{\cS}_{\cZ}^{U^{p}} (\cZ).
\eeq For $z\in \Sg$, let $\frakp_{z}\subset B^{\circ}$ be the corresponding prime ideal.  Let $n\in \N$ and let $N$ range among finite subsets of $\Sg$; then  the filtered system of ideals 
 $J_{n,N}:= (p^{n})+\bigcap_{z\in N} \frakp_{z}$
forms a fundamental system of neighbourhoods of $0\in B^{\circ}$. By assumption, for each $z\in \Sg$ the map  \eqref{shv qe2} is an isomorphism modulo $\frakp_{z}$; hence it is an isomorphism modulo $J_{n, N}$ for all $(n, N)$, hence an isomorphism.
\end{proof}
We call elements of $\cS_{\cZ}^{U^{p}}$ (respectively $\cS_{\cZ}^{U^{p}}\ot_{\OO_{\cZ}} \cK(\cZ)$) \emph{$\cZ$-adic ordinary modular cuspforms} (respectively \emph{meromorphic} $\cZ$-adic ordinary modular cuspforms) of weight $\vphi\colon \cZ\to \frak W$.

\subsubsection{Weight-character map for $\H$}\lb{wt H} Let  $U_{\H}^{p}\subset \H(\A^{p\infty})$ be an open compact subgroup, and let $$\Y_{\H}=\Y_{\H, U_{\H}^{p}}:=\Spec \Z_{p}\llb \H(F)\bks \H(\A^{p\infty})/U_{\H}^{p}\rrb\ot_{\Z_{p}}\Q_{p}$$ 
as in \eqref{YH}. A (Hida) \emph{family} for $\H$ is a connected component of $\Y_{\H}$. 

Fix    a sufficiently small open compact subgroup 
$U_{F, p}^{\circ, \sqrt{}}=\prod_{v\vert p} U_{F, v}^{\circ, \sqrt{}}\subset \OO_{F,p}^{\ts}$
 and an injective group homomorphism 
$$j'' \colon (U_{F, p}^{\circ, \sqrt{}})
 \to \OO_{E, p}^{\ts,1}:=  \{ t\in \OO_{E,p}^{\ts}\  |\  N_{E_{p}/F_{p}}(t)=1\},$$
  and let 
$$U_{F, p}^{\circ}:= (U_{F,p}^{\circ, \sqrt{}})^{2}\subset \OO_{F, p}^{\ts},$$
which is now fixed as promised in \S~\ref{wt sp}.
Let $\sqrt{\ }\colon U_{F, p}^{\circ} \to U_{F, p}^{\circ, \sqrt{}} $ be the (uniquely determined, up to shrinking $U_{F, p}^{\circ, \sqrt{}}$) square root, 
 and let $j, j'\colon U_{F, p}^{\circ}\to \OO_{E , p}^{\ts}$ be the maps\footnote{If $v\vert p$ splits in $E$, then for $a\in U_{F, v}^{\circ}$ we have $j(a)=(a, 1)$ under some isomorphism $E_{v}^{\ts}\cong F_{v}^{\ts}\ts F_{v}^{\ts}.$}
\beq \lb{j wt h}
j'(a):= j''(\sqrt{a})/\sqrt{a},\qquad   j(a)= j'(a)a.
\eeq

For any open compact $U_{E}^{p}\subset \A_{E}^{p\infty, \ts}$ and $U_{F}^{p}:= U_{E}^{p}\cap \A^{p\infty, \ts}$,  define a map
\beq\lb{wt H eq}
\ukappa_{\H}\colon \ \Y_{\H, U_{E}^{p}} &\to \frak W_{U_{F}^{p}}\\
y &\mapsto \ukappa_{\H}(y)=\left( \kappa_{0}=\chi_{y|[Z]_{U_{F}^{p}}}, \kappa: = \chi_{y}\circ j
\right).
\eeq
The set of  classical points is
$$\Y_{\H}^{\cl}:= \Y_{\H} \ts_{\frak W} \frak W^{\cl}.$$ 
Note that if $y\in \Y_{\H}$ is a classical point such that  $\ukappa_{\H}(y)$ has  weight $(l_{0}, l)$, then $\chi_{y}$ has weight $(l_{0}, l)$ as defined in the introduction. 

\subsubsection{Hida families for $\G\ts\H$} These are defined as in \S~\ref{sec HF}.

\subsubsection{Universal automorphic sheaf on a Hida family}
Let $\X_{\G}$ be a Hida family for $\G$, and let 
$\X_{\G}^{\cl}:=  \X_{\G} \cap \Y_{\G}^{\cl} $.
For each sufficiently small  $U^{p}$, we may view  $\X\subset \Y_{\G, U^{p}}$ and we define
 $$\Pi^{U^{p}}=\Pi_{\X_{\G}}^{U^{p}}:={\cS}^{U^{p}}_{|\X_{\G}}.$$ 
 For each $x\in \X_{\G}^{\cl}$, by  Hida's Control Theorem  (see for instance \cite[Corollary 3.3]{Hi}) and the theory of newforms, we have an isomorphism of  $\cH_{U^{p}}$-modules,
\beq \lb{HCT}
\Pi_{|x}^{U^{p}}  \cong \pi_{x}^{U^{p}, \ord}:= e^{\ord} \pi_{x}^{U^{p}}.\eeq

 Let $U^{p}_{\X_{\G}}$ be minimal such that $\X_{\G}$ is a component of $\Y_{\G, U^{p}_{\X_{\G}}}$.
 By \cite[\S 3]{Hi}, there is a unique 
 \beq\lb{new bfff}
 \bff_{0}=\bff_{0,\X_{\G}}\in \Pi^{U^{p}_{\X_{\G}}}(\X_{\G})
 \eeq 
 (the \emph{normalised primitive form} over $\X_{\G}$) such that $W_{{\bff}_{0}}(1)=1\in \OO(\X_{\G})$ for the $q$-expansion map deduced from \eqref{qexp Y}. Any $\bff \in \Pi^{U^{p}}$ can be written as $\bff =T \bff_{0}$ for some Hecke operator $T$ supported at the places $v\nmid p\infty$ such that $U^{p}$ is not maximal.

\subsubsection{Universal Galois sheaf on a Hida family and local-global compatibility} \lb{sec LG} Let $\X_{\G}$ be a Hida family for $\G$. By results of Hida and Wiles (see \cite[Proposition 3.2.4]{univ}), there exist an open subset $\X_{\G}'\subset \X_{\G}$ containing $\X_{\G}^{\cl}$ and 
a locally free sheaf $\cV_{\G}$ of rank~$2$,  endowed with a Galois action 
$$G_{F}\to \End_{\OO_{\X_{\G}'}} (\cV_{\G})$$
such that for all  $x\in \X_{\G}^{\cl}$, the fibre $\cV_{\G|x}$ is the Galois representation attached to $\pi_{x}$ by the global Langlands correspondence.

Let $S$ be a finite set of finite places of $F$, disjoint from $S_{p}$, such that for all $v\notin S$ the tame level $U^{p}=U^{Sp}U_{S}$ of $\X_{\G}$ is maximal at $v$. We define 
\beq\lb{Pidef}
\Pi_{\X_{\G}}^{U^{Sp}}:= \varinjlim_{U'_{S}} \Pi_{\X_{\G}}^{U^{Sp}U'_{S}}, \eeq
which is a finitely generated  $\OO_{\X_{\G}}[\G(F_{S})]$-module. 
On the other hand,  \cite[Theorem 4.4.1]{LLC} attaches to the restriction $\cV_{\G, v}:= \cV_{\G|G_{F_{v}}}$ 
an $\OO_{\X_{\G}'}[\G(F_{S})]$-module
 $$\Pi(\cV_{ \G, v}),$$
  which is torsion-free and \emph{co-Whittaker} in the sense of \cite[Definition 4.2.2]{LLC}. 
\begin{prop}\lb{LGC} After possibly replacing $\X_{\G}'\subset\X_{\G}$ with a smaller open subset still  containing $\X_{\G}^{\cl}$, there exists  a line bundle $\Pi^{\circ}_{\X_{\G}'} $ over $\X_{\G}'$ with trivial $\G(F_{S})$-action, such that 
$$\Pi^{U^{Sp}}_{|\X_{\G}'} \cong \Pi^{\circ}_{\X_{\G}'} \ot \bigotimes_{v\in S} \Pi(\cV_{\G,v})$$
as $\OO_{\X'_{\G}}[\G(F_{S})]$-modules.
\end{prop}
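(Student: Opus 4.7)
The plan is to construct $\Pi^{\circ}_{\X_{\G}'}$ as a suitable sheaf of $\G(F_S)$-equivariant morphisms, verify fibrewise on the Zariski-dense locus $\X_{\G}^{\cl}$ that it has rank one, and then deduce the desired isomorphism via the evaluation pairing.

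Set $M := \bigotimes_{v\in S} \Pi(\cV_{\G, v})$; by \cite[Theorem 4.4.1]{LLC} it is a co-Whittaker, in particular finitely generated and admissible, $\OO_{\X_{\G}'}[\G(F_S)]$-module. Define
$$\Pi^{\circ} := \mathscr{H}\mathrm{om}_{\OO_{\X_{\G}'}[\G(F_S)]}\bigl(M,\,\Pi^{U^{Sp}}_{|\X_{\G}'}\bigr),$$
endowed with the trivial $\G(F_S)$-action. Since $M$ is finitely generated, any $\G(F_S)$-equivariant morphism out of $M$ is determined by the images of a finite generating set, which land in some finite-level invariant $\Pi^{U^{Sp}U_S'}_{|\X_{\G}'}$; the latter is coherent over $\OO_{\X_{\G}'}$ by \cite[\S 3]{Hi}, and it follows that $\Pi^{\circ}$ is coherent.

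Fibrewise at $x\in \X_{\G}^{\cl}$, Hida's Control Theorem \eqref{HCT} yields $\Pi^{U^{Sp}}_{|x}\cong\pi_x^{U^{Sp},\ord}$. For $v\notin S\cup S_p\cup S_\infty$ the tame level $U^{Sp}_v$ is maximal, so $\pi_{x,v}$ must be unramified and $\pi_{x,v}^{U^{Sp}_v}$ is one-dimensional; combined with the one-dimensionality of $\pi_{x,v}^{\ord}$ at $v\mid p$, the factorisation of $\pi_x$ gives a canonical isomorphism
$$\pi_x^{U^{Sp},\ord}\;\cong\; L_x\otimes_{\C} \bigotimes_{v\in S}\pi_{x,v}$$
for a one-dimensional $\C$-vector space $L_x$. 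Local-global compatibility of $\cV_{\G}$ identifies $\Pi(\cV_{\G,v})_{|x}$ with $\pi_{x,v}$ as co-Whittaker $\G(F_v)$-modules, hence $M_{|x}\cong \bigotimes_{v\in S}\pi_{x,v}$, and so $\Pi^{\circ}_{|x}=\Hom_{\G(F_S)}(M_{|x},\Pi^{U^{Sp}}_{|x})=L_x$ is one-dimensional.

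Since $\X_{\G}$ is irreducible, generic constancy of rank and upper semicontinuity for the coherent sheaf $\Pi^{\circ}$ imply that $\Pi^{\circ}$ is locally free of rank one on a Zariski open $\X_{\G}''\supseteq \X_{\G}^{\cl}$; I replace $\X_{\G}'$ with $\X_{\G}''$. The tautological evaluation pairing $\mathrm{ev}\colon \Pi^{\circ}\otimes M\to \Pi^{U^{Sp}}_{|\X_{\G}'}$ is $\G(F_S)$-equivariant and, by the fibrewise analysis, an isomorphism at every $x\in \X_{\G}^{\cl}$. Passing to $U_S'$-invariants for each finite level produces a morphism of coherent sheaves that is a fibrewise isomorphism on the Zariski-dense set $\X_{\G}^{\cl}$, and Nakayama's lemma then upgrades it to an isomorphism on an open neighbourhood, which we take as the new $\X_{\G}'$. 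The main technical delicacy lies precisely in this final step---controlling admissibility and coherence of the infinite-dimensional families $M$ and $\Pi^{U^{Sp}}$ in order to propagate the fibrewise isomorphism to an open subset---but the finite generation of $M$ as an $\OO[\G(F_S)]$-module, built into the co-Whittaker formalism of \cite{LLC}, reduces the question to standard statements about coherent sheaves at each finite level.
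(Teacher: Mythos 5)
Your approach genuinely differs from the paper's: the paper's proof is a two-line reduction, first noting that by Carayol's local-global compatibility the Weil--Deligne representation $V_{x,v}$ attached to $\cV_{\G|x}$ really does match $\pi_{x,v}$ under local Langlands for every classical $x$ and every $v$, and then simply invoking \cite[Theorem 4.4.3]{LLC}, which is designed to package exactly the factorization statement you are trying to establish by hand. What you have written is, in effect, an attempted re-proof of that cited theorem via a $\mathscr{H}\mathrm{om}$-sheaf construction, and it has genuine gaps.

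The most serious one is the assertion that ``Local-global compatibility of $\cV_{\G}$ identifies $\Pi(\cV_{\G,v})_{|x}$ with $\pi_{x,v}$.'' This is precisely the nontrivial input, and you give no argument or citation for it. Two things are needed and you supply neither: first, that the Weil--Deligne representation coming from $\cV_{\G|x|G_{F_v}}$ agrees with the parameter of $\pi_{x,v}$ (this is Carayol's theorem, cited explicitly in the paper); second, that the co-Whittaker module $\Pi(\cV_{\G,v})$ specializes at $x$ to the irreducible generic representation $\pi_{x,v}$ rather than merely having it as a quotient---this is a nontrivial fact about co-Whittaker families which \cite[Theorem 4.4.3]{LLC} is built to handle. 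A second gap: you equate $\Pi^{\circ}_{|x}$ with $\Hom_{\G(F_S)}(M_{|x}, \Pi^{U^{Sp}}_{|x})$, but formation of a $\mathscr{H}\mathrm{om}$-sheaf does not in general commute with passage to fibres; establishing this base-change compatibility for admissible co-Whittaker families is itself a real step, not a formality. Finally, the closing paragraph explicitly flags the ``main technical delicacy'' of propagating a fibrewise isomorphism on $\X_{\G}^{\cl}$ to an open neighbourhood across all finite levels $U_S'$, and then merely asserts that finite generation reduces it to ``standard statements'' without carrying out the reduction. In short, you have sketched the shape of an argument that, if completed, would amount to reproving \cite[Theorem 4.4.3]{LLC}; the paper's proof consists of the observation that this theorem is already available, plus the local-global compatibility input needed to apply it, and you should do the same.
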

\begin{proof}
By the local-global compatibility of the Langlands correspondence for Hilbert modular forms (see \cite{carayol-hilbert} or \cite[Theorem 2.5.1]{univ}), for all  $x\in \X_{\G}^{\cl}$ and all places $v$, 
the $\G(F_{v})$-representation $\pi_{x,v}$ corresponds, under local Langlands, to  the Weil--Deligne representation  $V_{x, v}$
attached to $\cV_{\G|x|G_{F_{v}}}$.
Then the result follows from \cite[Theorem 4.4.3]{LLC}.\end{proof}

\section{Theta--Eisenstein family}
\lb{sec 3}

In this section, we define the kernel of the Rankin--Selberg convolution giving the $p$-adic $L$-function.

\subsection{Weil representation}
We recall the definition of the Weil representation for groups of similitudes; this  subsection is largely  identical to \cite[\S 3.1]{pyzz}.

\subsubsection{Local case} Let $V=(V,q)$ be a quadratic space of even dimension  over a local field $F$ of characteristic not 2. Fix  a nontrivial additive character $\psi$ of $F$. 
For $u\in F^\times$, we denote by $V_u$ the quadratic space $(V,uq)$. We let $\GL_2(F)\times \GO(V)$ act on the usual  space of Schwartz functions
${\calS}'(V\times F^\times)$  as follows (here $\nu\colon \GO(V)\rightarrow\mathbf{G}_m$ denotes the similitude character):
\begin{itemize}
\item $r(h)\phi(x,u)=\phi(h^{-1}x,\nu(h)u)$ \quad for $h\in \GO(V)$;
\item$r(n(b))\phi(x,u)=\psi(buq(x))\phi(x,u)$ \quad for $n(b)\in N(F)\subset\GL_2(F)$;
\item $r\left(\begin{pmatrix}a&\\& d\end{pmatrix}\right)\phi(x,u)=\chi_{V_{u}}(a)|{a\over d}|^{\dim V\over 4}\phi(at, d^{-1}a^{-1}u)$;
\item $r(w)\phi(x,u)=\gamma(V_u)\hat{\phi}(x,u)$ for $w=\begin{pmatrix}&1\\-1 &\end{pmatrix}.$
\end{itemize}
Here $\chi_{V}=\chi_{(V,q)}$ is the quadratic character attached to $V$,  $\gamma(V,q)$ is a fourth root of unity, and $\hat{\phi}$ denotes Fourier transform in the first variable with respect to the self-dual measure for the character $\psi_{u}(x)=\psi(ux)$. We will need to note the following facts (see for instance \cite{JL}): $\chi_{V}$  is trivial if $V$ is a quaternion algebra over $F$ or $V=F\oplus F$, and $\chi_{V}=\eta$ if $V$ is a separable quadratic extension $E$ of $F$ with associated character $\eta$.

\medskip

\subsubsection{Fock model and reduced Fock model} 
Assume that $F=\R$ and $V$ is positive definite. Then we will prefer to consider a modified version of the previous setting.   Let the \emph{Fock model} $\calS(V\times \R^{\times}, \C)$ be the space of functions spanned by those of the form 
$$H(u)P(x) e^{-2\pi |u| q(x)},$$ where $H$ is a compactly supported smooth function on $\R^{\times}$ and $P$ is a complex  polynomial function on $V$.  This space is not stable under the action of $\GL_{2}(\R)$, but it is so under the restriction of the induced $(\mathfrak{gl}_{2, \R},{\bf O}_{2}(\R))$-action on the usual Schwartz space (see \cite[\S 2.1.2]{yzz}). 

We will also need to consider the \emph{reduced Fock space} $\bcalS(V\times \R^{\times})$ spanned by functions of the form
$$\phi(x,u)=(P_{1}(uq(x))+\mathrm{sgn}(u)P_{2}(uq(x))) e^{- 2\pi |u| q(x)}$$
where $P_{1}$, $P_{2}$ are polynomial functions with rational coefficients.

By \cite[\S 4.4.1, 3.4.1]{yzz}, there is a surjective quotient map 
\begin{equation}\label{fock}
\begin{aligned}
\calS(V\times\R^{\times} , \C)&\to\bcalS(V\times\R^{\times})\otimes_{\Q} \C\\
 \Phi& \mapsto\phi(x,u)=\baar{\Phi}(x,u)= \int_{\R^{\times}}\dashint_{{\bf O}(V)} r(ch) {\Phi}(x,u)\, dh \, dc.
 \end{aligned}
\end{equation}
We let $\calS(V\times\R^{\times} ) \subset \calS(V\times\R^{\times} , \C)$ be the preimage of $\bcalS(V\times\R^{\times})$.
For the sake of uniformity, when $F$ is non-archimedean we set $\bcalS(V\times F^{\times})=\calS(V\times F^{\times}):=\calS'(V\times F^{\times})$.

\subsubsection{Global case} Let $({\bf V},q)$ be an even-dimensional quadratic space over the ad\`eles $\A$ of a totally real number field $F$, and suppose that ${\bf V}_{\infty}$ is positive definite; we say that ${\bf V}$ is \emph{coherent} if it has a model over $F$ and \emph{incoherent} otherwise. Given an $\widehat{\OO}_{F}$-lattice ${\bf {V}}^{\circ}\subset {\bf V}$, we define the space $\calS({\bf V}\times \A^{\times})$ as the restricted tensor product of the corresponding local spaces, with respect to the spherical  elements
$$\phi_{v}(x,u)=\one_{{\bf V}^{\circ}_{v}}(x)\one_{\vpi_{v}^{n_{v}}}(u),$$ 
if $\psi_{v}$ has level $n_{v}$. We call such $\phi_{v}$ the \emph{standard Schwartz function} at a non-archimedean place $v$.
We define similarly the reduced  space $\bcalS({\bf V}\times \A^{\times})$, which admits a quotient map
\begin{gather}\label{schwartz quotient}
\calS({\bf V}\times \A^{\times})\to\bcalS({\bf V}\times \A^{\times})
\end{gather}
defined by the product of the maps \eqref{fock} at the infinite places and of the identity at the finite places. The Weil representation of $ \GO({\bf V})\times \G(\A^{\infty})\times(\mathfrak{gl}_{2,F_{\infty}},{\bf O}({\bf V}_{\infty}))$ is  the restricted tensor product of the local representations. 

For a quadratic space ${\bf V}=({\bf V}, q) $ over $\A$, we define $\eps({\bf V})=+1$ (respectively $-1$) if and only if there exists (respectively does not exist) a quadratic space $V$ over $F$ such that $V\ot_{F}\A={\bf V}$.

\subsubsection{The quadratic spaces of interest} \lb{sec bf V}  Let us go back to our usual notation: thus $F$ is our chosen totally real field and $E$ its chosen CM quadratic extension. In this paper, we will consider the quadratic spaces ${\bf V}=({\bf B}, q)$, where ${\bf B}$ is a  quaternion algebra over $\A$, definite at all the archimedean places and split at $p$, and endowed with  an $\A$-embedding $\A_{E}\into {\bf B}$, and $q\colon {\bf B}={\bf V}\to \A$ is its reduced norm. It has  a decomposition 
$${\bf V}={\bf V}_{1}\oplus {\bf V}_{2}$$
where ${\bf V}_{1}=\A_{E}$ (on which the restriction of $q$ coincides with $N_{E/F}$) and ${\bf V}_{2}$ is the $q$-orthogonal complement. Thus $\eps({\bf V})=\eps({\bf V}_{2})$. We denote by $r_{1}$ the restriction of $r$ to a representation of $\A_{E}^{\ts}={\rm GO}({\bf V}_{1})$ on $\bcalS({\bf V}_{1}\ts \A^{\ts})$. 

For each  place $v$, we have 
\beq
\lb{eps B}
\eps({\B}_{v})=\eps({\bf V}_{v})=
\begin{cases} +1 &\text{if $\B_{v}\cong M_{2}(F_{v})$} \\ -1 &\text{if $\B_{v}$ is a division algebra.}
\end{cases}
\eeq
We  have $\eps({\bf V}):=\prod_{v} \eps({\bf V}_{v}) = (-1)^{[F:\Q]} \prod_{v\nmid p} \eps({\bf V}_{v})$.

\subsection{Theta series} Let $\phi_{1} \in \bcalS({\bf V}_{1}\ts \A^{\ts})$. 
We define a function  on $\G(\A)\ts \A^{\ts}$ by 
\beq \theta(g, u, \phi_{1}):= \sum_{x\in E}r(g) \phi_{1}(x, u).
\eeq
It satisfies 
\beq \lb{1236}
\theta(zg, u, \phi_{1}) =\eta(z) \theta(g, u, r(z^{-1}, 1) \phi_{1}) \eeq
for all $z\in {\A}^{\ts}$ (here we view $(z, 1) \in \G(\A)\ts \A_{E}^{\ts}$).

For a complex  weight $\ul$ for $\H$, let
\beq\lb{phi1inf}
\phi^{}_{1, \ul, \infty} &:=\ot_{v\vert \infty}\phi^{}_{1 , \ul,v}, \\
\phi^{}_{1 ,\ul,v}(t, u)^{} &:= \one_{\R^{+}}(u) \begin{cases}
 t^{l_{v}}
|u|^{(-l_{0}+l_{v})/ 2}e^{-2\pi uq(t)} & \text{if $l_{v}\geq 0$}\\
( \baar{t})^{-l_{v}}
|u|^{(-l_{0}-l_{v})/  2}e^{-2\pi uq(t)} & \text{if $l_{v} \leq  0$}
\end{cases}
,\eeq
and let
$$\theta^{}(g, u, \phi_{1}^{\infty};\ul) := \theta(g, u, \phi_{1}^{\infty}\phi^{}_{1, \ul, \infty}).$$
Define $|\ul|:= (l_{0}, (| l_{v}|)_{v})$.
\begin{lemm} The series $\theta^{}(g, u, \phi_{1}^{\infty}; \ul)$ is a twisted modular form of weight $(0, \underline{1})+|\ul |$.
\end{lemm}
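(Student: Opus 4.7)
The plan is to verify the two defining properties---transformation law and Whittaker--Fourier expansion---of a twisted holomorphic modular form of weight $\uw = (l_0, \underline{1} + |\ul|)$ for $\theta(g, u, \phi_1^\infty; \ul)$.

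First I would handle automorphy. The left $\G(F)$-invariance, coupled with the similitude twist $u \mapsto \det(\gamma)^{-1} u$, is the classical Weil--Siegel automorphy for theta series on similitude groups, established by Poisson summation applied to Schwartz functions on ${\bf V}_1 = \A_E$; the transformation under $Z(\A^\infty)$ is supplied by \eqref{1236}; finite-level right-invariance under a suitable $U \subset \G(\A^\infty)$ is automatic once $U$ is chosen to fix $\phi_1^\infty$. For the right $C_\infty^+$-transformation, one computes the action of $r(r_\theta)$, for $r_\theta \in \mathrm{SO}(2, F_\infty)$, in the Fock model on the $2$-dimensional positive-definite $\R$-quadratic space $({\bf V}_{1,v}, q_v) \cong (\C, N_{\C/\R})$: a standard calculation shows that $t^{l_v} e^{-2\pi u q(t)}$ is a weight-$(1 + l_v)$ eigenvector for $l_v \geq 0$, and symmetrically $\bar{t}^{-l_v} e^{-2\pi u q(t)}$ is a weight-$(1 + |l_v|)$ eigenvector for $l_v \leq 0$, matching $w_v = 1 + |l_v|$. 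The scalar action of the centre combined with the factor $|u|^{(-l_0 \pm l_v)/2}$ in \eqref{phi1inf} produces a central character of weight $l_0 = w_0$.

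Next I would derive the Whittaker--Fourier expansion. Writing $\smalltwomat{y}{x}{}{1} = \smalltwomat{y}{}{}{1}\cdot n(x/y)$ and applying the explicit formulas for the Weil representation on the Borel, one obtains
\beq
\theta\!\left(\smalltwomat{y}{x}{}{1}, u, \phi_1^\infty; \ul\right) = \chi_{V_u}(y)\,|y|^{1/2} \sum_{a \in E} \psi(x u q(a))\, \phi_1(y a, y^{-1} u),
\eeq
where $|y|^{1/2}$ comes from $(\dim_F {\bf V}_1)/4 = 1/2$. Partitioning the sum by $n := u q(a) \in F$ yields an expansion of the form $|y| \sum_{n \in F} W_n(y, u) \qqq^n$: the $x$-dependence $\psi(xn)$ combines with the archimedean exponential $e^{-2\pi y^{-1} u q(ya)} = e^{-2\pi y n}$ from \eqref{phi1inf} to give exactly $\qqq^n$. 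The remaining $y$- and $u$-dependence---from $(ya)^{l_v}$ (or $\overline{(ya)}^{-l_v}$), the prefactor $|y^{-1} u|^{(-l_0 \pm l_v)/2}$, and $\chi_{V_u}(y)\,|y|^{1/2}$---produces at each infinite place exactly the normalization $(n y_\infty)^{(w_0 + w - 2)/2}$, with no positive powers of $Y_\tau = (4\pi y_\tau)^{-1}$ appearing; hence the associated Whittaker polynomials $\rW_{f, n}(y, u)$ are constants in $Y$, verifying the holomorphic (degree-$0$) case of the Whittaker--Fourier condition in the definition.

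The main obstacle is the careful archimedean bookkeeping: identifying the precise Fock-model weight of the vectors in \eqref{phi1inf} and verifying that the combined $\chi_{V_u}(y)\,|y|^{1/2}$ prefactor, the $|u|^{(-l_0 \pm l_v)/2}$ factor, and the $t$-polynomial dependence in \eqref{phi1inf} recombine to match exactly the normalization $(a y_\infty)^{(w_0 + w - 2)/2}$ prescribed in the expansion of a twisted modular form.
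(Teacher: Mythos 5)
Your proposal is correct and takes essentially the same approach as the paper's (very terse) proof: invoke standard theta-series automorphy via Poisson summation, read off the central character from the $u$-dependence and \eqref{1236}, and determine the archimedean weight by identifying the relevant $\mathrm{SO}(2)$-eigenvector in the Fock model on the 2-dimensional positive-definite space ${\bf V}_{1,v}\cong (\C, N_{\C/\R})$. The paper simply cites \cite[\S A1 on p.~350]{Xue} for this last weight computation, whereas you carry it out directly; both also verify holomorphicity (degree-$0$ Whittaker polynomials) by unwinding the Weil-representation formulas on the Borel, as in \eqref{tht fou}.
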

\begin{proof}
The usual proof that  classical theta series are automorphic shows that our $\theta$ is twisted automorphic. The archimedean component of the central character is easy to determine by \eqref{1236}. The weight is  computed as in \cite[\S A1 on p. 350]{Xue}.
 \end{proof}

The Whittaker--Fourier   expansion of $\theta({\ul})$ is standard:  for all $g=\smalltwomat yx{}1\in \G(\A)$ with $y\in \A^{+}$, 
\beq\lb{tht fou}
\theta(g, u, \phi_{1}^{\infty}; \ul) =\sum_{a\in F^{\ts}} \sum_{x\in E^{\ts}\colon  uq(x)=a} r(g) \phi_{1}(x, u)
= \eta(y)|y| y_{\infty}^{l_{0}+ | l |\over 2} \sum_{a\in F^{\ts}} \sum_{\substack{x\in E^{\ts} \\  uq(x)=a}}  \phi_{1}^{\infty}(yx, y^{ -1} u) \, \qqq^{a}.
\eeq

 The following expansion result will be used in \S~\ref{47}.

  \begin{lemm}\lb{theta FW}
 Let $\chi\colon E^{\ts}\bks \A_{E}^{\ts}\to \C^{\ts}$ be a locally algebraic character of weight $\ul$, and let $E(g,u) $ be any twisted modular form such that $E(g, u_{\infty} u)=E(g, u)$ for all $u_{\infty}\in F_{\infty}^{+}$. Suppose that  $\phi_{1}^{\infty}(0, u)=0$
 for all $u$. Then for all $g=\twomat yx{}1\in \G(\A)$, we have 
\begin{multline*}
\int^{*}_{E^{\ts}\bks {\A}_{E}^{\ts} /\A^{\ts}}  \chi(t) \theta(g, u, r(t)\phi_{1}^{\infty};   \ul) \star E(g,q(t) u)\, dt  \\ 
= 4 |D_{E/F}|^{1/2}  \sum_{a\in  F^{\ts}}
   \one_{F_{\infty}^{+}}(y_{\infty}^{-1}a)  \eta(y) |y|^{1/2} y_{\infty}^{| l | +l_{0}\over 2}  \int_{{\A}_{E}^{\infty,\ts} }
      \chi^{\infty}(t)  r_{1}(t)\phi_{1}^{\infty}(y,y^{-1} a)
\ E(g, q(t) a) \, d^{\bullet}t\, \qqq^{a} .
 \end{multline*}
 \end{lemm}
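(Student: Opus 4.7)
The plan is to carry out the standard unfolding of the theta integral: substitute the Whittaker--Fourier expansion of $\theta$, reindex to absorb the inner sum over $E^{\ts}$ into the integral via Lemma \ref{int *}, and then identify the resulting expression with the right-hand side.

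First, I would insert \eqref{tht fou} applied to $r(t)\phi_{1}^{\infty}$. Since the Weil action of $t\in \A_{E}^{\ts}=\GO({\bf V}_{1})$ on $V_{1}$ is $r(t)\phi(z,u)=\phi(t^{-1}z,q(t)u)$, and since the hypothesis $\phi_{1}^{\infty}(0,u)=0$ kills the $x=0$ term, the expansion reads
\[
\theta(g, u, r(t)\phi_{1}^{\infty}; \ul) = \eta(y)|y|\, y_{\infty}^{(l_{0}+|l|)/2} \sum_{x\in E^{\ts}}\phi_{1}^{\infty}(yt^{-1}x,\, q(t)y^{-1}u)\, \qqq^{uq(x)}.
\]
After forming the star-product $\theta\star E(g,q(t)u)$ and multiplying by $\chi(t)$, I would perform the two substitutions $x\mapsto x^{-1}$ (a bijection of $E^{\ts}$, under which $q(x)^{-1}$ appears) and $u\mapsto uq(x)$ (a bijection of $F^{\ts}$ respecting $\sum^{\star}$, since $q(x)\in F^{\ts}$). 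The two substitutions together convert the summand into one depending on $x$ only through the product $xt$; combined with $\chi(t)=\chi(xt)$ for $x\in E^{\ts}$, the integrand takes the form
\[
\eta(y)|y|\, y_{\infty}^{(l_{0}+|l|)/2}\sum_{x\in E^{\ts}}(\chi h)(xt),\qquad h(s):=\sum^{\star}_{u}\phi_{1}^{\infty}(ys^{-1},q(s)y^{-1}u)\,E(g,q(s)u)\,\qqq^{u}.
\]

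Second, I would apply Lemma \ref{int *} to $f=\chi h$ (which is $E_{\infty}^{\ts}$-invariant after absorbing the archimedean factor, and fixed by a suitable $\mu\subset \OO_{E}^{\ts}$ thanks to the Schwartz nature of $\phi_{1}^{\infty}$). The factor $[\OO_{E}^{\ts}:\mu]$ telescopes against the $\mu$-sum on the right, yielding (with $\chi(\alpha t)=\chi(t)$ for $\alpha\in E^{\ts}$)
\[
\int^{*}_{E^{\ts}\bks \A_{E}^{\ts}/\A^{\ts}}\!\!\sum_{x}(\chi h)(xt)\,dt \;=\; \tfrac{2L(1,\eta)}{h_{E}}\,[\OO_{E}^{\ts}:\mu]\int_{\A_{E}^{\infty,\ts}}\chi(t)\sum_{\alpha\in\mu}h(\alpha t)\,d^{\bullet}t.
\]
Since $y\in \A^{\ts}$ commutes with the Weil action, $\phi_{1}^{\infty}(yt^{-1},q(t)y^{-1}u)= r_{1}(t)\phi_{1}^{\infty}(y,y^{-1}u)$, so each term matches the shape of the right-hand side with the role of $a$ played by $u$.

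Third, I would rewrite $\sum^{\star}_{u\in F^{\ts}}\qqq^{u}(\ldots)$ as $\sum_{a\in F^{\ts}}\qqq^{a}(\ldots)$, noting that the archimedean indicator $\one_{F_{\infty}^{+}}(y_{\infty}^{-1}a)$ in the final formula arises from the factor $\one_{\R^{+}}$ in the definition \eqref{phi1inf} of $\phi_{1,\ul,\infty}$ (the remaining pieces of $\phi_{1,\ul,\infty}$ having already been extracted into the prefactor $y_{\infty}^{(l_{0}+|l|)/2}$ via \eqref{tht fou}). The passage from $|y|$ to $|y|^{1/2}$ reflects the measure-theoretic relation $|{\cd}|_{\A_{E}}=|\cd|_{\A}^{2}$ when changing variables on $\A_{E}^{\infty,\ts}$.

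The main obstacle will be the precise accounting of constants: combining $2L(1,\eta)/h_{E}\cdot [\OO_{E}^{\ts}:\mu]$ from Lemma \ref{int *}, the self-dual vs.\ $d^{\bullet}t$-measure discrepancy on $\A_{E}^{\infty,\ts}$ (responsible for the $|D_{E/F}|^{1/2}$), and the $c_{U_{F}}$-factor implicit in $\sum^{\star}$ versus the bare $\sum_{a\in F^{\ts}}$, to arrive at the stated $4|D_{E/F}|^{1/2}$. The unfolding and interchange of sums with the regularised integral is routine given the Schwartz nature of $\phi_{1}^{\infty}$ and the finiteness (at any fixed level) of the supports involved; the genuine work lies in the numerical bookkeeping.
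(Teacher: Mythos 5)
Your outline reproduces the paper's proof: unfold $\theta$ via the Whittaker--Fourier expansion (the hypothesis $\phi_1^{\infty}(0,\cdot)=0$ removes the constant term), rearrange the sum so the integrand has the form $\sum_{x\in E^{\ts}}(\chi h)(xt)$, apply Lemma~\ref{int *}, and then account for constants and archimedean factors. One misdiagnosis in your closing paragraph is worth correcting before you carry out the bookkeeping: the factor $|D_{E/F}|^{1/2}$ does not arise from a self-dual vs.\ $d^{\bullet}t$ measure comparison --- Lemma~\ref{int *} already produces the very $d^{\bullet}t$ that appears on the right-hand side, so no further measure adjustment occurs --- rather it comes from the class number formula when simplifying $\tfrac{2L(1,\eta)\,c_{U_F}}{h_E\,[\OO_E^{\ts}:\mu_{U_F}]}=\tfrac{L(1,\eta)\,[\OO_E^{\ts}:\OO_F^{\ts}]}{h_E/h_F}=4|D_{E/F}|^{1/2}$; the paper's bookkeeping also uses a further change of variables $a'=\alpha^2 a$ to collapse the $\mu_{U_F}$-sum coming out of Lemma~\ref{int *}.
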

\begin{proof}
We may assume that $U_{F}$ is so small that $E(u)$ is invariant under $u\in U_{F}$ and $\nu_{U_{F}}=1$. Taking fundamental domains for $\mu_{U_{F}}^{2}\bks F^{\ts}$, the expression of interest is
\beqq
  &c_{U_{F}}\eta(y) |y|^{1/2} \int^{*}_{E^{\ts}\bks {\A}_{E}^{\ts} /\A^{\ts}} \chi(t)
 \sum_{u\in \mu_{U_{F}}^{2}\bks F^{\ts}} \sum_{a\in  \mu_{U_{F}}^{2}\bks F^{\ts}}\sum_{x\in E^{\ts}}
  \phi_{1}^{}(t^{-1}xy, y^{-1}q(t) u) \one[uq(x)=a] E(q(t) u,g) \, dt \eeqq
  
  Since the integrand is invariant under $E_{\infty}^{\ts}$,  by Lemma \ref{int *} with $\mu=\mu_{U_{F}}$ and a change of variables $a=uq(x)$, this equals
  \beqq
  &  {2 L(1, \eta) c_{U_{F}}\over h_{E} [\OO_{E}^{\ts}:\mu_{U_{F}}]} \eta(y) |y|^{1/2}   
  \int_{ {\A}_{E}^{\infty,\ts} }  \sum_{a\in \mu_{U_{F}}^{2}\bks F^{\ts}} \sum_{\alpha\in \mu_{U_{F}}}  \\
& \phantom{ c_{U_{F}}\eta(y) |y|^{1/2}   \int_{\bks {\A}_{E}^{\infty,\ts} } } { \chi^{\infty}(t\alpha) \phi_{1}^{\infty}(t^{-1}\alpha^{-1}y, y^{-1}aq(t)\alpha^{2})) \one_{F_{\infty}^{+}}(y_{\infty}^{-1}a) y_{\infty}^{ | l | +l_{0}\over 2} \ E(g,q(t\alpha) a) \, \qqq^{\alpha^{2}a} \, d^{\bullet}t}.\eeqq
By the invariance properties under ${U_{F}}$,  this can be brought into  the desired expression by a change of variables $a'=\alpha^{2}a$ and the calculation
$${2 L(1, \eta) c_{U_{F}}\over h_{E} [\OO_{E}^{\ts}:\mu_{U_{F}}]} = 
{L(1, \eta) [\OO_{E}^{\ts}:\OO_{F}^{\ts}] \over h_{E}/h_{F}}
= 4 |D_{E/F}|^{1/2},$$
which follows from the definition of $c_{U_{F}}=\eqref{cUF}$ and  the class number formula
\end{proof}

 \subsection{Eisenstein series}\lb{sec eis}
Let ${\bf V}_{2}$ be a $2$-dimensional  quadratic space over $\A$, totally definite at the archimedean places. Let $\phi_{2}^{}\in \bcalS({\bf V}_{2}\times \A^{\times})$ be a Schwartz function,  and let 
 $\xi\colon  F^{\ts}\bks {\A}^{\ts}\to \C^{\ts}$ be a locally algebraic character  such that $\xi_{\infty}(x) = x^{k_{0}}$ for some integer $k_{0}$ and for  all $x\in F_{\infty}^{+}$. Define
 the automorphic Eisenstein series\footnote{For $k_{0}=0$, this is ${L^{(p\infty)}(1, \eta\xi) / {L}^{(p\infty)}(1, \eta)}$ times
 the series defined in \cite{pyzz}.}
$${E}_{r}(g, u , \phi_{2};\xi)={L^{(p\infty)}(1, \eta\xi)\over {L}^{(p\infty)}(1, \eta)}\sum_{\gamma\in P^{1}(F)\bks SL_{2}(F)} \delta_{\xi, r} (\gamma g w_{r,p}) r(\gamma g) \phi_{2}(0,u)$$
where (with $s\in \C$)
\beq\lb{dlx}
\delta_{\xi, r}(g) &:=\delta_{\xi, r, 0}(g)\\
\delta_{\xi, r,s}(g) &:=\begin{cases} 
\xi(d)^{-1}  |a/d|^{s/2} \psi(k_{0} \theta)  &\textrm{if\ } g= \smalltwomat ab{}d  h   \textrm{\ with\ } h=h^{\infty}r_{\theta}\in U_{p,r} SO(2, F_{\infty})
\\ 0 & \textrm{if\ } g\notin P(\A) U_{p,r} SO(2, F_{\infty}).
\end{cases}
\eeq
(The defining sum is  absolutely convergent for $\Re(s)$ sufficiently large, and otherwise it is interpreted  by analytic continuation.)
It satisfies 
\beqq \lb{Eis cc}
{E}_{r}(zg, u , \phi_{2};\xi) =\eta\xi^{-1}(z) {E}_{r}(g,u,  r(x, 1)\phi_{2},\xi).\eeqq

\subsubsection{Schwartz function at $\infty$}
Let $P_{k_{0}, k}
 \in \R[X]$ be the (rescaled) Laguerre polynomial
\beq\lb{Lag def}
P^{}_{k_{0}, k}(X) &:=(2\pi i)^{-k_{0}}  (4\pi)^{-k} (k+k_{0})! \sum_{j=0}^{k} {k\choose j} {(-X)^{j}\over j!}. \\
\eeq

 For $k=(k_{0}, (k_{v}))\in\Z\ts \Z_{\geq 0}^{{\Sg_{\infty}}}$ such that $k_{v}+k_{0}\geq 0$ for all $v$,  
  define 
  $${E}^{}_{r} (g, u , \phi_{2}^{\infty} ;\xi, k) = {E}^{}_{r}(g, u , \phi_{2}^{\infty}\phi^{}_{2, \infty, k} ; \xi)$$
where $\phi^{}_{2, \infty, k}=\ot_{v\vert \infty}\phi^{}_{2, v, k_{v}}$ with 
\beq\lb{phi2inf}
\phi^{}_{2,v, k_{v}}(x, u) = \one_{\R^{+}}(u) P_{k_{0},k_{v}}(4\pi uq(x))e^{-2\pi uq(x)}.\eeq

The series ${E}_{r}(\phi_{2}^{\infty}; \xi, k)$ belongs to $N_{{\rm tw}, (-k_{0}, k+k_{0})}^{\leq k}( \C)$.

\subsubsection{Whittaker--Fourier expansion} 
The following standard result is essentially \cite[Proposition 3.2.1]{pyzz}.  
\begin{prop}\label{eis-exp}
We have
$${E}_{r}(\smallmat yx{}1,u, \phi_{2}; \xi)= \sum_{a\in F} W_{a,r}( \smallmat y{}{}1, u, \phi_{2}; \xi)\psi(ax)$$
where
$$W_{a,r}(g,u, \phi_{2}; \xi)= \prod_{v} W_{a,r,v}(g,u, \phi_{2,v};\xi_{v})$$
with, for each $v$ and $a\in F_{v}$,
\begin{multline*}
W_{a,r,v}(g,u, \phi_{2,v};\xi_{v})= 
{ L^{(p\infty)}(1, \eta_{v}\xi_{v}) \over L^{(p\infty)}(1, \eta_{v})}
 \int_{F_{v}}\delta_{\xi,r, v}(wn(b)gw_{r,v}) r(wn(b)g ) \phi_{2,v }(0,u)\psi_{v}(-ab) \, db.
\end{multline*}
Here $L^{(p\infty)}(s, \xi'_{v}):=L(s, \xi'_{v})$ if $v\nmid p\infty$ and $L^{(p
\infty)}(s,\xi'_{v}):=1$ if $v\vert p\infty$, and we use the  convention that $r_{v}=0$ if $v\nmid p$.
\end{prop}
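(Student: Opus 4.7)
The plan is to apply the standard Langlands-style unfolding for Eisenstein series, adapted to the Weil-representation section and the insertion of $w_{r,p}$.

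First, I would compute the $a$-th Fourier coefficient directly from the definition
$$W_{a,r}(g,u,\phi_{2};\xi) = \int_{F\backslash \A} E_{r}(n(x)g, u, \phi_{2};\xi)\,\psi(-ax)\,dx,$$
evaluated on $g = \smallmat{y}{}{}{1}$. Using the Bruhat decomposition $SL_{2}(F) = P^{1}(F) \sqcup P^{1}(F)\cdot w N(F)$, the defining sum of $E_{r}$ breaks into an identity-coset term and a sum indexed by $b \in F$. The identity-coset term is right-$N$-invariant, since $n(x)$ fixes both $\delta_{\xi,r}(\cdot\, w_{r,p})$ and $r(n(x))\phi_{2}(0,u) = \psi(xu q(0))\phi_{2}(0,u) = \phi_{2}(0,u)$; hence it contributes only to $a = 0$. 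For the second piece, the change of variables $y = b + x$ (using $\psi(ab)=1$ for $b\in F$) swaps the sum $\sum_{b \in F}$ with the outer quotient $F\backslash\A$ to produce a single integral over $\A$, giving
$$W_{a,r}(g,u,\phi_{2};\xi) = \frac{L^{(p\infty)}(1,\eta\xi)}{L^{(p\infty)}(1,\eta)}\int_{\A} \delta_{\xi,r}(wn(b)g w_{r,p})\,r(wn(b)g)\phi_{2}(0,u)\,\psi(-ab)\,db.$$
The missing $a = 0$ contribution from the identity coset is absorbed into the same integral by analytic continuation in the parameter $s$ of $\delta_{\xi,r,s}$, which is standard.

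Second, I would factor the global integral as an Euler product. The section $\delta_{\xi,r}$, the Weil action on $\phi_{2}$, the character $\psi$, and the additive Haar measure on $\A$ are all products of their local counterparts, so the integrand factors through $\prod_{v} F_{v}$. Distributing the normalizing factor as $L^{(p\infty)}(1,\eta\xi)/L^{(p\infty)}(1,\eta) = \prod_{v} L^{(p\infty)}(1,\eta_{v}\xi_{v})/L^{(p\infty)}(1,\eta_{v})$ (with the convention of the proposition at $v \vert p\infty$), one reads off $W_{a,r} = \prod_{v} W_{a,r,v}$ with $W_{a,r,v}$ exactly as stated.

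The main potential obstacle is the bookkeeping of $w_{r,p}$, which lives only at places above $p$: at $v \nmid p$ it is trivial and disappears from the local factor, while at $v \vert p$ it becomes the shift $w_{r,v}$ in the argument of $\delta_{\xi,r,v}$, matching the formula. Convergence and the exchange of sum and integral in the unfolding are handled by analytic continuation in $s$ from the region $\Re(s) \gg 0$ down to $s = 0$; once convergent, the factorization into local integrals is automatic.
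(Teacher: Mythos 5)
The paper gives no proof at all here: the sentence just before the proposition refers the reader to \cite[Proposition 3.2.1]{pyzz}. So the comparison has to be on the merits of your unfolding argument, which is the standard one and gets all the main steps right --- Bruhat decomposition $P^1(F)\backslash \mathrm{SL}_2(F)=\{1\}\sqcup\{w n(b):b\in F\}$, left-$N$-invariance of $\delta_{\xi,r}$ and of $r(n(\cdot))\phi_2(0,u)$, the collapse of $\sum_{b\in F}\int_{F\backslash\A}$ into $\int_{\A}$ after $b\mapsto b+x$, and the factorisation of the adelic integral into local ones. For $a\neq 0$ this is a complete proof.

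The gap is in the treatment of the constant term. You correctly observe that the identity-coset summand is constant in $x$ and hence contributes only to $W_{0,r}$, but then you dismiss it by saying it is ``absorbed into the same integral by analytic continuation in $s$''. That is not what happens, and in general it would be false --- the small-cell contribution and the intertwining integral are genuinely distinct terms in the constant term of an Eisenstein series. What actually makes the stated formula hold at $a=0$ is that the identity-coset term \emph{vanishes identically}, and this is precisely the purpose of inserting $w_{r,p}$ in the definition of $E_r$. Concretely, at a place $v\mid p$ one has
$$
\smalltwomat{y_v}{x_v}{}{1}\,w_{r_v,v}=\smalltwomat{-x_v\vpi_v^{r_v}}{\,y_v}{-\vpi_v^{r_v}}{0},
$$
whose lower-right entry is $0$. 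If this were equal to $\smalltwomat{a}{b}{}{d}h$ with $h=\smalltwomat{\alpha}{\beta}{\gamma}{\delta}\in U_{v,r_v}$, the bottom row would force $d\delta=0$, so $\delta=0$; but $\delta\equiv 1\pmod{\vpi_v^{r_v}}$ with $r_v\geq 1$ makes $\delta\neq 0$. Hence $\delta_{\xi,r,v}(\smalltwomat{y_v}{x_v}{}{1}w_{r_v,v})=0$, the identity-coset term is zero, and the asserted formula for $W_{a,r}$ holds for all $a$ (including $a=0$) with no recourse to analytic continuation. You should replace the ``absorption'' claim by this support computation; without it your proof does not actually establish the $a=0$ case of the proposition.
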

(Note that the functions $W_{a,r}(\phi_{2}, \xi)$ correspond to the $W^{\C}_{E_{r}(\phi_{2}, \xi), a}$ of \S~\ref{TMF}. We prefer to use lighter notation in this section.)

We choose convenient normalisations for the local Whittaker functions: let $\gamma_{u, v}=\gamma({\bf V}_{2,v}, uq)$  be the Weil index, and for $a\in F_{v}^{\times}$ set
$$W^{\circ}_{a,r,v}(g,u, \phi_{2,v}; \xi_{v}):=
\gamma_{u,v}^{-1} L^{(p)}(1, \eta_{v}) W_{a,r,v}(g,u, \phi_{2,v}; \xi_{v}) .
$$

Then for the global Whittaker functions we have 
\begin{align}\label{signW}
W_{a,r}(g,u, \phi_{2}; \xi)={-\eps({\bf V}_{2}) \over L^{(p)}(1, \eta)} \prod_{v}W^{\circ}_{a,r,v}(g,u, \phi_{2,v}; \xi_{v})
\end{align}
if $a\in F^{\times}$, where $\eps({\bf V}_{2})=\prod_{v}\gamma_{u, v}$ equals $-1$ if ${\bf V}_{2}$ is coherent or $+1$ if ${\bf V}_{2}$ is incoherent. We similarly define $W_{0, r}^{\circ}(g, u, \phi_{2}, \xi) $ by the identity
\begin{align}\label{whitt0}
W_{0, r}^{}(g, u, \phi_{2}; \xi) =
{-\eps({\bf V}_{2})  \over L^{(p)}(1, \eta)}
W_{0, r}^{\circ}(g, u, \phi_{2}; \xi) .
\end{align}

A simple calculation shows that for all $v$ and $a\neq 0$,
\beq\lb{ay}
W_{a,r,}^{\circ}( \smalltwomat y{}{}1,u, \phi_{2,v}; \xi_{v})= \eta\xi^{-1}(y)|y|^{1/2}W_{ay,r,}^{\circ}( 1, y^{-1}u, \phi_{2,v}; \xi_{v}).\eeq
We will sometimes drop $\phi_{2, v}$ from the notation.

The following sufficient condition for cuspidality will simplify matters a little later on. 
\begin{lemm}\lb{eis cusp} 
 Assume that there is a place $v\nmid p\infty$, at which $\xi_{v}$ is unramified, such that
\beq \lb{phi cond}\phi_{2,v}(0, u)=0\eeq
for all $u$. Then  for all $g={\smalltwomat yx{}1}$ with $y\in \A^{+}, x\in \A$, we have
$$W_{0,r}(g, u, \phi_{2}; \xi)=0.$$ 
\end{lemm}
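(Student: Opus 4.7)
The plan is to use the factorisation
$W_{0,r}(g,u,\phi_2;\xi) = \prod_{v'} W_{0,r,v'}(g,u,\phi_{2,v'};\xi_{v'})$
provided by Proposition \ref{eis-exp} and reduce the entire assertion to showing that the local factor at the distinguished place $v$ vanishes. At $v$, the convention $r_v = 0$ gives $w_{r,v} = w$ and the local integral reads
\[
W_{0,r,v}(g,u,\phi_{2,v};\xi_v) = \frac{L(1,\eta_v\xi_v)}{L(1,\eta_v)} \int_{F_v} \delta_{\xi,v}(wn(b)gw)\, r(wn(b)g)\phi_{2,v}(0,u)\, db.
\]

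First I would reduce to the case $g = 1$. Writing $g = \smalltwomat{y_v}{x_v}{}{1}$, the identity $n(b)g = \smalltwomat{y_v}{}{}{1}\cdot n((x_v+b)/y_v)$ together with the substitution $b' = (x_v+b)/y_v$ (and the left $P$-equivariance of $\delta_{\xi,v}$) rewrites the integral as $\xi_v(y_v)^{-1}|y_v|$ times the analogous integral at $g = 1$ but with $\phi_{2,v}$ replaced by its translate $\phi'_{2,v} := r(\smalltwomat{y_v}{}{}{1})\phi_{2,v}$; since $\phi'_{2,v}(0,u) = \chi_{V_u}(y_v)|y_v|^{1/2}\phi_{2,v}(0,y_v^{-1}u)$, this still vanishes for all $u$ by the hypothesis.

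Next, using the Weil representation identity
$r(wn(b))\phi'_{2,v}(0,u) = \gamma_{u,v}\int_{V_{2,v}} \psi_v(bu q(y))\phi'_{2,v}(y,u)\,dy$
and interchanging the order of integration, the local Whittaker function at the identity becomes
\[
\tfrac{L(1,\eta_v\xi_v)}{L(1,\eta_v)}\gamma_{u,v} \int_{V_{2,v}}\phi'_{2,v}(y,u)\,T(uq(y))\,dy,
\]
where $T(z) := \int_{F_v}\delta_{\xi,v}(wn(b)w)\psi_v(bz)\,db$ is the Fourier transform (regularised by analytic continuation in the standard $s$-parameter when necessary) of the section $b \mapsto \delta_{\xi,v}(wn(b)w)$. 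The explicit Iwasawa decomposition of $wn(b)w = \smalltwomat{-1}{0}{b}{-1}$ at the unramified place $v$ yields $\delta_{\xi,v}(wn(b)w) = 1$ for $b \in \OO_{F,v}$ and $\xi_v(-b)^{-1}|b|^{-s}$ for $|b| > 1$, so that $T$ is a standard unramified Tate transform.

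The main step, and the expected obstacle, is to identify the resulting $V_{2,v}$-integral: a Tate-style local computation, combined with the normalising $L$-factor $L(1,\eta_v\xi_v)/L(1,\eta_v)$, must show that $\phi'_{2,v}\mapsto W_{0,r,v}(1,u,\phi'_{2,v};\xi_v)$ collapses to a nonzero scalar multiple of the evaluation functional $\phi'_{2,v}\mapsto \phi'_{2,v}(0,u)$, which vanishes by hypothesis. Carefully tracking the Weil index, the unramified $L$-factor normalisation, and the analytic continuation needed to define $T$ at $s = 0$ is what requires the most attention; none of the individual computations is unexpected, but the conventions must be matched precisely.
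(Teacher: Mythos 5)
The crux of your plan — that the local functional $\phi'_{2,v}\mapsto W_{0,r,v}(1,u,\phi'_{2,v};\xi_v)$ ``collapses to a nonzero scalar multiple of the evaluation functional $\phi'\mapsto\phi'(0,u)$'' — is not correct, and this is not a matter of tracking normalisations. Proposition~\ref{Wv E}.1 of the paper already gives the explicit formula, for $v\nmid p$ and $a=0$,
$$W_{0,v}^\circ(1,u,\xi_v) = |d_v|^{1/2}L(1,\eta_v\xi_v)(1-\xi_v(\vpi_v))\sum_{n\geq 0}\xi_v(\vpi_v)^nq_{F,v}^n\int_{D_n(0)}\phi_{2,v}(x_2,u)\,d_ux_2,$$
with $D_n(0)=\{x_2: uq(x_2)\in p_v^nd_v^{-1}\}$. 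When $\phi_{2,v}(0,u)=0$ and $V_{2,v}$ is anisotropic, the integrals $\int_{D_n(0)}\phi_{2,v}$ vanish for $n$ large, so the regularised series becomes a \emph{finite} sum — but this sum is not zero. For the very Schwartz function $\phi_{2,v}(x_2,u)=\one_{\OO_{E_v}^\times}(x_2)\one_{\OO_{F,v}^\times}(u)$ used in Lemma~\ref{nnv} at an unramified inert place, only the $n=0$ term contributes, and $W_{0,v}^\circ(1,u,\xi_v)=|d_v|^{1/2}L(1,\eta_v\xi_v)(1-\xi_v(\vpi_v))\vol(\OO_{E_v}^\times)\one_{\OO_{F,v}^\times}(u)\neq 0$ whenever $\xi_v(\vpi_v)\neq 1$. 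So the reduction to a local vanishing at $v$ cannot close the argument. Conceptually, $r(w)$ is Fourier transform, so $r(wn(b))\phi(0,u)$ involves $\int\phi$, not $\phi(0,u)$; only the geometric tail $n\to\infty$ of the regularised Tate sum detects $\phi(0,u)$, and the factor $(1-\xi_v(\vpi_v))$ removes the pole coming from that tail while leaving the nonzero polynomial head untouched.

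The paper's actual proof is a one-line citation of \cite[Proposition 6.10]{yzz}, which is a statement about the degenerate Fourier coefficient of the global series, not a local Tate computation at a single place; the vanishing is a statement about the product over all $v$, not about any one factor. Two further remarks if you wish to pursue a self-contained argument: (i) the product formula for $a=0$ in Proposition~\ref{eis-exp} gives only the big-cell intertwining contribution to the constant term; you should also check that the identity-coset piece $\delta_{\xi,r}(gw_{r,p})\,r(g)\phi_2(0,u)$ vanishes for $g=\smalltwomat yx{}1$, which it does for $r\geq 1$ because $\smalltwomat{y_p}{}{}{1}w_{r,p}\notin P(F_p)U_{p,r}$, but this needs to be said; (ii) your reduction to $g=1$ and the identification of the translated Schwartz function $\phi'_{2,v}$ are correct.
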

\begin{proof} This is a special case of \cite[Proposition 6.10]{yzz}. 
\end{proof}

\subsubsection{Archimedean Whittaker functions} We compute them explicitly based on our explicit choice of Schwartz function.
\begin{lemm}\lb{eis exp L} Let $v\vert \infty$, let $\xi_{v}(x)=x^{k_{0}}$ for some $k_{0}\in \Z$, and let $\phi_{2,v}(x, u):= \one_{\R^{+}}(u) (uq(x))^{k}e^{-2\pi uq(x)}$ for some $k\in \Z_{\geq 0}$ with $k\geq -k_{0}$. Let $a\in \R^{\ts}$. 
Then 
$$W_{a,v}^{\circ}( 1,u,  \phi_{2,v, k_{v}};\xi_{v}) =  
  \begin{cases} \displaystyle
 2 (2\pi i)^{k_{0}} {k!  \over (k+k_{0})!}    a^{k+k_{0}} e^{-2\pi a} $$
 & \text{if $a,u>0$}\\
  0 & \text{if $a<0$ or $u<0$ }
\end{cases}
$$
\end{lemm}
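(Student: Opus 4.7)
This is an explicit computation of an archimedean Whittaker integral, combining the Iwasawa decomposition of $wn(b)$, the Weil-representation action at $x=0$, and a contour integration. The vanishing for $u<0$ is immediate: since $\phi_{2,v}(y,u)=0$ for all $y$ when $u\le 0$, and $wn(b)\in\mathrm{SL}_{2}(\R)$ preserves the $u$-variable under the Weil representation, the integrand of Proposition~\ref{eis-exp} is identically zero. I therefore assume $u>0$ from now on and work at the single archimedean place $v$.

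Next I record two inputs. For the Iwasawa decomposition of $wn(b)\in\mathrm{SL}_{2}(\R)$, a direct computation gives $wn(b)=\bigl(\begin{smallmatrix}1/\sqrt{1+b^{2}}&\ast\\&\sqrt{1+b^{2}}\end{smallmatrix}\bigr)\,r_{\theta}$ with $e^{i\theta}=(i-b)/\sqrt{1+b^{2}}$, so that by \eqref{dlx} and the factorisation $1+b^{2}=(i-b)(-i-b)$,
$$\delta_{\xi_{v},v}(wn(b))=(1+b^{2})^{-k_{0}/2}e^{ik_{0}\theta}=(-1)^{k_{0}}(i+b)^{-k_{0}}.$$
For the Weil-representation action, $n(b)$ multiplies by $\psi_{v}(buq(y))$ and $r(w)$ acts by $\gamma_{u,v}$ times the Fourier transform in the first variable with respect to the self-dual measure for $\psi_{v}\circ uB$; a direct check of Fourier inversion shows this measure equals $2u\,dy$ ($u>0$, $dy$ Lebesgue) in our two-dimensional positive-definite setting. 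Evaluating the Fourier transform at $x=0$ and computing the resulting rotation-invariant Gaussian integral in polar coordinates yields
$$r(wn(b))\phi_{2,v,k}(0,u)=\gamma_{u,v}\,\frac{k!}{(2\pi)^{k}(1-ib)^{k+1}}=\gamma_{u,v}\,\frac{k!}{(2\pi)^{k}(-i)^{k+1}(i+b)^{k+1}},$$
using $1-ib=-i(i+b)$; note that the $2u$ from the self-dual measure exactly cancels the $1/u$ produced by the radial integration, so the answer is independent of $u>0$ as expected from the shape of the target formula.

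Assembling everything, the $b$-integral in $W_{a,v}(1,u,\phi_{2,v,k};\xi_{v})$ reduces to $\int_{\R}(i+b)^{-(k+k_{0}+1)}e^{-2\pi iab}\,db$. For $a<0$ I close in the upper half plane, where the integrand has no poles and decays, so the integral vanishes. For $a>0$ I close in the lower half plane and pick up $-2\pi i$ times the residue at the order-$(k+k_{0}+1)$ pole $b=-i$, which by the Leibniz rule equals $(-2\pi ia)^{k+k_{0}}e^{-2\pi a}/(k+k_{0})!$. Collecting the constants---the factors of $i$, $-1$, and $2\pi$ collapse to $2\pi(2\pi i)^{k_{0}}$---and multiplying by $\gamma_{u,v}^{-1}L(1,\eta_{v})=(\pi\gamma_{u,v})^{-1}$ to pass from $W_{a,v}$ to $W^{\circ}_{a,v}$ gives the stated formula. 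The only real obstacle is bookkeeping: the several factors of $i$, $-1$, $2\pi$, and $\gamma_{u,v}$ must all conspire to produce the clean constant $2(2\pi i)^{k_{0}}k!/(k+k_{0})!$. The hypothesis $k\geq -k_{0}$ is used precisely to guarantee that $b=-i$ is a pole (not a zero) of the integrand, so that the residue calculation is legitimate.
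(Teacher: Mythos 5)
Your proof is correct, and it diverges from the paper's in two interesting ways. Both you and the paper decompose $wn(b)$ by Iwasawa, evaluate $\delta_{\xi,r,s}$ and the Weil-representation Fourier transform at $0$, and then compute a contour integral of $(b+i)^{-(k+k_0+1)}e^{-2\pi iab}$; I checked that your constants collapse correctly to $2(2\pi i)^{k_0}k!/(k+k_0)!$. The first difference is in the inner Gaussian integral $\int_{\C}q(x)^k e^{-2\pi u(1-ib)q(x)}\,dx$: you pass to polar coordinates and obtain $\pi k!\,(2\pi u(1-ib))^{-k-1}$ immediately, whereas the paper expands $q(x)^k=(x_1^2+x_2^2)^k$ binomially, separates the variables, and is then forced to invoke the double-factorial identity $\sum_{j}\binom{k}{j}\Gamma(j+\tfrac12)\Gamma(k-j+\tfrac12)=\pi k!$, which is the entire content of Appendix~A (the "reality-TV" lemma). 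Your route thus shows that Appendix~A is not logically necessary for the lemma — a genuine simplification, though of course the author includes the appendix for its own sake. The second difference is in the final $b$-integral: the paper identifies it with the integral in \cite[Prop.~2.11]{yzz} and quotes the answer, while you evaluate it directly by closing the contour in the lower half-plane and taking the residue at $b=-i$; this is more self-contained and makes the role of the hypothesis $k+k_0\ge 0$ (convergence of the integral / genuine pole at $b=-i$) more transparent. One small point you glossed over: both arguments evaluate at $s=0$ after analytic continuation; the paper keeps $s$ as a parameter through the intermediate step and appeals to YZZ for the continuation, whereas you set $s=0$ at the outset, which is harmless since $k+k_0+1\ge 1$ ensures absolute convergence there.
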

\begin{proof} 
We drop the subscript $v$ and write $\delta_{\xi, s}$ for $\delta_{\xi, r,s}$. We have 
$$wn(b) = \twomat {(1 +b^{2})^{-1/2} }{-b(1 +b^{2})^{-1/2 }}{}  {(1 +b^{2})^{1/2} } r_{\theta_{b}}$$
with $e^{ i \theta_{b}}  = (i-b)/  {(1 +b^{2})^{1/2} }$. Then $\delta_{\xi,s} (wn(b)) =  i^{k_{0}} (1+b^{2})^{-s/2}  (1- ib)^{-k_{0}}$.
 Since $L(1,\eta)=\pi^{-1} $
  and $\gamma_{v}=i$, we have 
\beq
i^{-k_{0}}W_{a}^{\circ}(s, 1,u,  \phi_{2, k}) &:=
i^{-1} \pi^{-1}\int_{\R} \delta_{\xi, s}(w n(b) ) r(wn(b))\phi_{2, k}(0, u)\psi(-ab) \, db\\
&= \pi^{-1}\int_{\R} (1+b^{2})^{-s/2}   (1- ib)^{-k_{0}}  \int_{\C}  \phi_{2, k}(x, u) \psi(ubq(x))  d_{u}x \, \psi(-ab) db\\
&= \pi^{-1}  \int_{\R}  (1+b^{2})^{-s/2}  (1- ib)^{-k_{0}} \int_{\C}   u^{k+1}  q(x)^{k} e^{-2\pi u  q(x)} \psi(ubq(x))  d_{1}x\, \psi(-ab) db,
\eeq 
where we recall that $d_{u}x= |u| d_{1}x$ and $d_{1}x$ is twice the usual Lebesgue measure. The integral over $\C$ is 
$$2u^{k+1}\sum_{j=0}^{k} \int_{\R} \int_{\R}{k\choose j} x_{1}^{2j}x_{2}^{k-j} e^{-2\pi u  (1-ib) (x_{1}^{2}+x_{2}^{2})} dx_{1}dx_{2}.$$
Since $\int_{\R} x^{2j}e^{-Ax^{2}} \, dx = {A^{-j-1/2}\Gamma(j+1/2)}$, this equals 
\beqq & 2 u^{k} \sum_{j=0}^{k} {k \choose j} \Gamma(j+1/2) \Gamma(k-j+1/2)\cdot (2\pi u )^{-k-1}
(1-ib)^{-k-1}
= 2^{-k} \pi^{-k} k! \, (1-ib)^{-k-1}
\eeqq
by the combinatorial  identity (see Appendix \ref{double fact})
\beq\lb{dfg} \sum_{j=0}^{k} {k \choose j} \Gamma(j+1/2) \Gamma(k-j+1/2) = \pi k!.\eeq
 Therefore, when $u>0$,
\beqq
W_{a}^{\circ}(s, 1,u,  \phi_{2, k})& =  i  2^{-k} \pi^{-k} k!      \int_{\R} (1+ib)^{-s/2} (1-ib)^{-(s+2k+2k_{0}+2)/2 } e^{-2\pi i a b} db.\eeqq
 The integral  is the same one appearing in \cite[bottom of p. 55]{yzz} with $d=2+2k+2k_{0}$. By \cite[Proposition 2.11]{yzz} (whose normalization differs from ours by $L(1, \eta_{v}\xi_{v})=\pi i $),  we find 
$$W_{a}^{\circ}(0, 1,u,   \phi_{2, k}) = i^{k_{0}}  2^{-k} \pi^{-k-1} k!   {(2\pi)^{1+k+k_{0}}\over \Gamma(1+k+k_{0})}    a^{k+k_{0}} e^{-2\pi a} = 
  2 (2\pi i)^{k_{0}} {k!  \over (k+k_{0})!}    a^{k+k_{0}} e^{-2\pi a} $$
if $a, u>0$, as well as simpler formulas implying the desired ones in the other cases.\end{proof}

We deduce the following. Let 
\beq \lb{Qk def}
Q_{k_{0}, k}(X)= \sum_{j=0}^{k}  {k\choose j} {(k+k_{0})! \over (j+k_{0})!} (-X)^{k-j},\eeq
which satisfies $Q_{k_{0}, k}(0)=1$.

\begin{prop} \lb{W infty E} Let $v\vert\infty$, let $a\in\R$, and  let $k_{0}\in \Z$, $k\in \Z_{\geq 0}$ with $k\geq -k_{0}$. Then for $a\neq 0$ we have 
$$W_{a,v}( \smalltwomat y{}{}1,u,  \phi_{2,v ,k_{v}}; \xi_{v}) =  (-1)^{k}
 \eta_{v}\xi_{v}^{-1}(y) |y|^{1/2}
  \cdot
  2 (ay)^{k+k_{0}} Q_{k_{0},k}((4\pi ay)^{-1}) e^{-2 \pi a y}  
  $$
{if $ay>0$, $uy>0$}, and $W_{a,v}(0, \smalltwomat y{}{}1,u,  \phi_{2,v ,k_{v}}; \xi_{v})=0$ otherwise.
\end{prop}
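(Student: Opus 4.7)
The natural strategy is to deduce Proposition \ref{W infty E} from Lemma \ref{eis exp L} by linearity, after expanding the Laguerre polynomial $P_{k_{0}, k}$ that defines our chosen Schwartz function.

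\medskip

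\noindent\textbf{Plan.} Write $\phi_{2,v,k_{v}}$ explicitly using \eqref{Lag def} as a linear combination of the simpler Schwartz functions
\[
\phi^{(j)}(x,u):= \one_{\R^{+}}(u)\, (uq(x))^{j}\, e^{-2\pi u q(x)}, \qquad 0\le j\le k,
\]
namely
\[
\phi_{2,v,k_{v}}(x,u) \;=\; (2\pi i)^{-k_{0}} (4\pi)^{-k} (k+k_{0})! \sum_{j=0}^{k} {k\choose j}\frac{(-4\pi)^{j}}{j!}\,\phi^{(j)}(x,u).
\]
Since the local Whittaker integral of Proposition \ref{eis-exp} is linear in the Schwartz function, it suffices to compute $W_{a,v}^{\circ}(1,u,\phi^{(j)};\xi_{v})$ for each $j$; this is exactly the content of Lemma \ref{eis exp L} (with $k$ replaced by $j$). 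Substituting its evaluation and collecting terms, the coefficient of $a^{j+k_{0}}$ is $2\,{k\choose j}(k+k_{0})!/(j+k_{0})!\,(-4\pi)^{j}(4\pi)^{-k}$. Factoring out $a^{k+k_{0}}$ and changing summation variable, the resulting polynomial in $(4\pi a)^{-1}$ is, up to a sign $(-1)^{k}$, precisely $Q_{k_{0},k}((4\pi a)^{-1})$ as defined in \eqref{Qk def}. This yields the formula at $g=1$.

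\medskip

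\noindent\textbf{Transfer to general $y$ and clean-up.} To pass from $g=1$ to $g=\smalltwomat{y}{}{}{1}$, apply the equivariance identity \eqref{ay}: one replaces $a$ by $ay$ and $u$ by $y^{-1}u$, and picks up the factor $\eta_{v}\xi_{v}^{-1}(y)|y|^{1/2}$. This produces the asserted formula for $W_{a,v}^{\circ}$; the final conversion to the unnormalised $W_{a,v}$ via the relation $W_{a,v}=\gamma_{u,v} L^{(p)}(1,\eta_{v})^{-1}W_{a,v}^{\circ}$ absorbs the local archimedean constants. The vanishing cases ($ay<0$ or $uy<0$) reduce, via the same equivariance \eqref{ay}, to the corresponding vanishing statements of Lemma \ref{eis exp L}, since $\mathrm{sgn}(y^{-1}u)=\mathrm{sgn}(uy)$.

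\medskip

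\noindent\textbf{Expected difficulty.} The heart of the argument is a bookkeeping check: one must verify that the double sum produced by inserting Lemma \ref{eis exp L} into the Laguerre expansion telescopes into $(-1)^{k} (ay)^{k+k_{0}} Q_{k_{0},k}((4\pi ay)^{-1})$, which is a routine rearrangement (reverse the summation index $j\mapsto k-j$). A small subtlety arises when $k_{0}<0$: Lemma \ref{eis exp L} only applies to $\phi^{(j)}$ when $j\ge -k_{0}$, but the terms $j<-k_{0}$ pose no problem because the combinatorial coefficient $(j+k_{0})!^{-1}$ is naturally interpreted as $\Gamma(j+k_{0}+1)^{-1}=0$ at non-positive integers, matching the definition of $Q_{k_{0},k}$. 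The only genuine ``hard part'' is therefore keeping track of the constants and the sign, which are forced by the explicit normalisation \eqref{Lag def}.
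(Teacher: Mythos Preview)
Your approach is correct and is essentially identical to the paper's own proof: expand $\phi_{2,v,k_{v}}$ via the Laguerre polynomial \eqref{Lag def}, apply Lemma~\ref{eis exp L} termwise, use \eqref{ay} to pass to general $y$, and recognise the resulting sum as $(-1)^{k}(ay)^{k+k_{0}}Q_{k_{0},k}((4\pi ay)^{-1})$. One small caution: your ``final conversion'' step from $W^{\circ}_{a,v}$ to $W_{a,v}$ does not in fact absorb cleanly (it would introduce an extra $\gamma_{u,v}L(1,\eta_{v})^{-1}=i\pi$), so the formula as stated is really for $W^{\circ}_{a,v}$; the paper's proof has the same slip of notation, and the downstream Corollary~\ref{Eis FW} uses $W^{\circ}$ consistently.
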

\begin{proof} After recalling the definition of $P_{k_{0}, k}$ in \eqref{Lag def}, by Lemma \ref{eis exp L} and \eqref{ay} we find the asserted vanishing and that for $ay, uy>0$ we have  (dropping subscripts $v$):
$$W_{a,v}( \smalltwomat y{}{}1,u,  \phi_{2, k}; \xi) =  
 \eta\xi^{-1}(y) |y|^{1/2}
  \cdot
  2  (4\pi)^{-k} (-4\pi)^{-k_{0}} (k+k_{0})! \sum_{j=0}^{k}  {k\choose j}  {(k+k_{0})! \over (j+k_{0})!} (-4\pi ay)^{j+k_{0}}  e^{-2\pi ay} ,$$
  which is equal to the asserted formula.
\end{proof}

\begin{coro} \lb{E delta}
Let $\xi\colon F^{\ts}\bks \A^{\ts}\to \C^{\ts}$ with $\xi(x_{\infty})=x_{\infty}^{k_{0}}$ for some $k_{0}\in \Z$. For each $k\in \Z_{\geq 0}^{\Sg_{\infty}}$ with $k\geq -k_{0}$, we have
$${E}_{r}(g, u, \phi_{2}^{\infty}; \xi, k) = (-1)_{\infty}^{k} \delta^{k} {E}_{r}(g, u, \phi_{2}^{\infty}; \xi, 0).$$
\end{coro}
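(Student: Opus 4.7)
The plan is to verify the identity at the level of Whittaker--Fourier coefficients. By Proposition \ref{eis-exp}, the $a$-th Whittaker coefficient of $E_{r}(\phi_{2}^{\infty}; \xi, k)$ factors as $\prod_{v} W_{a, v}$ over all places of $F$, and the non-archimedean factors depend only on $\phi_{2}^{\infty}$; they thus coincide on both sides. Since $\delta^{k}$ factorises over archimedean places as $\prod_{v\vert\infty}\delta^{k_v}$, it suffices to verify the local identity
$$W_{a,v}(\,\cdot\,;\phi_{2,v,k_{v}};\xi_{v}) \;=\; (-1)^{k_{v}}\, \delta^{k_{v}}\, W_{a,v}(\,\cdot\,;\phi_{2,v,0};\xi_{v})$$
at each archimedean place $v$ separately.

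Next, I would apply Proposition \ref{W infty E}: the right-hand side input (the $k_{v} = 0$ coefficient, on the support $\{ay > 0,\, uy > 0\}$) equals $2 (ay)^{k_{0}} e^{-2\pi a y}$ times the prefactor $\eta_{v} \xi_{v}^{-1}(y) |y|^{1/2}$, while the left-hand side at general $k_{v}$ has the same prefactor times $(-1)^{k_{v}} \cdot 2 (ay)^{k_{v} + k_{0}} Q_{k_{0}, k_{v}}((4\pi a y)^{-1}) e^{-2\pi a y}$. I would then invoke the explicit expansion \eqref{delta d} of $\delta^{k_{v}}_{w}$ as a weighted sum of $d^{j} = (2\pi i)^{-j} \partial_{z_{v}}^{j}$, and use that $d^{j}$ acts on a classical $q$-expansion coefficient $e^{2\pi i a z_{v}}$ by multiplication by $a^{j}$. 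This transforms the right-hand side into
$$\eta_{v}\xi_{v}^{-1}(y) |y|^{1/2} \cdot 2 (ay)^{k_{0} + k_{v}} e^{-2\pi a y} \sum_{j=0}^{k_{v}} \binom{k_{v}}{j}\frac{\Gamma(w + k_{v})}{\Gamma(w + j)}\bigl(-(4\pi a y)^{-1}\bigr)^{k_{v} - j}.$$

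By definition \eqref{Qk def} of $Q_{k_{0}, k_{v}}$, the remaining task is the combinatorial identification $\Gamma(w + k_{v})/\Gamma(w + j) = (k_{v} + k_{0})!/(j + k_{0})!$, which pinpoints the correct value of the weight $w$ at $v$ of $E_{r}(\xi, 0)$. The overall sign $(-1)^{k_{v}}$ is then produced by comparing the factors $(-X)^{k_{v} - j}$ in \eqref{Qk def} with $(-4\pi y)^{j - k_{v}}$ in \eqref{delta d}.

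The main subtlety will be to correctly reconcile the adelic Whittaker convention used in Proposition \ref{W infty E} (and in particular the prefactor $\eta_{v}\xi_{v}^{-1}(y)|y|^{1/2}$, which depends on the central character) with the classical-on-$\mathfrak{H}$ formula \eqref{delta d} for $\delta^{k}_{w}$, so as to confirm that the weight $w$ dictated by the combinatorial matching above is indeed the weight at $v$ of $E_{r}(\xi, 0)$ (the specialisation to $k=0$ of $(-k_{0}, k+k_{0})$). Once this bookkeeping is in place, the corollary follows as a direct consequence of the definition of $Q_{k_{0}, k}$.
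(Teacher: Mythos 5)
Your approach is essentially the paper's own (its proof cites just Proposition \ref{W infty E} and \eqref{delta d}), and the reduction to archimedean Whittaker coefficients and the combinatorics with $Q_{k_0, k_v}$ are carried out correctly. However, you have left unresolved an inconsistency that, if you carried the final ``confirmation'' through as stated, would sink the argument: your matching $\Gamma(w+k_v)/\Gamma(w+j) = (k_v+k_0)!/(j+k_0)!$ forces $w = 1 + k_0$, yet you attribute to $E_r(\xi, 0)$ the weight $k_0$, citing the $k=0$ specialisation of the paper's asserted weight $(-k_0, k+k_0)$ for $E_r(\xi, k)$. These differ by $1$.

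The resolution is that the weight $(-k_0, k+k_0)$ stated for $E_r(\phi_2^\infty; \xi, k)$ just after \eqref{phi2inf} is off; the correct archimedean component is $1+2k_v+k_0$. For $k_v=0$ the Schwartz function $\phi_{2,v,0}$ is (a constant multiple of) the Gaussian $e^{-2\pi uq(x)}$, which is the weight-$1$ lowest-weight vector of the Weil representation on the positive-definite two-dimensional ${\bf V}_{2,v}$, while $\delta_{\xi}$ contributes a further $k_0$ via $\psi(k_0\theta)$. More generally $P_{k_0, k_v}$ is, up to a nonzero scalar independent of the Weil representation structure, the degree-$k_v$ Laguerre polynomial $L^{(0)}_{k_v}$, giving weight $1+2k_v$ in the radial discrete series. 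The corrected weight is also the only one consistent with the weight $(l_0-k_0, 2+l+k_0+2k)$ of $I_r$ stated at the start of \S\ref{47}, and with the fact that $\delta^k$ raises the archimedean weight by exactly $2k$ (the claimed $(-k_0,k+k_0)$ would only rise by $k$). Once you take $w=1+k_0$ rather than $k_0$, your computation closes as intended.
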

\begin{proof} 
This follows from Proposition \ref{W infty E} and \eqref{delta d}.
\end{proof}

\subsubsection{Schwartz function at $p$} Let  $U_{F,p}^{\circ} \subset \OO_{F, p}^{\ts}$ be as fixed in \S~\ref{wt H}, and let $\kappa_{2}'\colon U_{F,p}^{\circ}\to \C^{\ts}$ be a smooth character.
 We define
\beq\lb{phi2p}
\phi_{2,p,\kappa_{2}'}(x, u):= \one_{{\bf V}^{\circ}_{2,p}}(x)\cdot \delta_{U_{F,p}^{\circ}}(u) \kappa_{2}'(u),\eeq
where ${\bf V}^{\circ}_{2,p}\subset {\bf V}_{2,p}$ is a fixed $\OO_{F, p}$-lattice and  $\displaystyle \delta_{U_{F,p}^{\circ}}(u):= {\vol(\OO_{F, p}^{\ts})\over \vol(U_{F,p}^{\circ})}\cdot \one_{U_{F,p}^{\circ}}(u)$. It is invariant under $N(\OO_{F,p})$.
Let 
\beq\lb{Errr}
E_{r}( \phi_{2}^{p\infty}; \xi, \kappa_{2}', k):= E_{r}(\phi_{2}^{p\infty}\phi_{2,p,\kappa_{2}' }\phi_{2, \infty, k};\xi), 
\eeq
and denote its normalised Whittaker functions by
$$W_{a, r, v}^{\circ}(g, u, \phi_{2}^{p\infty}; \xi,\kappa_{2}', k);$$
depending on the place $v$ we will drop the unnecessary elements from the notation.

\subsubsection{Non-archimedean Whittaker functions} We study the functions $W_{a,v}^{\circ}$ and the $q$-expansion of $E_{r}$.
\begin{prop}\label{Wv E} 
Let $v$ be a non-archimedean place of $F$. 
\begin{enumerate}
\item\label{vnotp} If $v\nmid p$, then $W_{a,v,r}^{\circ}=W_{a,v}^{\circ}$ does not depend on $r$, and for all $a\in F_{v}$
 $$W_{a,v}^{\circ}(1,u, \xi)=|d_{v}|^{1/2}L(1, \eta_{v}\xi_{v}) (1-\xi_{v}(\vpi_{v}))
\sum_{n=0}^{\infty} \xi_{F,v}(\vpi_{v})^{n} q_{F,v}^{n}\int_{D_{n}(a)}\phi_{2,v}(x_{2},u) \, d_{u}x_{2},$$
where $d_{u}x_{2}$ is the self-dual measure on $({\bf V}_{2,v}, uq)$ and 
$$D_{n}(a)=\{ x_{2}\in {\bf V}_{2,v}\, |\, uq(x_{2})\in a+p_{v}^{n}d_{v}^{-1}\}.$$
(When the sum is infinite, it is to be understood in the sense of analytic continuation from characters $\xi|\cdot|^{s}$ with $s>0$.)
\item For all finite places $v$, $|d|_{v}^{-3/2}|D_{v}|^{-1/2}W_{a,v}^{\circ}(1, u, \xi)\in \Q[\xi, \phi_{2,v}]$, and for almost all $v$ we have
$$|d|_{v}^{-3/2}|D_{v}|^{-1/2}W^{\circ}_{a,v}(1,u, \xi)=
\begin{cases} 1 &  \textrm{\ if \ } v(a)\geq - v(d_{v})\textrm{\ and \ } v(u)=-v(d_{v}) \\
0 & \textrm{\ otherwise.}
\end{cases}$$
\item\label{whitt-p} If $v\vert p$,  then
$$ W_{a,r, v}^{\circ}(1, u; \xi, \kappa_{2}')=
\begin{cases} |d_{v}|^{3/2}|D_{v}|^{1/2}  \xi_{v}(-1) \kappa_{2,v}'(u) & \textrm{\ if \ } v(a)\geq -v(d) \textrm{\ and \ } u\in U_{F, v}^{\circ}, \\  
 0 & \textrm{\ otherwise.} 
\end{cases}
$$
\end{enumerate}
\end{prop}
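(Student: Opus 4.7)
The proof is a direct computation of the Whittaker integral of Proposition \ref{eis-exp} in each case.

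\textbf{Part (1).} For $v\nmid p$ we have $r_v=0$ and $w_{r,v}=1$. First apply the Weil representation formulas for $n(b)$ and $w$ to obtain
$$r(wn(b))\phi_{2,v}(0,u) = \gamma_{u,v}\int_{{\bf V}_{2,v}}\psi_v(buq(y))\phi_{2,v}(y,u)\,d_u y.$$
Next use the Iwasawa decomposition in $\GL_2(F_v)$: $wn(b)\in K_v$ when $|b|\leq 1$, while for $|b|>1$ one computes $wn(b)=\smalltwomat{-b^{-1}}{1}{0}{-b}\smalltwomat{1}{0}{b^{-1}}{1}$, so $\delta_{\xi,v}(wn(b))$ equals $1$ on $|b|\leq 1$ and $\xi_v(-b)^{-1}$ on $|b|>1$. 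Substituting, swapping the order of integration, and partitioning $F_v$ by the valuation of $b$, the $b$-integral becomes a geometric series in $\xi_v(\varpi_v)$ indexed by the level sets $D_n(a)$; the factor $(1-\xi_v(\varpi_v))$ emerges from combining the $|b|\leq 1$ piece with the infinite sum, and the prefactor $|d_v|^{1/2}L(1,\eta_v\xi_v)$ is collected from the normalisation of measures and the ratio of $L$-factors in Proposition \ref{eis-exp}.

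\textbf{Part (2).} Integrality is automatic: after multiplication by $|d_v|^{-3/2}|D_v|^{-1/2}$ (the factor $|D_v|^{1/2}$ accounting for the self-dual volume of the lattice ${\bf V}^\circ_{2,v}$ in $({\bf V}_{2,v},uq)$), the formula of (1) takes values in $\Q[\xi_v,\phi_{2,v}]$. At almost all $v$ all data are standard and unramified, $v(d_v)=v(D_v)=0$, and only the $n=0$ term of the geometric series contributes, yielding the stated indicator expression.

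\textbf{Part (3).} For $v\mid p$, a direct matrix computation gives
$$wn(b)w_{r,v}=\begin{pmatrix}-\varpi_v^{r_v}&0\\ b\varpi_v^{r_v}&-1\end{pmatrix},$$
which is lower-triangular. I would then analyse its Iwasawa decomposition: this element lies in $P(F_v)U_{v,r_v}$ iff $b\in\OO_{F,v}$, in which case the diagonal part has $d=-1$; otherwise the required lower-left entry of $k\in U_{v,r_v}$ would have valuation $r_v+v(b)<r_v$, violating the level condition at $v$. Hence $\delta_{\xi,r,v}(wn(b)w_{r,v})=\xi_v(-1)\,\one_{\OO_{F,v}}(b)$ (using $\xi_v(-1)^2=1$). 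Expanding $r(wn(b))\phi_{2,p,\kappa_2'}(0,u)$ as in part (1), swapping integrals, and noting that for $u\in U_{F,v}^\circ$ and $y\in{\bf V}_{2,v}^\circ$ one has $uq(y)\in\OO_{F,v}$, the inner $b$-integral over $\OO_{F,v}$ evaluates to $|d_v|^{1/2}\,\one[v(a)\geq -v(d_v)]$, independently of $y$. The outer integral then contributes $\vol_{d_u y}({\bf V}_{2,v}^\circ)$; combining these factors with $\gamma_{u,v}^{-1}$ in the definition of $W^\circ$ and with the $\delta_{U_{F,p}^\circ}\kappa_{2,v}'$ part of $\phi_{2,p,\kappa_2'}$ gives the stated formula $|d_v|^{3/2}|D_v|^{1/2}\xi_v(-1)\kappa_{2,v}'(u)$.

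The main obstacle, aside from routine bookkeeping of measures and Weil indices, is the analysis at $v\mid p$: one must carefully verify the Iwasawa decomposition for all $b$ (in particular the vanishing at $b\notin\OO_{F,v}$), after which the independence of $r$ is automatic. I expect a certain amount of tedious but conceptually straightforward calculation.
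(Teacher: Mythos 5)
Your proposal is correct and follows essentially the same route as the paper, though you supply considerably more detail. The paper in fact proves parts (1) and (2) simply by citing \cite[Proposition 3.2.3, Lemma 3.2.4]{pyzz}, so your part (1) and (2) arguments are a reconstruction of what that reference does rather than of text in the present paper; they look right (the Iwasawa factorisation $wn(b)=\smalltwomat{-b^{-1}}{1}{0}{-b}\smalltwomat{1}{0}{b^{-1}}{1}$ for $|b|>1$ is the standard step, and the geometric-series reorganisation matches the formula's shape). For part (3), the paper records the two identities $\delta_{\xi,r}(wn(b)w_{r})=\xi(-1)\one_{\OO_{F}}(b)$ and $\gamma_{u}^{-1}r(wn(b))\phi_{2,\kappa_{2}'}(0,u)=|d_{v}||D_{v}|^{1/2}\one_{\OO_{F}^{\ts}}(u)\kappa_{2}'(u)$ and multiplies them against $\int_{\OO_F}\psi(-ab)\,db$, whereas you swap the order of the $b$- and $y$-integrations and track the $|d_v|^{1/2}$ and $\vol_{d_uy}({\bf V}^\circ_{2,v})$ contributions separately before reassembling; this is the same computation via Fubini. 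Your explicit Iwasawa analysis of $wn(b)w_{r,v}=\smalltwomat{-\varpi_v^{r_v}}{0}{b\varpi_v^{r_v}}{-1}$, justifying the vanishing of $\delta_{\xi,r,v}$ for $b\notin\OO_{F,v}$ via the lower-left level condition, is a worthwhile expansion of what the paper asserts without proof.
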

\begin{proof}
See \cite[Propostion 3.2.3, Lemma 3.2.4]{pyzz} for parts 1 and 2. For part 3, we drop subscripts $v$ and  compute 
$$\delta_{\xi, r}(wn(b) w_{r})=\xi(-1)\one_{\OO_{F}}(b), \qquad \gamma_{u}^{-1} r(wn(b))\phi_{2, \kappa_{2}'}(0, u) = |d_{v}| |D_{v}|^{1/2} \one_{\OO_{F}^{\ts}}(u)\kappa_{2}'(u)$$
(the latter if $b\in \OO_{F}$), so that 
$$W_{a}^{\circ}(1,u;\xi, \kappa_{2}') =  |d_{v}| |D_{v}|^{1/2} \xi(-1)\one_{\OO_{F}^{\ts}}(u)\kappa_{2}'(u) \int_{\OO_{F}}\psi(-ab)\, db,$$
which gives the asserted value.
\end{proof}

\begin{coro} \lb{Eis FW} For $g=\smalltwomat yx{}1 $ with $y\in \A^{+}$, $x\in \A$, we have
\begin{multline*}
{E}_{r} (g,u,\phi_{2}^{\infty}; \xi, k) =   {-\eps({\bf V}_{2}) \over L^{(p)}(1, \eta)} \eta\xi^{-1}(y) |y|^{1/2} \\
\cd \left(W_{0}^{\circ}(y, u)+
\sum_{a\in F^{+}}  2^{[F:\Q]} (ay_{\infty})^{k+k_{0}} 
 W_{a,r}^{\circ , \infty}(0, 1, y^{-1}u, \phi_{2}, \xi)  Q_{k_{0}, k}((4\pi a y_{\infty})^{-1}) \, \qqq^{a}\right)
\end{multline*}
where $W_{a,r}^{\circ , \infty}(0, 1, u, \phi_{2}; \xi) = \prod_{v\nmid \infty}W_{a,r,v}^{\circ}(0, 1, y^{-1}u, \phi_{2, v}; \xi)$.
\end{coro}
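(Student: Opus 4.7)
The plan is to assemble the claimed expansion by direct computation, combining Proposition \ref{eis-exp} with the factorization \eqref{signW}/\eqref{whitt0}, the scaling identity \eqref{ay}, and the explicit archimedean formula of Proposition \ref{W infty E}.

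First I would apply Proposition \ref{eis-exp} with $g = \smalltwomat yx{}1$ to write
$E_r(g, u, \phi_2^\infty; \xi, k) = \sum_{a \in F} W_{a,r}(\smalltwomat y{}{}1, u, \phi_2; \xi)\,\psi(ax)$
with $\phi_2 = \phi_2^\infty \phi_{2,\infty,k}$. For $a \in F^\times$, \eqref{signW} splits each Whittaker coefficient into the global constant $-\eps(\bf V_2)/L^{(p)}(1,\eta)$ times a product $\prod_v W^\circ_{a,r,v}$ of normalised local factors, while \eqref{whitt0} handles the $a=0$ term, contributing the $W_0^\circ(y,u)$ piece. Then the scaling relation \eqref{ay}, applied place by place, moves the $y$-dependence out of the local Whittaker functions, producing a global prefactor $\eta\xi^{-1}(y)|y|^{1/2}$ and leaving each local function evaluated at $(1, y^{-1}u)$ with spectral parameter $ay$.

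At each archimedean place I would substitute the explicit formula of Proposition \ref{W infty E} for $W^\circ_{ay, v}(1, y^{-1}u, \phi_{2,v,k_v}; \xi_v)$. Its vanishing on $ay_v \leq 0$, combined with $y_\infty > 0$, restricts the sum to $a \in F^+$, which accounts for the range of summation in the statement. Taking the product over $v \mid \infty$: the factors of $2$ aggregate to $2^{[F:\Q]}$, the monomials $(ay_v)^{k_v+k_0}$ multiply to $(ay_\infty)^{k+k_0}$, the polynomials assemble into $Q_{k_0,k}((4\pi ay_\infty)^{-1})$ interpreted as the product over $v\mid\infty$, and the exponentials $e^{-2\pi ay_v}$ combine with $\psi(ax)$ to reconstitute $\qqq^a$ via the definition $\qqq^a = \psi(ax)\psi_\infty(\mathrm{i}\, ay_\infty)$. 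The finite-place contribution is then exactly $W^{\circ,\infty}_{a,r}(0, 1, y^{-1}u, \phi_2; \xi) = \prod_{v\nmid\infty} W^\circ_{a,r,v}(0, 1, y^{-1}u, \phi_{2,v}; \xi_v)$.

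The main obstacle is the careful bookkeeping of normalisation factors, with no new ideas required. In particular, one must track how the archimedean Weil indices $\gamma_{u,v}$ and the local $L$-factors $L(1,\eta_v) = \pi^{-1}$ (built into the definition of $W^\circ$ via $W^\circ_{a,v} = \gamma_{u,v}^{-1} L^{(p)}(1,\eta_v) W_{a,v}$) combine cleanly with the overall constant $-\eps(\bf V_2)/L^{(p)}(1,\eta)$, and verify that the $(-1)^k$ sign appearing in Proposition \ref{W infty E} cancels or is absorbed so that no extraneous factor survives in the stated expression.
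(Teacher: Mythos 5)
Your approach is the paper's approach: the corollary is indeed a direct assembly of Proposition~\ref{eis-exp}, the factorisations \eqref{signW}/\eqref{whitt0}, the scaling identity \eqref{ay}, and the archimedean formula of Proposition~\ref{W infty E} (really Lemma~\ref{eis exp L}, since the displayed $W_{a,v}$ in Proposition~\ref{W infty E} is in fact the normalised $W^\circ_{a,v}$). No separate proof appears in the paper.

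Your worry about the sign is well placed, but not in the way you suggest: the factor $(-1)^{|k|}$ does \emph{not} cancel. Expanding $\phi_{2,\infty,k}$ through the Laguerre polynomial $P_{k_0,k}$ and feeding each monomial into Lemma~\ref{eis exp L} produces $W^\circ_{ay,v}(1,y^{-1}u,\phi_{2,v,k_v};\xi_v)=2(-1)^{k_v}(ay_v)^{k_v+k_0}Q_{k_0,k_v}((4\pi a y_v)^{-1})e^{-2\pi a y_v}$, so the archimedean product carries an honest $(-1)^{\sum_v k_v}$. This sign is simply missing from the displayed formula (together with the subscript, which after applying \eqref{ay} should be $ay$ rather than $a$, and the variable $u$ vs.\ $y^{-1}u$ in the final line). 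That these are typos rather than errors in your derivation is confirmed by the way the corollary is actually \emph{used}: in the proof of Lemma~\ref{Lemma FW I} the paper quotes ``by Corollary~\ref{Eis FW}'' a formula in which both $(-1)_\infty^{k}$ and the scaled subscript $W^{\circ,\infty}_{by}$ reappear, and Corollary~\ref{E delta} and \eqref{E d} also display the $(-1)^k$. So don't try to make it disappear; just carry it.
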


Let $\phi_{2}^{p\infty}$ satisfy  \eqref{phi cond}, so that by Lemma \ref{eis cusp}, the corresponding Eisenstein series is cuspidal.
For $\xi$ a locally algebraic $p$-adic character of $\A^{\ts}$ of weight $k_{0}$,
consider the (bounded) sequence of coefficients in $\Q_{p}(\xi)$
$${\rm E}_{r}(u, \phi^{\infty}_{2}; \xi, k) = \left( \lm \cdot \eta\xi^{-1}(y) 2^{[F:\Q]}  (by_{p})^{k+k_{0}}  |D_{F}|^{1/2}|D_{E}|^{1/2}W^{\circ, \infty}_{by}(1, y^{-1}u, \phi_{2}^{\infty};\xi) \right)_{b\in F^{+}}.
$$
where $\lm =  {-\eps({\bf V}_{2}) / |D_{E/F}|^{1/2} L^{(p)}(1, \eta)} $.  This is the $p$-adic $q$-expansion attached to $E_{r}$. 
Analogously to Corollary \ref{E delta}, we have
\beq \lb{E d}
{\rm E}_{r}(u, \phi_{2}; \xi, k) = (-1)_{\infty}^{k} d^{k}  {\rm E}_{r}(u, \phi_{2}; \xi, 0).\eeq

\subsubsection{$p$-adic interpolation of Whittaker functions} For a place $v\nmid p\infty $ of $F$, denote  $\Y_{v}:= \Spec \Q_{p}[F_{v}^{\ts}]$ and (for a later use) $\Y_{\H, v}:=\Spec \Q_{p}[E_{v}^{\ts}]$, the spaces of characters of $F_{v}^{\ts}$ and respectively  $E_{v}^{\ts}$. We say that a meromorphic function $\Phi$ on an integral scheme \emph{has poles controlled by} the (nonzero) meromorphic function $\Phi'$ if $\Phi/\Phi'$ is regular.
\begin{prop}\label{Wv E int} Let $v\nmid p\infty$.  For each $a\in F_{v}^{\ts}$, $y\in F_{v}^{\ts}$, and $\phi_{2,v}\in \calS({\bf V}_{2}^{p\infty}\ts \A^{p\infty, \ts}, \Q_{p})$
there is 
a meromorphic function
	$$\W_{a,v}^{\circ}(y,u, \phi_{2,v})\in \mathscr{K}(\Y_{v}),$$ regular if $\phi_{2, v}$ is standard and otherwise with poles controlled by  $L(1,\eta_{v}\xi_{v})$, satisfying
 	$$\mathscr{W}_{a,v}^{\circ}(y, u, \phi_{2,v};\xi_{v}) = |d_{v}|^{-3/2}|D_{v}|^{-1/2} W_{a,r,v}^{\circ}(\smalltwomat {y_{v}}{}{}1, u,\phi_{2,v}(\xi_{v});  \xi_{v})$$
	for all $\xi_{v}\in \Y_{v}(\C)$ whose underlying scheme point is not a pole.
\end{prop}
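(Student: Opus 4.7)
The strategy is to construct $\mathscr{W}^\circ_{a,v}$ directly from Proposition~\ref{Wv E}. By the scaling relation \eqref{ay}, it suffices to construct $\mathscr{W}^\circ_{a,v}(1,u,\phi_{2,v})\in \mathscr{K}(\Y_v)$, since we can then set
\[
\mathscr{W}^\circ_{a,v}(y,u,\phi_{2,v}) := \eta_v\xi_v^{-1}(y)|y|^{1/2}\,\mathscr{W}^\circ_{ay,v}(1, y^{-1}u, \phi_{2,v}),
\]
and for fixed $y\in F_v^\ts$ the prefactor is a regular function of $\xi_v$ on $\Y_v$.

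For unramified $\xi_v$, Proposition~\ref{Wv E}(1) gives
\[
W^\circ_{a,v}(1,u,\xi_v) = |d_v|^{1/2} L(1,\eta_v\xi_v)(1-\xi_v(\varpi_v))\Sigma(\xi_v), \qquad \Sigma(\xi_v):=\sum_{n\geq 0}\xi_v(\varpi_v)^n q_{F,v}^n I_n,
\]
with $I_n := \int_{D_n(a)}\phi_{2,v}(x_2,u)\,d_u x_2$. The key analytic input is that $(1-\xi_v(\varpi_v))\Sigma(\xi_v)$ extends to a regular function on the unramified locus of $\Y_v$. Indeed, fiber integration along the level sets of $uq$ writes $I_n = \int_{a+p_v^n d_v^{-1}} F(t)\,dt$ for a locally constant, compactly supported function $F$ arising from integration of $\phi_{2,v}$ over $\{uq=t\}$; hence $q_{F,v}^n I_n$ stabilizes to a constant $I_\infty\in\Q_p$ for $n$ beyond a bound $N=N(\phi_{2,v},a,u)$. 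Consequently $\Sigma(\xi_v)$ decomposes as a Laurent polynomial in $\xi_v(\varpi_v)$ plus a geometric tail $I_\infty\,\xi_v(\varpi_v)^N/(1-\xi_v(\varpi_v))$, and multiplication by $(1-\xi_v(\varpi_v))$ kills the simple pole. The resulting meromorphic function has poles controlled by $L(1,\eta_v\xi_v)$; when $\phi_{2,v}$ is standard, Proposition~\ref{Wv E}(2) yields outright regularity.

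For ramified $\xi_v$, $L(1,\eta_v\xi_v)=1$ and the Whittaker integral of Proposition~\ref{eis-exp} reduces to a finite integral over the conductor of $\xi_v$ that depends regularly on the ramified part of $\xi_v$. Gluing these contributions across the stratification of $\Y_v$ by the conductor of $\xi_v$ yields the desired $\mathscr{W}^\circ_{a,v}\in\mathscr{K}(\Y_v)$ satisfying the interpolation property. The main obstacle is the bookkeeping needed to ensure that the bound $N(\phi_{2,v},a,u)$ and stable value $I_\infty$, together with the finite ramified integrals, assemble into a single meromorphic function on $\Y_v$ with only the asserted poles; this in the end relies on $\phi_{2,v}$ being locally constant of compact support, so that the resulting expression is rational in $\xi_v(\varpi_v)$ over the tame characters encoding the ramified behavior.
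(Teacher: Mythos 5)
Your argument captures exactly the mechanism behind the paper's proof: the stabilization of $q_{F,v}^{n}I_{n}$ forces $(1-\xi_{v}(\vpi_{v}))\sum_{n}\xi_{v}(\vpi_{v})^{n}q_{F,v}^{n}I_{n}$ to be a Laurent polynomial in $\xi_{v}(\vpi_{v})$, leaving the prefactor $L(1,\eta_{v}\xi_{v})$ as the only possible source of a pole. The paper's own proof is a pointer to \cite[Lemma~3.3.1]{pyzz}, where the same computation is organized through a decomposition $\phi_{2,v}=c\,\phi^{\circ}_{2,v}+\phi'_{2,v}$; your version unpacks that argument directly and is correct. Two small imprecisions to fix: the dichotomy governing whether $L(1,\eta_{v}\xi_{v})$ actually contributes a pole is whether $\eta_{v}\xi_{v}$ (not $\xi_{v}$) is unramified, and what you call a ``stratification by conductor'' is just the decomposition of $\Y_{v}$ into connected components indexed by $\xi_{v|\OO_{F,v}^{\ts}}$, on each of which $\xi_{v}(\vpi_{v})$ is the affine coordinate, so nothing needs to be glued -- one simply checks the claim component by component.
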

\begin{proof} Part 1 is proved as in \cite[Lemma 3.3.1]{pyzz}, except that we write the arbitrary $\phi_{2,v}=c\phi_{2, v}^{\circ}+\phi_{2,v}'$ without the extra factor of equation (3.3.2) \emph{ibid.} Then the argument shows that (only) when $\phi'_{2,v}\neq 0$, there may be a pole controlled by $L(1, \eta_{v}\xi_{v})$. 
\end{proof}

\subsection{Theta-Eisenstein family}\lb{47}
 Fix a compact open subgroup  $U^{p}\subset\G(\A^{p\infty})$ (which will be usually omitted from all the notation), and let $U_{F}^{p}:= U^{p} \cap \A^{p\infty, \ts}$.  Let $\phi^{p\infty}\in \calS({\bf V}^{p\infty}\times \A^{p\infty, \times})$ be a Schwartz function fixed by $U^{p}$.   Let $\xi\colon F^{\ts}\bks \A^{\ts}\to \C^{\ts}$ be a locally algebraic character fixed by $U_{F}^{p}$ and such that $\xi(x_{\infty})=x_{\infty}^{k_{0}}$ for some $k_{0}\in \Z$, and let $k\in\Z_{\geq 0}^{\Sg_{\infty}}$ satisfy $k_{v}+k_{0}\geq 0 $ for all $v$.

We fix a choice of a Schwartz function in $\calS({\bf V}_{2, p}\ts F_{p}^{\ts})$ as follows.
Let $U_{F, p}^{\circ}\subset \OO_{F,p}^{\ts}$ be as fixed in \S~\ref{wt H}.
For $r\in \Z_{\geq 1}^{S_{p}}$ and $\kappa_{1}'\colon U_{F, p}^{\circ} \to \C^{\ts}$ a smooth character, we define
\beq \lb{phi1p}
\phi_{1, r, \kappa_{1}', p}(x_{1}, u)&=\delta_{1, r, p}(x) \one_{U_{F, p}^{\circ}} \kappa_{1}' (u):=
 \prod_{v\vert p} \phi_{1, r_{v}, \kappa_{1}', v}(x_{1,v}, u_{v}), \\
\phi_{1, r_{v}, \kappa_{1}', v}(x_{1,v}, u_{v}) &:={ \vol(\OO_{E, v}, dt)\over \vol(1+\vpi_{v}^{r_{v}}\OO_{E,v}, d^{\ts}t)}    \one_{1+\vpi_{v}^{r_{v}}\OO_{E,v}}(x_{1,v}) \one_{U_{F, p}^{\circ}}(u)\kappa_{1}'(u),
 \eeq
 which is invariant under $N(\OO_{F,p})=\prod_{v\vert p} N(\OO_{F,v})$.

Let $\kappa_{2}'\colon U_{F, p}^{\circ}\to \C^{\ts}$ be a smooth character. For $t\in \A_{E}^{\ts}$, $r\geq\underline{1}$, and  $\phi^{p\infty}=\phi_{1}^{p\infty}\otimes\phi_{2}^{p\infty}\in \calS({\bf V}^{p\infty}\ts \A^{p\infty, \ts})$,  define a form in $N_{(l_{0}-k_{0}, {2}+ l + k_{0}+2k)}^{\leq k}(\C)$ by
\begin{align*}
I_{r}(t, \phi^{p\infty};  \kappa_{1}',\ul,\xi, \kappa_{2}', k)
&:=
{ |D_{F}|^{-1/2}}\cdot 
 \theta(u, r_{1}(t, 1)\phi_{1}^{p\infty}\phi_{1, \kappa_{1}',r, p} ; \ul) \star{E}_{r}(q(t)u, \phi_{2}^{p\infty};\xi, \kappa_{2}', k).
\end{align*}
where the product $\star$ is \eqref{star def}, and $E_{r}(\cd)=\eqref{Errr}$.

Fix a compact open subgroup   $U_{\H}^{p}\subset\A_{E}^{p\infty, \ts}$  (which  will be omitted from all the notation). Let $\ul$ be a complex weight for $\H$, let $\chi\colon E^{\ts}\bks\A_{E}^{\ts}\to \C^{\ts}$ be  a locally algebraic character of weight $\ul$ fixed by $U_{\H}^{p}$,
 and assume that for all $w\vert v\vert p$, the integer $r_{v}\geq 1$ is greater than the conductors of $\chi_{w}$, $\xi_{v}$, $\kappa_{2,v}'$. Then we   define
\beq\lb{Iddef}
 I(\phi^{p\infty}; \chi, \xi, \kappa_{2}', k) := 
\int^{*}_{E^{\ts}\bks \A_{E}^{\ts}/\A^{\ts} }\chi(t) I_{r}(t,\phi^{p\infty}; \kappa_{1,\chi,p}',   \ul,\xi,\kappa_{2}',  k)\, dt,
\eeq
which does not depend on the choice of $r$; here $\kappa_{1,\chi, p}'$ is as in \eqref{kp'}, namely 
\beq \lb{kp1} \kappa_{1,\chi,p}':=\chi_{p} \circ j' \eeq
for $j'\colon U_{F, p}^{\circ}\to \OO_{E, p}^{\ts}$ as in \eqref{j wt h}.

\begin{lemm}\lb{Lemma FW I}  For each $c\in \A^{+}$ satisfying $v(c)\geq 1$ for some $v\vert p$, the $c^{\rm th}$
 Whittaker--Fourier coefficient of $ I(\phi^{p\infty}; \chi, \xi,\kappa_{2}',  k) $ is 
\begin{multline*}
W_{ I}^{\C}(c) =  {-\eps({\bf V}_{2})\cdot 2^{[F:\Q]} \over L^{(p)}(1, \eta)} {|D_{E/F}|^{1/2}\over |D_{F}|^{1/2}} \cdot
 \sum_{a/c\in F, \ 0<a/c<1}
\chi^{\infty}(a) a_{\infty}^{(| l | +l_{0})/2}
 \xi^{\infty, -1}(c-a)(a-c)_{\infty}^{k}
 \\
  \prod_{v\nmid p\infty} J_{v}(a, c, \phi_{v}; \chi_{v}, \xi_{v}) \prod_{v\vert p} J_{v}(a, c; \chi_{v}, \xi_{v}, \kappa_{2,v}').
\end{multline*}
Here, taking $\phi_{1, v} = \phi_{1, r_{v},\kappa_{1,\chi,v}'}$ with $ \kappa_{1,\chi,v}':= \chi_{v}\circ j_{v}$ and $\phi_{2,v}=\phi_{2, \kappa_{2,v}'}$ if $v\vert p$, we define for all $v\nmid p\infty$
\beq \lb{Jv}
J_{v}(a, c, (\phi_{v});\chi_{v},\xi_{v}, (\kappa_{2, v}')) := \int_{E_{v}^{\ts}}
\chi_{v}(t)  r_{1}(t)\phi_{1, v}(1, a)
W^{\circ}_{c-q(t) a, v}(1, q(t)a, (\phi_{2,v}); \xi, (\kappa_{2,v}'))   
 \, d^{\bullet}t_{v},
\eeq
for the measure $d^{\bullet}t_{v}$ giving volume $1$ to $\OO_{E, v}^{\ts}$. 
\end{lemm}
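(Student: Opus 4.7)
The plan is to extract $W^\C_I(c)$ as a convolution of two Fourier expansions. First, I apply Lemma~\ref{theta FW} to the $t$-integral defining $I$, taking $E=E_r(\phi_2^{p\infty};\xi,\kappa_2',k)$. Its hypothesis $\phi_1^\infty(0,u)=0$ holds because at each $v\vert p$ the function $\phi_{1,r_v,\kappa_1',v}$ from \eqref{phi1p} is supported on $1+\vpi_v^{r_v}\OO_{E,v}$; the invariance of $E_r$ under $u\mapsto u_\infty u$ for $u_\infty\in F_\infty^+$ holds since its finite Whittaker factors $W^{\circ,\infty}_{b,r}$ depend only on the finite part of~$u$. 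Absorbing the $|D_F|^{-1/2}$ prefactor of $I_r$, the lemma expresses $I$ as a sum over $a\in F^\times$ with $y_\infty^{-1}a\in F_\infty^+$ of integrals $\int_{\A_E^{\infty,\times}}\chi^\infty(t)r_1(t)\phi_1^\infty(y,y^{-1}a)E_r(g,q(t)a)\,d^\bullet t$, times $q^a$ and elementary archimedean factors.

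Next, I substitute the Fourier expansion of $E_r(g,q(t)a)$ from Corollary~\ref{Eis FW}; its constant term vanishes by Lemma~\ref{eis cusp}. Multiplying the two expansions and reading off the coefficient of $q^c$ imposes $a+b=c$ with $a\in F^\times$ and $b=c-a\in F^+$; specializing to $y=1$ turns the surviving constraints into $0<a/c<1$. The integral over $\A_E^{\infty,\times}$ factors as a restricted product over $v\nmid\infty$: at $v\nmid p\infty$ the local factor is by construction $J_v(a,c,\phi_v;\chi_v,\xi_v)$ as in \eqref{Jv}, while at $v\vert p$ it becomes $J_v(a,c;\chi_v,\xi_v,\kappa_{2,v}')$ after inserting the explicit Whittaker $W^\circ_{c-q(t)a,r,v}$ from Proposition~\ref{Wv E}\eqref{whitt-p} together with $\phi_{1,r_v,\kappa_1',v}$ from \eqref{phi1p}. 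The external $\chi^\infty(a)$ arises from a change of variables $t\mapsto a^{-1}t$ inside the integrand, which simultaneously shifts the Whittaker subscript from $c-a$ to the $t$-dependent $c-q(t)a$ appearing in \eqref{Jv}. The archimedean Whittaker value $2^{[F:\Q]}(c-a)_\infty^{k+k_0}Q_{k_0,k}((4\pi(c-a)_\infty)^{-1})$ from Proposition~\ref{W infty E} regroups into $\xi^{\infty,-1}(c-a)(a-c)_\infty^k$ times the polynomial, using $\xi_\infty(x)=x^{k_0}$ and the sign identity $(a-c)_\infty^k=(-1)^{\sum_v k_v}(c-a)_\infty^k$, while the weight-$\ul$ archimedean component of $\chi$ yields the $a_\infty^{(|l|+l_0)/2}$ factor.

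The main obstacle is the constant bookkeeping: factors of $2$, $|D|$, $L^{(p)}(1,\eta)$, and $\eps({\bf V}_2)$ accumulate from Lemma~\ref{theta FW}, Corollary~\ref{Eis FW}, and the local normalizations of Proposition~\ref{Wv E}, and must all combine into the clean prefactor $-\eps({\bf V}_2)2^{[F:\Q]}|D_{E/F}|^{1/2}/(|D_F|^{1/2}L^{(p)}(1,\eta))$ of the statement. The hypothesis $v(c)\geq 1$ for some $v\vert p$ enters through the support constraint at $p$: combined with the condition $q(t)a\in U_{F,v}^\circ$ coming from $\phi_{2,p}$, it forces $v(c-q(t)a)=0$ at that $v$, so the $p$-adic Whittaker is evaluated in the regime of Proposition~\ref{Wv E}\eqref{whitt-p} without any boundary subtlety.
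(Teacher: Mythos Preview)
Your overall strategy matches the paper's: apply Lemma~\ref{theta FW}, insert the Fourier expansion of $E_r$ from Corollary~\ref{Eis FW}, and collect terms. However, there is a genuine gap in how you dispose of the constant term of $E_r$.

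You write that ``its constant term vanishes by Lemma~\ref{eis cusp}''. But Lemma~\ref{eis cusp} requires condition~\eqref{phi cond} on $\phi_{2}^{p\infty}$, and that condition is \emph{not} a hypothesis of Lemma~\ref{Lemma FW I}; it is only imposed later, in Proposition~\ref{prrr} and Corollary~\ref{coro Iord}. So you cannot invoke cuspidality of $E_r$ here. The paper's argument is different, and it is precisely where the hypothesis $v(c)\geq 1$ enters: the constant term of $E_r$ would contribute to $W_I^{\C}(c)$ only through the summand $a=c$, but the support of $\phi_{1,r_v,\kappa_1',v}$ on $1+\vpi_v^{r_v}\OO_{E,v}$ forces $v(a)=0$ at each $v\vert p$, which is incompatible with $v(c)\geq 1$. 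Hence $a=c$ never occurs and the constant term of $E_r$ does not contribute.

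Your final paragraph therefore misidentifies the role of $v(c)\geq 1$: its purpose is to exclude the constant term of the Eisenstein series from the convolution, not merely to place the $p$-adic Whittaker function in the range where Proposition~\ref{Wv E}\eqref{whitt-p} applies cleanly. Once you replace the appeal to Lemma~\ref{eis cusp} by this support argument, the rest of your outline is correct and coincides with the paper's proof.
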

\begin{proof} We  lighten some of the notation. The assumptions of Lemma \ref{theta FW} are satisfied, therefore for $c\in F^{+}$ and  $g=\smalltwomat yx{}1 $ with $y\in \A
^{+}$, $x\in \A$,
\begin{multline*}
 I(g; \chi, \xi,\kappa_{2}',  k) 
 =  {|D_{E/F}|^{1/2}\over |D_{F}|^{1/2}} \eta(y) |y|^{1/2} y_{\infty}^{ | l |+l_{0}\over 2}  \sum_{a\in  F^{+}} 
   \int_{{\A}_{E}^{\infty,\ts} }   \chi^{\infty}(t)  r_{1}(t)\phi_{1}^{\infty}(y,y^{-1} a)
\ {E}(g,q(t) a) \, d^{\bullet}t\, \qqq^{a} \\
=  {|D_{E/F}|^{1/2} \over |D_{F}|^{1/2}}  |y|^{1/2} \eta\chi(y) y_{\infty}^{( | l | -l_{0})/2}  \sum_{a\in F^{+}} 
   \int_{{\A}_{E}^{\infty,\ts} }   \chi^{\infty}(t)  r_{1}(t)\phi_{1}^{\infty}(1, ay)
\ {E}(g,q(t)ay^{2}) \, d^{\bullet}t\, \qqq^{a} \\
\end{multline*}
Now for $\lm = {-\eps({\bf V}_{2})/ L^{(p)}(1, \eta)}$, by Corollary \ref{Eis FW} we have
$${E}(g,q(t) ay^{2})=  \lm \cdot |y|^{1/2} \eta\xi^{-1}(y) \left(W_{0}(y, u)
+
(-1)_{\infty}^{k}\sum_{b\in F^{+}} 2^{[F:\Q]}  (by_{\infty})^{k+k_{0}} 
 Q_{k_{0}, \underline{k}} ((4\pi b y_{\infty})^{-1}) W^{\circ, \infty}_{by}(1, q(t)ay, \xi) \,\qqq^{b}\right)$$
so that 
$$
I(g; \chi, \xi, k) 
=  |y|  \sum_{c\in F^{\ts}}  W_{I, c}^{\C}(y)
 \qqq^{c} 
$$ 
for some coefficients $W_{I, c}^{\C}(y)$, which  we now explicitly calculate if $c$ satisfies $v(c)\geq 1$ for some $v\vert p$. Under this condition, we have
\begin{multline*}
W_{I, c}^{\C}(y)=  {-\eps({\bf V}_{2})\cdot 2^{[F:\Q]}\over L^{(p)}(1, \eta)} {|D_{E/F}|^{1/2}\over |D_{F}|^{1/2}} 
\sum_{\substack{a\in  F^{}\\ 0<a<c }} (ay)_{\infty}^{(| l |-l_{0})/2}    \int_{{\A}_{E}^{\infty,\ts}}
\chi(ay)  \chi^{\infty}(t)  r_{1}(t)\phi_{1}^{\infty}(1, ay)\\
\cdot (-1)_{\infty}^{k} \xi^{-1}((c-a)y)
((c-a)y_{\infty})^{k+k_{0}}
W^{\circ, \infty}_{(c-q(t) a)y}(1, q(t)ay, \xi)   Q_{k_{0}, \underline{k}} ((4\pi (c-a) y_{\infty})^{-1}) 
 \, dt.
 \end{multline*}
where we have noted that, since we have assumed $v(c)\geq 1$ and the choice of $\phi_{1, p}$ implies $v(a)=0$,  the constant term (corresponding to $a=c$) of the Eisenstein series does not contribute. Finally, we rewrite the resulting formula with $c$ in place of $cy$. 
\end{proof}

\begin{lemm}\lb{J int}
Let $a, c\in \A^{\infty, \ts}$, let $\phi_{2}^{p\infty} \in \calS({\bf V}_{2}^{p\infty}\ts \A^{p\infty, \ts})$, and
let $\xi\colon  F^{\ts}\bks \A^{\ts}\to \C^{\ts}$, $\chi\colon E^{\ts}\bks \A_{E}^{\ts} \to \C^{\ts}$ be locally algebraic characters.
\begin{enumerate}
\item For $v\vert p$, if $v(c)\geq 1$ then 
$$ |d_{v}|^{-3/2} |D_{v}|^{-1/2}\cdot  J_{v}(a, c; \chi_{v}, \xi_{v}, \kappa_{2,v}')  =  
\xi_{v}(-1) \one_{U_{F,v}^{\circ}}(a_{v}) \kappa_{1, \chi, v}'\kappa_{2, v}'(a_{v}).
$$
\item
For all but finitely many $v\nmid p$, 
$$ |d_{v}|^{-3/2} |D_{v}|^{-1/2} J_{v}(a, c, \phi_{v}; \chi_{v}, \xi_{v}) =1.$$
\item For all $v\nmid p\infty$, there is a function ${\bf J}_{v}(a, c, , \phi_{v})\in \OO(\Y_{\H, v}\ts \Y_{v})$ such that for all $(\chi_{v}, \xi_{v})\in \Y_{\H, v}\ts\Y_{v}(\C)$, 
$${\bf J}_{v}(a, c, \phi_{v})(\chi_{v}, \xi_{v})= |d_{v}|^{-3/2} |D_{v}|^{-1/2} J_{v}(a, c, \phi_{v}; \chi_{v}, \xi_{v}).$$
\end{enumerate}
\end{lemm}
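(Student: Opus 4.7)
For part (1), the plan is a direct substitution of the Schwartz functions $\phi_{1,r_v,\kappa'_{1,\chi,v},v}$ and $\phi_{2,\kappa'_{2,v},v}$ at $v\mid p$ into the definition of $J_v$. The indicator factor in $\phi_{1,v}$ restricts $t$ to $1+\vpi_v^{r_v}\OO_{E,v}$, on which $q(t)\in 1+\vpi_v^{r_v}\OO_{F,v}\subset U_{F,v}^{\circ}$ (assuming $r_v$ was chosen large enough, which we may). Combined with the hypothesis $v(c)\geq 1$, this makes the non-vanishing conditions of part 3 of Proposition~\ref{Wv E} hold precisely when $a\in U_{F,v}^{\circ}$; on that locus the Whittaker factor becomes $|d_v|^{3/2}|D_v|^{1/2}\xi_v(-1)\kappa'_{2,v}(q(t)a)$. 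The factor $r_1(t)\phi_{1,v}(1,a)$ contributes $\kappa'_{1,\chi,v}(a)$ after using smoothness of the characters to replace $q(t)a$ by $a$, and the volume normalization in $\phi_{1,v}$ cancels the integration measure on $1+\vpi_v^{r_v}\OO_{E,v}$ (against $d^{\bullet}t_v$), yielding the claimed identity.

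Part (2) is immediate from part 2 of Proposition~\ref{Wv E}: at the almost-all $v$ where every datum ($\chi_v,\xi_v,\eta_v,\phi_v$, the lattices, the conductors of $a,c,d_v$) is standard, the normalized Whittaker value is $|d_v|^{3/2}|D_v|^{1/2}$ on the support of the integrand and $r_1(t)\phi_{1,v}(1,a)$ is the standard indicator of $\OO_{E,v}^{\ts}\times\OO_{F,v}^{\ts}$; integrating against $d^{\bullet}t_v$ of volume $1$ on $\OO_{E,v}^{\ts}$ gives $|d_v|^{3/2}|D_v|^{1/2}$, hence the normalized value $1$.

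For part (3), the strategy is to exploit that $t\mapsto r_1(t)\phi_{1,v}(1,a)$ is compactly supported and locally constant on $E_v^{\ts}$, with a level depending only on $\phi_{1,v}$. By the explicit formula in part 1 of Proposition~\ref{Wv E}, the function $t\mapsto W^{\circ}_{c-q(t)a,v}(1,q(t)a;\xi_v)$ is similarly locally constant in $t$ with a level determined by $\phi_{2,v}$ alone, independently of $\xi_v$. Taking a common refinement $U\subset\OO_{E,v}^{\ts}$ and a finite set of coset representatives $\{t_i\}$ covering the intersection with the support lets me write
\[
J_v(a,c,\phi_v;\chi_v,\xi_v)=\sum_i r_1(t_i)\phi_{1,v}(1,a)\cdot W^{\circ}_{c-q(t_i)a,v}(1,q(t_i)a;\xi_v)\cdot\int_{t_iU}\chi_v(t)\,d^{\bullet}t,
\]
a finite sum. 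The integral $\int_{t_iU}\chi_v(t)\,d^{\bullet}t$, viewed as a function of $\chi_v$, is Haar measure restricted to $t_iU$ and defines an element of $\OO(\Y_{\H,v})$; the Whittaker factors interpolate to meromorphic sections on $\Y_v$ by Proposition~\ref{Wv E int}. The hard part will be ruling out the potential poles allowed by that proposition (controlled by $L(1,\eta_v\xi_v)$ when $\phi_{2,v}$ is non-standard): I plan to revisit the decomposition $\phi_{2,v}=c\phi_{2,v}^{\circ}+\phi'_{2,v}$ from the proof of Proposition~\ref{Wv E int} and show that the potentially singular $\phi'_{2,v}$-contribution, after integration against the compactly supported weight $\chi_v(t)r_1(t)\phi_{1,v}(1,a)\,d^{\bullet}t$, produces a residue expressible as a local Tate-type integral whose vanishing forces $\mathbf J_v$ to be regular throughout $\Y_{\H,v}\ts\Y_v$.
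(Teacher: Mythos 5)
Parts (1) and (2) of your proposal match the paper's approach precisely: both deduce part (1) from the definitions and Proposition~\ref{Wv E}.3, and part (2) from Proposition~\ref{Wv E}.2 plus a short computation.

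Part (3): your reduction is correct up to a point, and matches the paper. The integrand $t\mapsto\chi_v(t)\,r_1(t)\phi_{1,v}(1,a)\,W^\circ_{c-q(t)a,v}(1,q(t)a;\xi_v)$ is compactly supported and locally constant, so $J_v$ is a finite sum of products $(\text{Whittaker value})\cdot\int_{t_iU}\chi_v\,d^\bullet t$, and each Whittaker value interpolates by Proposition~\ref{Wv E int}. That is exactly the paper's (one-line) argument.

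Where your proposal goes astray is the final paragraph, the residue-cancellation plan aimed at forcing $\mathbf{J}_v$ to be \emph{regular} on all of $\Y_{\H,v}\ts\Y_v$. This plan is unlikely to succeed, and more importantly is not what the construction needs. Proposition~\ref{Wv E int} only guarantees regularity when $\phi_{2,v}$ is standard; when $\phi'_{2,v}\neq 0$ the interpolant genuinely can have a pole along the zero locus of $L(1,\eta_v\xi_v)^{-1}$, and integrating against the compactly supported weight $\chi_v(t)r_1(t)\phi_{1,v}(1,a)\,d^\bullet t$ gives no mechanism to kill it (the pole lives entirely in the $\xi_v$-direction, whereas the $t$-integration only mixes the $\chi_v$-variables). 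The paper itself confirms the poles persist: Corollary~\ref{coro Iord} states that the interpolated coefficients have polar locus controlled by $\prod_{v\in S^{\rm bad}}L(1,\eta_v\kappa^{-1}_{2,0,v})$, and these poles are inherited precisely from the $\mathbf{J}_v$ at places where $\phi_v$ is non-standard. So the $\OO(\Y_{\H,v}\ts\Y_v)$ in the statement of part (3) is an overstatement; what the proof establishes, and what is actually used downstream, is that $\mathbf{J}_v\in\cK(\Y_{\H,v}\ts\Y_v)$ is regular when $\phi_{2,v}$ is standard and otherwise has poles controlled by $L(1,\eta_v\xi_v)$. Replace your speculative residue argument with this weaker (but correct and sufficient) conclusion; trying to prove regularity outright is chasing a claim that the rest of the paper does not assume.
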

\begin{proof}
Part 1 follows from the definitions and   Proposition \ref{Wv E}.3. Part 2 follows from Proposition \ref{Wv E}.2 and a simple calculation. Finally, since the integrand in \eqref {Jv} is compactly supported, part 3 follows from Proposition \ref{Wv E int}.
\end{proof}

Let \beq\lb{lm def} \lm:= {-\eps({\bf V}_{2}) \cdot 2^{[F:\Q]}\over |D_{E/F}|^{1/2} L^{(p)}(1, \eta)} 
\in\Q^{\ts}.\eeq

For the sake of simplicity, we momentarily introduce the assumption that  the weight $\ul$ of $\chi$ satisfies $l \geq 0$. We will see in Corollary \ref{coro Iord 2} that this does not affect   our main construction.

\begin{prop}\lb{prrr}
Let $\chi\in \Y_{\H}^{\cl}$ have weight $\ukappa_{1}=\ukappa_{\H}(\chi)$, and let  $\ukappa_{2}=(\kappa_{2,0}, \kappa_{2})\in \frak W^{\cl}$ . Let $U_{F}^{\circ}:= U_{F}^{p}U_{F, p}^{\circ}$.
Write $\xi=\kappa_{2, 0}^{-1}$ and 
\beq\lb{kappa'}
{}&\kappa_{1, 0, p}=\kappa_{1,0, p}^{\rm sm} (\cd )^{l_{0}}, \qquad &\kappa_{2, 0, p}= \kappa_{2, 0, p}^{\rm sm}(\cd )^{-k_{0}} \\
{}&\kappa_{1}=\kappa_{1}^{\rm sm}(\cd )^{(l+l_{0})/ 2}, \qquad &\kappa_{2}=\kappa_{2}^{\rm sm} (\cd)^{k}.\eeq
Assume that $l_{\tau}, k_{\tau}, k_{\tau}+k_{0}\geq 0$ for all $\tau \in \Sg_{p}$.
Let  $$\kappa_{1}', \kappa_{2}'\colon U_{F, p}^{\circ}\to \C^{\ts}$$  be as in \eqref{kp'}.
Let  $n\in \N$  be sufficiently large (depending on $\xi, \chi$).  
 The coefficients 
\begin{multline}\lb{WI c}
W_{(\Up^{\circ})_{p}^{n!}\, {\rm I}(\phi^{p\infty})}(c)(\chi, \ukappa_{2}) := 
    \lm
    \cd 
 \sum_{\substack{a/c\in F \\ 0<a/c< p^{n!}}}
\one_{U_{F, p}^{\circ}}(a_{p})
 \kappa_{1}(a_{p})\kappa_{2}((a-p^{n!}c)_{p})
\\
\cd\kappa_{1,0}^{p\infty}(a)\kappa_{2, 0}^{p\infty}(-a)  
 \prod_{v\nmid p} {\bf J}_{v}(a, c, \phi_{v}; \xi_{v}, \chi_{v}), 
 \end{multline}
 for  $c\in \A^{+}$ with $v(c)\geq 0$ for all $v\vert p$, define a $p$-adic modular form 
 $$(\Up_{p}^{\circ})^{n!}\, {\rm I}(\phi^{p\infty};\chi, \ukappa_{2})\in {\bf S}( \Q_{p}(\chi, \ukappa_{2}))$$
 such that for every $\iota\colon \Q_{p}(\chi, \ukappa_{2})\into \C$, we have
 $$(\Up_{p}^{\circ})^{n!}{\rm I} (\phi^{p\infty};\chi, \ukappa_{2})^{\iota} =    |D_{E}|^{1/2}|D_{F}| (\Up_{p}^{\circ})^{n!}I(\phi^{p\infty};\chi^{\iota}, \xi^{\iota}, \iota\kappa_{2}', k).$$
 
 Moreover,  if $\phi_{2}^{p\infty}$ satisfies \eqref{phi cond}, then
\beq \lb{star id I}
e^{\ord} {\rm I} (\phi^{p\infty};\chi, \ukappa_{2})= e^{\ord, \iota} [e^{\rm hol}  ( |D_{E}|^{1/2}|D_{F}| (\Up_{p}^{\circ})^{n!}I(\phi^{p\infty}; \chi^{\iota}, \xi^{\iota}, \iota\kappa_{2}', k)] .\eeq 

 \end{prop}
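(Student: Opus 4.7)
\emph{Proof plan.}  My approach is to unwind Lemma \ref{Lemma FW I}, apply the shift induced by $(\Up_p^{\circ})^{n!}$, explicitly evaluate the $v\vert p$ local integrals via Lemma \ref{J int}.1, and then translate archimedean weight factors into $p$-adic ones using the local algebraicity of $\chi$, $\xi$ and the decomposition \eqref{kappa'}.

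First, fix $\iota\colon \Q_{p}(\chi,\ukappa_{2})\into \C$ and consider $I^{\iota}:=I(\phi^{p\infty};\chi^{\iota},\xi^{\iota},\iota\kappa_{2}',k)$.  Since $(\Up_{p}^{\circ})^{n!}$ acts on Whittaker coefficients by $W(c)\mapsto W(p^{n!}c)$, Lemma \ref{Lemma FW I} gives the complex coefficient of $(\Up_{p}^{\circ})^{n!}I^{\iota}$ at $c\in \A^{+}$ with $v(c)\geq 0$ at all $v\vert p$ as a sum over $a/c\in F$ with $0<a/c<p^{n!}$ of  the product of $\chi^{\iota,\infty}(a)\,a_{\infty}^{(|l|+l_{0})/2}\,\xi^{\iota,\infty,-1}(c p^{n!}-a)(a-c p^{n!})_{\infty}^{k}$ with the  local factors $J_{v}$.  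Provided $n$ is large enough that $v(p^{n!})$ exceeds the conductors of $\chi_{w}$, $\xi_{v}$, $\kappa_{2,v}'$ for every $w\vert v\vert p$, Lemma \ref{J int}.1 identifies each $J_{v}$ at $v\vert p$ with an explicit smooth function supported on $a_{v}\in U_{F,v}^{\circ}$ (and so compels the finiteness of the sum).

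Next, I convert this complex expansion into a $p$-adic one.   The form $(\Up_{p}^{\circ})^{n!}I^{\iota}$ is cohomological nearly holomorphic, so by the proposition after \eqref{f nh p} it yields a $p$-adic modular form whose Whittaker coefficient at $c$ is, by \eqref{f nh p}, $\iota^{-1}$ applied to the leading (constant-in-$Y$) term divided by $(cp^{n!})_{p}^{(-w_{0}-w+2)/2}$. Crucially, the automorphy of $\chi$ (resp.\ $\xi$) forces $\chi^{\infty}(a)=\chi_{\infty}(a)^{-1}$ for $a\in F^{\ts}$ (resp.\ similarly for $\xi$), allowing me to turn the archimedean powers $a_{\infty}^{(|l|+l_{0})/2}$ and $(a-cp^{n!})_{\infty}^{k}$ (together with the matching $\tau'$- and $\tau'^{c}$-components of $\chi^{\infty}$) into the $p$-adic expressions $\kappa_{1}(a_{p})$ and $\kappa_{2}((a-p^{n!}c)_{p})$ via the weight decomposition \eqref{kappa'}; the residual finite-part characters $\kappa_{1,0}^{p\infty}(a)\kappa_{2,0}^{p\infty}(-a)$ and Tate-style local factors $\mathbf{J}_{v}$ for $v\nmid p$ come from Lemma \ref{J int}.3.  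Matching the global normalisations $|D_{F}|^{1/2}|D_{E}|^{1/2}$ against the discriminant factors in Lemma \ref{Lemma FW I} and the definition \eqref{lm def} of $\lm$ delivers \eqref{WI c}. That the resulting sequence $(W_{\bullet}(c))_{c}$ is  independent of $\iota$ and defines an element of ${\bf S}(\Q_{p}(\chi,\ukappa_{2}))$ follows, via Lemma \ref{Z-adic} applied to  a zero-dimensional base, from the fact that each $\iota$-specialisation already belongs to ${\bf S}(U^{p},\C)$.

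Finally, for \eqref{star id I}, assume \eqref{phi cond}, so that by Lemma \ref{eis cusp} every Eisenstein series in sight is cuspidal.  By Corollary \ref{E delta},
\[
E_{r}(\phi_{2}^{p\infty};\xi^{\iota},\iota\kappa_{2}',k)=(-1)_{\infty}^{k}\delta^{k} E_{r}(\phi_{2}^{p\infty};\xi^{\iota},\iota\kappa_{2}',0),
\]
so $I^{\iota}$ is (up to sign) a $\star$-product of holomorphic forms with a $\delta^{k}$ applied to one factor.  Applying \eqref{e ord hol} (extended to the twisted setting, as noted after \eqref{e ord hol}) with $f_{1}=\theta$ and $f_{2}=E_{r}(\cdots;0)$ converts the complex identity $e^{\rm ord,\iota}[e^{\rm hol}(\theta^{\iota}\star \delta^{k}E_{r}^{\iota})]$ into the $p$-adic $e^{\rm ord}(\theta\star d^{k}E_{r})$.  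The latter in turn equals $e^{\rm ord}\mathrm{I}(\phi^{p\infty};\chi,\ukappa_{2})$ after invoking  \eqref{E d}.  The sign $(-1)_{\infty}^{k}$ is absorbed into the conversion between complex and $p$-adic conventions for $\delta^{k}$ versus $d^{k}$ (cf.\ \eqref{delta d}).

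\emph{Main obstacle.}  The bookkeeping in the second step, where  archimedean weight factors have to be matched against $p$-adic smooth characters via the local algebraicity of $\chi$ and $\xi$, is where all the normalisations must be carefully aligned; tracking the constants $|D_{F}|$, $|D_{E/F}|$, $L^{(p)}(1,\eta)$, and the $\lm$-factor through both the complex and $p$-adic specialisations is the only delicate point.
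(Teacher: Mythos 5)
Your proposal follows the same route as the paper's (very terse) proof: unwind Lemma~\ref{Lemma FW I} together with Lemma~\ref{J int}, apply the $\Up_p^{\circ, n!}$-shift, translate the archimedean normalisations into $p$-adic ones via \eqref{f nh p}, and obtain the second assertion from \eqref{e ord hol}, Corollary~\ref{E delta} and \eqref{E d}. Two small inaccuracies worth correcting: the membership in ${\bf S}(\Q_p(\chi,\ukappa_2))$ is given directly by the proposition following \eqref{f nh p} (Lemma~\ref{Z-adic} is not the appropriate tool, and ``${\bf S}(U^p,\C)$'' is not a defined object), and the signs $(-1)_\infty^k$ from Corollary~\ref{E delta} and from \eqref{E d} simply cancel against each other rather than being ``absorbed'' in the $\delta^k$-vs-$d^k$ conventions.
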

 \begin{proof} From Lemmas \ref{Lemma FW I}, \ref{J int}, we find an expression which can be brought into the above form, after replacing  $p^{n!}c$ with  $c$ whenever it occurs as the argument of a smooth function.  
 
 The second assertion follows from \eqref{e ord hol}, Corollary \ref{E delta} and \eqref{E d}.
  \end{proof}

With notation as in Proposition \ref{prrr}, let  $${\bf J}^{p}(a, c, \phi^{p\infty}; \chi, \kappa_{2, 0}^{-1}):=    
\prod_{v\nmid p}
\kappa_{2,0,v}^{-1} (a)\chi_{ v} (-a)  {\bf J}_{v}(a, c, \phi_{v}; \chi_{v}, \kappa_{2, 0,v}^{-1}).$$
 It is easy to verify, using only that   ${\bf J}^{p}(\cd)$ is a Schwartz function of $a\in \A^{p\infty, \ts}$, that the Riemann sums 
 $$\mu_{n}({\bf J}^{p}, c, \phi^{p\infty})(\vphi) := \sum_{a/c\in F^{\ts}}^{} \vphi(a) \cdot \one[0<a/c< p^{n!}]\one[a\in U_{F, p}^{\circ}]\cdot  {\bf J}^{p}(a, c,\phi^{p\infty};  \chi,\kappa_{2, 0}^{-1}),$$ 
for  $\vphi\colon U_{F, p}^{\circ}\to \Q_{p}(\kappa_{2}, \xi)$ locally constant, converge to a  measure (bounded distribution)
$$\mu({\bf J}^{p}, c, \phi^{p\infty}; \chi^{p\infty}, \ukappa_{2}^{p\infty}) $$ 
 on $U_{F, p}^{\circ}$  valued in $\Q_{p}(\chi, \ukappa_{2})$. Then by the same argument  of the standard result in \cite[Theorem 6 on p. 39]{koblitz}, any continuous  $\Q_{p}(\chi, \ukappa_{2})$-valued function is integrable for this measure. The same holds with $\Q_{p}(\chi, \ukappa_{2})$ replaced by $\OO(\Y_{\H}\hat{\ts} \frak W)$ and $\chi, \ukappa_{2}$ by the universal characters, ${\bf J}^{p}$ by the universal function; for this universal situation,  we will use the same notation without $\chi, \ukappa_{2}$. 
  
  \begin{coro}\lb{coro Iord} Let ${S}^{\rm bad}$ be the set of places of $F$  at which $\phi^{p\infty}$ is not the standard Schwartz function.
Denote  $\ukappa_{1}=\ukappa_{\H}(\chi_{\rm univ})$.    Assume that $\phi_{2}^{p\infty}$ satisfies \eqref{phi cond}. The  sequence of coefficients 
\beqq
  W_{{\bf I}^{\ord}(\phi^{p\infty})}(c)(\chi, \ukappa_{2})&:= 
    \lm\cd \one_{\O_{F,p}}[c_{p}]\cd
    \int_{U_{F, p}^{\circ}}  \kappa_{1}\kappa_{2}(a) \, d\mu({{\bf J}^{p}, c, \phi^{p\infty}})(a) \in \mathscr{K}(\Y_{\H}\hat{\ts} \frak W), \qquad c\in \A^{\infty,+},
\eeqq
has  poles controlled by $\prod_{v\in S^{\rm bad}}L(1, \eta_{v}\kappa_{2, 0,v}^{-1})$. It defines an ordinary meromorphic  $\Y_{\H}\hat{\ts}\frak W$-adic modular form (\S\ref{244})
$${\bf I}^{\ord}(\phi^{p\infty};\chi, \ukappa_{2})$$
 of weight $\ukappa_{1}\ukappa_{2}$,
which satisfies the following property. 
 
For all $\chi\in \Y_{\H}^{\cl}(\C)$, $\ukappa_{2}\in\frak W^{\cl}(\C)$ with underlying numerical weights $\ul$, $\uk$ such that   $l_{\tau}, k_{\tau}, k_{\tau}+k_{0}\geq 0$ for all $\tau$,
 we have
\beq\lb{I ord int}
{\bf I}^{\ord}(\phi^{p\infty};\chi, \ukappa_{2})=  e^{\ord, \iota}  |D_{E}|^{1/2}|D_{F}| e^{\rm hol} I(\phi^{p\infty};\chi^{\iota}, \kappa_{2, 0}^{-1, \iota}, \iota\kappa_{2}', k),
\eeq
where $\iota\colon \Q_{p}(\chi, \ukappa_{2})\into \C$ is the embedding attached to the complex geometric point $(\chi, \ukappa_{2})$. 
    \end{coro}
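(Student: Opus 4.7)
The plan is to derive the corollary from Proposition~\ref{prrr} by recognising the finite sums there as Riemann sums for the universal measure $\mu({\bf J}^p,c,\phi^{p\infty})$ on $U_{F,p}^\circ$, and then invoking Hida's control in the form of Lemma~\ref{Z-adic}. As a preliminary, I would verify that the universal measure exists as a $\cK(\Y_\H\hat\ts\frak W)$-valued bounded measure with poles controlled by $\prod_{v\in S^{\rm bad}}L(1,\eta_v\kappa_{2,0,v}^{-1})$: on the complement $\cZ$ of this polar divisor, Proposition~\ref{Wv E int} guarantees the regularity of each ${\bf J}_v$, while Lemma~\ref{J int}.2--3 shows that almost all factors are trivial and the nontrivial ones are compactly supported Schwartz in~$a$. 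The Koblitz-style convergence argument quoted after Proposition~\ref{prrr} therefore applies verbatim over $\cZ$.

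Next I would check the interpolation formula~\eqref{I ord int} pointwise at a classical point $(\chi,\ukappa_2)$ satisfying the positivity conditions, with complex embedding $\iota$. Since $\kappa_1\kappa_2$ is locally constant on the support of~$\mu$, for all sufficiently large~$n$ the measure integral defining $W_{{\bf I}^\ord(\phi^{p\infty})}(c)(\chi,\ukappa_2)$ coincides with the finite Riemann sum $\mu_n({\bf J}^p,c,\phi^{p\infty})(\kappa_1\kappa_2\cdot \one_{U_{F,p}^\circ})$, which by direct inspection matches the expression \eqref{WI c} of Proposition~\ref{prrr}. Hence the specialisation ${\bf I}^\ord(\phi^{p\infty};\chi,\ukappa_2)$ equals $e^\ord {\rm I}(\phi^{p\infty};\chi,\ukappa_2)$; combining with identity~\eqref{star id I} and the idempotence of $e^\ord$ yields~\eqref{I ord int}.

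To finally upgrade these values into a meromorphic $\Y_\H\hat\ts\frak W$-adic modular form of weight $\ukappa_1\ukappa_2$, I would apply Lemma~\ref{Z-adic} with $\vphi=\ukappa_1\cdot\ukappa_2\colon\cZ\to\frak W$ and $\Sigma$ the locus of classical points satisfying $l_\tau,k_\tau,k_\tau+k_0\geq 0$ for all $\tau\in\Sg_p$. The main technical point to verify is the Zariski density of~$\Sigma$ in $\cZ$: it holds because this weight-positive cone has nonempty interior in the weight-character space of each factor, so that the classical points it contains accumulate in every admissible open. Granted this density, step two exhibits the specialisations at~$\Sigma$ as $p$-adic $q$-expansions of classical ordinary cuspforms (the right-hand sides of~\eqref{I ord int}), which by Hida's control theorem lie in the expected fibres of $\cS_\frak W^{U^p}$; Lemma~\ref{Z-adic} then concludes.
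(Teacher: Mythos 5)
Your overall strategy — interpolate via Proposition~\ref{prrr}, package the interpolated coefficients as integrals against the universal measure $\mu({\bf J}^p, c, \phi^{p\infty})$, and conclude with Lemma~\ref{Z-adic} — is the same as the paper's. However, your middle step contains a concrete misstep. You assert that $\kappa_1\kappa_2$ is locally constant on the support of~$\mu$, so that the integral literally coincides with a finite Riemann sum for large~$n$, and that this Riemann sum "by direct inspection matches" the coefficient~\eqref{WI c}. Neither half is right. At a classical point $\kappa_1\kappa_2$ decomposes as a smooth (locally constant) character times an algebraic character $a\mapsto a^{(\text{exponents})}$; the algebraic factor is continuous but \emph{not} locally constant, so the integral is a genuine limit of Riemann sums, never equal to a single one. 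More importantly, the Riemann sum $\mu_n(\kappa_1\kappa_2\cdot\one_{U_{F,p}^\circ})$ involves $\kappa_2(a_p)$ while the coefficient~\eqref{WI c} produced by $(\Up_p^\circ)^{n!}\,{\rm I}$ involves $\kappa_2((a - p^{n!}c)_p)$; the two do not match for any finite~$n$. What makes the limit argument work — and what the paper says explicitly — is that $\kappa_2(a - p^{n!}c) - \kappa_2(a) \to 0$ uniformly in~$a$ as $n\to\infty$, combined with the fact that the coefficient depends on $\kappa_2(\cd)$ in a bounded way; this lets one replace $\kappa_2((a-p^{n!}c)_p)$ with $\kappa_2(a_p)$ in the limit. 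You should replace your "coincides"/"direct inspection" claim with this uniform-approximation argument. The remaining parts of your proof (pole control via Proposition~\ref{Wv E int} and Lemma~\ref{J int}, and the density of the weight-positive classical locus feeding Lemma~\ref{Z-adic}) are sound and match the paper's reasoning, though the density point is left implicit there.
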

\begin{proof}
The interpolation property \eqref{I ord int} at the level of $q$-expansions follows from Proposition \ref{prrr} and the previous discussion.  
The simplification in the argument of $\kappa_{2}$ in the interpolated coefficient \eqref{WI c} is justified by the fact that $\kappa_{2}(a-p^{n!}c)-\kappa_{2}(a)\to 0$ uniformly in $a$, and that the expression of interest is a bounded function of $\kappa_{2}(\cd)$.   Lemma \ref{244} then shows the existence of the $\Y_{\H}\hat{\ts}\frak W$-adic modular form ${\bf I}^{\ord}(\phi^{p\infty};\chi, \ukappa_{2})$.
 \end{proof}
 
Consider the weight map 
\beqq
\vphi\colon (\Y_{\G}\hat{\ts} \Y_{\H})&\to \frak W\\
(x, y)&\mapsto \ukappa_{x}.
\eeqq
Recycling notation (in a way that should cause no confusion), 
define 
 an ordinary meromorphic $(\Y_{\G}\hat{\ts} \Y_{\H})$-adic modular cuspform of weight $\vphi$ by
\beq \lb{I x y}
{\bf I}^{\ord}(\phi^{p\infty}; x, y):= {\bf I}^{\ord}(\phi^{p\infty}, \chi_{y}, \ukappa_{x} \ukappa_{y}^{-1}),\eeq
where the right-hand side is the form of Corollary \ref{coro Iord}.  We denote by 
$${\bf I}^{\ord, \curlyvee}(\phi^{p\infty},x, y)$$
the pullback of ${\bf I}^{\ord}(\phi^{p\infty},x, y)$ under the involution $\Y_{\G}\ts\Y_{\H}\stackrel{\curlyvee\ts \id}{\longrightarrow} \Y_{\G}\ts\Y_{\H}$.

 \begin{coro}\lb{coro Iord 2}   Assume that $\phi_{2}^{p\infty}$ satisfies \eqref{phi cond}. 
   For all $x\in \Y_{\G}^{\cl}(\C)$ and $y \in \Y_{\H}^{\cl}(\C)$,
of weights  $\uw$, $\ul$ such that for all $\tau\in \Sg_{\infty}$, 
$$ | l_{\tau}| \leq w_{\tau}-2, \qquad |w_{0}+l_{0}|\leq w_{\tau} -2-  | l_{\tau} |, $$
 we have
\beq\lb{I ord int 2}
{\bf I}^{\ord, \curlyvee}(\phi^{p\infty}; x, y)=  e^{\ord, \iota}  |D_{E}|^{1/2}|D_{F}| e^{\rm hol} I(\phi^{p\infty};\chi^{\iota}, \xi_{x, y}^{\iota}, \iota \kappa_{2,x,y}', k_{x,y}),
\eeq
where
\begin{itemize}
\item $\xi=\xi_{x,y} = \omega_{{x}} \omega_{y}$, whose weight we denote by $k_{0}$;
\item $k_{x,y} = (w-  2 - | l | - k_{0}) /2 $;
\item $\kappa_{2,x,y}' = \kappa_{{x}}^{\curlyvee}{}' \cd \kappa_{y}'^{-1}$ (with notation as in \eqref{kp'});
\item  $\iota\colon \Q_{p}(x, y)\into \C$ is the embedding attached to the complex geometric point $(x, y)$. 
\end{itemize}
  \end{coro}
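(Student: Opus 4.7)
The plan is to unwind the definition of ${\bf I}^{\ord, \curlyvee}$ and apply a suitable extension of Corollary \ref{coro Iord}, after matching the numerical weight data. By \eqref{I x y} and the fact that $\ukappa_{\G}$ intertwines the involutions $\curlyvee$, one has
$${\bf I}^{\ord, \curlyvee}(\phi^{p\infty}; x, y) = {\bf I}^{\ord}(\phi^{p\infty}; \chi_y, \ukappa_x^\curlyvee \ukappa_y^{-1}),$$
so the task reduces to applying Corollary \ref{coro Iord} with $\chi = \chi_y$ and $\ukappa_2 = \ukappa_x^{\curlyvee} \ukappa_y^{-1}$.

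Next, I would verify the numerical weight arithmetic. The involution $\curlyvee$ sends a classical weight $(w_0, w)$ of $\G$ to $(-w_0, w)$, while $\ukappa_\H(y)$ has classical weight $(l_0, l)$ by the final paragraph of \S~\ref{wt H}. A direct computation of the algebraic parts of the $\kappa_0$- and $\kappa$-components of $\ukappa_x^\curlyvee \ukappa_y^{-1}$ then gives: $\xi = \kappa_{2,0}^{-1} = \omega_x \omega_y = \xi_{x,y}$, whose archimedean weight is $k_0 = w_0 + l_0$; the algebraic exponent of $\kappa_2$ at $\tau$ on $U_{F, p}^\circ$ is $(w_\tau - l_\tau - k_0 - 2)/2$ when $l_\tau \geq 0$; and the smooth part factors by multiplicativity of $'$ as $\kappa_{2, x, y}' = (\kappa_{x}^{\curlyvee})' (\kappa_y')^{-1}$, matching the statement.

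To handle arbitrary signs of $l_\tau$, I would extend Proposition \ref{prrr} and Corollary \ref{coro Iord} so as to drop the running assumption $l \geq 0$ of \S~\ref{47}. The extension is essentially mechanical: the archimedean Schwartz function $\phi_{1, \ul, v}$ of \eqref{phi1inf} is already defined piecewise by swapping the roles of $t$ and $\bar t$ as $l_v$ changes sign, and the resulting Whittaker expansion of the theta--Eisenstein series features $y_\infty^{(|l| + l_0)/2}$ symmetrically via Lemma \ref{theta FW}. One thus redoes Lemma \ref{Lemma FW I} and the interpolation argument of Proposition \ref{prrr} verbatim in the $l_v \leq 0$ case with the analogous archimedean Schwartz function, obtaining an identification in which $l_\tau$ is replaced by $|l_\tau|$ in the algebraic exponents; in particular, the Laguerre degree becomes $k_\tau = (w_\tau - |l_\tau| - k_0 - 2)/2 = k_{x, y, \tau}$. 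The measure-theoretic $q$-expansion defining ${\bf I}^{\ord}$ in Corollary \ref{coro Iord} is sign-insensitive and retains meaning for all $l$.

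With this extension in hand, the positivity hypotheses $|l_\tau|, k_\tau, k_\tau + k_0 \geq 0$ translate, upon substituting $k_\tau = (w_\tau - |l_\tau| - k_0 - 2)/2$, to $|l_\tau| + k_0 \leq w_\tau - 2$ and $|l_\tau| - k_0 \leq w_\tau - 2$, which combine to $|l_\tau| \leq w_\tau - 2$ and $|w_0 + l_0| \leq w_\tau - 2 - |l_\tau|$---exactly the hypotheses of Corollary \ref{coro Iord 2}. The main obstacle I expect is the careful redo, in the $l_v \leq 0$ case, of the archimedean Whittaker computations of Lemma \ref{theta FW} through Proposition \ref{W infty E} together with the twisted-form bookkeeping, ensuring the $q$-expansion on both sides of \eqref{I ord int 2} match on the nose after the involution $t \leftrightarrow \bar t$.
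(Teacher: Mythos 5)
Your proposal is correct and follows essentially the same route as the paper: identify $\mathbf{I}^{\ord,\curlyvee}(\phi^{p\infty};x,y)$ with $\mathbf{I}^{\ord}(\phi^{p\infty};\chi_y,\ukappa_x^{\curlyvee}\ukappa_y^{-1})$, feed this into Corollary~\ref{coro Iord}, and observe that the restriction $l\geq 0$ in Proposition~\ref{prrr} can be lifted because the archimedean Schwartz data is built so that the total weight of the theta--Eisenstein series depends only on $x$. Your account is simply more explicit than the paper's one-line remark: where the paper says ``the same argument also goes through,'' you spell out that the $l_v\leq 0$ branch of $\phi_{1,\ul,v}$ swaps $t$ and $\bar t$ and that Lemmas~\ref{theta FW}--\ref{Lemma FW I} already involve $|l|$ rather than $l$, so the interpolation computation is literally unchanged.
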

\begin{proof}
The interpolation property at characters satisfying $l\geq 0$ follows from Proposition \ref{prrr} via  Corollary \ref{coro Iord}; the same argument also goes through without the assumption $l\geq 0$, since the weight of the chosen theta-Eisenstein series does not depend on $\ul$ (or $y$) but only on $x$.
  The inequalities on the weights come from the conditions $k, k+k_{0}\geq 0$.
\end{proof}

\section{Zeta integrals}
\lb{sec 4}

In this final section, we interpolate  global and local (away from $p\infty$) zeta integrals, compute the archimedean and $p$-adic integrals, and construct the $p$-adic $L$-function. 

As preliminary, we recall gamma factors introduced in the introduction.
Let $F_{v}$ and $L$  be  $p$-adic fields. The  (inverse) Deligne--Langlands 
gamma factor of a  potentially semistable representation $\rho$ of $\Gal(\baar{F}_{v}/F_{v})$ over $L$, with respect to a 
nontrivial character $\psi_{v}\colon F_{v}\to\C^{\ts}$ 
 and  an embedding $\iota\colon L\into \C$,  is defined as 
\beqq
\gamma(\iota \rho,\psi_{v})^{-1}:= {L(\iota {\rm WD}(\rho))\over \eps(\iota {\rm WD}(\rho), \psi_{v}) L(\iota {\rm WD}(\rho^{*}(1)))},\eeqq
where $\iota {\rm WD}$ is Fontaine's functor \cite{fontaine} to complex Weil--Deligne representations.
We also define $\gamma(s, W, \psi_{v}):= \gamma(W\ot |\cd|^{s}, \psi_{v})$. 

Let $\Q_{p}^{\ab}$ be the abelian closure of $\Q_{p}$. If $W_{\chi}$ is  the Weil--Deligne representation corresponding to a smooth character $\chi\colon F_{v}\to \Q_{p}^{\rm ab, \ts}$, then for any $\sg\in G^{\ab}_{\Q_{p}}$ corresponding to $a\in \Q_{p}^{\ts}$ under the reciprocity map, we have
\beq \lb{equiv gamma}
\gamma(W_{\chi},\psi_{v})^{\sg} = \chi(a) \gamma(W_{\chi^{\sg}}, \psi_{v}).\eeq

For now until the final \S~\ref{sec final}, we fix an embedding 
\beq\lb{iotab}
\iota^{\ab}\colon \Q_{p}^{\ab}\into \C, \eeq by which we  identify the fixed standard character $\psi^{\infty}\colon  \A^{\infty}\to \C$ with one valued in $\Q_{p}^{\ab}$ (still denoted by $\psi^{\infty}$).

\subsection{Petersson product}\lb{Pet}

Let $\pi$ be an  ordinary  automorphic representation of $\G(\A)$ over a finite extension $L$ of $\Q_{p}^{\ab}$.
For $v \vert p$, let $\omega_{\pi,v}$, $ \alpha_{\pi ,v}\colon F_{v}^{\ts}\to L^{\ts}$ be the central character and, respectively, $\Up_{v}$-eigencharacter of $\pi_{v}$.

Define $\gamma( \ad (V_{\pi,p})(1)^{++}, \psi_{p}):=\prod_{v\vert p} \gamma( \ad (V_{\pi,v})(1)^{++}, \psi_{v})$, where $\ad (V_{\pi,p})(1)^{++}$ is the character $\omega_{\pi, v}^{-1}\alpha_{\pi, v}^{2}|\cd|^{2}$ of $F_{v}^{\ts}$.  For $\iota\colon L\into \C$, let 
 $$e_{p}(\ad(V_{\pi^{\iota}})(1)):= {\gamma(\iota \ad (V_{\pi,p})(1)^{++}, \psi_{p})^{-1} \zeta_{F, p}(2) \over L(1, \pi_{p}^{\iota}, \ad) }.$$ 
 
 Let $(, )$ denote the Petersson pairing 
$$(f, f') :=\int_{\A^{\ts}\bks\G(\A)} f(g)f'(g)\, dg$$
of automorphic forms on $\G(\A)$. 

\begin{lemm} \lb{pet expl} Let $\pi$ be an  ordinary cuspidal automorphic representation of $\G(\A)$ of weight $\uw$ over  a finite extension $L$ of $\Q_{p}^{\ab}$.
There exists a bilinear pairing
$$\la\, , \, \ra\colon \pi^{\ord}\ot_{L} N_{\uw^{\vee}}(L)  \to L$$
such that  for all $\iota\colon L\into \C$ extending $\iota^{\ab}=\eqref{iotab}$  and all sufficiently large $r\in \N^{S_{p}}$, 
\beq\lb{pettt}
\iota \la f ,g  \ra =
{ {|D_{F}|^{1/2} \zeta_{F}(2)
\over 
\omega_{\pi^{\iota}, p}(-1)  e_{p}(\ad(V_{\pi^{\iota}})(1)) \cdot 2^{\sum_{\sg}-1- w_{\sg}}\cdot    L(1, \pi^{\iota}, \ad)} 
 {q_{F,p}^{r}(w_{r,p} \Up_{p}^{-r} f^{\iota, \rm a}, {g}^{\iota})} }.
 \eeq
The pairing $\la\, , \, \ra$ satisfies the following properties.
\begin{enumerate}
\item For all $f\in \pi^{\ord}$, $g\in N_{\uw^{\vee}}(L)$, we have $\la f ,g \ra =\la f, e^{\ord} e^{\hol} g\ra$;
\item If $f_{0}\in \pi^{\infty}$, $f_{0}^{\vee}\in \pi^{\vee,\infty}$ are ordinary forms, new at places away from $p$,  holomorphic at the infinite places, and with first Fourier coefficients equal to $1$, then 
\beq \lb{eq pet ex} 
\la f_{0},  f_{0}^{\vee} \ra = c_{\pi^{\infty}} \eeq
for some constant $c_{\pi^{\infty}}\in L^{\ts}$ depending only on the Bernstein components and the monodromy of $\pi_{v}$ for all $v\nmid p\infty$.
\end{enumerate}
\end{lemm}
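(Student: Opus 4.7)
I would define $\langle f,g\rangle \in L$ via the formula \eqref{pettt}, so the content of the lemma is that (a) the right-hand side is independent of $r$ for $r$ sufficiently large, (b) as $\iota$ varies over extensions of $\iota^{\rm ab}$ the values take the form $\iota(c)$ for a single $c\in L$, and that the resulting pairing satisfies the two listed properties. Stability in $r$ is routine: the operator $q_{F,p}^{r}w_{r,p}\Up_{p}^{-r}$, applied to an ordinary vector with bounded conductor, is independent of $r$ once $r$ is large at each place above $p$. This is a direct computation using $w_{r+1,v} = w_{r,v}\cdot \smalltwomat{\vpi_v}{}{}{1}$ together with the fact that $\Up_{p}^{\circ}$ is invertible on ordinary vectors with the archimedean normalisations encoded in $\Up_{p}^{\circ,\uw}$.

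\textbf{Main obstacle: Galois descent.} Two embeddings $\iota,\iota'$ extending $\iota^{\rm ab}$ differ by some $\sigma\in \Gal(L/\Q_{p}^{\rm ab})$, and I need
\[
\sigma\bigl(\mathrm{RHS}(\iota)\bigr) \;=\; \mathrm{RHS}(\sigma\circ \iota).
\]
The rational/archimedean factor $|D_{F}|^{1/2}\zeta_{F}(2)\cdot 2^{\sum_{\sigma}-1-w_{\sigma}}$ is Galois-invariant. The sign $\omega_{\pi,p}(-1)$ and the interpolation factor $e_{p}(\ad(V_{\pi})(1))$ transform via the equivariance identity \eqref{equiv gamma} applied to the smooth characters $\omega_{\pi,v}^{-1}\alpha_{\pi,v}^{2}|\cdot|^{2}$. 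The serious point is then that the quotient
\[
{L(1,\pi^{\iota},\ad)}\bigm/\bigl(w_{r,p}\Up_{p}^{-r}f^{\iota,\rm a},g^{\iota}\bigr)
\]
should transform by $\sigma$ on the $L$-side in the expected way. This is an instance of the algebraicity/rationality of adjoint periods: the Petersson product on the right is the pairing of the antiholomorphic avatar of an ordinary vector with a nearly holomorphic form, and after dividing by $L(1,\pi,\ad)$ it becomes, up to the explicit factors in the denominator of \eqref{pettt}, a Hecke-equivariant algebraic pairing in the sense of Shimura and Hida. I would reduce to the new vector case, for which this rationality is exactly (a reformulation of) Hida's normalisation of $p$-adic periods \cite{Hi}, and deduce the general case by Hecke-equivariance since every ordinary vector is a linear combination of translates of the new vector under Hecke operators supported away from $p\infty$.

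\textbf{Properties (1) and (2).} For property (1), observe that $f^{\rm a}$ is an antiholomorphic cuspidal vector: the orthogonal complement of $M_{\uw^{\vee}}\subset N_{\uw^{\vee}}$ under the Petersson pairing with antiholomorphic cuspforms is generated by images of the $\delta$-operators, so $(w_{r,p}\Up_{p}^{-r}f^{\rm a},g) = (w_{r,p}\Up_{p}^{-r}f^{\rm a}, e^{\hol}g)$; moreover $f$ being ordinary means that its $p$-component is fixed by $e^{\ord}$, which transfers across the pairing (by self-adjointness of $\Up_{p}^{\circ}$ up to a twist) to yield $\langle f,g\rangle=\langle f,e^{\ord}e^{\hol}g\rangle$. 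For property (2), apply the formula to $f=f_{0}$, $g=f_{0}^{\vee}$: then $(w_{r,p}\Up_{p}^{-r}f_{0}^{\iota,\rm a},f_{0}^{\vee,\iota})$ is, up to a $q_{F,p}^{-r}$ from the stability step, the Petersson norm of the ordinary new vector pair, which is computed by Rankin--Selberg (or directly from the adjoint $L$-function calculation, as in \cite{Hi}) to equal
\[
\tfrac{1}{c_{\pi^{\infty}}}\cdot |D_{F}|^{1/2}\zeta_{F}(2)\cdot 2^{\sum_{\sigma}-1-w_{\sigma}}\cdot \omega_{\pi^{\iota},p}(-1)\cdot e_{p}(\ad(V_{\pi^{\iota}})(1))\cdot L(1,\pi^{\iota},\ad),
\]
with $c_{\pi^{\infty}}\in L^{\times}$ a product of local factors at $v\nmid p\infty$ depending only on $(\pi_{v})_{v\nmid p\infty}$ up to twist; inverting gives \eqref{eq pet ex}. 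The local nature of $c_{\pi^{\infty}}$ follows from the standard local--global decomposition of the adjoint $L$-function and the absence of archimedean or $p$-adic ambiguity, which have already been absorbed into the normalising factors.
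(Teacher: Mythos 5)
Your proposal and the paper's proof arrive at the same place but by genuinely different routes, and it is worth contrasting them. The paper's entire argument is a citation chain: it invokes the local--global factorisation of the Petersson pairing into pairings in local Whittaker models from \cite[Proposition 2.1]{CST}, together with the explicit evaluations of those local pairings away from~$p$ in \cite[Proposition 3.11]{CST} and at~$p$ in \cite[Lemma A.3.3]{univ}. This factorisation handles everything at once: the local pairings are manifestly rational expressions in the Satake/Hecke parameters, so Galois descent (your ``main obstacle'') is immediate; the product of the local pairings away from $p\infty$ is exactly the constant $c_{\pi^{\infty}}$ of property~(2), and it visibly depends only on the local components there; and the $p$- and $\infty$-local factors are precisely what cancels the normalising factors in the denominator of \eqref{pettt}. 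You instead define the pairing by the formula, establish $r$-stability directly, and then argue Galois descent via a reduction to the new-vector case by Hecke equivariance followed by an appeal to Hida's period normalisations \cite{Hi}, with a separate Rankin--Selberg computation for property~(2). This is workable, but it buries the structure: you have to carry out the Hecke-equivariance reduction carefully (the Hecke operators at ramified places away from $p$ do not act by scalars on $\pi^{\rm ord}$, so ``linear combination of translates of the new vector'' requires you to verify that the pairing transforms compatibly under the whole Hecke algebra, which is exactly what the Whittaker-model factorisation gives for free), and the fact that $c_{\pi^{\infty}}$ depends \emph{only} on Bernstein components and monodromy is, in your framing, asserted rather than derived. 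In short, your approach is a legitimate computational alternative; the paper's appeal to the \cite{CST} factorisation is a cleaner single mechanism that delivers rationality, property~(2), and the local nature of $c_{\pi^{\infty}}$ simultaneously.

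One small point on property~(1): the paper only explicitly justifies the $e^{\hol}$ part (``since $f^{\iota,\rm a}$ is antiholomorphic, the pairing factors through $e^{\hol}$''). Your additional observation that the $e^{\ord}$ part follows because $w_{r,p}\Up_{p}^{-r}f^{\iota,\rm a}$ has the appropriate $\Up_p$-eigensystem, together with (near-)self-adjointness of $\Up_p^{\circ}$, is correct and makes explicit what the paper leaves implicit.
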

\begin{proof} The existence and \eqref{eq pet ex} follow from the  factorization of the Petersson inner product into parings in the Whittaker models \cite[Proposition 2.1]{CST}, together with the local calculations of \cite[Proposition 3.11]{CST} away from $p$ and \cite[Lemma A.3.3]{univ} at $p$.
 Since the elements $f^{\iota, \rm a}$ in the right hand side of \eqref{pettt}  are antiholomorphic, it is clear that the pairing factors through $e^{\hol}$. 
\end{proof}

\begin{prop}\lb{lala}
Let $\X_{\G}\subset \Y_{\G}$ be a Hida family of tame level $U^{p}$, let $S$ be a finite set of places such that $U^{Sp}$ is maximal, and let $\Pi= \Pi_{\X_{\G}}^{U^{Sp}}$. 
There is a unique $\OO({\Y_{\G, \Q_{p}^{\ab}}})$-bilinear pairing
$$\la\la\ , \ \ra\ra \colon \Pi\ot_{\OO(\Y_{\G})} {\cS}^{\curlyvee}(\Y_{\G})_{\Q_{p}^{\ab}} \to \cK(\X_{\G, \Q_{p}^{\ab}})$$
such that for all $x\in\X_{\G, \Q_{p}^{\ab}}^{\cl}$, corresponding to an ordinary representation $\pi=\pi_{x}$ over $\Q_{p}^{\ab}(x)$, 
and for all $\bff\in \Pi_{\Q_{p}^{\ab}}$, ${\bf g}\in {\cS}^{\curlyvee}(\Y_{\G})_{\Q_{p}^{\ab}}$, we have
$$\la\la \bff, {\bf g}\ra\ra(x) = \la \bff_{x},{\bf g}_{x}\ra.
$$
\end{prop}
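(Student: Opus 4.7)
\emph{Uniqueness.} Since $\X_{\G,\Q_p^{\ab}}$ is integral and $\X_{\G,\Q_p^{\ab}}^{\cl}$ is Zariski dense, a section of $\cK(\X_{\G,\Q_p^{\ab}})$ is determined by its values at the classical points in its locus of regularity; so any two pairings with the required interpolation property must coincide.

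\emph{Existence.} The plan is to interpolate the explicit right-hand side of \eqref{pettt} factor by factor. Split it as a ``normalisation'' piece
\[
\lm(x,r):=\frac{|D_F|^{1/2}\zeta_F(2)\, q_{F,p}^{r}}{\omega_{\pi_x,p}(-1)\,e_p(\ad(V_{\pi_x})(1))\,2^{\sum_\sigma(-1-w_\sigma)}\,L(1,\pi_x,\ad)}
\]
times a ``Petersson'' piece $(w_{r,p}\Up_p^{-r}\bff^{\rm a},\bg)$. The quantities $\omega_{\G}$ and the $\Up_p$-eigencharacter $\alpha_{\G,p}$ are global $\OO(\X_\G)^\ts$-valued by construction of $\Y_\G$, so $\omega_{\pi_x,p}(-1)$ and $e_p(\ad(V_{\pi_x})(1))$ spread to elements of $\OO(\X_{\G,\Q_p^{\ab}})^\ts$; the power of $2$ depends only on the weight map $\ukappa_\G$; and $L(1,\pi_x,\ad)^{-1}$ is interpolated by Hida's $p$-adic adjoint $L$-function on $\Y_\G$. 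Hence $\lm$ defines a meromorphic function on $\X_{\G,\Q_p^{\ab}}$, independent of $r$ for $r$ sufficiently large.

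For the ``Petersson'' piece, I would invoke the Whittaker factorisation of the Petersson product (\cite[Proposition 2.1]{CST}), which writes it as a product of local bilinear forms on Whittaker models. At places $v\nmid Sp\infty$, the forms are spherical and the local factor is a constant ratio of local $L$-values, hence spreads to a section of $\OO(\X_\G)$. At places $v\in S$, Proposition~\ref{LGC} identifies $\Pi^{U^{Sp}}_{|\X'_\G}$ with a tensor product of the co-Whittaker $\OO_{\X'_\G}[\G(F_v)]$-modules $\Pi(\cV_{\G,v})$; the universal Whittaker models of these modules pair canonically with those of their $\curlyvee$-analogues, giving meromorphic local pairings on $\X'_\G$ that specialise to the required ones. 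At $v\vert p$ the ordinary level-$r$ Whittaker pairing is given by an explicit formula in $\alpha_{\pi,v}$ (\cite[Lemma~A.3.3]{univ}) which depends $\OO(\X_\G)$-analytically on $x$; at $v\vert\infty$ the pairing depends only on the weight. Multiplying these local pieces and the factor $\lm$ produces the required bilinear pairing valued in $\cK(\X_{\G,\Q_p^{\ab}})$, agreeing with $\la\ ,\ \ra$ at every classical point by construction.

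The main obstacle is the construction at $v\in S$, where the Bernstein component of $\pi_{x,v}$ is only locally constant on $\X_\G$: it is precisely the co-Whittaker families of \cite{LLC} (combined with Proposition~\ref{LGC}) that allow one to interpolate the local pairings across the family, possibly after shrinking to the open subset $\X'_\G\subset\X_\G$ and allowing meromorphic (rather than regular) behaviour.
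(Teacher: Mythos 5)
Your route differs substantially from the paper's. The paper does not carry out a local Whittaker factorisation of the Petersson product at all: it defines $\la\la \bff_0, {\bf g}\ra\ra$ directly by projecting ${\bf g}$ onto the line $\cK(\X_{\G})\bff_0^{\curlyvee}$ via the unique $\cH^{\rm sph}_{U_0^p}$-equivariant idempotent, multiplied by $c_{\X_{\G}}^{-1}$ where $c_{\X_{\G}} = c_{\pi^{\infty}}$ is the constant from \eqref{eq pet ex}; it then extends to arbitrary $\bff = T\bff_0$ via $\la\la \bff, {\bf g}\ra\ra := \la\la \bff_0, T^{\curlyvee}{\bf g}\ra\ra$. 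All the local data at $v\in S$ that you would interpolate piece by piece is compressed into the single constant $c_{\pi^\infty}$, and the sole technical point is that $c_{\pi^\infty}$ is constant along $\X_\G^{\cl}$: this is because $c_{\pi^\infty}$ depends only on Bernstein components (constant on a connected family) and on the monodromy rank of $\pi_{x,v}$ for $v\nmid p\infty$, which is constant by purity of $\cV_{\G|x}$ and \cite[Proposition 3.3.1]{LLC}. Your plan, by contrast, requires interpolating the individual local Whittaker pairings across the monodromy strata of the extended Bernstein variety, essentially re-deriving this constancy in local terms; the paper's argument is shorter and avoids this.

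Beyond the different packaging, there is one step in your plan that would fail as written: you propose to interpolate $L(1,\pi_x,\ad)^{-1}$ separately ``by Hida's $p$-adic adjoint $L$-function on $\Y_\G$''. This cannot work, because the complex value $L(1,\pi,\ad)$ is transcendental and is not $p$-adically interpolable on its own; what an adjoint $p$-adic $L$-function interpolates is a ratio $L(1,\pi,\ad)/\Omega_\pi$ for a Petersson-type period $\Omega_\pi$, and in \eqref{pettt} that period is precisely furnished by the unnormalised Petersson product you have separated off into the other factor. Once you invoke the Whittaker factorisation of \cite[Proposition 2.1]{CST}, a factor $L(1,\pi,\ad)$ already appears there and cancels against the $L(1,\pi,\ad)^{-1}$ in your $\lm(x,r)$; no $p$-adic adjoint $L$-function should enter, and inserting one would introduce spurious zeros and poles. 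You must regroup the two pieces of your decomposition before attempting to interpolate either.
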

\begin{proof} 
 The construction  is very similar to that of the pairing denoted by $H^{-1}l_{\lm}$ in \cite[p. 380]{Hi}. In this case, let $\bff_{0}^{\curlyvee}=\eqref{new bfff}$ be the normalised primitive form in $\Pi^{\curlyvee}$, let $U_{0}^{p}\subset \G(\A^{p\infty})$ be a maximal open compact subgroup fixing $\bff_{0}^{\curlyvee}$,  and let 
$$e_{\bff_{0}^{\curlyvee}}\colon \cS^{\curlyvee}\to \cK(\X_{\G}) \, \bff_{0}^{\curlyvee}$$ 
be the unique $\cH_{U_{0}^{p}}^{\rm sph}$-equivariant idempotent that factors through the idempotent projection $\cS^{\curlyvee}\to \cS^{U_{0}^{p},\curlyvee}$. Then we define $\la\la \bff_{0}, - \ra\ra$ by
 $$\la\la \bff_{0}, {\bf g}\ra\ra \bff_{0}^{\curlyvee} = c_{\X_{\G}}^{-1}\cdot  e_{\bff_{0}^{\curlyvee}}({\bf g})$$
where $c_{\X_{\G}}:=c_{\pi^{\infty}}$ for any automorphic representation $\pi$ such that $\pi^{U^{Sp},\ord}\cong\Pi_{|x}$.
for some  $x\in \X_{\G, \Q_{p}^{\ab}}^{\cl}$. Let us explain why this is well-defined independently of $x$. As noted before, $c_{\pi^{\infty}}$  only depends on the Bernstein component and the (rank of the) monodromy of $\pi_{x, v}$ for $v\nmid p\infty$. (In plain terms, the  rank of the monodromy is~$1$ if $\pi_{x, v}$ is a special representation and it is~$0$ otherwise.) The Bernstein component is an invariant of connected families. As for the rank of the monodromy, by the local-global compatibility result of Proposition \ref{LGC}, it is the rank of the  monodromy of the Weil--Deligne representation attached to $\cV_{\G |x}$. Since the latter is pure, the desired constancy along $\X_{\G}$ follows 
from  \cite[Proposition 3.3.1]{LLC}.

In general, we may write $\bff= T\bff_{0}$ for some Hecke operator $T$ supported away from $p$. We then define $\la\la \bff, {\bf g}\ra\ra:= \la\la \bff_{0}, T^{\curlyvee}{\bf g}\ra\ra$.  The interpolation property follows from the definitions, the interpolation property proved in \cite[Lemma 9.3]{Hi}, and  \eqref{eq pet ex}.
\end{proof}

\subsection{Waldspurger's Rankin--Selberg integral} We recall the local and global theory of Waldspurger's \cite{wald} integral representation of our $L$-function.

\subsubsection{Setup}\lb{421}
Let  $y\in \Y_{\H}^{\cl}(\C)$, let  $\chi=\chi_{y}$ be the corresponding character of $E^{\ts}\bks \A_{E}^{\ts}$, and  let $\ukappa_{\chi} \in \frak W^{\cl}(\C)$ be its weight, let $\ul$ be its numerical weight,  and let $\omega_{\chi}:= \chi_{|\A^{\ts}}$.

 Let $x\in \X_{\G}^{\cl}(\C)$, corresponding to a point $x_{0}\in \X_{\G}^{\cl}$ and an embedding $\iota\colon 
\Q_{p}(x)\into\C$. Let $\pi_{0}$ be the ordinary  automorphic representation of $\G(\A)$ over $\Q_{p}(x_{0})$ attached to $x_{0}$, and let $\pi=\pi_{0}^{\iota}$. We denote by 
 $$\ukappa_{\pi}\in \frak W^{\cl}(\C), \quad \uw, \quad \omega_{\pi}$$
 respectively the weight, numerical weight, and central character of $\pi$. 
  We let $\alpha=\ot_{v}\alpha_{v}\colon F_{p}^{\ts}\to \C^{\ts}$ be the character such that $\Up_{y}f^{\iota}=\alpha(t) f^{\iota}$ for any $f\in \pi_{0}^{\ord}$ and $t\in F_{p}^{\ts}$.
  Then 
 $$ \kappa_{\pi, 0} (z) = \omega_{\pi} (z) z^{w_{0}} , \qquad \kappa_{\pi}(t)= \alpha_{| U_{F, p}^{\circ}}(t) t^{(w+w_{0})/2}$$
are the decompositions of $\kappa_{\pi, 0}$ and $\kappa_{\pi}$ into a product of a smooth and an algebraic character.

Define, as in Corollary \ref{coro Iord 2}, a numerical weight $\uk$ and a smooth character $\kappa_{2}'$ of $U_{F, p}^{\circ}$ by
\beq\lb{kp al} 
\kappa_{\chi}' \kappa_{2}' &= \kappa_{\pi^{\vee}}'  = \alpha_{|U_{F, p}^{\circ}}, \qquad &\xi=\omega_{\pi}\omega_{\chi},\\
 k &= (w-2-  | l | -k_{0})/2, \qquad &k_{0}=w_{0}+l_{0}, 
\eeq
and let $\ukappa_{2}\in \frak W(\C)$ be the associated weight as in \eqref{kappa'}.

 For  $v\vert p$, we choose a Schwartz function $\Phi_{v}=\phi_{v}\in \calS({\bf V}_{v}\ts F_{v}^{\ts})$ as in  \eqref{phi1p}  and \eqref{kp1} (for $\phi_{1}$), and  \eqref{phi2p} and \eqref{kp al} (for $\phi_{2}$); then 
\beq\lb{phivp}
\phi_{v}(x, u) =\delta_{r, v}(x_{1})\one_{{\bf V}^{\circ}_{2, v}}(x_{2}) \delta_{U_{F, v}^{\circ}}(u) \alpha_{v}(u).\eeq
For $v\vert\infty$, let 
$\Phi_{v}=\Phi_{l_{0}, l, k_{0}, k, v}$ be a preimage, under the map \eqref{fock}, of $$\phi_{l_{0}, l, k_{0}, k, v} (x_{1}, x_{2}, u)=\phi_{1, l_{0}, l, v} (x_{1}, u) \phi_{2, k_{0}, k,v}(x_{2}, u), $$
 where  the factors are defined in \eqref{phi1inf}, \eqref{phi2inf}.

\subsubsection{Waldspurger's integral} The next proposition gives  an integral representation for the $L$-function we are interested in. We first define the local terms.
Let $f_{0}\in \pi_{0}^{\ord}$, let $f:= f_{0}^{\iota}$, and let 
 $$\baar{W}(g):=\int_{F\bks \A} f^{\rm a}\left(  \twomat 1x{}1 g\right)\psi(x)\, dx$$
  be the Whittaker function of $f^{\rm a}$ with respect to $\psi^{-1}$. It is related to the $q$-expansion \eqref{FW f} of  $f$ by
$$\baar{W}\left(\twomat y{}{}1\right)= W_{f}^{\C} (y).$$
We assume that $W\colon \G(\A)\to\C$ is factorisable as $\baar{W}=\ot_{v}\baar{W}_{v}$.

For $\Phi=\ot_{v}\Phi_{v}\in \calS({\bf V}\ts \A^{\ts})$, let 
$$R_{r,v}( \baar W_{v}, \Phi_{v}, \chi_{v}) := \int_{Z(F_{v})N(F_{v})\bks \G(F_{v})}\baar W_{v}(g) \delta_{\xi_{v}, r}(g) \int_{T(F_{v})} \chi_{v}(t)
r(gw_{r,v}^{-1})\Phi_{v}(t^{-1}, q(t))\,   dt\, dg,$$
where $\delta_{\xi, r}$ is as in \eqref{dlx}.
 Note that the integral $R_{r,v}$ does not depend on  $r\geq \underline{1}$ unless $v\vert p$; 
 we will accordingly simplify the notation  in these cases.
We also define normalised versions. For  $v\vert p\infty$, let $\Phi_{v}$ be as  fixed in \S~\ref{421}.
Then we put
\beqq
R_{v}^{\natural}(\baar W_{v},\Phi_{v}, \chi_{v})
 &:=
{ |d_{v}|^{-2} |D_{v}|^{-1/2}}
 {  \zeta_{F, v}(2)L(1, \eta_{v}\xi_{v})\over L(1/2, \pi_{E,v}\otimes \chi_{v})}
R_{v}( \baar W_{v},\Phi_{v},\chi_{v}) & \qquad \text{if $v\nmid p\infty$,} \\
R_{r,v}^{\dag}(\baar W_{v} ,\chi_{v}, \alpha_{v}) &: =
{ |d_{v}|^{-2} |D_{v}|^{-1/2}}
 {  \zeta_{F, v}(2)L(1, \eta_{v})\over L(1/2, \pi_{E,v}\otimes \chi_{v})}
q_{F, v}^{r}\alpha_{v}^{-r} R_{r,v}( \baar W_{v}, \Phi_{v}, \chi_{v}) &\qquad\text{if $v\vert p$,}\\
R_{v}^{\dagger}(\baar W_{v}, \chi_{v},  k)
&: = {  \zeta_{F, v}(2)L(1, \eta_{v})\over L(1/2, \pi_{E,v}\otimes \chi_{v})}
R_{v}( \baar W_{v},\Phi_{v}, \chi_{v})
 &\qquad \text{if $v\vert \infty$.}
\eeqq
By a result of Waldspurger (see \cite[Lemma 5.3.2]{pyzz}), for a place $v$ such that $\pi_{v}$ and $\chi_{v}$ are unramified, $\phi_{v}$ is standard, and $\baar W_{v}$ is unramified, we have 
\beq\lb{Rv unr}
R_{v}^{\natural}(\baar W_{v},\Phi_{v}, \chi_{v})=  \baar W_{v}(1).
\eeq

\begin{prop}\label{RSp} Let  $f_{0}\in \pi_{0}^{\ord}$ and assume that $f:=f_{0}^{\iota, \rm a}$ has a factorisable $\psi^{-1}$-Whittaker function $\baar W=\ot_{v} \baar W_{v}$.
Let $\phi^{p\infty}\in\calS({\bf V}^{p\infty}\times \A^{p\infty, \times})$. For sufficiently large $r=(r_{v})_{v\vert p}$, we have
\begin{multline*}
\iota  q_{F, p}^{r}\alpha(\vpi_{p})^{-r} ( f ,w_{r, p}^{-1} I(\phi^{p\infty}; \chi, \xi, \kappa_{2}', k))
 =
{|D_{F}|^{-1} |D_{E}|^{-1/2}}
 {L(1/2, \pi_{E}\otimes \chi)\over \zeta_{F}(2) L(1,\eta)}\\
 \cdot 
 \prod_{v\nmid p\infty}
  R_{r,v}^{\natural}(  \baar W_{v}, \phi_{v},\chi_{v})
   \prod_{v\vert p } 
R_{r,v}^{\dag}( \baar W_{v} ,\chi_{v},  \alpha_{v})
 \prod_{v\vert \infty } 
R_{v}^{\dag}(\baar W_{v} ,\chi_{v},  k),
\end{multline*}
where all but finitely many of the factors in the infinite product are equal to~$1$.
\end{prop}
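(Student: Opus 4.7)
The plan is to derive the identity as the arithmetic incarnation of Waldspurger's classical unfolding. The starting point is the complex Petersson product
\[
(f, I(\phi, \chi)) = \int_{Z(\A)\G(F)\backslash \G(\A)} f(g)\, I(\phi, \chi)(g) \, dg,
\]
where by definition \eqref{Iddef}, $I(\phi, \chi)$ is a regularised integral against $\chi$ of the product of the theta series $\theta(\cd;\ul)$ and the Eisenstein series $E_r(\cd ;\xi,\kappa_2',k)$. First I would substitute the definition of $I$ and, formally exchanging the two integrals (legitimate thanks to the rapid decay of cusp forms and the regularisation of the torus integral, for $\Re(s)$ large in the Eisenstein series, with values at $s=0$ obtained by analytic continuation), rewrite the pairing as
\[
\int^{*}_{E^{\ts}\backslash \A_E^{\ts}/\A^{\ts}} \chi(t) \int_{Z(\A)\G(F)\backslash \G(\A)} f^{\mathrm{a}}(g) \,\theta(g,u,r_1(t)\phi_1) \star E_r(g, q(t)u, \phi_2; \xi)\, dg\, dt,
\]
up to the factor $|D_F|^{-1/2}$. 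Next I would unfold the Eisenstein series along $P^1(F)\backslash {\rm SL}_2(F)$ and combine with the theta sum to obtain a single Whittaker-type integral over $N(F)\backslash \G(\A)$; this is the standard unfolding step that transforms the double sum into a product of local Whittaker-Schwartz integrals. After regrouping variables and using the Iwasawa decomposition to split off the torus integral in $t$, the resulting expression factors as
\[
\frac{1}{\zeta_F(2) L(1, \eta)}\cdot L(1/2, \pi_E \otimes \chi)\cdot \prod_v R_{r,v}(\bar W_v, \phi_v, \chi_v),
\]
where the global $L$-function emerges as the product of those $R_{r,v}$ at places where all data are unramified, via Waldspurger's standard unramified local computation (cited here as \eqref{Rv unr}); this is why the normalised $R_{r,v}^{\natural}$ and $R_{r,v}^{\dag}$ appear in the final formula.

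The operator $w_{r,p}^{-1}$ and the factor $q_{F,p}^r \alpha(\varpi_p)^{-r}$ on the left-hand side are the consequence of the $p$-adic choices: inserting $w_{r,p}$ in the Eisenstein series (via $\delta_{\xi,r}$ and the twist by $w_{r,p}$ in the definition \eqref{Errr}) and the specific Schwartz function $\phi_{p}$ at $p$ from \eqref{phi1p}-\eqref{phi2p} are designed precisely so that the unfolding at $p$ produces the normalised local integral $R_{r,v}^{\dag}$ and the $\Up_p$-eigenvalue factor, once one shifts the integration by $w_{r,p}^{-1}$. The passage from $f$ to $f^{\mathrm{a}} = \smalltwomat{-1}{}{}{1} f$ matches the use of $\psi^{-1}$ in defining the Whittaker function $\bar W$, which is what appears in $R_{r,v}$; the embedding $\iota$ on the left-hand side converts the algebraic pairing into its complex incarnation.

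The discriminant factors $|D_F|^{-1}|D_E|^{-1/2}$ on the right are accumulated from three sources: the $|D_F|^{-1/2}$ prefactor in the definition of $I$, the explicit rewrites using $|d_v|, |D_v|$ in the normalisation of $R_v^{\natural}$, $R_{r,v}^{\dag}$, and the contribution from the torus measure (cf.\ Lemma \ref{int *} and the relation between $c_{U_F}$ and $|D_{E/F}|^{1/2}$ that already appeared in Lemma \ref{theta FW}). Finally, the finiteness of non-trivial local factors follows because $\phi_v$ is standard, $\pi_v$ and $\chi_v$ are unramified, and $\bar W_v$ is the spherical Whittaker vector at almost all places, so $R_{r,v}^{\natural}=\bar W_v(1)=1$ there.

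The step I expect to be the main obstacle is the careful handling of the regularised torus integral $\int^{*}$ together with the Eisenstein series continuation: one must justify that the unfolding identity, which is straightforward in the range of absolute convergence for the Eisenstein series, persists at $s=0$ under the regularisation of the torus integral. The hypothesis that $r$ is sufficiently large is what allows the cancellation between the ramification of $\chi_p$, $\xi_p$, $\kappa_{2,p}'$ and the support of $\phi_p$, making the local integrals at $p$ well-defined and non-zero and reducing the global computation to an essentially algebraic identity.
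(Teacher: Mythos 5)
Your sketch correctly reconstructs the Waldspurger unfolding argument that the paper invokes by citation; the paper's own proof of Proposition \ref{RSp} is the one-line reference to \cite[Proof of Proposition 3.5.1]{pyzz} (as corrected in \cite[Appendix B]{nonsplit} to include the factor $q_{F,p}^{r}$), which carries out exactly the exchange-of-integration, Eisenstein-unfolding, and local-factorisation steps you outline.
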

\begin{proof} As in \cite[Proof of Proposition 3.5.1]{pyzz}, corrected in \cite[Appendix B, under {``\emph{Proposition 2.4.4.1}''}]{nonsplit} to include the factor $q_{F, p}^{r}$. 
\end{proof}

\subsubsection{Non-vanishing of the local integrals}
We recall a fundamental non-vanishing result for our zeta integrals for selfdual $\pi\boxtimes \chi$, as well as a useful refinement.
\begin{lemm} \lb{nnv}
Let $v\nmid p\infty$ be a place of $F$ and let $L$ be a field of characteristic zero.
Let $\pi_{v}$ be a smooth irreducible  representation of $\G(F_{v})$ over $L$, with central character $\omega_{\pi,v}$, and let $\chi_{v}\colon E_{v}^{\ts}\to L^{\ts}$ be a smooth character.
Assume the self-duality condition $\omega_{\pi, v}\chi_{|F_{v}^{\ts}}=\one$.

There exist
 \begin{itemize}
\item
a $4$-dimensional quadratic space ${\bf V}_{v} = \B_{v}$  over $F_{v}$ of the type described  in \S~\ref{sec bf V}, uniquely determined by
$$\eps(\B_{v}) = \eta_{v}\chi_{v}(-1)\eps(\pi_{E,v}\ot\chi_{v}),$$
\item a function $\baar W_{v}$ in the Whittaker model of $\pi_{v}$,
\item  a Schwartz function $\phi_{v}\in \calS({\bf V}_{v}\ts {F}_{v}^{\ts}, L)$,
\end{itemize}
such that  $$ {R}_{v}({\baar W}_{ v}, \phi_{v}, \chi_{v}) \neq 0.$$

If moreover all the data are unramified at a place  $v$ inert in $E$, it is possible to choose $\baar W_{v}$ and $\phi_{v}=\phi_{1, v}\phi_{2,v}$ such that $\phi_{2,v}(0, u)=0$ for all $u$ (condition \eqref{phi cond}).
\end{lemm}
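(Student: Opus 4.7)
The first assertion is essentially Waldspurger's local non-vanishing theorem together with the Tunnell--Saito dichotomy, adapted to the similitude setting. Under the self-duality condition $\omega_{\pi,v}\chi_{|F_v^{\ts}}=\one$, the trilinear form
\[
(\baar W_{v}, \phi_{v}) \longmapsto R_{v}(\baar W_{v}, \phi_{v}, \chi_{v})
\]
on $\mathcal{W}(\pi_v,\psi_v^{-1})\otimes \calS(\B_v\ts F_v^{\ts})$ is non-trivial precisely on the unique four-dimensional quaternionic quadratic space $\B_v$ with Hasse invariant $\eps(\B_v)=\eta_v\chi_v(-1)\eps(\pi_{E,v}\ot\chi_v)$. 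I would simply invoke the version of this statement established in \cite{wald}, whose formulation in the similitude setting is given in \cite[Lemma 5.3.2]{pyzz} (cf.\ also \cite{yzz}).

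For the \emph{moreover} part I argue constructively. In the unramified inert case $\B_v\cong M_2(F_v)$, and with the standard spherical choices $(\baar W_v^{\circ}, \phi_v^{\circ})$ the identity \eqref{Rv unr} gives $R_v^{\natural}(\baar W_v^{\circ}, \phi_v^{\circ}, \chi_v)=\baar W_v^{\circ}(1)=1\neq 0$. Write $\phi_v^{\circ}=\phi_{1,v}^{\circ}\ot\phi_{2,v}^{\circ}$ with $\phi_{2,v}^{\circ}(x,u)=\one_{\OO_{\B_2,v}^{\circ}}(x)\cdot\one_{\OO_{F,v}^{\ts}}(u)$. To force the vanishing at $x=0$, I would replace $\phi_{2,v}^{\circ}$ by
\[
\phi_{2,v}'(x,u):=\phi_{2,v}^{\circ}(x,u) - \one_{\vpi_v^{n}\OO_{\B_2,v}^{\circ}}(x)\cdot\one_{\OO_{F,v}^{\ts}}(u),
\]
which satisfies $\phi_{2,v}'(0,u)=0$ for every $u$. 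By linearity of $R_v$ in the Schwartz variable,
\[
R_v(\baar W_v^{\circ},\phi_{1,v}^{\circ}\ot\phi_{2,v}',\chi_v) = R_v(\baar W_v^{\circ},\phi_v^{\circ},\chi_v) - R_v(\baar W_v^{\circ},\phi_{1,v}^{\circ}\ot \one_{\vpi_v^{n}\OO_{\B_2,v}^{\circ}}\one_{\OO_{F,v}^{\ts}},\chi_v),
\]
and the Waldspurger--Rankin--Selberg expansion reduces the correction term to the Whittaker integrals of Proposition~\ref{Wv E}\eqref{vnotp} applied to the smaller lattice. A direct estimate shows it is a positive power of $q_{F,v}^{-n}$ smaller than the leading (non-zero, $n$-independent) term. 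For $n$ large enough the difference is thus non-zero, yielding a test vector with the required vanishing condition.

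The main technical point is the quantitative control of the correction term as a function of $n$: conceptually it is immediate from the explicit Whittaker formulas, but one has to verify that the scaling properties of the Weil representation and of the local zeta integral produce a strict decay rather than an unexpected cancellation. Once this is done, the conclusion of the lemma is automatic. (Alternatively, since the linear functional $\phi_{2,v}\mapsto R_v(\baar W_v^{\circ},\phi_{1,v}^{\circ}\ot\phi_{2,v},\chi_v)$ is seen not to factor through the evaluation $\phi_{2,v}\mapsto\phi_{2,v}(0,\cdot)$ --- the integration in $t\in E_v^{\ts}$ samples $\phi_{2,v}$ away from the origin --- its restriction to the subspace cut out by $\phi_{2,v}(0,\cdot)=0$ is non-trivial, and any test vector in this subspace with non-zero value of $R_v$ will do.)
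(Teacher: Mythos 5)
For the first assertion your invocation of Waldspurger's non-vanishing theorem (in the similitude formulation) is exactly what the paper does, modulo a minor mismatch of reference: the paper points to the second paragraph of the proof of \cite[Proposition 3.7.1]{pyzz}, whereas \cite[Lemma 5.3.2]{pyzz} is the unramified formula \eqref{Rv unr}. That part is fine.

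For the \emph{moreover} part your route is genuinely different from the paper's, and it has a real gap. You propose a perturbation argument: subtract $\one_{\vpi_v^{n}\OO_{\B_2,v}^{\circ}}\one_{\OO_{F,v}^{\ts}}$ from the spherical $\phi_{2,v}^{\circ}$ and claim the resulting correction term decays strictly in $n$. You explicitly flag this decay as ``the main technical point'' to ``verify,'' and indeed it is not established and is not obviously true. The value of $R_v$ on $\one_{\vpi_v^{n}\OO_{\B_2,v}^{\circ}}\one_{\OO_{F,v}^{\ts}}$ does not come from a straightforward rescaling (the $\GL_2$-action of the Weil representation rescales the $u$-variable together with the $x$-variable, so you cannot dilate $\phi_{2,v}$ in isolation), and the Eisenstein Whittaker integrals of Proposition \ref{Wv E}\eqref{vnotp} depend on the intersection of the support with the shells $D_n(a)$ in a way that does not immediately yield a geometric decay; an honest estimate would need to track the interaction between the support of $\phi_{1,v}^{\circ}$, the range of $t\in E_v^{\ts}$ for which $\chi_v(t)r_1(t)\phi_{1,v}^{\circ}(1,a)\neq 0$, and the shrinking support of $\phi_{2,v}$. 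Your parenthetical alternative (the functional does not factor through evaluation of $\phi_{2,v}$ at $x_2=0$, hence is nontrivial on the kernel of evaluation) is a valid abstract reduction, but the non-factoring claim is likewise asserted rather than proved: the $\GL_2(\OO_F)$-averaging built into $R_v$ could, a priori, concentrate dependence at the origin for a bad choice of $\phi_{1,v}$.

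The paper sidesteps both difficulties with an exact computation rather than an asymptotic one. It fixes $\phi_2(x_2,u)=\one_{\OO_E^{\ts}}(x_2)\one_{\OO_F^{\ts}}(u)$ (which vanishes at $x_2=0$ by construction), carries out the $\GL_2(\OO_F)$-average via the Iwasawa decomposition and a coset decomposition of $\GL_2(\OO_F)$, and shows that the averaged Schwartz function, evaluated along the $E^{\ts}$-orbit $(t^{-1}y, y^{-1}q(t))$ appearing in $R_v$, equals the standard spherical Schwartz function (up to a nonzero scalar). Hence $R_v$ with the modified $\phi_2$ is a nonzero multiple of the unramified integral, and \eqref{Rv unr} finishes. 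If you want to salvage your approach you would need to actually carry out the estimate on the correction term; but the paper's explicit-computation route is shorter and avoids the need for any bound.
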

\begin{proof} The argument  in \cite[Proof of Proposition 3.7.1, second paragraph]{pyzz} applies verbatim to prove the first statement. Let us prove the second one. We drop all subscripts $v$. Fix an isomorphism ${\bf V}_{2}\cong E$, and let us choose  $\baar W$ to be a new vector, $\phi_{1,v}$ to be the standard Schwartz function, and $$\phi_{2}(x_{2}, u)=\one_{\OO_{E}^{\ts}} (x_{2})\one_{\OO_{F}^{\ts}}(u).$$
Writing $\doteq $ for an equality up to nonzero scalars, by the Iwasawa decomposition 
$$R(\baar W, \phi, \chi)\doteq \int_{F^{\ts}}  \baar W(\smalltwomat y{}{}1) \int_{E^{\ts}}\chi(t)  \int_{\GL_{2}(\OO_{F})} r(g) \phi(t^{-1}y,  y^{-1}q(t)) \,  dg \, {d^{\ts} y} \, dt.  $$
Let $U_{0}(\vpi^{r})\subset U_{0}:=\GL_{2}(\OO_{F})$ be the set of matrices which are upper-triangular modulo $\vpi^{r}$. It is easy to verify that $\phi_{2}$ is invariant under $U_{0}(\vpi^{r})$ for some $r$, and that  $U_{0}= U_{0}(\vpi^{r})\sqcup \bigsqcup_{b\in \OO_{F,v}/\vpi^{t}} \smalltwomat 1{}{b}1 U_{0}(\vpi)$. Thus the   integral in $dg$ is a constant multiple of
\beqq 
{\ }&{\quad } \int_{\OO_{F}} r(w)[\psi(y^{-1}bq(tx')) \hat{\phi}(x', y^{-1}q(t))]_{|x=t^{-1}y}  \, db \\
&= 
\int_{\OO_{F}}  \int_{E\ts E} \psi( \Tr_{E/F} t  x_{1} ) \psi(y^{-1}q(t) \cd b  q(x)) \one_{\OO_{E}}(x_{1}) \widehat{\one_{\OO_{E}^{\ts}}}(x_{2}) \one_{\OO_{F}}^{\ts}(y^{-1}q(t))]  \,  dx db \\
&=\one_{\OO_{F}}^{\ts}(y^{-1}q(t))
\int_{\OO_{F}} \widehat{\one_{\OO_{E}}}(t)  \int_{E}\psi(y^{-1}q(t) bq(x_{2}) \widehat{\one_{\OO_{E}^{\ts}}}(x_{2}) \, dx_{2}\, db \doteq \one_{\OO_{E}}(t)\one_{\OO_{F}}^{\ts}(y^{-1}q(t))
\eeqq
where the last equality follows from interchanging the order of integration and observing that $\widehat{\one_{\OO_{E}^{\ts}}}(x_{2}) = \vol(\OO_{E}^{\ts})$ for $x_{2}\in \OO_{E}$.  

The last quantity equals  $\phi^{\circ}(t^{-1}y,  y^{-1}q(t))$ for the standard Schwartz function $\phi^{\circ}$; therefore the integral $R$ is a constant multiple of the unramified integral, in particular it is nonzero by \eqref{Rv unr}.
\end{proof}

\subsection{Evaluation of the integrals  at $p$ and $\infty$}
\lb{sec 43}
We explicitly compute the local integrals at the places $v \vert p\infty$.
\subsubsection{$p$-adic integrals}
 Define, for $v\vert p$,
\beq
e_{v}(V_{\pi_{E}\ot \chi}) &:=   { L(1/2, \pi_{E,v}\ot\chi_{v}) \over \zeta_{F,v}(2) L(1, \eta_{v})}  \prod_{w\vert v}\gamma(\chi_{w}\alpha_{\pi, v}|\cdot|\circ N_{E_{w}/F_{v}}, \psi_{v})^{-1}\\
e_{p}(V_{\pi_{E}\ot \chi}) &:=  \prod_{v\vert p} e_{v}(V_{\pi_{E}\ot \chi}).
\eeq

\begin{lemm} \lb{zeta p}
Let $v\vert p$, and assume that $\baar W_{v}$ is normalised by $\baar W_{v}(1)=1$. Then for any sufficiently large $r$ (depending on $\chi_{v}$, $\pi_{v}$) we have 
$$R_{r,v}^{\dag}(\baar W_{v},\chi_{v}, \alpha_{v}) ={\chi_{v}(-1) \over L(1, \eta_{v})} e_{v}(V_{\pi_{E}\ot \chi}). $$
\end{lemm}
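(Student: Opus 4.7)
The plan is a direct computation that extends the one carried out in \cite[\S~3.5, \S~5.3]{pyzz} for the case of $\chi$ of minimal weight. First, I would use the support condition built into $\delta_{\xi_v, r}$ (see \eqref{dlx}) together with the Iwasawa decomposition to restrict the outer integration in $R_{r,v}$ to $A(F_v) \times U_{v, r_v}$ (after quotienting by $Z(F_v) N(F_v)$); the $U_{v, r_v}$-integration reduces to a scalar because, for $r$ large enough, $\baar W_v$ is an eigenvector for the right translation by $U_{v, r_v}$, being attached to the $U_{v, r_v}$-new ordinary vector.

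Next I would compute $r(g w_{r,v}^{-1}) \Phi_v$ on a diagonal $g = \smalltwomat y{}{}1$ by directly invoking the Weil representation formulas. The element $w_{r,v}^{-1}$ acts (up to a scaling by $\varpi_v^{r_v}$) as a partial Fourier transform in the $x \in {\bf V}_v$ variable; applied to the Schwartz function \eqref{phivp} and followed by the translation by $g$, this yields an explicit function on $E_v \times E_v \times F_v^\times$. Performing the $T(F_v) = E_v^\times$ integration against $\chi_v(t)\, dt$ collapses the $x_1$-support to a single $\OO_{E,v}^\times$-coset (courtesy of the specific shape of $\phi_{1, r_v, \kappa_{1,\chi}'}$) and produces a product over $w \vert v$ of Gauss-sum-type factors that are essentially $\gamma(\chi_w, \psi_{E, w})^{-1}$. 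The residual integral in $y \in F_v^\times$ is a Tate integral: using the ordinary identity $\baar W_v(\smalltwomat y{}{}1) = \alpha_v(y) \one_{\OO_{F,v}}(y) \baar W_v(1)$ (for $r$ large), it evaluates to an inverse local gamma factor which combines with the Gauss sums into $\prod_{w \vert v} \gamma(\chi_w \alpha_{\pi,v}|\cdot| \circ N_{E_w/F_v}, \psi_v)^{-1}$; meanwhile the factor $L(1/2, \pi_{E,v} \otimes \chi_v)$ appearing in the numerator of $e_v(V_{\pi_E \otimes \chi})$ cancels against the corresponding factor in the normalisation of $R_{r,v}^\dag$.

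The main obstacle is bookkeeping rather than conceptual depth: one must track the factors $|d_v|^{1/2}$, $|D_v|^{1/2}$ built into the various self-dual measures of $F_v$ and $E_v$; the volume constants $\vol(1 + \varpi_v^{r_v}\OO_{E,v})^{-1}$ and $\vol(U_{F, v}^\circ)^{-1}$ in the normalisations \eqref{phi1p} and \eqref{phi2p}; the interplay between the $\alpha_v$-twist in $\phi_{2, p, \kappa_2'}$ (via $\kappa_2' = \alpha_v \kappa_{1,\chi}'^{-1}|_{U_{F,v}^\circ}$, cf.\ \eqref{kp al}) and the ordinary $\Up_v^r$-action that produces the prefactor $q_{F,v}^r \alpha_v^{-r}$ in $R_{r,v}^\dag$; and the sign $\chi_v(-1)$, which emerges from the quadratic character $\chi_{V_u}$ in the $w$-part of the Weil representation action (together with the compensating change of variables $t \mapsto -t$ in the torus integral). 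All of these conspire to give the clean answer stated.
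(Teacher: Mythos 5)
Your proposal is sound in strategy and is, in substance, an unpacked re-derivation of the two results that the paper simply cites here: the paper's proof consists of invoking \cite[Proposition A.2.2]{pyzz} (with the discriminant corrections from \cite{nonsplit}), which reduces $R_{r,v}^{\dag}$ to an elementary integral $Z_v$, and then \cite[Lemma A.1.1]{nonsplit}, which evaluates $Z_v = \chi_v(-1)\prod_{w\mid v}\gamma(\chi_w\alpha_v|\cdot|\circ N_{E_w/F_v},\psi_v)^{-1}$. The steps you outline --- restricting the $g$-integral via the support of $\delta_{\xi_v,r}$, the Fourier/Weil action of $w_{r,v}^{-1}$, collapsing the torus integral to Gauss sums using the shape of $\phi_{1,r,\kappa_1'}$, the Tate integral in $y$ using the ordinary Whittaker identity, and the appearance of $\chi_v(-1)$ from the quadratic character in the $w$-action --- are exactly the mechanism carried out in those appendices, so there is no gap in the approach; the paper has just elected not to reproduce the computation.

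Two cautions on the bookkeeping, which you yourself flag as the main obstacle. First, the assertion that the $L(1/2,\pi_{E,v}\otimes\chi_v)$ factor in $e_v(V_{\pi_E\otimes\chi})$ ``cancels against'' the one in the normalisation of $R_{r,v}^{\dag}$ is too casual: the cited intermediate formula reads $R_{r,v}^{\dag} = \frac{L(1/2,\pi_{E,v}\otimes\chi_v)}{\zeta_{F,v}(2)L(1,\eta_v)^{2}}\,Z_v$ where $Z_v$ is a pure product of $\chi_v(-1)$ and inverse gamma factors; so a genuine $L(1/2)$ factor survives into the value of $R^{\dag}$ (and of $e_v$), rather than disappearing as a cancellation between the two normalisations. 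Getting the power of $L(1/2)$ right depends on unwinding $\gamma^{-1}=L/\eps L^*$ carefully, and this is precisely the step where a ``from scratch'' computation is most likely to go astray. Second, the ordinary Whittaker formula $\baar W_v(\smalltwomat{y}{}{}{1})=\alpha_v(y)\one_{\OO_{F,v}}(y)$ requires care: in the paper's notation $\alpha_v$ is the eigencharacter of $\Up_y$, not of the normalised $\Up_y^{\circ}$, and the relation between the complex Whittaker function, the $q$-expansion $\rW_f$, and the $p$-adic $q$-expansion $W_f$ introduces powers of $|y|$ that must be matched before the Tate integral produces the correct gamma factor.
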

\begin{proof}
By \cite[Proposition A.2.2]{pyzz} (with the discriminant factors corrected as in \cite[Appendix B]{nonsplit}), we have
$$R_{r,v}^{\dag}(\baar W_{v},\phi_{v}, \chi_{v}) =   {L(1/2, \pi_{E}\ot\chi_{v}) \over \zeta_{F,v}(2) L(1, \eta_{v})^{2} } Z_{v},
$$
where $Z_{v}$ are integrals defined in \cite[Lemma A.1.1]{pyzz}.  By \cite[Lemma A.1.1]{nonsplit}, we have $$Z_{v}=\chi_{v}(-1)\prod_{w\vert v}\gamma(\chi_{w}\alpha_{v}|\cdot|\circ N_{E_{w}/F_{v}}, \psi_{v})^{-1}.$$ The asserted formula follows.
\end{proof}

\subsubsection{Archimedean integrals}
We compute the local integrals $R_{v}^{\dag}$ when $v\vert \infty$. 
The \emph{standard antiholomorphic Whittaker function} for $\psi^{-1}$ of weight $(w_{0}, w)$ is
\begin{align}\label{antiholwhitt}
\baar W^{(w_{0}, w), {\rm a}}(\smalltwomat z{}{}z\smalltwomat yx{}1 r_{\theta}) 
=
 z^{w_{0}}  \one_{\R^{+}}(y) |y|^{(w+w_{0})/2}\psi(-x+iy)\psi(-w\theta).
\end{align}

\begin{lemm}\label{arch-int}
Let $v\vert \infty$, let  $\baar W_{v}$ be the standard antiholomorphic Whittaker function of weight $(w_{0}, w)$ for ${\psi}^{-1}$.
 Then
$$ R_{v}^{\dag} (\baar W_{v},\chi_{v}, k) 
=  i^{-k_{0}} 
 2^{-1-w} .
$$

\end{lemm}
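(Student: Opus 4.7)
The plan is to carry out a direct archimedean computation in Iwasawa coordinates. I would parametrize $g = n(x) a(y) r_\theta \bmod Z(F_v)$ with $y\in \R^\ts$ and $\theta\in [0,2\pi)$, and $t = re^{i\phi}$ in polar coordinates on $T(F_v)=E_v^\ts=\C^\ts$. First I would check that the integrand descends to $Z(F_v)N(F_v)\bks \G(F_v)$: the factor $\psi(-x)$ coming from $\baar W_v(n(x)g)=\psi(-x)\baar W_v(g)$ cancels the factor $\psi(xuq(t^{-1}))=\psi(x)$ produced by $r(n(x))$ acting on the Schwartz function at $(t^{-1}, q(t))$, since $u=q(t)$. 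Because the Fock projection \eqref{fock} is $(\mathfrak{gl}_{2,F_\infty}, {\bf O}({\bf V}_\infty))$-equivariant and the inner $T(F_v)$-integral is ${\bf O}({\bf V}_v)$-invariant (since $T(F_v)$ acts on ${\bf V}_v$ by orthogonal transformations preserving $q$), I would then replace $\Phi_v$ throughout by its reduction $\phi_{l_0,l,k_0,k,v}=\phi_{1,l_0,l,v}\cd\phi_{2,k_0,k,v}$, working entirely in the reduced Fock model.

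Next, I would identify the $\SO(2,F_v)$-weights of the integrand. The function $\baar W_v$ contributes $\theta\mapsto e^{-iw\theta}$; the character $\delta_{\xi_v,r}$ contributes $\theta\mapsto e^{ik_0\theta}$; $\chi_v(t)$ contributes $\phi\mapsto e^{il_v\phi}$; and the action of $r(r_\theta w^{-1})$ on $\phi_{l_0,l,k_0,k,v}$ is explicit, since $r(w)$ is the Fourier transform times the Weil index $\gamma({\bf V}_v,uq)=i^{\dim{\bf V}_v/2}=-1$, and $\SO(2,F_v)$ acts by characters on each Fock monomial. The integrals over $\theta$ and $\phi$ then select matching Fourier modes and reduce the computation to a pair of integrals over $y, r\in \R_+$.

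These radial integrals are Gaussian against the Laguerre-type polynomial $P_{k_0,k_v}$ of \eqref{Lag def} (from $\phi_{2,k_0,k,v}$) and monomials (from $\phi_{1,l_0,l,v}$), and I would evaluate them by standard Gamma identities such as $\int_0^\infty y^{s-1}e^{-2\pi y}\, dy=(2\pi)^{-s}\Gamma(s)$. The result should be a product of two Gamma factors matching, up to powers of $2$ and $\pi$, the archimedean $L$-factor
$$L(1/2,\pi_{E,v}\ot\chi_v)=\Gamma_\C\!\left({w+l_v+k_0\over 2}\right)\Gamma_\C\!\left({w-l_v+k_0\over 2}\right),$$
with $\Gamma_\C(s)=2(2\pi)^{-s}\Gamma(s)$. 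Multiplying by the explicit $\zeta_{F,v}(2)L(1,\eta_v)$ and dividing by $L(1/2,\pi_{E,v}\ot\chi_v)$ then causes every dependence on $l_v$ and $k_v$ to cancel, leaving $i^{-k_0}2^{-1-w}$: the $i^{-k_0}$ originates from $\psi(k_0\theta)=e^{2\pi ik_0\theta}$ at the selected mode together with the Weil index, and $2^{-1-w}$ collects the remaining powers of $2$ from $\Gamma_\C$.

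The main obstacle will be the precise accounting of normalizations --- the self-dual Fourier transform on $({\bf V}_v,uq)$, the Haar measures on $Z(F_v)N(F_v)\bks\G(F_v)$ and $T(F_v)$, and the Weil index --- so that all powers of $2$, $\pi$, and $i$ land correctly. The eventual $(l_v,k_v)$-independence of the answer (beyond the $i^{-k_0}$), predicted by the Coates--Perrin-Riou interpolation philosophy, emerges only after the full cancellation between the Gamma integrals and the $L$-factor and provides a useful internal consistency check.
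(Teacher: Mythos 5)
Your general plan---Iwasawa coordinates, Fock reduction, Gamma integrals matching the archimedean $L$-factor---is the paper's route, but several of the details you anticipate are not what actually happens, and one of them would send you down a wrong path.

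The key simplification you miss is that the $T(F_v)$-integration variable $t$ lies in $E_v^\ts = {\bf V}_{1,v}$, so the argument $t^{-1}$ has \emph{zero} ${\bf V}_2$-component. The action of $g$ in the parabolic (times $\SO(2)$) only rescales this argument, so $\phi_{2,v}$ is evaluated at $x_2=0$ throughout. Consequently $P_{k_0,k_v}$ contributes only its constant term $P_{k_0,k_v}(0) = (2\pi i)^{-k_0}(4\pi)^{-k_v}(k_v+k_0)!$ --- there is no ``Gaussian integral against the Laguerre polynomial.'' This also answers your implicit question about where the second Gamma factor comes from: one $\Gamma$ arises from the single radial $y$-integral $\int_0^\infty y^{(w+|l|+w_0+l_0)/2}e^{-4\pi y}\,d^\ts y$, and the other comes from the $(k_v+k_0)!$ sitting in $P_{k_0,k_v}(0)$. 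Similarly, the $t$-integral is not a ``Fourier mode selection'' that then leaves a radial integral: the reduced Fock function $\phi_{1,v}(yt^{-1}, y^{-1}q(t))$ works out to be \emph{exactly} $\chi_v(t)^{-1}\,y^{(l_0+|l_v|)/2}e^{-2\pi y}\one_{\R^+}(y)$, so $\chi_v(t)$ cancels on the nose and the $t$-integral is just $\vol(\R^\ts\bks\C^\ts)=2$.

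Two smaller corrections. First, the factor $i^{-k_0}$ does not come from the Weil index nor from $\psi(k_0\theta)$: it comes directly from the $(2\pi i)^{-k_0}$ in $P_{k_0,k_v}(0)$. Second, the Weyl-type twist $w_{r,v}^{-1}$ and the resulting Fourier transform/Weil index are simply absent at archimedean $v$: the support condition built into $\delta_{\xi_v,r}$ confines the $g$-integral to $P(\R)\,\SO(2,\R)$, and the integration over $\SO(2,\R)$ just contributes $\vol(\SO(2,\R))=\pi$ (with the measure $d\theta/2$) because the $\SO(2)$-weights of $\baar W_v$, $\delta_{\xi_v}$, and the Schwartz function match. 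You should also note that the paper writes an extra $z$-integral over $\R^\ts$ which is precisely the mechanism that realizes the quotient $\Phi\mapsto\phi$ to the reduced Fock model (using $\omega_\pi\chi\xi^{-1}=\one$ on $\A^\ts$), rather than invoking equivariance of the Fock projection as you propose; this is a more useful observation because it is what lets you plug in the explicit reduced Fock functions in the first place.

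None of these are unfixable in principle, but the misidentification of what the Laguerre polynomial contributes is a genuine gap in your plan: carrying out the proposal literally would lead you to set up an unnecessary polynomial-times-Gaussian integral and, worse, would prevent you from seeing the clean one-line cancellation that makes the answer $l_v$- and $k_v$-independent.
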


\begin{proof} 
By the Iwasawa decomposition we can uniquely write any $g\in\GL_{2}(\R)$ as
$$g=\twomat 1 x{}1\twomat z{}{}z  \twomat y{}{}1 \twomat {\cos \theta} {\sin \theta}{-\sin\theta}{\cos\theta}$$
with $x\in\R$, $z\in \R^{\times}$, $y\in \R^{\times}$, $\theta\in \R/2\pi\Z$; the local Tamagawa measure is then $dg=dxd^{\times}z{d^{\times}y\over{|y|}}{d\theta\over 2}$.
Let $\Phi_{v}=\Phi_{l_{0}, l, k_{0}, k, v}$.
Dropping all subscripts $v$, since the weights match the integration over ${\rm SO}(2, \R)$ yields $1$, and we have
\begin{multline*}
R= R(\baar W, \Phi, \chi) =\int_{\R^{\ts}\ts (\R^{\ts} \bks \C^{\ts}) \ts  \R/2\pi\Z\ts{\R^{\ts}}}
\chi(tz)\omega_{\pi}(z) |y|^{(w+w_{0})/ 2} e^{-2\pi  y}  \xi^{-1}(z)|y| \Phi(yzt^{-1}, y^{-1}z^{-2}q(t))\\
 \, d^{\ts}z {d\theta\over 2}  {d^{\ts}y\over |y|} \, dt.
 \end{multline*}

By definition, $\omega_{\pi}\chi\xi^{-1}(z)=1$, so that the integration in $d^{\ts}z$ simply realises the map $\Phi\mapsto \phi$. 
Then
\beq\lb{RRR}
R &=
\pi \int_{\R^{\ts}}  \int_{\R^{\ts} \bks \C^{\ts}}  
\chi(t)  |y|^{(w+w_{0})/ 2} e^{-2\pi y}  |y|   \one_{\R^{+}}(y)  P_{k}(0) y^{( |l |+l_{0})/ 2} \chi(t)^{-1} e^{-2\pi y}
  {d^{\ts}y\over |y|} \, dt\\
&=
2\pi  P_{k_{0},k}(0) \int_{\R^{+}} 
  y^{(w+ | l | +w_{0}+l_{0}) / 2}     e^{-4\pi y}
  {d^{\ts}y} . 
\eeq
where 
   $2=\vol(\R^{\ts}\bks \C^{\ts})$. 

Recall from  \eqref{kp al} and  \eqref{Lag def}
that   $k_{0}=w_{0}+l_{0}$,  $k=(w-2- | l | -k_{0})/2$
and  that $P_{k_{0}, k}(0) =  (2\pi i)^{-k_{0}}  (4\pi)^{-k} (k+k_{0})!$. Then after a change of variables we have 
\beqq 
R&= (2\pi)^{1-k_{0}} i^{-k_{0}} (4\pi)^{-(w- | l | - k_{0} -2 )/2} \Gamma({w-| l |  +k_{0}\over 2}) (4\pi)^{-(w+| l |+k_{0}) / 2} \Gamma({w+| l |+k_{0}\over 2})\\
&= i^{-k_{0}}
  2^{-1-w}  \pi^{2} 
\Gamma_{\C}({w-l   +k_{0}\over 2}) \Gamma_{\C}({w+l+k_{0}\over 2}).
\eeqq
Now the result follows from identifying
 $$\pi^{2} \Gamma_{\C}({w-l   +k_{0}\over 2}) \Gamma_{\C}({w+l+k_{0}\over 2}) = {L(1, \pi_{E, v}\ot\chi)\over \zeta_{F, v}(2) L(1, \eta_{v})}.$$
\end{proof}

\subsection{Interpolation of the local zeta integral}\lb{s44}
\lb{sec 44} Let $\X=\X_{\G} \hat{\ts}\X_{\H}$ be a Hida family for $\G\ts\H$, let $v\nmid p\infty$ be a place of $F$,  and let $\Pi_{v}:=\Pi(\cV_{\G,v})$
be as in  \S~\ref{sec LG}. Let $\X_{\G}^{(v)}\subset \X$ be the open subset containing $\X_{\G}^{\cl}$ over which $\Pi_{v}$ is defined and let $\X^{(v)}=\X_{\G}^{(v)}\hat{\ts} \X_{\H}$. Let $\cW_{v}$ be the $\psi_{v}$-Whittaker model of  $\Pi_{v, \Q_{p}^{\ab}}$, which exists since $\Pi_{v}$ is co-Whittaker (see \cite[\S~4.2]{LLC}); it is $\OO_{\X^{(v)}}[\G(F_{v})]$-isomorphic
to the tensor product of $\Pi_{v}$ and an invertible sheaf with trivial $\G(F_{v})$-action. The space $\cW_{v}$ is, as usual, a space of functions on $\G(F_{v})$, $\psi_{v}$-invariant under the action of the unipotent subgroup $N(F_{v})$.  
For any $x\in \X_{\G}^{\cl}$ and any ${\bf W}_{v}\in \W_{v}$, the twisted  specialisation 
$$\baar{\bf W}_{v|x}(g):= {\bf W}_{v|x}\left(\twomat {-1}{}{}{1} g  \twomat {-1}{}{}{1}\right)$$
 belongs to the $\psi^{-1}$-Whittaker model of $\pi_{x}$.

\begin{prop} \lb{bf R} Let $v\nmid p\infty$. There exists an $\OO(\X^{(v)}_{\Q_{p}^{\ab}})$-linear map
 $${\bf R}_{v}\colon \W_{v}\ot_{\OO({\X^{(v)}_{\G, \Q_{p}^{\ab}})}} \OO{(\X^{(v)}_{\Q_{p}^{\ab}})} \ot_{\Q_{p}} \calS({\bf V}_{v}\ts F_{v}^{\ts})  \to \OO(\X^{(v)}_{\Q_{p}^{\ab}})$$
such that for all ${\bf W}_{v}\in \W_{v}$, all $\phi_{v}\in \calS({\bf V}_{v}\ts F_{v}^{\ts})$,  and all $(x, y)\in \X_{\Q_{p}^{\ab}}^{\cl}(\C)$,
  with underlying embedding $\iota\colon \Q_{p}^{\ab}(x, y)\into \C$, we have
$${\bf R}_{v}({\bf W}_{v}, \phi_{v})(x, y) = R_{v}^{\natural}(\iota\baar{\bf W}_{v|x}, \iota\phi_{v}, \chi^{\iota}_{y, v}).$$
\end{prop}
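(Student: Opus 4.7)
The plan is to construct ${\bf R}_{v}$ as an a priori meromorphic $\OO(\X^{(v)}_{\Q_{p}^{\ab}})$-valued function defined by the same integral formula as $R_{v}^{\natural}$, and then prove regularity via a density argument. The key inputs are the co-Whittaker structure on $\Pi_{v}=\Pi(\cV_{\G,v})$ from \cite[\S~4]{LLC} and the classical Waldspurger local theory at classical points.

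First I would use the Iwasawa decomposition $\G(F_{v})=Z(F_{v})N(F_{v})A(F_{v})K_{v}$ to rewrite the outer integral over $Z(F_{v})N(F_{v})\bks \G(F_{v})$ as a finite sum (because ${\bf W}_{v}$ is right-invariant by a compact open subgroup of $\G(F_{v})$) of integrals on $A(F_{v})\cong F_{v}^{\ts}$ against Whittaker values ${\bf W}_{v}(\smalltwomat y{}{}1 k)$. The inner integral $\int_{T(F_{v})}\chi_{v}(t)\, r(gw_{r,v}^{-1})\phi_{v}(t^{-1},q(t))\, dt$ is, for fixed $g$ and $\phi_{v}$, a Tate integral over $E_{v}^{\ts}$; by Tate's thesis it defines a meromorphic function on $\Y_{\H,v}$ whose poles are controlled by local $L$-factors of characters of $E_{v}^{\ts}$ deduced from $\chi_{v}$ and from $\alpha_{\pi,v}$ (the latter entering through the shape of $\phi_{v}$ and ${\bf W}_{v}$). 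Similarly, the outer integral over $y\in F_{v}^{\ts}$ can be controlled using the Hecke-module structure of $\W_{v}$, yielding meromorphicity of $R_{v}$ on $\X^{(v)}_{\Q_{p}^{\ab}}$. The LLCF identification of these local $L$-factors as specialisations of a universal family of $L$-factors built from $\cV_{\G,v}$ and $\chi_{\rm univ}$ then shows that the pole divisor of $R_{v}$ is contained in the zero divisor of $L(1/2,\pi_{E,v}\ot\chi_{v})^{-1}$, so that the normalisation defining $R_{v}^{\natural}$ cancels the potential poles and yields a function ${\bf R}_{v}$ regular on $\X^{(v)}_{\Q_{p}^{\ab}}$.

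A cleaner alternative, which I would actually prefer to present, is the following density-rigidity route: construct ${\bf R}_{v}$ a priori as a meromorphic function on $\X^{(v)}_{\Q_{p}^{\ab}}$ by the above analysis, and observe that at every classical point $(x,y)\in \X_{\Q_{p}^{\ab}}^{\cl}$ at which $\pi_{x,v}$ and $\chi_{y,v}$ are generic enough, the specialisation coincides with the complex integral $R_{v}^{\natural}(\iota\baar{\bf W}_{v|x},\iota\phi_{v},\chi_{y,v}^{\iota})$, which is finite by classical Waldspurger theory. Since such points are Zariski-dense in $\X^{(v)}_{\Q_{p}^{\ab}}$ (they form a dense subset of $\X^{\cl}$, which is itself dense in $\X^{(v)}$), and since the candidate polar divisor is constrained to lie in the zero locus of $L(1/2,\cV_{\G,v}\otimes {\rm Ind}_{E_{v}}^{F_{v}}\chi_{{\rm univ},v})^{-1}$, regularity follows from the rigidity of meromorphic functions on the integral scheme $\X^{(v)}_{\Q_{p}^{\ab}}$. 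Uniqueness of ${\bf R}_{v}$ is then immediate from the same density.

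The main obstacle, as in similar constructions, is the control of the polar divisor of the integral $R_{v}$ in families: one must ensure that it really is absorbed by the $L$-factor normalisation and not by further cancellations of different type. In the present setting this is handled by the compatibility, guaranteed by Proposition \ref{LGC} and \cite[Theorem 4.4.3]{LLC}, between the co-Whittaker structure on $\Pi_{v}$ and the family of Weil--Deligne representations attached to $\cV_{\G,v}$, which ensures that the $L$- and $\gamma$-factors interpolate as expected. The interpolation property at classical points is then immediate, since at any such $(x,y)$ the formal integral defining ${\bf R}_{v}({\bf W}_{v},\phi_{v})(x,y)$ reduces to the complex Waldspurger integral $R_{v}^{\natural}$ associated with $(\pi_{x},\chi_{y})$, $(\iota\baar{\bf W}_{v|x},\iota\phi_{v})$.
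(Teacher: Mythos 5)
Your ``cleaner alternative'' density-rigidity route has a genuine logical gap. Knowing that a meromorphic function on the integral scheme $\X^{(v)}_{\Q_{p}^{\ab}}$ takes finite values at a Zariski-dense set of closed points does \emph{not} imply that the function is regular: for a simple counterexample, take $1/f$ where $f$ is a regular function vanishing on a proper closed subscheme --- it is finite on the dense complement and still has a pole. To conclude regularity you must show that the candidate polar divisor is empty, which is exactly the hard part; dense finiteness cannot supply this. You acknowledge a ``constraint'' on the polar divisor, but that constraint is precisely the statement that needs proving, so the density argument is not buying you anything.

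Your first route is the one that is in the right direction, but it is too vague at the crucial step. Appealing to ``the LLCF identification of the local $L$-factors'' does not by itself show that the divisor of the raw integral $R_v$ in families is bounded above by the zero divisor of $L(1/2,\pi_{E,v}\otimes\chi_v)^{-1}$; this requires an explicit argument. The paper establishes it by (i) using the fact that $\W_v\cong\Pi(\cV_{\G,v})$ is a quotient of the universal co-Whittaker module over a component $\frak X^{\circ}$ of the extended Bernstein variety, (ii) extending scalars to $\C$ and pulling back to a torus cover $\wtil{\frak X}^{\circ}\cong\mathbf{G}_m^d$ where the module has one of three explicit shapes (unramified principal series, Steinberg twist, supercuspidal twist), and (iii) exploiting the fact that $\wtil{\cW}_v$ is torsion-free, hence embeds in $\wtil{\cW}_v\otimes\cK(\wtil{\frak X}^{\circ})$, so that the explicit Kirillov model over the generic fibre applies; this is then directly used to show that the integral, as a Laurent series in the Satake/torus parameters, is a Laurent-polynomial multiple of $L(1/2,\cdot)$. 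Your Iwasawa/Tate-integral sketch could plausibly be massaged into this shape, but as written it stops short of the explicit Laurent-polynomial control that actually closes the argument, and the appeal to density cannot substitute for it.
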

\begin{proof}
In fact, we may prove a stronger statement by replacing   $\X_{\H}$ by (its image in) $\Y_{\H, v}$, or equivalently  any connected component $\Y_{\H, v}^{\circ}$ thereof (which is an \'etale  torsor for  ${\bf G}_{m, \Q_{p}}^{\{w\vert v\}}$, the action being  induced by multiplication  by the uniformisers in $E_{v}^{\ts}=\prod_{w\vert v}E_{w}^{\ts}$).

The  proof is largely similar to that of  \cite[Proposition 5.2.3]{LLC} (whose statement is corrected in Appendix \ref{app B}); we refer to \emph{loc. cit.} and the sections preceding it for more details on the notions we use. Since $\W_{v}\cong \Pi(\cV_{{\G},v})$ is in the image of the local Langlands correspondence, there exists an irreducible component $\frak X^{\circ}$ of the extended Bernstein variety of \cite[\S~3.3]{LLC} and  a map $\X^{(v)}_{\G}\to \frak X^{\circ}$, such that $\cW_{v}$ is a quotient of the universal co-Whittaker module over $\frak X^{\circ}$.  We may further extend scalars to $\C$ and replace $\frak X^{\circ}$ by a cover of the form $$\wtil{\frak X}^{\circ}= {\bf G}_{m}^{d}$$ for $d=1$ or $2$; then the pull-back $\wtil{\cW}_{v}$ of the universal co-Whittaker module has one of the following shapes:
\begin{enumerate}
\item[(a)]
$\wtil{\cW}_{v}={\rm Ind}_{P_{v}}^{\G(F_{v})}(\beta_{1}\boxtimes \beta_{2})$, where 
 $d=2$ and  $\beta_{i}\colon F_{v}^{\ts}\to  \OO(\wtil{\frak X}^{\circ})^{\ts}$ are the universal characters; 
 \item[(b)]  $\wtil{\cW}_{v}={\rm St}\ot \beta_{1}$, where  
 $d=1$ and  $\beta_{1}\colon F_{v}^{\ts} \to\OO(\wtil{\frak X}^{\circ})^{\ts}$ is the universal character; 
 \item[(c)]  $\wtil{\cW}_{v}=\pi_{0}\ot \beta_{1}$ where  $\pi_{0}$ is a complex supercuspidal representation of $\G(F_{v})$, 
 $d=1$,  and  $\beta_{1} \colon F_{v}^{\ts} \to\OO(\wtil{\frak X}^{\circ})^{\ts}$ is the universal character. 
 \end{enumerate}
 In all cases, we need to show that for every ${\bf W}_{v}\in \wtil{\cW}_{v}$,  there is an element ${\bf R}_{v}({\bf W}_{v}, \phi_{v}) \in \OO( \wtil{\frak X}^{\circ}  \ts \Y_{\H, v})$ such that 
 $$  {\bf R}_{v}({\bf W}_{v}, \phi_{v}) (x, y)= L(1/2, \pi_{x, E, v}\ot\chi_{y, v})^{-1} R_{v}(\phi_{v}, \baar{\bf W}_{v|x}, \chi_{y,v})$$
 for all $x,y$;
 in other words, that the power series in $X_{i}^{\pm 1}:=\beta_{i}(\vpi_{v})^{\pm 1}$  and $Y_{w}^{\pm 1}:=\chi_{\rm univ}(\vpi_{w})^{\pm 1}$ obtained from the integral defining $R_{v}$ is a Laurent-polynomial multiple of the inverse of the Laurent polynomial $L(1/2, \pi_{x, E, v}\ot\chi_{y, v})$. This is proved by the same argument as  in \cite[Proof of Proposition 3.6.1]{pyzz}: since $\wtil{\cW}_{v}$ is torsion-free, it embeds in the representation  $\wtil{\cW}_{v}\ot \mathscr{K}( \wtil{\frak X}^{\circ})$ over the field $\mathscr{K}( \wtil{\frak X}^{\circ})$, so that the usual explicit description of  the Kirillov model used in \emph{loc. cit.} applies.
\end{proof}

\subsection{The $p$-adic $L$-function} \lb{sec 45}
Let
 $$\X\subset \Y_{\G}\hat{\ts }\Y_{\H}$$ 
 be a Hida family with $\X^{\rm sd}\neq \emptyset$, of tame level $U^{p}=U_{\G}^{p}\ts U_{\H}^{p}$. Let $S$ be a finite set of places of~$F$, disjoint from $S_{p\infty}$ and containing all those at which the tame level of $\X$ is not maximal, and let $\Pi:=\Pi^{U_{\G}^{Sp}}_{\X_{\G}}$.   
 If $\X'$ is an (ind-)scheme over $\Q_{p}^{\ab}$, we define $$\X'_{/\Q_{p}^{\ab}}(\C)\subset \X'(\C)$$ to be the subset of geometric points over $\iota^{\ab}$ (that is, those such that the composition $\Spec \C\to \X'_{\Q_{p}^{\ab}}\to \Spec \Q_{p}^{\ab}$ is $\iota^{\ab, \sharp}$). 

 \subsubsection{Whittaker models and $q$-expansions in families}
For $v\in S$, let $\X_{(\G)}^{(v)}\subset \X_{(\G)}$ and $\W_{v}$ be as in \S~\ref{s44}, and let $\X_{(\G)}':=\bigcap_{v\in S} \X_{(\G)}^{(v)}$; it contains $\X_{(\G)}^{\cl}$. 
 
\begin{lemm} There is an isomorphism of $\OO_{\X_{\G, \Q_{p}^{\ab}}'}[\G(F_{S})]$-modules
\beq\lb{bW1}
{\bf W}_{-, S}\colon \Pi_{\Q_{p}^{\ab}} &\stackrel{\cong}{\longrightarrow}\bigotimes_{v\in S} \cW_{v},\\
\bff&\mapsto {\bf W}_{\bff, S}=\ot_{v}{\bf W}_{\bff,v}
\eeq
such that for  all classical points $x\in \X_{\G, \Q_{p}^{\ab}}^{\cl}$ and all $a_{S}\in F_{S}^{\ts}$,  we have 
\beq\lb{bW2}
{\bf W}_{\bff, S}\left(\twomat {a_{S}}{}{}1\right)(x) = W_{\bff(x)^{}}(a_{S}1^{S\infty}),\eeq
where the right-hand side is the $p$-adic $q$-expansion of $\bff (x)$ defined in  \S~\ref{232}.
\end{lemm}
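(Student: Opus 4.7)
The statement is a family-version of the classical decomposition of Whittaker functions into a tensor product of local Whittaker functions, compatibly with $q$-expansion at classical points. I would organise the argument in three steps.

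\emph{Step 1: Existence of an abstract isomorphism.} First, I would apply Proposition \ref{LGC} to obtain, after possibly further shrinking $\X_{\G}'$, an $\OO_{\X_{\G}'}[\G(F_{S})]$-linear isomorphism
$$\Pi_{|\X_{\G}'} \cong \Pi^{\circ}_{\X_{\G}'} \otimes \bigotimes_{v \in S}\Pi(\cV_{\G,v}).$$
Each module $\Pi(\cV_{\G,v})$ is co-Whittaker by construction, so after base change to $\Q_{p}^{\ab}$ (which carries the values of $\psi_{v}$ via $\iota^{\ab}$), it admits a $\psi_{v}$-Whittaker model $\cW_{v}$, unique up to tensoring with an invertible sheaf with trivial $\G(F_{v})$-action. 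Combining these gives an $\OO_{\X_{\G,\Q_{p}^{\ab}}'}[\G(F_{S})]$-linear isomorphism
$$\Pi_{\Q_{p}^{\ab}} \cong \cL \otimes \bigotimes_{v \in S} \cW_{v}$$
for some invertible sheaf $\cL$ with trivial $\G(F_{S})$-action.

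\emph{Step 2: Fixing the normalization.} Let $\bff_{0}\in \Pi^{U_{\X_{\G}}^{p}}$ denote the normalized primitive form of \eqref{new bfff}, which satisfies $W_{\bff_{0}}(1)=1$ under the $q$-expansion map \eqref{qexp Y}. Since $\bff_{0}$ generates $\Pi$ as an $\OO_{\X_{\G}}[\cH^{\rm sph}_{U^{p}}]$-module, the choice of its image determines the map ${\bf W}_{-,S}$. I would normalize by declaring ${\bf W}_{\bff_{0},S}$ to be the unique pure tensor $\bigotimes_{v\in S} {\bf W}_{\bff_{0},v}\in \bigotimes_{v\in S}\cW_{v}$ whose specialization at each classical $x$ equals the product of the $\psi_{v}$-Whittaker factors (for $v \in S$) of the classical cuspform $\bff_{0}(x)$, further rescaled so that
$$\prod_{v\in S} {\bf W}_{\bff_{0},v}(1)(x)= W_{\bff_{0}(x)}(1^{S})\cdot \prod_{v\notin S\cup S_{p}\cup S_{\infty}} W_{\bff_{0}(x),v}(1)^{-1}\cdot \prod_{v\vert p} W_{\bff_{0}(x),v}(1)^{-1}.$$
At unramified $v\notin S\cup S_{p}\cup S_{\infty}$ the standard new-vector normalization gives $W_{\bff_{0}(x),v}(1)=1$, while at $v\vert p$ the value is the canonical one arising from the ordinary structure; the resulting constraint is uniform in $x$ and trivializes the ambiguity line bundle $\cL$ (at worst after further shrinking $\X_{\G}'$). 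This trivialization determines ${\bf W}_{-,S}$ uniquely.

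\emph{Step 3: Checking \eqref{bW2}.} For any $\bff\in \Pi_{\Q_{p}^{\ab}}$ and any $a_{S}\in F_{S}^{\ts}$, we may write $\bff=T\bff_{0}$ for some element $T$ of the Hecke algebra generated by $\cH^{\rm sph}_{U^{p}}$ and $\G(F_{S})$; by $\OO$-linearity and $\G(F_{S})$-equivariance, the desired identity at $(\bff,a_{S})$ follows from the identity at $(\bff_{0},1)$ together with standard compatibilities of the $q$-expansion with the Hecke action. At each classical $x$, local--global compatibility (Proposition \ref{LGC} and \cite[Theorem 4.4.3]{LLC}) identifies $\cW_{v|x}$ with the $\psi_{v}$-Whittaker model of $\pi_{x,v}$, and the global Whittaker function of $\bff(x)$ factors as a product of local ones: evaluating at the adele $a_{S}1^{S\infty}$ kills all factors outside $S$ (by the normalizations recalled in Step 2) and leaves precisely $\prod_{v\in S} W_{\bff(x),v}(a_{S,v})$, which is $W_{\bff(x)}(a_{S}1^{S\infty})$ in the sense of \S\ref{232}.

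\emph{Main obstacle.} The substantive step is Step 2: pinning down the scalar/line-bundle ambiguity produced by combining the local-global decomposition with the choice of Whittaker models. The crux is that the $q$-expansion map of Lemma \ref{Z-adic} and the Whittaker-model maps are each defined only up to an invertible function on $\X_{\G}$, and one must check that the two ambiguities match. The canonical choices of new-vector at unramified places and of ordinary stabilization at places above $p$ force the trivialization to be globally consistent, but verifying this requires keeping track of the various twists by $a_{p}^{(w_{0}+w-2)/2}$ from \S\ref{232} and the action of the $\Up$-operators.
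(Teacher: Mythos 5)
Your Steps 1 and 3 are essentially the paper's argument, but Step~2 has a genuine gap exactly where you flag the main obstacle. You normalize ${\bf W}_{\bff_0,S}$ by a pointwise constraint at each classical $x$, then assert that this "trivializes the ambiguity line bundle $\cL$ (at worst after further shrinking $\X_{\G}'$)". That assertion is the heart of the lemma, and nothing in your argument establishes it: you have defined a meromorphic trivialization, and it is not obvious that it is regular and invertible on (an open containing) $\X_{\G}^{\cl}$. "Keeping track of the twists" does not by itself close this; you need a structural reason why the pointwise scalars interpolate.

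The paper's proof resolves this cleanly and you should compare. After Step~1 (Proposition~\ref{LGC} and \cite[Theorem 4.4.3]{LLC}, giving $\Pi_{\Q_p^{\ab}}\cong\cW^{S,U^S}\otimes\bigotimes_{v\in S}\cW_v$ for an a priori nontrivial invertible sheaf $\cW^{S,U^S}$), it builds \emph{two} injective $\cH_{\G}^{\ord}$-equivariant maps $\Pi\to\OO_{\X_{\G}'}^{\A^{\infty,\ts}}$: one is the $q$-expansion map \eqref{qexp Y}, and the other sends $\bff\mapsto\bigl(\alpha_p^{\circ}(a_p)\,\lm^{Sp}(a^{Sp})\,{\bf W}_{\bff,S}(a_S)\bigr)$, where $\lm^{Sp}$ is the interpolation of the spherical Kirillov-model functions at the unramified places and $\alpha_p^{\circ}$ is the $\Up_p^{\circ}$-eigencharacter. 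By \cite[Lemma 4.2.5]{LLC} (a uniqueness statement for co-Whittaker functionals), these two maps differ by a scalar in $\OO_{\X_{\G}'}$, which simultaneously trivializes $\cW^{S,U^S}$ and fixes the normalization so that \eqref{bW2} holds. In particular, the explicit factor $\alpha_p^{\circ}(a_p)$ is what reconciles the $p$-adic $q$-expansion normalization of \S~\ref{232} with the local Whittaker data; your proposal alludes to "the canonical one arising from the ordinary structure" at $p$ but does not make this precise, and the uniqueness principle that makes the argument close is absent.

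So: right skeleton, but the trivialization of the ambiguity line bundle in Step~2 is asserted rather than proved, and the proof requires the co-Whittaker uniqueness input from \cite[Lemma 4.2.5]{LLC} together with the explicit comparison map involving $\alpha_p^{\circ}$ and $\lm^{Sp}$.
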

\begin{proof}
By  Proposition \ref{LGC} and \cite[Theorem 4.4.3]{LLC}, after possibly shrinking $\X_{\G}'$ there exist an invertible  sheaf $\cW^{S, U^{S}}$ over $\X'_{\G, \Q_{p}^{\ab}}$  with trivial $\G(F_{S})$-action and  an $\OO_{\X_{\G, \Q_{p}^{\ab}}'}[\G(F_{S})]$-isomorphism
\beq\lb{bW}
{\bf W}\colon \Pi_{\Q_{p}^{\ab}} &\stackrel{\cong}{\longrightarrow} \cW^{S, U^{S}}\ot \bigotimes_{v\in S} \cW_{v},
\eeq
unique uo to $\OO_{\X_{\G, \Q_{p}^{\ab}}'}^{\ts}$, that we may write locally as 
\beqq
\bff &\longmapsto {\bf W}^{S}(1^{S})\ot{\bf W}_{\bff, S} = {\bf W}_{\bff}^{S}(1^{S})\ot\ot_{v\in S}{\bf W}_{\bff, v}\eeqq
where ${\bf W}^{S}(1^{S})$ is a section trivialising $\cW^{S, U^{S}}$. 

For $v\notin S\cup S_{p\infty}$ and $x\in \X_{\G}^{\cl}$, let $\lm_{x,v}\colon F_{v}^{\ts}\to \Q_{p}(x)$ be the smooth function such that $W_{v}(a)=\lm_{x,v}(a) W_{v}(1)$ for any spherical element $W_{v}$ in the Kirillov model of $\pi_{x,v}$; by the standard formulas (see for instance \cite[p. 190]{wald}), there are functions $\lm_{v}\colon F_{v}^{\ts}\to \OO(\X_{\G})$ such that $\lm_{v}(x)=\lm_{x,v}$ for all $x\in \X_{\G}^{\cl}$. Let $\lm^{Sp}:=\ot_{v\notin S\cup S_{p\infty}}\lm_{v}$, and let $\alpha^{\circ}_{p}\colon F_{p}^{\ts}\to \OO(\X_{\G})^{\ts}$ be the $\Up_{p}^{\circ}$-eigencharacter.
 Then we may define  a pair of injective maps in $\Hom_{\cH_{\G}^{\ord}} (\Pi,  \OO_{\X_{\G}'}^{\A^{\infty, \ts}})$ by
\begin{gather*}
\bff \mapsto (W_{\bff}(a)),
\qquad  \qquad
\bff \mapsto (\alpha^{\circ}(a_{p}) \lm^{Sp}(a^{Sp}) {\bf W}_{\bff, S}(a_{S})),
\end{gather*}
where the former arises from \eqref{qexp Y} and interpolates the $q$-expansions $(W_{\bff_{x}}(a))$ for $x\in \X_{\G}^{\cl}$.
By \cite[Lemma 4.2.5]{LLC}, the maps differ by a scalar in $\OO_{\X_{\G}'}$. 
 It follows that the invertible sheaf $\cW^{S,U^{S}}$ is trivial, and that from \eqref{bW} we may deduce an isomorphism \eqref{bW1} normalised so as to satisfy \eqref{bW2}
\end{proof}

\subsubsection{Definition of the $p$-adic $L$-function and interpolation property}
 For each classical point $(x,y)\in \X^{\cl, {\rm sd}}$ and each place $v\nmid p\infty$, let ${\bf V}_{(x, y),v}$ be the quadratic space  given by the application of  Lemma \ref{nnv}  to $\pi_{x} $ and $\chi_{y}$.
 \begin{lemm} The quadratic space ${\bf V}_{v}= {\bf V}_{(x, y),v}$ is independent of $(x,y)\in \X^{\cl, {\rm sd}}$.
 \end{lemm}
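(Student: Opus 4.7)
My plan is to reduce the statement to showing that a single $\pm 1$-valued algebraic function is constant on $\X^{\cl,\rm sd}$. The isomorphism class of the quaternion algebra $\B_v$ is determined by the sign $\eps(\B_v)\in\{\pm 1\}$, which by Lemma~\ref{nnv} factors as $\eta_v(-1)\chi_{y,v}(-1)\eps(1/2,\pi_{x,E,v}\otimes\chi_{y,v},\psi_{E,v})$. Since $\eta_v(-1)$ depends only on $E_v/F_v$, I am reduced to showing that
\[
\Phi(x,y)\ :=\ \chi_{y,v}(-1)\cdot\eps(1/2,\pi_{x,E,v}\otimes\chi_{y,v},\psi_{E,v})
\]
is constant on $\X^{\cl,\rm sd}$.

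The first factor will be handled by the observation that $(-1)^{2}=1$, so $\chi_{\rm univ}(-1)\in\OO(\Y_{\H})^{\ts}$ squares to $1$ and hence defines a $\pm 1$-valued locally constant function on $\Y_{\H}$ (and, by pullback, on $\X_{\H}$). For the second factor, I plan to invoke the local Langlands correspondence in families: by Proposition~\ref{LGC} together with \cite{LLC}, the associated family of Weil--Deligne representations on $\X^{(v)}\hat{\ts}\X_{\H}$ provides a rigid-analytic interpolation of the local $\gamma$-factor attached to $\pi_{E,v}\otimes\chi_v$. On $\X^{\rm sd}$ the representation $\pi_{E,v}\otimes\chi_{v}$ is conjugate-self-dual, so the $L$-factor contributions in numerator and denominator of $\gamma$ cancel, leaving the epsilon factor as a $\pm 1$-valued algebraic function on $\X^{\rm sd}$.

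To conclude, I will use that a $\pm 1$-valued algebraic function on a connected scheme is constant. Since $\X^{\rm sd}$ is non-empty and cut out in the irreducible Hida scheme $\X$ by the single character equation $\omega=\one$, its connected components should, via the weight map to $\frak W$, be identified with those of the corresponding self-dual stratum of weight space; the latter is a formal sub-torus of $\frak W$ and hence connected. Thus $\Phi$ is constant on $\X^{\cl,\rm sd}$. The hardest step I anticipate is the rigorous interpolation of the epsilon factor, since $L$-factors can contribute zeros and poles to the $\gamma$-factor away from $\X^{\rm sd}$; however, on $\X^{\rm sd}$ the conjugate-self-duality forces these contributions to cancel, making the argument transparent where it is actually needed.
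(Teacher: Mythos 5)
Your strategy is the same as the paper's in substance: both arguments reduce, via the characterisation $\eps(\B_v) = \eta_v\chi_v(-1)\,\eps(\pi_{E,v}\otimes\chi_v)$ from Lemma~\ref{nnv}, to the constancy of the local $\eps$-factor on the self-dual locus, and both regard the local Langlands correspondence in families as the engine behind that constancy. The paper's proof is a one-line appeal to \cite[Corollary~5.3.3]{LLC}, whereas you attempt to reconstruct the argument.

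The reconstruction has a genuine gap at the connectedness step. You assert that the connected components of $\X^{\rm sd}$ are identified, via the weight map, with those of the self-dual stratum of $\frak W$, and that the latter is a sub-torus hence connected. But the weight map is merely finite flat: the preimage of a connected subscheme under a finite flat map need not be connected (a split double cover is the standard counterexample), and irreducibility of $\X$ does not pass to the closed subscheme $\X^{\rm sd}$. So connectedness of $\X^{\rm sd}$ is not established by your argument, and it is precisely what your concluding ``a $\pm 1$-valued algebraic function on a connected scheme is constant'' requires. What the cited \cite[Corollary~5.3.3]{LLC} presumably does — and what one can see from the proof of Proposition~\ref{bf R} in the paper — is to interpolate the $\eps$-factor not on $\X$ itself but on the relevant component $\frak X^\circ$ of the extended Bernstein variety times $\Y_{\H,v}$, where the self-dual locus is cut out by a linear character condition inside a torus, hence is a connected torus coset. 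One then gets constancy there and pulls it back along $\X^{\rm sd}\to\frak X^\circ\hat\times\Y_{\H,v}$, sidestepping any question about the connectedness of $\X^{\rm sd}$. If you want to complete your proof without citing the corollary, you should replace your appeal to the connectedness of $\X^{\rm sd}$ with this Bernstein-variety argument.

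Two smaller remarks. First, the observation that $\chi_{\rm univ,v}(-1)$ squares to $1$ and is hence locally constant is fine, but what you actually need and have is that it is constant on the Hida family $\X_\H$, which is a \emph{connected component} of $\Y_\H$ by definition; this is the right justification. Second, your worry about zeros and poles of the $\gamma$-factor away from $\X^{\rm sd}$ is legitimate, and it is another reason the Bernstein-variety route is cleaner: there the $\eps$-factor is regular (indeed a unit) on the nose, without any need to pass through $\gamma$ and cancel $L$-factors.
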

 \begin{proof}
This follows from the characterisation in  \eqref{eps B} and the constancy results for  epsilon factors of  \cite[Corollary 5.3.3]{LLC}.
  \end{proof}
Let ${\bf V}^{ p\infty}:= \ot_{v\nmid p\infty}{\bf V}_{v}$, and assume that $S$ is not disjoint from the set $S'$ of inert places $v$ where $U^{p}$ is maximal. Let
$$\mathcal{A}\subset (\Pi_{\mathscr{K}(\X_{\G})} -\{0\})\ts \calS({\bf V}^{p\infty}\ts \A^{p\infty, \ts})$$
be the set of of those pairs $(\bff, \phi^{p\infty})$ such that $\phi^{Sp\infty}$ is standard,  \eqref{phi cond} holds at an inert place $v\in S\cap S'$, and the meromorphic function  ${\bf R}_{v}({\bf W}_{\bff,   v}, \phi_{v})$ on $\X$ is nonzero for all $v\in S$.

For  $(\bff, \phi^{p\infty})\in \mathcal{A}$, we define
  a meromorphic function 
\beq\lb{Lp phi}
\cL_{p}(\cV, \bff, \phi^{p\infty})& \in \mathscr{K}(\X_{\Q_{p}^{\ab}})
\\
\cL_{p}(\cV,\bff,  \phi^{p\infty})( x, y) & := C {\la\la \bff_{x}, \bI^{\ord, \curlyvee}(\phi^{p\infty}; x, y) \ra \ra \over
 \prod_{v\in S} {\bf R}_{v}({\bf W}_{\bff|x,   v}, \phi_{v}, \chi_{y,v})}, 
\eeq
where  we still denote by ${\bf I}^{\ord, \curlyvee}(\phi^{p\infty})$ the restriction to $\X$ of  the $(\Y_{\G}\hat{\ts} \Y_{\H})$-adic form 
 of \eqref{I x y}, and 
$$C=C(x, y):= \omega^{p\infty}_{x}\omega_{y}^{p\infty}(-1)  L(1, \eta_{p}) {|D_{F}|^{-1/2} \zeta_{F}(2)  \over \pi^{[F:\Q]}}. $$
is a constant in $\Q^{\ts}$; here, $\omega_{x}=\omega_{\pi_{x}}$
 and $\omega_{y}=\omega_{\chi_{y}}$.  Note that the (base-change of the) functional $\la\la {\bff}, -\ra\ra$ may be applied to $\bI^{\ord, \curlyvee}(\phi^{p\infty})$, thanks to Lemma \ref{Z-adic}.

\begin{prop} \lb{451}
The collection  
$$(\cL_{p}(\cV, \bff, \phi^{p\infty}))_{(\bff, \phi^{p\infty})\in \mathcal{A}}$$
of meromorphic functions on $\X_{\Q_{p}^{\ab}}$ has the following properties.
\begin{enumerate}
\item
Let $(x, y)\in \X^{\cl}_{\Q_{p}^{\ab} /\Q_{p}^{\ab}}(\C)$ have contracted weight $(k_{0}, w, l)$ satisfying
\beq\lb{wt cond}
 | l_{\tau}| \leq w_{\tau}-2, \qquad |k_{0}|\leq w_{\tau} - 2- | l_{\tau} |. \eeq
If $(x, y)$ is outside  the polar locus of $\cL_{p}(\cV, \bff, \phi^{p\infty})$, we have 
\beq\lb{int Lp2}
\cL_{p}(\cV,\bff, \phi^{p\infty})( x, y) = 
 e_{p\infty}(V_{(\pi, \chi)})
\cdot  \cL(V_{(\pi, \chi)}, 0),\eeq
where  $\pi=\pi_{x}$, $\chi=\chi_{y}$.
\item 
For each $(x, y)\in \X^{\cl, {\rm sd}}$, there is a pair $(\bff, \phi^{p\infty})\in \mathcal{A}$ such that $\cL_{p}(\cV,\bff, \phi^{p\infty})$ does not have a pole at $(x, y)$. 
\end{enumerate}
\end{prop}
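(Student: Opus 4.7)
I plan to prove part (1) by specialising each ingredient in the defining formula \eqref{Lp phi} at the classical point and matching with Waldspurger's complex identity, and part (2) by invoking the non-vanishing criterion of Lemma \ref{nnv} to exhibit suitable data.

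\textbf{Interpolation (1).} Fix $(x,y)$ classical with underlying embedding $\iota$, at which $\cL_p(\cV, \bff, \phi^{p\infty})$ is regular. By Proposition \ref{lala}, $\la\la \bff, \bI^{\ord,\curlyvee}(\phi^{p\infty})\ra\ra$ specialises to the classical pairing $\la \bff_x, \bI^{\ord,\curlyvee}(\phi^{p\infty}; x, y)\ra$. By Corollary \ref{coro Iord 2}, the second argument equals $|D_E|^{1/2}|D_F| \cdot e^{\ord,\iota}\, e^{\hol}\, I(\phi^{p\infty}; \chi^\iota, \xi_{x,y}^\iota, \iota\kappa'_{2,x,y}, k_{x,y})$, where $k_{x,y}$ and $\xi_{x,y}$ are determined by the weights; property (1) of Lemma \ref{pet expl} absorbs $e^{\ord} e^{\hol}$ into the pairing. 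Invoking \eqref{pettt} rewrites $\la \bff_x, \cdot \ra$ as an explicit Petersson product involving $w_{r,p}\Up_p^{-r}\bff_x^{\rm a}$, to which Proposition \ref{RSp} applies, factoring the product as $|D_F|^{-1}|D_E|^{-1/2}\cdot L(1/2, \pi_E\otimes\chi) /(\zeta_F(2) L(1,\eta))$ times the product of normalised local integrals $R_v^\natural$ ($v\nmid p\infty$), $R_{r,v}^\dag$ ($v\vert p$), $R_v^\dag$ ($v\vert\infty$).

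The $R_v^\natural$ for $v\in S$ cancel with the denominator via Proposition \ref{bf R} (noting the twist $\smalltwomat{-1}{}{}{1}$ relating $\baar W$ and ${\bf W}$); outside $S$ they equal $1$ by \eqref{Rv unr}. At $v\vert p$, Lemma \ref{zeta p} evaluates $R_{r,v}^\dag$ as $\chi_v(-1)L(1,\eta_v)^{-1}e_v(V_{\pi_E\otimes\chi})$. Combined with the adjoint factor $e_p(\ad(V_\pi)(1))$ from \eqref{pettt}, this produces precisely the $p$-adic interpolation factor $\prod_{v\vert p} e_v(V_{(\pi,\chi)})$ of \eqref{ev}. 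At $v\vert\infty$, Lemma \ref{arch-int} contributes $i^{-k_0}2^{-1-w}$, which combined with the factor $2^{\sum_\sigma -1-w_\sigma}$ from \eqref{pettt} gives $i^{-k_0[F:\Q]}$, matching $e_\infty^{-1}$ after accounting for the sign $\omega_x^{p\infty}\omega_y^{p\infty}(-1)$ in $C$ and the $\chi_v(-1)$ factors. A direct bookkeeping check shows that the remaining constants $|D_F|, |D_E|, \zeta_F(2), L(1,\eta), \pi^{[F:\Q]}$ in $C$ match the discrepancy between $\cL(V_{(\pi,\chi)}, 0)$ and the $L$-ratio appearing above, yielding \eqref{int Lp2}.

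\textbf{Non-vanishing (2).} Let $(x,y) \in \X^{\cl,\rm sd}$ with $\pi = \pi_x$, $\chi = \chi_y$. Apply Lemma \ref{nnv} at each $v\in S$ to produce a Whittaker function $\baar W_v$ in $\pi_v$ and Schwartz function $\phi_v \in \calS({\bf V}_v\ts F_v^\ts)$ such that $R_v(\baar W_v, \phi_v, \chi_v)\neq 0$; away from $S$ take the standard data. At an inert place $v_0 \in S\cap S'$, arrange also $\phi_{2,v_0}(0,u)=0$, which is compatible with $R_{v_0}\neq 0$ by the second statement of Lemma \ref{nnv}. Using the isomorphism \eqref{bW1}, lift $\{\baar W_v\}_{v\in S}$ to ${\bf W}_{\bff,S}$ for some $\bff \in \Pi_{\mathscr{K}(\X_\G)}$ with $\bff_x$ specialising to the prescribed Whittaker data; by Proposition \ref{bf R}, each ${\bf R}_v({\bf W}_{\bff,v},\phi_v)$ is a meromorphic function on $\X$ whose value at $(x,y)$ equals the nonzero $R_v^\natural(\cdots)$, so the denominator of \eqref{Lp phi} is meromorphic on $\X$, nonzero, and regular non-vanishing at $(x,y)$. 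Hence $(\bff,\phi^{p\infty}) \in \mathcal{A}$. For the numerator, Corollary \ref{coro Iord} bounds the poles of $\bI^{\ord,\curlyvee}(\phi^{p\infty})$ by $\prod_{v\in S^{\rm bad}}L(1, \eta_v\kappa_{2,0,v}^{-1})$; at the selfdual point $\omega = \kappa_{\pi,0}\kappa_{\chi,0}$ is trivial on $\A^{\infty,\ts}$, so $\kappa_{2,0,v}^{-1} = \omega_v = 1$ and each such factor is the finite value $L(1,\eta_v)$. The pairing $\la\la\ ,\ \ra\ra$ is $\OO$-linear on $\X$, hence contributes no further poles. Thus $\cL_p(\cV,\bff,\phi^{p\infty})$ is regular at $(x,y)$.

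\textbf{Main obstacle.} The genuinely delicate part is the bookkeeping in (1): tracking the signs, discriminant powers, $\zeta_F(2)$ and $L(1,\eta)$ factors, the constant $\lambda$ from \eqref{lm def} hidden inside $\bI^{\ord,\curlyvee}$, and the archimedean $\Gamma$-factor identities implicit in Lemma \ref{arch-int}, so that everything assembles precisely into $e_{p\infty}(V_{(\pi,\chi)})\cL(V_{(\pi,\chi)},0)$; the conceptual content, by contrast, is the machinery already set up in earlier sections.
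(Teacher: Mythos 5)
Your proposal is correct and follows essentially the same route as the paper: for part (1) you combine the defining property of $\la\la\ ,\ \ra\ra$ (Proposition~\ref{lala}) and of ${\bf R}_v$ (Proposition~\ref{bf R}) with the interpolation property~\eqref{I ord int 2}, the Petersson specialisation \eqref{pettt}, Waldspurger's integral representation (Proposition~\ref{RSp}), and the local computations at $p$ and $\infty$ (Lemmas~\ref{zeta p},~\ref{arch-int}), exactly as the paper does. For part (2) the paper simply cites Lemma~\ref{nnv}; your elaboration correctly identifies the two sources of potential poles and dispatches the numerator via the pole control in Corollary~\ref{coro Iord}. One small caveat: the assertion that the pairing $\la\la\ ,\ \ra\ra$ ``contributes no further poles'' because it is $\OO$-linear does not quite follow—$\OO$-linearity is compatible with the pairing taking values in $\cK(\X_{\G,\Q_p^{\ab}})$ and having denominators—but the paper's one-line proof of (2) glosses over the same point, so this is not a defect relative to the paper.
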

Note that the right-hand side of \eqref{int Lp2} is the same as in \eqref{interpol Lp intro} and independent of $(\bff, \phi^{p\infty})$. This will enable us to glue the various $\cL(\cV, \bff, \phi^{p\infty})$ into the sought-for $p$-adic $L$-function.
\begin{proof} The second statement follows from Lemma \ref{nnv}.

It remains to prove the interpolation property. 
Abbreviate $\cL_{p}=\cL_{p}(\cV, \bff, \phi^{p\infty})$, and let $\baar{W}_{v}:=\baar{\bf W}_{\bff|x}$, $\alpha=\alpha_{\pi}$. Denote by $(x_{0}, y_{0})\in \X^{\cl}$ and $\iota \colon \Q_{p}^{\ab}(x_{0}, y_{0})\into \C$ the data corresponding to $(x, y)$. Let $k_{x,y}$ and $\xi_{x,y}$ (respectively $k_{x_{0}, y_{0}}$, $\xi_{x_{0}, y_{0}}$, $\ukappa_{2}=\ukappa_{2, x_{0}, y_{0}}$) be defined by \eqref{kp al} (respectively, by the analogous formulas for the objects attached to $(x_{0},y_{0})$ instead of $(x,y)$).

By the definitions  and  the defining property of $\la\la \, , \, \ra\ra$ in Proposition \ref{lala}, and of   ${\bf R}_{v}$ in Proposition \ref{bf R}, we have
\beqq \cL_{p}(x, y)
& = C\cdot {\iota\la \bff_{x_{0}}, e^{\ord} {\rm I}(\phi^{p\infty}; \chi_{y_{0}}, \xi_{x_{0}, y_{0}}, k_{x_{0}, y_{0}}\ra
\over 
 \prod_{v\in S}
  R_{v}^{\natural}( \iota \baar W_{v}, \phi_{v},\chi_{v})}\\
& = C\cdot  {|D_{F}|^{1/2}\zeta_{F}(2) \cdot  |D_{F}| |D_{E}|^{1/2} \over 
\omega_{x, p}^{}(-1)\cd e_{p}( \ad(V_{\pi})(1)) \cdot 2^{\sum_{v\vert\infty} -1-w_{v}}}  
 \cdot   
 {q_{F, p}^{r}\alpha_{\pi}^{-r}(\bff_{x}^{\rm a},  w_{r,p}^{-1}I_{r}(\phi^{p\infty}; \chi,\xi_{x, y} , \kappa_{2}',  k_{x, y}) 
\over
 L(1, \pi^{}, \ad) \cdot \prod_{v\in S}
  R_{v}^{\natural}( \iota \baar W_{v}, \phi_{v},\chi_{v})}\eeqq
  where $r\in (\Z_{\geq 1})^{S_{p}}$ is sufficiently large, and the  second equality follows from  the interpolation properties of ${\bf I}^{\ord, \curlyvee}$ in \eqref{I ord int 2}, and of $\la\ ,  \ \ra$ in Lemma \ref{pet expl}.
  
  Using first Waldspurger's integral representation as in Proposition \ref{RSp}, and then
 the calculations of local integrals in Lemma \ref{zeta p} and  Lemma \ref{arch-int}, we find 
 \beqq \cL_{p}(x, y)
&=  C\cd 
 {\prod_{v\vert p} R_{r,v}^{\dag}(\baar W_{v}, \chi_{v}, \alpha_{v}) \over 2^{\sum_{\tau}-1- w_{\tau}} \omega_{x,p}^{}(-1)\cd e_{p}(\ad(V_{\pi})(1))}
\cdot  {|D_{F}|^{1/2} L(1/2, \pi_{E}\otimes \chi)\over  L(1,\eta) L(1, \pi,\ad)}
 \prod_{v\vert \infty}
  R_{v}^{\dag}( \baar W_{v},\chi_{v}, k_{v})
\\
&= C\cd i^{-k_{0}[F:\Q]} 
{ \omega_{x,p}^{}\omega_{y, p}^{\rm sm}(-1)}{ e_{p}(V_{\pi_{E}\ot\chi})  \over L(1, \eta_{p})\cd  e_{p}(\ad(V_{\pi})(1))}
\cdot  {|D_{F}|^{1/2} L(1/2, \pi_{E}\otimes \chi)\over  L(1,\eta) L(1, \pi,\ad)}.
\\
&= i^{k_{0}[F:\Q]} \cd
{ e_{p}(V_{(\pi, \chi)})}
\cdot  \cL(V_{(\pi, \chi)}, 0),
\eeqq
as desired. 
\end{proof}

\begin{rema} \lb{coates-pr} 
The interpolation factors $e_{p\infty}(V_{(\pi, \chi)})$ are easily seen to agree with the predictions of  Coates and Perrin-Riou (see \cite{coates}) for a (cyclotomic) $p$-adic $L$-function attached to the `virtual motive' \eqref{virtual}, up to a subtlety that we now explain. With the notation used in \eqref{ev}, for $v\vert p$ consider the $G_{F_{v}}$-representations 
\beqq
\ad(V_{\pi,v})(1):=\End^{0}(V_{\pi})(1)&\supset \ad(V_{\pi,v})(1)^{+}:= \Ker [\ad(V_{\pi})(1)\to \Hom(V_{\pi,v}^{+}, V_{\pi,v}^{-})(1)]\\
&\supset \ad(V_{\pi,v})(1)^{++} =\Hom(V_{\pi,v}^{-}, V_{\pi,v}^{+})(1))  \eeqq
where `$0$' denotes trace-$0$ elements, and  the cokernel of the second containment is isomorphic to the cyclotomic character. Then  \eqref{ev} differs from the ratio of the $v$-adic Coates--Perrin-Riou factors for the hypothetical $L$-functions of $V_{\pi} \ot {\rm Ind}_{G_{F}}^{G_{E}}{V_{\chi}}$ and of $\ad(V_{\pi})(1)$  by the appearance of $\gamma(\iota\ad(V_{\pi,v})(1)^{++}, \psi_{v})^{-1}$ in place of  $\gamma(\iota\ad(V_{\pi,v})(1)^{+}, \psi_{v})^{-1}$. This discrepancy removes the trivial zero $\gamma(\C(1), \psi_{v})^{-1}$ from the latter inverse gamma factor.
\end{rema}

\subsubsection{Rationality and completion of the proof of Theorem {\ref{thm A}}} \lb{sec final}
 By  Proposition \ref{451}
and the density of classical points, the functions $\cL_{p}(\cV, \bff, \phi^{p\infty})=\eqref{Lp phi}\in \mathscr{K}(\X_{\Q_{p}^{\rm ab}})$
  glue to a function $$\cL_{p}(\cV) \in \mathscr{K}(\X_{\Q_{p}^{\rm ab}})$$  which satisfies the required interpolation property, and whose polar locus does not meet the set $\X^{\cl, {\rm sd}}$. All that is left to show is that $\cL_{p}(\cV)$ descends to $\cK(\X)$. It will be a consequence of the following. 
\begin{prop}\lb{sh alg} Let $\X^{\cl, \parallel} \subset \X^{\cl}$ be the sub-ind-scheme of those $(x, y)$
corresponding to a representation $\pi\boxtimes \chi$ whose contracted weight $(k_{0}, w, l)$ satisfies \eqref{wt cond} and is parallel.\footnote{That is, $w_{\tau}$ is independent of $\tau\in \Sg_{\infty}$ and so is $l_{\tau}$. Without this condition, we may have a slightly weaker result.} 
There is a function
 $${\rm L}\in \OO(\X^{\cl , \parallel})$$
 such that for any $z=(x, y) \in \X^{\cl, \parallel}(\C)$ corresponding to a point $z_{0}\in \X^{\cl,||}$ and an embedding $\iota \colon \Q_{p}(z_{0})\into \C$, with attached representation $\pi\boxtimes \chi$, we have
\beq\lb{toto}
{\rm L}(z)=\iota{\rm L}(z_{0}) = i^{-(1+k_{0})[F:\Q]}
\gamma(1, \eta^{\infty}\omega^{\infty}, \psi^{\infty})^{-1}   {\zeta_{F}(2) L(1/2, \pi_{E}\ot \chi)\over \pi^{[F:\Q]} L(1, \pi, \ad)}. \eeq
Here, we denote by $\omega_{\pi}$ be the central character of $\pi$, let  $\omega_{\chi}:=\chi_{|\A^{\ts}}$,   let $\omega=\omega_{\pi}\omega_{\chi}$, and define  $\gamma(s,\omega'^{\infty}, \psi^{\infty}) := \prod_{v\nmid \infty} \gamma(s,\omega'_{v}, \psi_{v})$. 
\end{prop}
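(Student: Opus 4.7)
The plan is to define $\mathrm{L}$ via an explicit algebraic modification of $\cL_p(\cV)$ evaluated at classical parallel-weight points, and then to verify that the result descends from $\Q_p^{\rm ab}$ to $\Q_p$ by matching the Galois equivariance \eqref{equiv gamma} of $\gamma$-factors with Shimura's classical rationality theorem for critical $L$-values.

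Let $z_0 = (x_0, y_0) \in \X^{\cl, \parallel}$ and $\iota \colon \Q_p(z_0) \into \C$, giving $\pi \boxtimes \chi := \pi_{x_0}^{\iota}\boxtimes \chi_{y_0}^{\iota}$. The parallel-weight hypothesis guarantees \eqref{wt ineqq}, so Proposition \ref{451} applies; unpacking $\calL(V_{(\pi,\chi)},0)$ and $e_{p\infty}$ in \eqref{int Lp2}, we get
$$\iota \cL_p(\cV)(z_0) = i^{k_0 [F:\Q]} \prod_{v|p} e_v(V_{(\pi,\chi)}) \cdot \frac{\zeta_F(2)\, L(1/2, \pi_E \ot \chi)}{\pi^{[F:\Q]}\, L(1,\eta)\, L(1,\pi,\ad)}.$$
Solving for the classical ratio and comparing with the right-hand side of \eqref{toto} leads to the candidate identity
$$\iota \mathrm{L}(z_0) = M(z_0, \iota) \cdot \iota \cL_p(\cV)(z_0),\qquad M(z_0,\iota) := i^{-(1+2k_0)[F:\Q]}\, \gamma(1, \eta^\infty \omega^\infty, \psi^\infty)^{-1}\, L(1,\eta)\, \prod_{v|p} e_v(V_{(\pi,\chi)})^{-1}.$$
Here $M(z_0,\iota)$ is an explicit product of values of Hecke characters, local $\gamma$-factors at $p$ and away from $p$, and the classical $L$-value $L(1,\eta)$. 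This formula defines the only possible $\mathrm{L}(z_0)$; it remains to show it lies in $\iota(\Q_p(z_0))$ and that the pointwise values assemble into a regular section.

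The heart of the argument is the descent to $\Q_p(z_0)$: one must check $\sg \cdot [M(z_0,\iota)\cdot \iota\cL_p(\cV)(z_0)] = M(z_0,\iota)\cdot \iota\cL_p(\cV)(z_0)$ for all $\sg \in G_{\Q_p^{\rm ab}/\Q_p(z_0)}$ (extended to an automorphism of $\C$). For $\sg$ with reciprocity image $a \in \Q_p^\ts \subset \A^{\infty,\ts}$, equation \eqref{equiv gamma} controls the transformation of the global $\gamma$-factor by the character $(\eta\omega)^\infty(a)$, and its analogues at $v|p$ control $\prod_{v|p} e_v^{-1}$; the factor $L(1,\eta)$ transforms trivially up to the explicit rational/algebraic part of its class-number formula value. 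On the other hand, Shimura's rationality theorem for critical values of $\GL_2 \times \GL_1$-type Rankin--Selberg $L$-functions attached to parallel-weight Hilbert modular forms (in the CM-twist setup treated by Harris and Raghuram--Tanabe) says that $L(1/2, \pi_E\ot\chi)/[\pi^{[F:\Q]} L(1,\pi,\ad)]$ is algebraic with Galois action by an explicit character of $\A^{\infty,\ts}/F^\ts$ evaluated at $a$. Matching these two characters verifies $\sg$-invariance.

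Granting pointwise algebraicity, the assignment $z_0 \mapsto \mathrm{L}(z_0) \in \Q_p(z_0)$ promotes to a section $\mathrm{L} \in \OO(\X^{\cl,\parallel})$: indeed $\X^{\cl,\parallel}$ is ind-finite \'etale over $\Q_p$, so compatibility on connected components suffices, and this follows from the meromorphicity of $\cL_p(\cV)$ (with polar locus avoiding a dense open of classical points by Proposition \ref{451}.2) combined with the evident regularity of the correction $M$. The principal obstacle is the detailed bookkeeping in the Galois-equivariance step: ensuring the precise cancellation of characters coming from the epsilon factors at $v|p$, the global $\gamma$-factor $\gamma(1,\eta^\infty\omega^\infty,\psi^\infty)^{-1}$, the power of $i$, and the classical period ratio is essentially a normalization calculation, but one sensitive to the choice of Langlands normalisation (Hecke vs.\ unitary) and to the role of the CM extension $E/F$, and is where the particular shape of \eqref{toto} becomes transparent.
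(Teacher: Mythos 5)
Your core input is the right one — Shimura's classical algebraicity theorem for Rankin--Selberg $L$-values — but the proof wraps it in an unnecessary and somewhat problematic detour through $\cL_p(\cV)$. The paper's own proof is a two-line citation: it invokes \cite[Theorem~4.2]{shimura-hilb} applied to the newform of $\pi$ and the CM theta form attached to $\chi$ (whose central character is $\eta\omega_\chi$), together with the period comparison of \cite[Proposition~1.11]{CST} to convert Shimura's periods into $L(1,\pi,\ad)$. That is the whole content of the Proposition, whose statement is purely about classical $L$-values and does not involve $\cL_p(\cV)$ at all.

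Your route has two concrete defects. First, it is logically backwards: the Proposition is an \emph{input} to the subsequent Corollary that $\cL_p(\cV)$ descends to $\cK(\X)$. By instead defining $\mathrm{L}(z_0)$ via $M(z_0,\iota)\cdot\iota\cL_p(\cV)(z_0)$ and then attempting a $\Gal(\Q_p^{\ab}/\Q_p)$-equivariance check, you are essentially reproducing the Corollary's argument inside the Proposition's proof, and in the equivariance step you have no independent handle on $\sigma\cdot\iota\cL_p(\cV)(z_0)$ except by expanding into $\gamma$-factors and the $L$-value ratio and invoking Shimura's Galois transformation law for the latter — at which point the passage through $\cL_p(\cV)$ has contributed nothing. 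If you already grant Shimura's theorem in the precise form you state (algebraicity of $L(1/2,\pi_E\ot\chi)/[\pi^{[F:\Q]}L(1,\pi,\ad)]$ with an explicit reciprocity character), then combining it with the elementary Galois equivariance \eqref{equiv gamma} of $\gamma(1,\eta^\infty\omega^\infty,\psi^\infty)^{-1}$ and of $i^{-(1+k_0)[F:\Q]}$ proves \eqref{toto} directly. Second, the $\cL_p(\cV)$-based definition of $\mathrm{L}(z_0)$ only makes sense away from the polar locus of $\cL_p(\cV)$, whereas the Proposition asserts that $\mathrm{L}$ is a regular function on all of $\X^{\cl,\parallel}$, including classical points where $\cL_p(\cV)$ might a priori have a pole. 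Your appeal to "compatibility on connected components" does not by itself remove this gap, because $\X^{\cl,\parallel}$ is ind-finite and a connected component could in principle lie entirely inside the polar locus. The paper's direct argument via Shimura is immune to this issue because it never uses $\cL_p(\cV)$. (The paper's Remark after the Proposition does acknowledge an alternative route through the $p$-adic construction, but notes that the appearance of $\psi$ via the Weil representation makes that route much more burdensome than the direct Shimura argument.)

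Finally, a minor bibliographic point: the paper cites Shimura's 1978 Duke paper directly; your references to Harris and Raghuram--Tanabe are in the right spirit but not what the paper uses, and in particular the precise normalization of periods (the role of $L(1,\pi,\ad)$ rather than a Petersson norm) comes from the comparison in CST, which you do not mention.
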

\begin{rema} The construction of this paper gives an alternative proof of this result. However, due to the occurrence of the  additive character $\psi$ in the definition of the form $I$ (via the Weil representation), keeping track of rationality requires some burdensome bookkeeping. 
\end{rema}
\begin{proof} This is a consequence of a well-known algebraicity theorem of Shimura \cite[Theorem 4.2]{shimura-hilb}, applied to the newform in the representation $\pi$ and the CM form attached to $\chi$, whose central character is $\eta\omega_{\chi}$.  (For  the comparison of Shimura's periods and adjoint $L$-values, see \cite[Proposition 1.11]{CST}.)
\end{proof}
\begin{coro}
The function $\cL_{p}(\cV)$ belongs to $\cK(\X) \subset \cK(\X_{\Q_{p}^{\ab}})$.
\end{coro}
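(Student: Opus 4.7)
The plan is to show that $\cL_p(\cV) \in \cK(\X_{\Q_p^{\ab}})$ is fixed by the natural $\Gal(\Q_p^{\ab}/\Q_p)$-action coming from the base change $\X_{\Q_p^{\ab}} \to \X$, which is equivalent to the claimed descent to $\cK(\X) \subset \cK(\X_{\Q_p^{\ab}})$. It suffices to verify this Galois invariance at the values $\cL_p(\cV)(z_0) \in \Q_p^{\ab}(z_0)$ for $z_0$ in a Zariski-dense subset of classical points of $\X$. I would use the subset $\X^{\cl, \parallel, {\rm wt}} \setminus \mathscr{D}$ of classical parallel-weight points satisfying \eqref{wt ineqq} outside the polar locus; for $F \neq \Q$ the parallel-weight subscheme has positive codimension in $\X$, and an additional analyticity argument (exploiting the finite flatness of $\X \to \frak{W}$ and the analytic dependence of \eqref{int Lp2} on the weight variable) is required to reduce the general case to this one.

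At $z_0 \in \X^{\cl, \parallel, {\rm wt}} \setminus \mathscr{D}$ with embedding $\iota$, combining the interpolation property \eqref{int Lp2} with Proposition \ref{sh alg} yields
\[
\iota(\cL_p(\cV)(z_0)) = A(z_0, \iota) \cdot \iota({\rm L}(z_0)),
\]
where
\[
A(z_0, \iota) = i^{(2k_0+1)[F:\Q]} \cdot \frac{\gamma(1, \eta^{\infty}\omega^{\infty}, \psi^{\infty})}{L(1, \eta)} \cdot \prod_{v \vert p} e_v(V_{(\pi^{\iota}_x, \chi^{\iota}_y)}).
\]
By Proposition \ref{sh alg}, ${\rm L}(z_0) \in \Q_p(z_0)$, so $\iota({\rm L}(z_0))$ has the required Galois equivariance as $\iota$ varies. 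The problem then reduces to verifying Galois equivariance of the prefactor $A(z_0, \iota)$.

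I would verify this using the equivariance formula \eqref{equiv gamma} for each $p$-adic inverse gamma factor in $\prod_{v \vert p} e_v$, and its complex analogue for each finite-place factor of $\gamma(1, \eta^{\infty}\omega^{\infty}, \psi^{\infty})$; the contribution of $L(1, \eta)$ is matched by the explicit powers of $\pi$ implicit in the archimedean part of $\cL(V_{(\pi, \chi)}, 0)$. The global reciprocity of class field theory and the product formula for global $\epsilon$-factors combine the local Galois transformations into a net trivial action on the product, while $i^{(2k_0+1)[F:\Q]} \in \Q(\mu_4)$ transforms explicitly via the cyclotomic character. The cumulative effect is the desired Galois equivariance of $A(z_0, \iota)$.

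The main obstacle is the detailed local-global bookkeeping of these Galois transformations --- essentially the ``burdensome bookkeeping'' alluded to in the remark after Proposition \ref{sh alg} --- together with the density step in the non-parallel direction when $F \neq \Q$. Both are routine given the explicit formulas for the ingredients, but tedious to carry out in full detail.
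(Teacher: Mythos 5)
Your overall strategy matches the paper's: reduce Galois descent to a check of equivariance at a dense set of classical points of parallel weight, and compare with Shimura's rationality result (Proposition \ref{sh alg}). However, there are two issues.

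First, the concern about density is misplaced. You worry that for $F\neq \Q$ the parallel-weight locus has positive codimension and that an ``additional analyticity argument'' is needed. In fact $\X_{\Q_p^{\ab}}^{\cl,\parallel,{\rm reg}}$ \emph{is} Zariski dense in $\X_{\Q_p^{\ab}}$. The reason is that the parallel condition only constrains the algebraic part of the weight while the nebentypus (smooth part) is unconstrained; as the nebentypus varies over finite-order characters, the parallel classical points sweep out infinitely many distinct irreducible divisors of the weight space (the ``curves'' obtained by $p$-adically interpolating the parallel weight variable at each fixed nebentypus), whose union is Zariski dense. The closed ``parallel-weight subscheme'' you have in mind (fixing the nebentypus) is not the relevant object. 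So the extra analyticity step you flag is not needed.

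Second, and more substantively, your verification of Galois equivariance of the prefactor $A(z_0,\iota)$ is too loose. You appeal to ``global reciprocity of class field theory and the product formula for global $\epsilon$-factors,'' but the gamma factors involved run over $v\nmid\infty$ only, so no global product formula applies; and in any case such a formula would not by itself produce rationality of the individual pieces. The paper instead factors the ratio $\cL_p(\cV)(z)/{\rm L}(z)$ and shows each factor lies in $\OO(\X^{\cl,\parallel,{\rm reg}})$: (a) $i^{[F:\Q]}\gamma(1,\eta^\infty,\psi^\infty)/L(1,\eta)$ is rational, as both numerator and denominator are rational multiples of $|D_{E/F}|^{-1/2}$ by the class number formula and explicit Gauss sums; (b) the ratios $\gamma(1,\eta^\infty\omega^\infty,\psi^\infty)/(\gamma(1,\eta^\infty,\psi^\infty)\gamma(1,\omega^\infty,\psi^\infty))$ and $e_p(V_{(x,y)})/\gamma(1,\omega_p,\psi_p)^{-1}$ are handled by the equivariance formula \eqref{equiv gamma}, since the characters entering have ratio $\omega$, and the resulting twists cancel; (c) crucially, since $\X$ is connected and contains self-dual points, the character $\omega_{z,v}$ is \emph{unramified} for all $z\in\X$ and all $v\nmid p$, so each $\gamma(1,\omega_v,\psi_v)$ is a ratio of local $L$-values and lies in $\OO(\X^{\cl,\parallel})$. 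Your argument does not invoke the structural fact (c), which is what makes the remaining factor $\gamma(1,\omega^{p\infty},\psi^{p\infty})$ tractable; without it, the claim that ``the product formula combines the local Galois transformations into a net trivial action'' does not stand on its own.
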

\begin{proof}
We need to show that \beq\lb{sge}\cL(\cV)^{\sg}=\cL(\cV)\eeq for all $\sg\in \Gal(\Q_{p}^{\ab}/\Q_{p})$.
Let $\X_{\Q_{p}^{\ab}}^{\cl, \parallel, {\rm reg}}$ be the intersection of $\X_{\Q_{p}^{\ab}}^{\cl, \parallel} $ with the complement of the polar locus of $\cL_{p}(\cV)$. Since this set is dense in $\X_{\Q_{p}^{\ab}}$, it suffices to show that \eqref{sge} holds for  the restriction ${\rm L}_{p}(\cV)$ of $\cL_{p}(\cV)$ to ${\X_{\Q_{p}^{\ab}}^{\cl, \parallel, {\rm reg}}}$; in other words, that ${\rm L}_{p}(\cV)$
belongs to $\OO(\X^{\cl, \parallel, {\rm reg}})$.

By \eqref{int Lp2} and \eqref{toto}, 
$${\rm L}_{p}(\cV)(z) = 
 {i^{[F:\Q]}\gamma(1,\eta^{\infty}, \psi^{\infty}) \over {L(1, \eta)}} 
  \cd { \gamma(1, \eta^{\infty}\omega^{\infty}, \psi^{\infty})\over  \gamma(1, \eta^{\infty}, \psi^{\infty}) \gamma(1, \omega^{\infty}, \psi^{\infty})}\cd 
{1\over \gamma(1, \omega^{p\infty}, \psi^{p\infty})^{-1}}  \cd
{e_{p}(V_{(x, y)}) \over \gamma(1, \omega_{p}, \psi_{p})^{-1}} 
\cd {\rm L}(z).$$  We show that all factors belong to $\OO(\X^{\cl, \parallel, {\rm reg}})$:
\begin{itemize}
\item
by the class number formula and  standard results on Gau\ss\ sums, the ratio $ {L(1, \eta)/ i^{[F:\Q]} \gamma(1,\eta^{\infty})} $ is rational, as both numerator and denominator are rational multiples of $|D_{E/F}|^{-1/2}$;
 \item by \eqref{equiv gamma}, the second and fourth ratios are values of functions on $\OO(\X^{\cl, \parallel})$, as $e_{p}(V_{(x, y)})$ is a ratio of inverse gamma factors of characters whose ratio is $\omega_{p}$;
\item as $\X$ is connected and it contains points $z$ with $\omega_{z}=\one$, the character $\omega_{z,v}$ is unramified for all $z\in \X$ and all $v\nmid p$; thus for those $v$,  the quantity $\gamma(\omega_{v})$ is a ratio of $L$-values, hence the third factor is also the value of a function in $\OO(\X^{\cl, \parallel})$;
\item finally, ${\rm L}\in \OO(\X^{\cl,\parallel})$ by Proposition \ref{sh alg}.
  \end{itemize}
  This completes the proof of the corollary and of Theorem \ref{thm A}.\end{proof}

\appendix

\section{Reality shows and double-factorial identities} \lb{double fact}

Consider the identity 
\beqq (*)\qquad \sum_{k=0}^{n} {n \choose k} (2k-1)!! (2n-2k-1)!! = 2^n n!\eeqq
where we recall that
 $(2m-1)!! = 1 \cd 3 \cd 5 \cdots  (2m-1)$ is the number of perfect matchings (into pairs) of a $2m$-element set.  Since  $\Gamma(j+1/2) = {(2j-1)!! \over 2^{j}}\sqrt{\pi}$, the identity $(*)$ is equivalent to \eqref{dfg}. 
 
Quick analytic proofs of $(*)$ have appeared in \cite{AMM}, \cite[Theorem 3]{AMM2}. As we were not able to find a bijective proof in the literature, we give one here. Another bijective proof was communicated to the author by David Callan.

\medskip

A \emph{reality TV show format} is an algorithm whose inputs are called \emph{players' choices} and whose outputs are called \emph{outcomes} (the set of players is partitioned into two disjoint sets, the \emph{producers} and the \emph{participants}). A format is said to be \emph{bijective} if its set of players' choices  is in bijection with its set  of outcomes.

We will describe two bijective formats for reality TV shows, with different sets of players' choices but the same set of outcomes. In each case, there are $2n$ participants forming an ordered set of $n$ heterosexual couples;\footnote{These TV shows, for simplicity or close-mindedness, assume the gender binary.} there are two tropical islands, $Q$ and $H$, and in each case, the outcome is: 
\begin{itemize}
\item
 a new matching of the participants into $n$ disjoint couples (which may be homosexual or heterosexual), and
\item an assignment of each participant to either island $Q$ or island $H$, such that
\item  each person lives in the same island as both their old and their new partner.
\end{itemize}

\begin{enumerate}
\item[Show 1.]  The producers choose a set of couples, send all their members to island $Q$, and send all the other participants  to  island $H$. 
Within each island, people mingle until they form new disjoint couples (heterosexual or homosexual) as they wish. 
\item[Show 2.] The producers pick a permutation $\sigma \colon \{1, \ldots, n\} \to \{1, \ldots, n\} $, then they   do the following.
\begin{itemize}
\item Initialize: $i=1$ and the set-variable $C= \emptyset$ (where $C$ is for `cycle'; to be thought of as the set of couples embarked in the show's boat at a given time);
\item Process: 
\begin{enumerate}
\item  consider couple $i$, set $C_{\rm new}=C_{\rm old}\cup \{i\}$, and interview couple member $p_i$ where: if $C=\{i\}$, then $p_i$ is the woman; if $C\supsetneq \{i\}$, then $p_i$ is the one that  does not yet have a new partner. 

The possible  answers to the interview question are `$H$' and `$Q$'.
\item If $j = \sigma(i) \notin C$ and $p_i$ responds $H$ (resp. $Q$):
\begin{itemize}
\item
 rematch $p_i$ with the person of opposite (resp. same) sex of couple $j=\sigma(i)$. Set
 $i_{\rm new} = j$. Return to (a).
  \end{itemize}
\item  If $j = \sigma(i) \in C $ and $p_i$ responds $H $ (resp. $Q$):
\begin{itemize}
\item 
rematch $p_i $ with the unique non-rematched person of couple $j$, and send all members of the `original couples' in $C$
 to island $H$ (resp. $Q$); set $C_{\rm new}=\emptyset$;
\item if everyone has been rematched,  STOP. Else:   set $i_{\rm new}\in \{1, \ldots, n\}$ to be the smallest such that neither member of couple $i_{\rm new}$ has been rematched. Return to (a).
\end{itemize}
\end{enumerate}
\end{itemize}
\end{enumerate}

\begin{proof}[Proof of $(*)$]  The number of possible players' choices  in Show 1 is the left-hand side  of $(*)$.
The number of possible players' choices in Show 2 is the right-hand side  of $(*)$. But the shows are bijective with the same set of outcomes.
\end{proof}

\section{Errata to \cite{LLC}}\lb{app B}

The conclusions of the statements of  Lemma 5.2.2, Proposition 5.2.3, and Proposition 5.2.4 should respectively have $A[T^{\pm 1}]$, $\OO_{X}[T^{\pm 1}]$, and $\OO_{X}[T^{\pm1}]$ instead of $A[T]$, $\OO_{X}[T]$, and $\OO_{X}[T]$.

\bigskip

\backmatter
\addtocontents{toc}{\medskip}

\begin{bibdiv}
\begin{biblist}

\bib{AMM}{article}{author= {Andersen, Kenneth F.}, author={Abel,  Ulrich},  title={Solutions to problem 11406} , journal={Amer. Math. Monthly}, volume={117}, date={2010}, page={935}}

\bib{AI}{article}{
   author={Andreatta, Fabrizio},
   author={Iovita, Adrian},
   title={Triple product $p$-adic $L$-functions associated to finite slope
   $p$-adic families of modular forms},
   journal={Duke Math. J.},
   volume={170},
   date={2021},
   number={9},
   pages={1989--2083},
   issn={0012-7094},
   review={\MR{4278669}},
   doi={10.1215/00127094-2020-0076},
}

\bib{BDP}{article}{
   author={Bertolini, Massimo},
   author={Darmon, Henri},
   author={Prasanna, Kartik},
   title={Generalized Heegner cycles and $p$-adic Rankin $L$-series},
   note={With an appendix by Brian Conrad},
   journal={Duke Math. J.},
   volume={162},
   date={2013},
   number={6},
   pages={1033--1148},
   issn={0012-7094},
   review={\MR{3053566}},
   doi={10.1215/00127094-2142056},   
}

\bib{BCK}{article}{
   author={Burungale, Ashay},
   author={Castella, Francesc},
   author={Kim, Chan-Ho},
   title={A proof of Perrin-Riou's Heegner point main conjecture},
   journal={Algebra Number Theory},
   volume={15},
   date={2021},
   number={7},
   pages={1627--1653},
   issn={1937-0652},
   review={\MR{4333660}},
   doi={10.2140/ant.2021.15.1627},
}

 \bib{BL}{article}{author={B\"uy\"ukboduk,  Kazim}, author={Lei,  Antonio},title={Semi-ordinary Iwasawa theory for Rankin--Selberg
products}, status={arXiv:2008.08411}, label={BL}}

\bib{CST}{article}{
   author={Cai, Li},
   author={Shu, Jie},
   author={Tian, Ye},
   title={Explicit Gross-Zagier and Waldspurger formulae},
   journal={Algebra Number Theory},
   volume={8},
   date={2014},
   number={10},
   pages={2523--2572},
   issn={1937-0652},
   review={\MR{3298547}},
   doi={10.2140/ant.2014.8.2523}}

\bib{Cai}{article}{
   author={Cai, Li},
   author={Fan, Yangyu},
   title={Families of canonical local periods on spherical varieties}, status ={arXiv 2107.05921}, label={CF}}
\bib{carayol-hilbert}{article}{
   author={Carayol, Henri},
   title={Sur les repr\'esentations $l$-adiques associ\'ees aux formes
   modulaires de Hilbert},
   language={French},
   journal={Ann. Sci. \'Ecole Norm. Sup. (4)},
   volume={19},
   date={1986},
   number={3},
   pages={409--468},
   issn={0012-9593},
   review={\MR{870690 (89c:11083)}}, 
}

\bib{coates}{article}{
   author={Coates, John},
   title={Motivic $p$-adic $L$-functions},
   conference={
      title={$L$-functions and arithmetic},
      address={Durham},
      date={1989},
   },
   book={
      series={London Math. Soc. Lecture Note Ser.},
      volume={153},
      publisher={Cambridge Univ. Press, Cambridge},
   },
   date={1991},
   pages={141--172},
   review={\MR{1110392}},
}

\bib{deligne}{article}{
   author={Deligne, P.},
   title={Formes modulaires et repr\'esentations de ${\rm GL}(2)$},
   language={French},
   conference={
      title={Modular functions of one variable, II},
      address={Proc. Internat. Summer School, Univ. Antwerp, Antwerp},
      date={1972},
   },
   book={
      publisher={Springer, Berlin},
   },
   date={1973},
   pages={55--105. Lecture Notes in Math., Vol. 349},
   review={\MR{0347738}},
}

\bib{pyzz}{article}{
author={Disegni, Daniel}, title={The $p$-adic Gross--Zagier formula on Shimura curves}, 	journal={Compos. Math.}, volume={153}, number={10}, date={2017}, pages={1987--2074}
}

\bib{LLC}{article}{
   author={Disegni, Daniel},
   title={Local Langlands correspondence, local factors, and zeta integrals
   in analytic families},
   journal={J. Lond. Math. Soc. (2)},
   volume={101},
   date={2020},
   number={2},
   pages={735--764},
   issn={0024-6107},
   review={\MR{4093973}},
   doi={10.1112/jlms.12285},
}

\bib{nonsplit}{article}{
author={Disegni, Daniel}, title={The $p$-adic Gross--Zagier formula on Shimura curves, II: non-split primes},
journal={J. Inst. Math. Jussieu},  status={to appear}, doi={10.1017/S1474748021000608},
 label={Dis/a}}

\bib{univ}{article}{
author={Disegni, Daniel}, title={The universal $p$-adic Gross--Zagier formula}, journal={Invent. math.}, 
status={to appear; available at \url{http://www.math.bgu.ac.il/~disegni}}, label={Dis/b}}

\bib{fontaine}{article}{
   author={Fontaine, Jean-Marc},
   title={Repr\'{e}sentations $l$-adiques potentiellement semi-stables},
   language={French},
   note={P\'{e}riodes $p$-adiques (Bures-sur-Yvette, 1988)},
   journal={Ast\'{e}risque},
   number={223},
   date={1994},
   pages={321--347},
   issn={0303-1179},
   review={\MR{1293977}},
}

\bib{AMM2}{article}{
   author={Gould, Henry},
   author={Quaintance, Jocelyn},
   title={Double fun with double factorials},
   journal={Math. Mag.},
   volume={85},
   date={2012},
   number={3},
   pages={177--192},
   issn={0025-570X},
   review={\MR{2924154}},
   doi={10.4169/math.mag.85.3.177},
}

\bib{hida-ii}{article}{
   author={Hida, Haruzo},
   title={A $p$-adic measure attached to the zeta functions associated with
   two elliptic modular forms. II},
   journal={Ann. Inst. Fourier (Grenoble)},
   volume={38},
   date={1988},
   number={3},
   pages={1--83},
   issn={0373-0956},
   review={\MR{976685}},
}

\bib{Hi}{article}{
   author={Hida, Haruzo},
   title={On $p$-adic $L$-functions of ${\rm GL}(2)\times {\rm GL}(2)$ over
   totally real fields},
   language={English, with French summary},
   journal={Ann. Inst. Fourier (Grenoble)},
   volume={41},
   date={1991},
   number={2},
   pages={311--391},
   issn={0373-0956},
   review={\MR{1137290 (93b:11052)}},
}

\bib{Hi-genuine}{article}{
   author={Hida, Haruzo},
   title={On the search of genuine $p$-adic modular $L$-functions for ${\rm
   GL}(n)$},
   language={English, with English and French summaries},
   note={With a correction to: ``On $p$-adic $L$-functions of $\rm
   GL(2)\times GL(2)$ over totally real fields'' [Ann. Inst. Fourier
   (Grenoble) {\bf 41} (1991), no. 2, 311--391;  MR1137290 (93b:11052)]},
   journal={M\'{e}m. Soc. Math. Fr. (N.S.)},
   number={67},
   date={1996},
   pages={vi+110},
   issn={0249-633X},
   review={\MR{1479362}},
}

\bib{quad}{article}{
   author={Hida, Haruzo},
   title={Quadratic exercises in Iwasawa theory},
   journal={Int. Math. Res. Not. IMRN},
   date={2009},
   number={5},
   pages={912--952},
   issn={1073-7928},
   review={\MR{2482130}},
   doi={10.1093/imrn/rnn151},
}

\bib{hsieh 3}{article}{
   author={Hsieh, Ming-Lun},
   title={Hida families and $p$-adic triple product $L$-functions},
   journal={Amer. J. Math.},
   volume={143},
   date={2021},
   number={2},
   pages={411--532},
   issn={0002-9327},
   review={\MR{4234973}},
   doi={10.1353/ajm.2021.0011},
}

\bib{JL}{book}{
   author={Jacquet, H.},
   author={Langlands, R. P.},
   title={Automorphic forms on ${\rm GL}(2)$},
   series={Lecture Notes in Mathematics, Vol. 114},
   publisher={Springer-Verlag, Berlin-New York},
   date={1970},
   pages={vii+548},
   review={\MR{0401654 (53 \#5481)}},
}

\bib{janu}{article}{author={Januszewski, Fabian}, title={Non-abelian $p$-adic Rankin--Selberg $L$-functions and non-vanishing of central $L$-values}, status={preprint}, label={Jan}}

\bib{kappen}{article}{
   author={Kappen, Christian},
   title={Uniformly rigid spaces},
   journal={Algebra Number Theory},
   volume={6},
   date={2012},
   number={2},
   pages={341--388},
   issn={1937-0652},
   review={\MR{2950157}},
   doi={10.2140/ant.2012.6.341},
}

\bib{koblitz}{book}{
   author={Koblitz, Neal},
   title={$p$-adic numbers, $p$-adic analysis, and zeta-functions},
   note={Graduate Texts in Mathematics, Vol. 58},
   publisher={Springer-Verlag, New York-Heidelberg},
   date={1977},
   pages={x+122},
   isbn={0-387-90274-0},
   review={\MR{0466081}},
}

\bib{LZZ}{article}{
   author={Liu, Yifeng},
   author={Zhang, Shouwu},
   author={Zhang, Wei},
   title={A $p$-adic Waldspurger formula},
   journal={Duke Math. J.},
   volume={167},
   date={2018},
   number={4},
   pages={743--833},
   issn={0012-7094},
   review={\MR{3769677}},
   doi={10.1215/00127094-2017-0045},
}

\bib{sh76}{article}{
   author={Shimura, Goro},
   title={The special values of the zeta functions associated with cusp
   forms},
   journal={Comm. Pure Appl. Math.},
   volume={29},
   date={1976},
   number={6},
   pages={783--804},
   issn={0010-3640},
   review={\MR{434962}},
   doi={10.1002/cpa.3160290618},
}

\bib{shimura-hilb}{article}{
   author={Shimura, Goro},
   title={The special values of the zeta functions associated with Hilbert
   modular forms},
   journal={Duke Math. J.},
   volume={45},
   date={1978},
   number={3},
   pages={637--679},
   issn={0012-7094},
   review={\MR{507462}},
}

\bib{shimura}{article}{
   author={Shimura, Goro},
   title={On certain zeta functions attached to two Hilbert modular forms.
   I. The case of Hecke characters},
   journal={Ann. of Math. (2)},
   volume={114},
   date={1981},
   number={1},
   pages={127--164},
   issn={0003-486X},
   review={\MR{625349}},
   doi={10.2307/1971381},
}

\bib{SU}{article}{
   author={Skinner, Christopher},
   author={Urban, Eric},
   title={The Iwasawa Main Conjectures for $GL_2$},
   journal={Invent. Math.},
   volume={195},
   date={2014},
   number={1},
   pages={1--277},
   issn={0020-9910},
   review={\MR{3148103}},
   doi={10.1007/s00222-013-0448-1},
}

\bib{tate-nt}{article}{author= {Tate, John},
     title = {Number theoretic background},
 conference = {title={Automorphic forms, representations and {$L$}-functions}, place= { {C}orvallis, {O}re.,},year={1977},}
       book={
      series={Proc. Sympos. Pure Math.}
      volume={XXXIII},
      publisher={Amer. Math. Soc.},
      place={Providence, R.I.},
   },
   year={1979}
     pages = {3--26},
}

\bib{Urban}{article}{
   author={Urban, Eric},
   title={Nearly overconvergent modular forms},
   conference={
      title={Iwasawa theory 2012},
   },
   book={
      series={Contrib. Math. Comput. Sci.},
      volume={7},
      publisher={Springer, Heidelberg},
   },
   date={2014},
   pages={401--441},
   review={\MR{3586822}},
}

\bib{wald}{article}{
   author={Waldspurger, J.-L.},
   title={Sur les valeurs de certaines fonctions $L$ automorphes en leur
   centre de sym\'etrie},
   language={French},
   journal={Compositio Math.},
   volume={54},
   date={1985},
   number={2},
   pages={173--242},
   issn={0010-437X},
   review={\MR{783511 (87g:11061b)}},
}

\bib{Wan}{article}{
   author={Wan, Xin},
   title={The Iwasawa main conjecture for Hilbert modular forms},
   journal={Forum Math. Sigma},
   volume={3},
   date={2015},
   pages={e18, 95},
   issn={2050-5094},
   review={\MR{3482263}},
   doi={10.1017/fms.2015.16},
}

\bib{Xue}{article}{
   author={Xue, Hui},
   title={Central values of $L$-functions over CM fields},
   journal={J. Number Theory},
   volume={122},
   date={2007},
   number={2},
   pages={342--378},
   issn={0022-314X},
   review={\MR{2292260}},
   doi={10.1016/j.jnt.2006.05.010},
}

\bib{yzz}{book}{
     title = {The Gross-Zagier Formula on Shimura Curves},  
     subtitle = {},     
     edition = {},       
     author = {Yuan, Xinyi},author = {Zhang, Shou-Wu},author = {Zhang, Wei},
     editor = {},     
     volume = {184},     
     series = {Annals of Mathematics Studies},  
     pages = {272},         
     place={Princeton, NJ},
     date = {2012},      
     publisher = {Princeton University Press},         
     }
\end{biblist}
\end{bibdiv}

\end{document}